\title{Quiver Hecke algebras from Floer homology
in Couloumb branches}
\author{Mina Aganagic${}^{1,2}$, Ivan Danilenko${}^{1}$, Yixuan Li${}^{1}$, Vivek Shende${}^{1,3}$, Peng Zhou${}^{1}$}
\def\MCB{\mcal^\times}
\def\ACB{\mcal^+}
\def\T{\tcal}
\def\pt{\text{pt}}
\def\GK{G(\K)}
\def\GO{G(\O)}
\def\cal{\mathcal}
\def\acal{\mathcal A}
\def\bcal{\mathcal B}
\def\ccal{\mathcal C}
\def\fcal{\mathcal F}
\def\hcal{\mathcal H}
\def\mcal{\mathcal M}
\def\ocal{\mathcal O}
\def\pcal{\mathcal P}
\def\tcal{\mathcal T}
\def\ycal{\mathcal Y}
\def\wcal{\mathcal W}
\def\tfrak{\mathfrak t}
\def\K{\mathcal{K}}
\def\O{\mathcal{O}}
\def\B{\mathbb{B}}
\def\R{\mathbb{R}}
\def\C{\mathbb{C}}
\def\Z{\mathbb{Z}}
\def\N{\mathbb{N}}
\def\P{\mathbb{P}}
\def\CS{\C^*}
\def\S{\mathcal{S}}
\def\D{\mathcal{D}}
\def\bD{\mathbb{D}}
\def\In{\subset}
\def\la{\langle}
\def\d{\partial}
\def\ra{\rangle}
\def\wt{\widetilde}
\def\wb{\overline}
\def\RM{\backslash}
\def\into{\hookrightarrow}
\def\xto{\xrightarrow}
\def\congto{\xrightarrow{\sim}}
\def\bfT{{\bf T}}
\def\bfL{{\bf L}}
\def\bfW{{\bf W}}
\def\bfR{{\bf R}}
\def\bfTh{{\bf \Theta}}
\DeclareMathOperator{\Bun}{Bun}
\DeclareMathOperator{\Sym}{Sym}
\DeclareMathOperator{\End}{End}
\DeclareMathOperator{\Coh}{Coh}
\DeclareMathOperator{\Hom}{Hom}
\DeclareMathOperator{\Fuk}{Fuk}
\DeclareMathOperator{\Spec}{Spec}
\DeclareMathOperator{\colim}{colim}
\newtheorem{theorem}{Theorem}[section]
\newtheorem{lemma}[theorem]{Lemma}
\newtheorem{corollary}[theorem]{Corollary}
\newtheorem{proposition}[theorem]{Proposition}
\newtheorem{definition}[theorem]{Definition}
\newtheorem{example}[theorem]{Example}
\newtheorem{remark}[theorem]{Remark}
\newtheorem{conjecture}[theorem]{Conjecture}
\def\rij{{[i,j]}}
\def\bij{{(i,j)}}
\def\LRA{\Leftrightarrow}
\def\cp{{\C\pcal}}
\def\colim{\text{colim}}
\begin{document}

\maketitle
\begin{center}
{\it
$^{1}$ Departments of Mathematics, University of California, Berkeley, USA\\
$^{2}$Department of Physics,  University of California, Berkeley, USA\\
$^{3}$ Center for Quantum Mathematics, Syddansk University, Odense, Denmark}
\end{center}
 
\begin{abstract}
Homology theories categorifying quantum group link invariants are 
    known to be governed by the representation theory
    of quiver Hecke algebras, also called KLRW algebras. 
    Here we show that certain
    cylindrical 
    KLRW algebras, relevant in particular for cylindrical generalizations of link homology theories, can be realized 
    by Lagrangian Floer homology in
    multiplicative
    Coulomb branches. This confirms a homological mirror symmetry
    prediction of the first author. 
\end{abstract}

\tableofcontents

\newpage

\section{Introduction}

Let $\Gamma$ be a directed graph with  no multiple edges, or loops. Pick a natural number $d_i \ge 0$ for each vertex $i$ of $\Gamma$, and a collection $F$ of points on a line, labeled by vertices of $\Gamma$ and possibly empty.  In diagrams, we depict the points of $F$ as red. The KLRW category 
$\mathcal{C}_{\Gamma, \vec{d}, F}$ is defined as follows 
: 
\begin{itemize}
    \item Objects are collections of 
    points on a line $\R$, all distinct
    from the points of $F$, with
    $d_i$ points labeled by the vertex $i$
    of $\Gamma$.  In diagrams, we depict
    these points as black. 
    \item Morphism spaces
    are generated by strand diagrams in
    the plane $\R \times [0,1]$, with no either horizontal or vertical
    tangencies, or non-generic intersections. The strands
    may be decorated by 
    dots. 
    \item 
    Composition $D_1 \circ D_2$ is given by stacking the diagram $D_1$ on top of $D_2$.
    \item 
    Diagrams are considered
    up to isotopy and satisfy relations
    in Figure \ref{fig:KLRW}. 
\end{itemize}
This category is relevant for categorification of $U_{\mathfrak{q}}({\mathfrak{g}})$, where $\mathfrak{g}$ has Dynkin diagram $\Gamma$ \cite{Khovanov-Lauda-diagrammatics-1, Khovanov-Lauda-diagrammatics-2, Rouquier-2kac, Webster-weighted,Kang-Kashiwara}. In particular, when $\Gamma$ is of ADE type, it leads to corresponding invariants of links in ${\mathbb R}^3$ \cite{Webster}.

We will be interested in a cylindrical variant 
$\mathcal{C}^{cyl}_{\Gamma, \vec d, F}$, which 
is the analogous structure with 
the line $\R$ replaced by a circle, and
the plane correspondingly replaced 
by a cylinder.  $\mathcal{C}^{cyl}_{\Gamma, \vec d, F}$ leads to homological invariants of links in  ${\mathbb R}^2\times S^1$ \cite{webster2022coherent}.

We will (re)discover 
the cylindrical KLRW categories 
as Fukaya-Seidel categories
of multiplicative Coulomb branches 
of quiver gauge theories whose quiver is $\Gamma$.
This identification, along with the specific form of the superpotential, was proposed in \cite{aganagic-knot-1, aganagic-knot-2, aganagic-icm}.

\begin{figure}
    \centering
     \begin{subfigure}[b]{0.1\textwidth}
     \centering     
     \begin{tikzpicture}[very thick]
        \def\br{40}
        \draw (0,0) to [out=\br, in=-\br] node[pos=0, below ]{$(i)$} (0,2);
        \draw (0.5,0) to [out=180-\br, in=\br - 180] node[pos=0, below ]{$(i)$} (0.5,2);
        \node at (1,1) {$=0$};
        \end{tikzpicture}
        \caption{bigon}\label{fig:KLRW-bigon}
     \end{subfigure} 
     \hspace{2cm}
      \begin{subfigure}[b]{0.3\linewidth}
     \centering     
     \begin{tikzpicture}[very thick]
     \begin{scope}
        \def\br{40}
        \draw (0,0) to [out=\br, in=-\br] node[pos=0, below ]{$(j)$} (0,2);
        \draw (0.5,0) to [out=180-\br, in=\br - 180] node[pos=0, below ]{$(i)$} (0.5,2);
        \end{scope}
        \node at (1,1) {$= \eta $};
        \begin{scope}[shift=({1.5,0})]
        \def\br{40}
        \draw (0,0) to   node[pos=0, below ] {$(j)$} (0,2);
        \draw (0.5,0) to node[pos=0.5, fill, circle, inner sep=2pt]{} node[pos=0, below ]{$(i)$} (0.5,2);
        \end{scope}
        \node at (2.5,1) {$- \eta $};
        \begin{scope}[shift=({3,0})]
        \def\br{40}
        \draw (0,0) to  node[pos=0.5, fill, circle, inner sep=2pt]{} node[pos=0, below ] {$(j)$} (0,2);
        \draw (0.5,0) to   node[pos=0, below ]{$(i)$} (0.5,2);
        \end{scope}
        \end{tikzpicture}
        \caption{bigon with neighbor $(j) \to (i)$ }
     \label{fig:KLRW-ij}\end{subfigure} 
     \hspace{2cm}
    \begin{subfigure}[b]{0.2\textwidth}
     \centering     
     \begin{tikzpicture}[very thick]
        \def\br{20}
        \begin{scope}
        \draw[red] (0,0) to node[pos=0, below ]{$[i]$} (0,2);
        \draw (0.4,0) to [out=90, in=-90] node[pos=0, below ]{$(i)$} (-0.4,1);
        \draw (-0.4, 1)  to [out=90, in=-90]  (0.4, 2);
        \end{scope}
        \node at (0.7,1) {$=\; \eta$};
        \begin{scope}[shift =({1.5,0})]
        \draw[red] (0,0) to node[pos=0, below ]{$[i]$} (0,2);
        \draw (0.5,0) to node[pos=0.5, fill, circle, inner sep=2pt]{}  node[pos=0, below ]{$(i)$} (0.5,2);
        \end{scope}
        \end{tikzpicture}
        \caption{bigon with red}\label{fig:KLRW-bigon-red}
     \end{subfigure}
     \vskip 1cm
          \begin{subfigure}[b]{0.4\linewidth}
     \centering     
       \begin{tikzpicture}[very thick]
        \def\br{40}
        \begin{scope}
        \draw (0,0) -- node[pos=0, below ]{$(i)$}  (1,2);
        \draw (1,0) -- node[pos=0, below ]{$(i)$}  (0,2);
        \draw (0.5,0) to [bend left] node[pos=0, below ]{$(j)$} (0.5,2);
        \end{scope}
        \node at (1.5,1) {$-$};
        \begin{scope}[shift=({2,0})]
        \draw (0,0) -- node[pos=0, below ]{$(i)$}  (1,2);
        \draw (1,0) -- node[pos=0, below ]{$(i)$}  (0,2);
         \draw (0.5,0) to [bend right] node[pos=0, below ]{$(j)$} (0.5,2);
        \end{scope}
 \node at (3.5,1) {$= \eta \hbar $};
 \begin{scope}[shift=({4,0})]
        \draw (0.1,0) -- node[pos=0, below ]{$(i)$}  (0.1,2);
        \draw (1.1,0) -- node[pos=0, below ]{$(i)$}  (1.1,2);
         \draw (0.6,0) to node[pos=0, below ]{$(j)$} (0.6,2);
        \end{scope}
        \end{tikzpicture}
        \caption{braid with neighbour $(j) \to (i)$.}
     \label{fig:KLRW-jij}\end{subfigure} 
     \hspace{2cm}
         \begin{subfigure}[b]{0.3\textwidth}
     \centering     
       \begin{tikzpicture}[very thick]
        \def\br{40}
        \begin{scope}
        \draw (0,0) to [bend right]  node[pos=0, below ]{$(i)$}  (1,2);
        \draw (1,0) to [bend right] node[pos=0, below ]{$(i)$}  (0,2);
        \draw[red] (0.5,0) to node[pos=0, below ]{$[i]$} (0.5,2);
        \end{scope}
        \node at (1.5,1) {$-$};
        \begin{scope}[shift=({2,0})]
        \draw (0,0) to [bend left] node[pos=0, below ]{$(i)$}  (1,2);
        \draw (1,0) to [bend left] node[pos=0, below ]{$(i)$}  (0,2);
        \draw[red] (0.5,0) to node[pos=0, below ]{$[i]$} (0.5,2);
        \end{scope}
 \node at (3.5,1) {$=\eta \hbar $};
 \begin{scope}[shift=({4.1,0})]
        \draw (0,0) -- node[pos=0, below ]{$(i)$}  (0,2);
        \draw (1,0) -- node[pos=0, below ]{$(i)$}  (1,2);
        \draw[red] (0.5,0) to node[pos=0, below ]{$[i]$} (0.5,2);
        \end{scope}
        \end{tikzpicture}
        \caption{braid with red}\label{fig:KLRW-braid-red}
     \end{subfigure} 
  \vskip 1cm
          \begin{subfigure}[b]{0.4\textwidth}
     \centering     
       \begin{tikzpicture}[very thick]
        \def\br{40}
        \begin{scope}
        \draw (0,0) -- node[pos=0, below ]{$(i)$}  (1,2);
        \draw (1,0) -- node[pos=0, below ]{$(i)$} node[pos=0.8, inner sep=2pt, circle, fill]{}  (0,2);
        \end{scope}
        \node at (1.5,1) {$-$};
        \begin{scope}[shift=({2,0})]
        \draw (0,0) -- node[pos=0, below ]{$(i)$}  (1,2);
        \draw (1,0) -- node[pos=0, below ]{$(i)$}  (0,2);
         \draw (1,0) -- node[pos=0, below ]{$(i)$} node[pos=0.2, inner sep=2pt, circle, fill]{}  (0,2);
        \end{scope}
        \node at (3.5,1) {$= \;\; \hbar $};
         \begin{scope}[shift=({4.3,0})]
        \draw (0,0) -- node[pos=0, below ]{$(i)$}  (0,2);
        \draw (1,0) -- node[pos=0, below ]{$(i)$}  (1,2);
        \end{scope}
        \end{tikzpicture}
        \caption{dot-pass-crossing}\label{fig:KLRW-skein-1}
     \end{subfigure} \hfill
         \begin{subfigure}[b]{0.4\textwidth}
     \centering     
       \begin{tikzpicture}[very thick]
        \def\br{40}
        \begin{scope}
        \draw (0,0) -- node[pos=0.2, inner sep=2pt, circle, fill]{}  node[pos=0, below ]{$(i)$}  (1,2);
        \draw (1,0) -- node[pos=0, below ]{$(i)$}  (0,2);
        \end{scope}
        \node at (1.5,1) {$-$};
        \begin{scope}[shift=({2,0})]
        \draw (0,0) -- node[pos=0.8, inner sep=2pt, circle, fill]{}  node[pos=0, below ]{$(i)$}  (1,2);
        \draw (1,0) -- node[pos=0, below ]{$(i)$}  (0,2);
         \draw (1,0) -- node[pos=0, below ]{$(i)$}  (0,2);
        \end{scope}
        \node at (3.5,1) {$= \;\; \hbar $};
         \begin{scope}[shift=({4.3,0})]
        \draw (0,0) -- node[pos=0, below ]{$(i)$}  (0,2);
        \draw (1,0) -- node[pos=0, below ]{$(i)$}  (1,2);
        \end{scope}
        \end{tikzpicture}
        \caption{another dot-pass-crossing}\label{fig:KLRW-skein-2}
     \end{subfigure} 
         
    \caption{Nontrivial KLRW relations. Exchanging $i$ and $j$ in diagrams (b) and (d), i.e. if we have an arrow $(j) \gets (i)$, the right-hand-side gets an extra (-1) factor.  
    }
    \label{fig:KLRW} 
\end{figure}
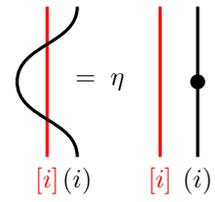
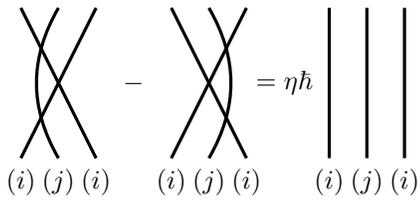
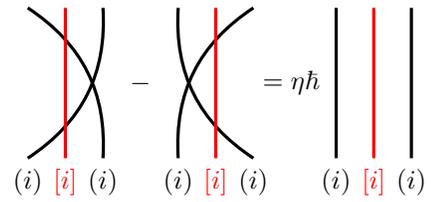
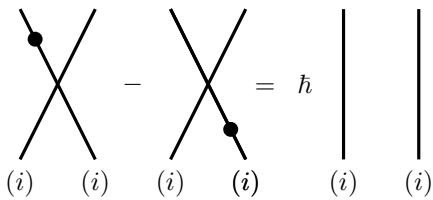
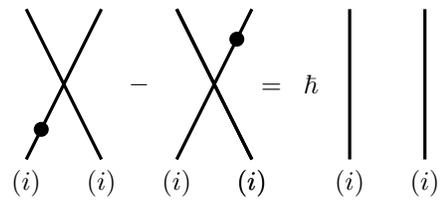

From the data $(\Gamma, {\vec d}, F)$, 
we consider the following group and vector spaces. 
\begin{equation} 
\label{group and rep}
\mathbf G := \prod_{i \in \Gamma} GL(\C^{d_i}), \quad \mathbf N := \bigoplus_{i \to j} \Hom(\C^{d_i}, \C^{d_j}), \quad \mathbf N_F := \bigoplus_{i } \Hom(\C^{m_i}, \C^{d_i}),  
\end{equation}
Here, the integers $m_i$ are the number of points in $F$ labeled by the $i$'th vertex of $\Gamma$. 

The space $\mathbf N$ parameterizes representations of the quiver $\Gamma$ with dimension vector $\vec d$; automorphisms of such representations are given by the action of $\mathbf G$.  
Meanwhile $\mathbf N \oplus \mathbf N_F$ parameterizes 
representations of a framed quiver $\overline{\Gamma}$, with dimension vector provided by $\vec d$ and $F$.

In general, from any  reductive Lie 
group $\mathbf{G}$ and representation $\mathbf N$, 
 the mathematical work  \cite{BFN}
 defines spaces 
 $\ACB(\mathbf G, \mathbf N)$
 and $\MCB(\mathbf G, \mathbf N)$, termed
 the 
 `additive' and `multiplicative' Coulomb branches, and expected
 to capture the appropriate
 physical moduli of vacua of 
 certain corresponding 3d $N=4$
 gauge theories. 
 The additive 
Coulomb branches  $\ACB(\mathbf G,\mathbf N)$ include 
many well studied spaces in representation theory, such as cotangent bundles of
flag varieties, slices in affine Grassmanians, and Hilbert schemes of points on $A_n$ surfaces.  These affine algebraic
varieties
carry holomorphic symplectic structures
on their smooth loci and maps $\ACB(\mathbf G,\mathbf N) \to \mathfrak{t}/W$,
and
 $\MCB(\mathbf G,\mathbf N) \to \mathbf T/W$ and 
where $\mathbf T \subset \mathbf G$ is a maximal torus, ${\bf t}$ is its Lie algebra, and $W$ is
the Weyl group of $\mathbf G$.

We will write $\MCB(\Gamma, \vec d) := \MCB(\mathbf G,\mathbf N)$.  
We will also use the Coulomb branch of the framed quiver
$\MCB(\mathbf G,\mathbf N \oplus \mathbf N_F)$ in order to describe a certain function 
${\cal W}_F: \MCB(\Gamma, \vec d) \rightarrow {\mathbb C}$, along with some special coordinates on  $\MCB(\Gamma, \vec d)$. 

The purpose of the present article is to construct an embedding of $\mathcal{C}^{cyl}_{\Gamma, \vec d, F}$ in  the Fukaya-Seidel category $Fuk(\MCB(\Gamma, \vec d), {\cal W}_F)$, as conjectured in  \cite{aganagic-knot-2}.

\subsection{Geometry of Coulomb branches for quiver gauge theory}

\subsubsection{Coordinates and superpotential}

We fix a maximal torus ${\bf T} \subset \mathbf G$ along
with an isomorphism 
$\mathbf T \cong \prod_i \prod_{\alpha = 1}^{d_i} \C^*$.  For $y \in \mathbf T$, we 
write its coordinates as
$y_{i, \alpha} \in \C^*$.  

\begin{definition}\label{def:1.1}
Fix 
$a \in \mathbf{T}_F = \prod_{i} \prod_{\alpha = 1}^{m_i} \C^*$
so that the coordinate entries 
$a_{i, \alpha} \in \C^*$ have distinct arguments. 
We write:  $\mathbf{T}_{O} \subset \mathbf{T}$ for
the complement of the following hyperplanes:
\begin{enumerate}
\item \label{intro root hyperplane} $H_{R, i}$ for the locus where some of the $y_{i, \alpha}$ coincide, and $H_R := \bigcup H_{R, i}$. 
\item $H_{I, j \to i}$, if there is an arrow $j \to i$, for the locus where some $y_{i, \alpha}$ coincides
with some $y_{j, \beta}$
\item $H_{F, i}$ for the locus where some $y_{i, \alpha}$ coincides with some $a_{i, \beta}$.  
\end{enumerate}
\end{definition}
The locus $\mathbf T_{O}$ is evidently $W$ invariant, and descends to the quotient.  We define by fiber products: 
$$
\begin{tikzcd}
\wt \MCB(\Gamma, \vec d) \ar[r] \ar[d] &  \MCB(\Gamma, \vec d)\ar[d] & \qquad & \wt \MCB(\Gamma, \vec d)_O \ar[r] \ar[d] & \MCB(\Gamma, \vec d)_O  \ar[d] \\
\mathbf T \ar[r] & \mathbf T / W & \qquad & \mathbf T_O \ar[r] & \mathbf T_O/W
\end{tikzcd}
$$

\noindent{}We will explicitly construct coordinates trivializing the fibration, over the torus $\mathbf T_O$.  These coordinates are chosen so that our desired superpotential function has a simple expression: 

\begin{theorem} \label{trivialization-intro} 
For  $a \in \mathbf T_F$ as in Definition \ref{def:1.1},
    there is a $W$-equivariant isomorphism  
    $$(u, y):  \wt \MCB(\Gamma, \vec d)_O \to \mathbf T^\vee \times \mathbf T_{O}$$
    trivializing the fibration 
    $y:  \wt \MCB(\Gamma, \vec d)_O \to \mathbf T_{O}$, 
    such that 
    $$\mathcal{W} := \sum_{i, \alpha} u_{i,\alpha}$$
    extends to a $W$-invariant regular function on $\wt \MCB(\Gamma, \vec d)$. 
\end{theorem}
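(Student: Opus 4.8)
The plan is to prove Theorem~\ref{trivialization-intro} by the standard abelianization of the Coulomb branch over the regular part of the torus, which turns it into an explicit identity for minuscule monopole operators. By \cite{BFN}, over the locus where the Weyl roots do not vanish — in particular over $\mathbf T_O$, since $\mathbf T_O\subset\mathbf T\setminus H_R$ — the multiplicative Coulomb branch is controlled by its abelianization: the pullback $\wt\MCB(\Gamma,\vec d)$ of $\MCB(\Gamma,\vec d)$ along $\mathbf T\to\mathbf T/W$, restricted to this locus, is $\Spec$ of the localized abelian convolution algebra. The latter is generated over $\C[\mathbf T]$, with the roots inverted, by monopole symbols $u_\lambda$ for $\lambda$ in the coweight lattice $\Z^{\sum_i d_i}$ of $\mathbf T$ (the character lattice of $\mathbf T^\vee$), subject to $u_\lambda u_\mu = u_{\lambda+\mu}\,h_{\lambda,\mu}(y)$, where $h_{\lambda,\mu}$ is a product with non-negative exponents of the group-like factors $1-\chi$ over the $\mathbf T$-weights $\chi$ of $\mathbf N$ — together with the framing weights specialized at $a$ if one works with the framed quiver $\overline\Gamma$. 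Definition~\ref{def:1.1} deletes from $\mathbf T_O$ exactly the zero loci of all root factors ($H_R$), all arrow-matter factors ($H_I$), and all framing factors ($H_F$); consequently every $h_{\lambda,\mu}$ occurring below, in particular $h_{e_{i,\alpha},-e_{i,\alpha}}(y)=u_{e_{i,\alpha}}u_{-e_{i,\alpha}}$, is an invertible regular function on $\mathbf T_O$.

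\emph{The trivialization.} Let $e_{i,\alpha}$ be the coweight generating the $\alpha$-th factor $\C^*$ of $GL(\C^{d_i})$, and set $u_{i,\alpha}:=D_{i,\alpha}(y)\,u_{e_{i,\alpha}}$, where $D_{i,\alpha}$ is the ratio of matter (and, if applicable, framing) contributions at node $i$ over the root factors $\prod_{\gamma\neq\alpha}\bigl(1-y_{i,\gamma}/y_{i,\alpha}\bigr)$ that appears in the abelianization formula for minuscule monopole operators; by the previous paragraph $D_{i,\alpha}$ is invertible on $\mathbf T_O$. Since $u_{e_{i,\alpha}}u_{-e_{i,\alpha}}$ is invertible on $\wt\MCB(\Gamma,\vec d)_O$, each $u_{i,\alpha}$ is nowhere vanishing, $u_{-e_{i,\alpha}}$ is recovered from the defining relation, and the multiplication relations express every $u_\lambda$ as a monomial in the $u_{i,\alpha}^{\pm1}$ times a function of $y$. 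Hence $\C[\mathbf T_O]$ and the $u_{i,\alpha}^{\pm1}$ generate the coordinate ring of $\wt\MCB(\Gamma,\vec d)_O$ with no further relations, so $(u,y)$ is an isomorphism $\wt\MCB(\Gamma,\vec d)_O\xrightarrow{\ \sim\ }\mathbf T^\vee\times\mathbf T_O$ trivializing $y$.

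\emph{Weyl-equivariance, the superpotential, and the extension.} The group $W=\prod_i S_{d_i}$ acts on $\wt\MCB(\Gamma,\vec d)$ through the $\mathbf T$-factor; on the abelian algebra this is $w(u_\lambda)=(\text{root twist})\,u_{w\lambda}$, and $D_{i,\alpha}$ is tailored so that the twist is absorbed, giving $\sigma(u_{i,\alpha})=u_{i,\sigma^{-1}\alpha}$ for $\sigma\in S_{d_i}$. Thus $(u,y)$ intertwines the $W$-actions for the standard factor-permuting action on $\mathbf T^\vee\times\mathbf T_O$, and $\mathcal W=\sum_{i,\alpha}u_{i,\alpha}$ is $W$-invariant. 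By the very choice of $D_{i,\alpha}$, the inner sum $\sum_{\alpha=1}^{d_i}u_{i,\alpha}$ is the abelianization of the monopole operator $M^{(i)}$ attached to the fundamental — hence, for $GL$, minuscule — coweight $\varpi_1$ of the $i$-th factor; minusculeness gives $\overline{\mathcal R}_{\varpi_1}=\mathcal R_{\varpi_1}$, so $M^{(i)}=[\mathcal R_{\varpi_1}]$ is a genuine element of the Coulomb branch coordinate ring (of the quiver gauge theory, or of the framed quiver if the superpotential $\mathcal W_F$ carries the masses $a$). Therefore $\mathcal W=\sum_i M^{(i)}$ is a regular function, and its pullback to $\wt\MCB(\Gamma,\vec d)$ — irreducible by \cite{BFN}, with $\wt\MCB(\Gamma,\vec d)_O$ dense — is regular and restricts on that dense open to $\sum_{i,\alpha}u_{i,\alpha}$; this gives the extension. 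Identifying $\mathcal W$ with the superpotential predicted in \cite{aganagic-knot-2} is then a comparison of the explicit abelianization formula with the formula there.

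\emph{Main obstacle.} The conceptual skeleton is short; the real work — and the point at which the multiplicative setting genuinely departs from the additive one — lies in the abelianization and Weyl-equivariance steps: installing the multiplicative/$K$-theoretic abelianization with matter (group-like factors $1-y_{j,\beta}/y_{i,\alpha}$, $1-a_{i,\beta}/y_{i,\alpha}$ replacing linear forms) and then checking on the nose that (i) the dressing factors $D_{i,\alpha}$ are invertible on $\mathbf T_O$ precisely because of the hyperplanes deleted in Definition~\ref{def:1.1}, (ii) the Weyl twist cancels exactly in the chosen normalization, and (iii) the node-wise sum is the honest minuscule monopole operator and matches the superpotential of \cite{aganagic-knot-2}, with the framing contributions and masses $a$ correctly incorporated. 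I expect (ii) and the bookkeeping of the framing to be the fiddliest parts.
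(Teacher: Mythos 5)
Your proposal is correct and follows essentially the same route as the paper's proof: abelianize over $\mathbf T_O$ (where, by Definition \ref{def:1.1}, all root, arrow-matter and framing factors are invertible), define the $u_{i,\alpha}$ as the dressed abelianized fiber variables so that their node-wise sum is the Finkelberg--Tsymbaliuk expression for a minuscule monopole operator, and deduce regularity of $\mathcal W$ on all of $\wt\MCB(\Gamma,\vec d)$ from the fact that this operator is a global function (on the deformed framed Coulomb branch, restricted along the open embedding obtained by removing the framing divisor). The only cosmetic differences are your sign convention (fundamental versus $-w_0$ of the fundamental coweight) and that you re-derive the trivialization directly from the abelianized algebra relations rather than quoting Propositions \ref{removing framing} and \ref{p:u-coord}.
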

\begin{proof}
 As we recall in  Section \ref{localization section}, 
 it follows immediately from the results of \cite{BFN-slice} that there is such a trivialization $(x, y)$ for coordinates `$x$' coming via a comparison to the Coulomb branch for the Cartan.  The $\mathcal{W}$ is a certain  `monopole operator' (in particular a global function) whose expression in the $x$ variables 
 was calculated in \cite{Finkelberg-Tsymbaliuk} to be a sum of terms, each of which is a product of one of the `$x$' variables and a term invertible away from the loci in Def. \ref{def:1.1}; see  Prop. \ref{p:Pn-class} below.  The `$u$' variables are defined (Def. \ref{d:u-variable}) to be the terms in this sum.  The fact that $(u, y)$ gives an isomorphism then follows immediately; it is stated in Proposition \ref{removing framing} and Proposition \ref{p:u-coord} below. 
\end{proof}

Note that $u$ and hence $\mathcal{W}$ depend on $a$,
as does the subset $\mathbf T_O \subset \mathbf T$, and thus also 
$\MCB(\Gamma, \vec d)_O \subset \MCB(\Gamma, \vec d)$.  However, 
 $\MCB(\Gamma, \vec d)$ itself does not depend on $a$.

The function
$\mathcal{W}$ is the single valued part of the `upstairs superpotential' introduced in 
\cite[Appendix B.2.3]{aganagic-knot-2}, as one can see by 
comparing Def. \ref{d:u-variable} with that reference. The potential, and the space it is defined on, are distinct relatives of that in \cite{gaiottowitten2011, braverman2014}.

\subsubsection{Holomorphic curves and cylindrical model}

We will study holomorphic curves $C \to \MCB(\Gamma, \vec d)$ by first considering their composition with the projection to $\mathbf T/W = \prod_i Sym^{d_i} \C^*_y$.  

We borrow the `cylindrical model' from the Heegaard Floer literature.  This is the following assertion: 

\begin{lemma}
\cite{lipshitz2006cylindrical}
    For any target curve $X$, there is a bijection (a) between maps $C \to Sym^d X$ transverse to the double point locus and (b) maps $S \to C \times X$, where $S \to C$ is a $d:1$ cover with simple ramification.
\end{lemma}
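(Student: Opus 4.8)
The plan is to unwind both sides of the claimed bijection into concrete geometric data and exhibit the correspondence directly, following the standard "cylindrical reformulation" philosophy from Heegaard Floer theory.

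First I would recall what a map $f \colon C \to \Sym^d X$ transverse to the diagonal (double-point locus) actually encodes. Away from the finitely many points of $C$ where $f$ hits the discriminant, $f$ is locally a multiset of $d$ distinct points of $X$ varying holomorphically; equivalently, pulling back the universal $d$-fold branched structure, one gets an unramified $d$-sheeted cover of $C \setminus \{\text{bad points}\}$ together with an evaluation map to $X$. The content is then to show that transversality to the \emph{double} point locus is exactly the condition that forces this cover, after filling in the punctures, to extend to a branched cover $\pi \colon S \to C$ whose only ramification is \emph{simple} (local model $z \mapsto z^2$), and that the evaluation map correspondingly extends across the ramification points. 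Conversely, given $(\pi \colon S \to C, \, g \colon S \to X)$ with $S \to C$ a degree-$d$ cover with only simple branch points and $S \to C \times X$ the product map $(\pi, g)$, one pushes forward: the map $C \to \Sym^d X$ sends $c \mapsto \sum_{s \in \pi^{-1}(c)} g(s)$ (as a divisor/multiset), and one checks this is holomorphic and meets the diagonal transversally precisely because the branching is simple and generic.

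The concrete steps I would carry out, in order: (1) set up local models — near a regular value, trivialize the cover and note $\Sym^d X \cong (\Sym^{d-1}X) \times X$ locally, making the correspondence obvious there; (2) analyze the local model near a simple ramification point of $S \to C$: in coordinates $S$ looks like $z \mapsto (z^2, h(z))$ into $\C_w \times X$, and pushing forward the two sheets $\pm\sqrt w$ gives a map to $\Sym^2 X$ which one computes (via elementary symmetric functions of $h(\sqrt w), h(-\sqrt w)$, which are holomorphic in $w$) is holomorphic and crosses the diagonal $\{h(\sqrt w) = h(-\sqrt w)\}$ transversally iff $h'(0) \neq 0$, i.e. iff $g$ is an immersion there in the fiber direction — this is the matching of "simple ramification + generic" with "transverse to double-point locus"; (3) glue these local pictures: given $f$ transverse to the diagonal, define $S$ as the closure in $C \times X$ (or as the normalization of the pullback of the universal divisor) of the graph of the multivalued section, verify it is a smooth curve with the asserted covering behavior using step (2) run backwards; (4) check the two constructions are mutually inverse and natural, so it is a genuine bijection.

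**The main obstacle** will be step (3): showing that the naively-defined cover $S$ is actually \emph{smooth} and that the branched cover $S \to C$ has \emph{only simple} ramification, with nothing worse hiding at the discriminant points — this is exactly where the transversality hypothesis does all the work, and where one must be careful that "transverse to the double point locus" is invoked with the right scheme-theoretic meaning (the diagonal in $\Sym^d X$ is singular in higher codimension, so one is really asking that $f$ avoids the deeper strata and meets the smooth part of the big diagonal transversally). One should also be slightly careful about what regularity is claimed for $S \to C \times X$ at the branch points; the cleanest route is to define $S$ as a subvariety of $C \times X$ and check smoothness pointwise via the local model of step (2), which simultaneously delivers the simple-ramification statement. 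Since this lemma is quoted verbatim from \cite{lipshitz2006cylindrical}, I would in practice cite it, but the above is the proof I would give if pressed.
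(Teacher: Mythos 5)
Your proposal is correct and is essentially the argument behind this statement: the paper gives no proof of its own, quoting the lemma directly from Lipshitz's cylindrical reformulation, whose proof is exactly the tautological correspondence you describe (incidence divisor/normalization inside $C \times X$ in one direction, pushforward of fibers in the other, with the local model $z \mapsto (z^2, h(z))$ showing that transversality to the big diagonal matches simple branching together with genericity of the evaluation map). Your step (2) also correctly isolates the only real subtlety --- simple ramification of $S \to C$ alone does not force transversality, one also needs $h'(0) \neq 0$ at branch points and transverse sheet crossings elsewhere --- which is the same implicit genericity already built into the loose wording of the cited lemma.
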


We correspondingly identify: (a) maps $C \to \prod_i Sym^{d_i} \C^*_y$ which are transverse to the 'big diagonal divisor' with (b) tuples of maps $S_i \to C \times \C^*_y$ such that $S_i \to C$ has simple ramification.  We write $S = \coprod S_i$, but always understand the components of $S$ to be labelled.  

Now will classify lifts of a given $C \to \prod_i Sym^{d_i} \C^*_y$ (always assumed transverse to the double point locus) to $C \to \MCB(\Gamma, \vec d)$, in terms of the corresponding maps  $S \to C \times \C^*_y$.  

Let us write $C_O = C \times_{\mathbf T / W} \mathbf T_O / W$, and correspondingly $S_{O} = S \times_C C_O$. 
Since in  $\mathbf T_O$ we have in particular removed the double point locus, the map $S_O \to C_O$ is \'etale. 
Since the $W$ action on 
$\mathbf T_{O}$ is free, the following is immediate: 

\begin{lemma}
Fix a map $C_O \to \mathbf{T}_O/W$ and consider
the corresponding 
$S_{O} \to C_O \times \C^*_y$. 
Then there is a canonical bijection between: (a) 
    lifts to 
$C_O \to \MCB(\Gamma, \vec d)_O= (\mathbf T^\vee \times \mathbf T_{O})/ W$ and
(b) tuples of maps $S_{O} \to \C^*_u$, where the subscript $u$ in $\mathbf T_u^\vee$
\end{lemma}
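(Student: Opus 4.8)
The plan is to work entirely inside the model $\MCB(\Gamma,\vec d)_O = (\mathbf T^\vee \times \mathbf T_O)/W$ furnished by Theorem~\ref{trivialization-intro}, under which the projection $y$ to $\mathbf T_O/W$ is induced by the second projection $\mathbf T^\vee \times \mathbf T_O \to \mathbf T_O$. This projection is $W$-equivariant, and --- by the remark just above, that $W$ acts freely on $\mathbf T_O$ --- it exhibits $\MCB(\Gamma,\vec d)_O \to \mathbf T_O/W$ as the fiber bundle with fiber $\mathbf T^\vee$ associated to the principal $W$-bundle $\mathbf T_O \to \mathbf T_O/W$; that is, $\MCB(\Gamma,\vec d)_O = \mathbf T_O \times_W \mathbf T^\vee$. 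First I would pull this back along the given $C_O \to \mathbf T_O/W$: setting $\wt C_O := C_O \times_{\mathbf T_O/W}\mathbf T_O$, a principal $W$-bundle on $C_O$, and using that the pullback of an associated bundle is the associated bundle of the pulled-back torsor, lifts of $C_O \to \mathbf T_O/W$ to $\MCB(\Gamma,\vec d)_O$ are exactly sections of $\wt C_O \times_W \mathbf T^\vee \to C_O$, i.e. $W$-equivariant maps $\wt C_O \to \mathbf T^\vee$.

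Next I would decompose everything along the product structures $W = \prod_i S_{d_i}$ and $\mathbf T^\vee = \prod_i (\C^*_u)^{d_i}$, where $S_{d_i}$ permutes the coordinates $u_{i,\alpha}$ in the $i$-th block and acts trivially on the other blocks. A $W$-equivariant map $\wt C_O \to \mathbf T^\vee$ is the same as a tuple, indexed by the vertices $i$ of $\Gamma$, of $W$-equivariant maps $\wt C_O \to (\C^*_u)^{d_i}$; since $\prod_{j \ne i} S_{d_j}$ acts trivially on the target, each such map factors uniquely through the quotient $P_i := \wt C_O/\prod_{j\ne i} S_{d_j}$, a principal $S_{d_i}$-bundle on $C_O$. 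So a lift is the same as a tuple of $S_{d_i}$-equivariant maps $P_i \to (\C^*_u)^{d_i}$.

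The third and final step is to identify $P_i$, as an $S_{d_i}$-bundle on $C_O$, with the bundle of orderings $\mathrm{Isom}_{C_O}\!\big(\underline{\{1,\dots,d_i\}},\, S_{O,i}\big)$ of the fibers of the étale $d_i:1$ cover $S_{O,i} \to C_O$. Here the cylindrical-model dictionary enters: by construction $\wt C_O$ is the pullback of the ``total ordering'' cover of $\mathbf T_O \to \mathbf T_O/W \subset \prod_i \Sym^{d_i}\C^*_y$, and the quotient $P_i$ remembers precisely the ordering of the $i$-th block $\C^*_y$, which by the quoted Lemma of \cite{lipshitz2006cylindrical} (applied factorwise) is the ordering bundle of $S_{O,i}$. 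Granting this, an $S_{d_i}$-equivariant map $P_i \to (\C^*_u)^{d_i}$ is exactly a map $S_{O,i} = P_i \times_{S_{d_i}} \underline{\{1,\dots,d_i\}} \to \C^*_u$: send $f\colon S_{O,i}\to \C^*_u$ to the equivariant map $\varphi \mapsto (f(\varphi(1)),\dots,f(\varphi(d_i)))$, and recover $f$ by evaluation on the tautological orderings. Stringing these three bijections together --- none of which involves any auxiliary labelling --- gives the claimed canonical bijection between lifts $C_O \to \MCB(\Gamma,\vec d)_O$ and tuples of maps $S_{O,i} \to \C^*_u$.

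The main obstacle I anticipate is the third step: making precise, and checking that it is functorial in $C_O$, the identification of the pulled-back $W$-torsor coming from the BFN/Cartan trivialization of $\MCB(\Gamma,\vec d)_O$ (as in the proof of Theorem~\ref{trivialization-intro}, via \cite{BFN-slice}) with the product over $i$ of the ordering bundles of the $S_{O,i}$ coming from the cylindrical model of the preceding lemmas --- essentially a bookkeeping compatibility between two ways of tracking ``which $y$-coordinate is which''. The first two steps are then formal manipulations with associated bundles and with the product group $\prod_i S_{d_i}$. One should also keep in mind that ``lift'' means a map over $\mathbf T_O/W$, and that, because $S_O \to C_O$ is étale, all of the above makes sense verbatim in whichever category of maps (holomorphic, smooth, or continuous) is relevant.
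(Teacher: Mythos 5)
Your proposal is correct and is exactly the unwinding of the paper's argument: the paper deduces the lemma as ``immediate'' from the freeness of the $W$-action on $\mathbf T_O$ together with the trivialization $\MCB(\Gamma,\vec d)_O\cong(\mathbf T^\vee\times\mathbf T_O)/W$, which is precisely your associated-bundle/equivariant-maps formulation; your step 3 is the tautological bookkeeping identifying $\wt C_O$ (ordered $y$-coordinates) with the ordering torsors of the covers $S_{O,i}$, and poses no real obstacle. Nothing further is needed.
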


Consider now the case of some  $C \to \mathbf{T}/W = \prod_i Sym^{d_i} \C^*_y$ transverse to $(\mathbf{T} \setminus \mathbf{T}_O) / W$. 
We write $C_{(i)}$, $C_{[i] \to (i)}$, and $C_{(i) \to (j)}$ for preimages in $C$ of the corresponding hyperplanes.  (By assumption of transversality, these sets of points have multiplicity one and are all distinct.)  

Points in the first two classes each correspond to a single witness in $S$, we write the sets of such points as  $S_{(i)}$ and  $S_{[i] \to (i)}$.  Points in 
$C_{(i) \to (j)}$ correspond to certain pairs of points $(s_i, s_j)$ with the same image in $C$; we denote the set of such pairs $S_{(i) \to (j)} \subset S \times_C S$.

We establish the following result. 

\begin{theorem} \label{cylindrical model intro} (\ref{cylindrical model})
    Fix a map $C \to \mathbf{T}/W = \prod_i Sym^{d_i} \C^*_y$ transverse to $(\mathbf{T} \setminus \mathbf{T}_O) / W$, and consider
    the corresponding 
    $S \to C \times \C^*_y$.  
    Lifts to $C \to \MCB(\Gamma, \vec d)$ are in bijection with maps $\Phi_u: S \to \P^1_u$ satisfying the following conditions: 
\begin{enumerate}
\setcounter{enumi}{-1}
    \item $\Phi_u(S_O) \subset \C^*_u$. 
    \item \label{cylindrical model root condition} $\Phi_u$ has simple poles over points in $S_{(i)}$.
    \item \label{cylindrical model external matter condition} $\Phi_u$ has simple zeros over $S_{[i] \to (i)}$.
    \item \label{cylindrical model internal matter condition} For $(s_i, s_j) \in S \times_C S$, the function $\Phi_u(s_i) \Phi_u(s_j)$ has a simple zero at $(s_i,s_j) \in S_{(i) \to(j)}$. 
\end{enumerate}
\end{theorem}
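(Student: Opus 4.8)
The plan is to reduce to a local analysis near each point where the fixed map $C\to\mathbf T/W$ meets the discriminant. Since $\MCB(\Gamma,\vec d)_O\hookrightarrow\MCB(\Gamma,\vec d)$ is an open immersion and $\MCB(\Gamma,\vec d)$ is affine, hence separated, a lift $C\to\MCB(\Gamma,\vec d)$ is the same datum as a lift $C_O\to\MCB(\Gamma,\vec d)_O$ together with, for each point $p\in C\setminus C_O$, an extension across a neighbourhood of $p$, such an extension being unique when it exists. By the preceding Lemma, lifts over $C_O$ are exactly tuples $\Phi_u\colon S_O\to\C^*_u$, which gives condition~(0). It thus suffices to prove, for each $p$: (i) the $\Phi_u$ induced by a lift over $C$ extends meromorphically across the fibre of $S$ over $p$, with the zero or pole demanded by~(1),~(2) or~(3) according to whether $p$ lies over $H_{R,i}$, $H_{F,i}$, or an internal-matter hyperplane; and (ii) conversely, a $\Phi_u$ exhibiting that behaviour at $p$ makes the lift over $C_O$ extend across $p$.

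For the local analysis, transversality means that on a small disk about $p$ the map $C\to\mathbf T/W$ meets exactly one hyperplane of Definition~\ref{def:1.1}, transversally, at one point. In the coordinates of Theorem~\ref{trivialization-intro} and the Finkelberg--Tsymbaliuk expression for the $u$-variables recalled in Proposition~\ref{p:Pn-class} and Definition~\ref{d:u-variable}, every factor of $u_{i,\alpha}$ is invertible near $p$ except the one tied to the hyperplane through $p$; isolating it yields the divisor of $u_{i,\alpha}$ along $C$: a simple pole over $C_{(i)}$ (from a vector-multiplet factor in the denominator), a simple zero over $C_{[i]\to(i)}$ (from the factor $\prod_\beta(1-a_{i,\beta}/y_{i,\alpha})$), and over $C_{(i)\to(j)}$ factors arranged so that only the product $\Phi_u(s_i)\Phi_u(s_j)$ picks up a simple zero; transversality converts vanishing order along the divisor into order at $p$, which is~(i). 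For~(ii) one runs the same identity in reverse: the active local models are pure $GL(2)$ over $C_{(i)}$, $GL(1)$ with one fundamental of mass $a_{i,\beta}$ over $C_{[i]\to(i)}$, and $GL(1)\times GL(1)$ with one bifundamental over $C_{(i)\to(j)}$, and in each the monopole generators of $\MCB(\Gamma,\vec d)$ are explicit products of $u$-variables with these invertible and wall factors; the prescribed simple zero or pole of $\Phi_u$ is exactly what keeps all of those bounded near $p$, so the lift extends.

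Assembling the pieces: the forward map sends a lift to $\Phi_u:=(u_{i,\alpha}\circ\mathrm{lift})_{i,\alpha}$ on $S_O$, which extends meromorphically to $S$ (each $u_{i,\alpha}$ being a rational function on $\widetilde\MCB(\Gamma,\vec d)$) and satisfies~(0)--(3) by the previous paragraph; the backward map restricts a given $\Phi_u$ to $S_O$, applies the preceding Lemma, extends over each $p$ by step~(ii), and glues by separatedness; these are mutually inverse since a lift is determined by $\Phi_u|_{S_O}$ and extensions of lifts are unique. I expect the real difficulty to sit in the sufficiency half of~(ii): checking that, once the $u$-variables carry only the single simple zero or pole imposed by~(1)--(3), \emph{every} monopole generator of $\MCB(\Gamma,\vec d)$ --- for all dominant coweights, not just minuscule ones, and their products as produced by the Finkelberg--Tsymbaliuk formula --- remains bounded near $p$. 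A secondary point is to verify that the chamber-dependent normalization in Definition~\ref{d:u-variable} is what places the pole of~(1) and the zero of~(2) on the sheet named there, while~(3) is stated for the product $\Phi_u(s_i)\Phi_u(s_j)$ precisely because that is the combination independent of this choice.
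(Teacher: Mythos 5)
Your overall strategy---restrict to $C_O$, use the preceding lemma to get condition (0), and then analyze each point $p$ over a hyperplane locally, reading the zero/pole orders of the $u$-variables off Definition \ref{d:u-variable}---is the same skeleton as the paper's proof, and your forward direction (i) is essentially correct. But the sufficiency half of your step (ii) is a genuine gap, and you have flagged it yourself: to show that a $\Phi_u$ with the prescribed simple zero or pole makes the lift extend across $p$, you propose to check that ``every monopole generator of $\MCB(\Gamma,\vec d)$ remains bounded near $p$.'' This is not workable as stated: the coordinate ring of $\MCB(\Gamma,\vec d)$ is not generated by operators with formulas of the shape you describe (even the minuscule monopole operators are Weyl-orbit \emph{sums} with denominators, per Proposition \ref{p:abelianize}, not single products of $u$-variables and wall factors, and for non-minuscule coweights and dressed operators no such closed formulas are on the table), so ``running the identity in reverse'' does not by itself yield regularity of all functions on the Coulomb branch along the lift.

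The missing ingredient is the local structure theorem for Coulomb branches, Proposition \ref{p:local-geom} (from \cite[Section 6]{BFN}): over a neighborhood $U(\wt H^o)$ of a punctured hyperplane, $\wt\MCB(G,N)$ is \emph{isomorphic} to the Coulomb branch of a drastically simplified theory --- pure $G_\alpha\cong GL_2$ over a root hyperplane, $T$ with the single weight space $N_w$ over a matter hyperplane. The paper combines this with the observation that the $u$-coordinates of the original quiver and of the simplified quiver differ by a function that is regular and invertible on $U(\wt H^o)$, so the zero/pole conditions are unchanged; the entire lifting problem near $p$ (both existence and uniqueness, in both directions) is thereby transported to the explicit two-dimensional models of Examples \ref{ex:pure-abelian}, \ref{ex:(1)-(1)}, \ref{ex:[1]-(1)}, \ref{ex:gl2}, e.g. $\{uv=1-1/y\}$ and $\{uvz=1-y_2/y_1\}$, where the extension criterion is immediate from the defining equations. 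You gesture at these same three local models in (ii), but without invoking the isomorphism of Coulomb branches you are left needing to control the full coordinate ring of the big space, which is exactly the difficulty you admit you cannot resolve. With Proposition \ref{p:local-geom} inserted at that point, your argument closes and coincides with the paper's; without it, the converse direction is unproved.
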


The basic point in the proof is that, according to \cite[Section 6]{BFN} the geometry of the Coulomb branch over the various hyperplanes are given by a trivial factor times a Coulomb branch   with total dimension $2$.  The `$u$' variables transform conveniently along this reduction, and so the theorem reduces to the case of such quivers (there are three), where it is elementary. 

\begin{remark}
    Our approach is informed and inspired 
    by some recent uses of cylindrical model approaches to compute (or define) Fukaya categories of symmetric powers and related spaces of higher dimensional symplectic varieties \cite{Colin-Honda-Tian, Honda-Tian-Yuan, Mak-Smith}.
\end{remark}

\subsection{Fukaya category calculations}
\label{s:intro-Fuk}

The trivialization \ref{trivialization-intro} lets us explicitly describe certain special Lagrangians ${\cal T}_{\theta}$ in $\MCB(\Gamma, \vec d)$. 
We write $\mathbf{T} =  \mathbf{R} \times  \mathbf{\Theta}$,  for the coordinate-wise splitting of the corresponding complex numbers into modulus and argument, $y = e^{\rho} e^{i \theta}$, and similarly for the dual torus.  

We define
$$ \bfTh_O := \{ \theta \in \bfTh \mid \bfR \times \theta \In \bfT_O \}.$$
Then $\bfW$ acts on $\bfTh_O$ freely. 

For any $\theta \in \bfTh_O$, we define
$$\widetilde T_\theta := \mathbf{R} \times \theta  \subset \mathbf{T}_{O} \qquad \qquad  \widetilde{\mathcal{T}}_\theta :=  (\mathbf{R}^\vee \times 0)  \times  (\mathbf{R} \times \theta)  \subset \mathbf{T}^\vee \times \mathbf{T}_{O}$$
Let $T_\theta \subset \bfT_O/W$ and $\mathcal{T}_\theta \subset (\bfT^\vee \times \bfT_O)/W$ to be the
images of these Lagrangians under the free $\bfW$ quotient. 
We use the same name of the Lagrangians in the (unpunctured) upstairs and downstairs spaces $T_\theta \in \bfT/\bfW$ and $\T_\theta \in \MCB(\Gamma, \vec d)$; these
remain closed submanifolds. 
Note that the $\tcal_\theta$  are Lagrangians for the natural product K\"ahler form on $\mathbf{T}^\vee 
\times \mathbf{T}_{O}$, not for the holomorphic symplectic form.

We can record all the data relevant to ${\cal T}_{\theta}$ in terms of configurations of red and black points on a circle.
Write $[[a]]$ for a circle decorated by red points, with one red point labelled $i$ at each $\arg(a_{i, \alpha})$.
Write $((\theta))$
for a circle which is decorated by black points in addition to red, with 
one black point labeled $i$ at each $\theta_{i, \alpha}$.  



There is an evident correspondence of objects: 

$$\mathcal{T}_\theta \in Fuk(\MCB(\Gamma, \vec d), \mathcal{W}) \longleftrightarrow ((\theta)) \in \mathcal{C}. $$
We will prove: 
\begin{theorem} \label{main theorem}
    Assume $\MCB(\Gamma, \vec d) $ is smooth.  (Smoothness is known when $\Gamma$ is of ADE type \cite{BFN-slice}.) 
    There is an embedding
    \begin{eqnarray*}
    KLRW & \hookrightarrow &
    \Fuk(\MCB(\Gamma, \vec d) , \mathcal{W}) \\
    ((\theta)) & \mapsto & \mathcal{T}_\theta
    \end{eqnarray*}
\end{theorem}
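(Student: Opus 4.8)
The plan is to build the functor strand-diagram by strand-diagram on objects $((\theta))$ and then verify the KLRW relations in Figure \ref{fig:KLRW} hold on the Floer side, using the cylindrical model of Theorem \ref{cylindrical model intro} as the main computational engine. First I would set up the generators: a KLRW morphism from $((\theta))$ to $((\theta'))$ is a strand diagram in the cylinder $S^1 \times [0,1]$, and such a diagram is naturally a Lagrangian isotopy class of path from $\mathcal{T}_\theta$ to $\mathcal{T}_{\theta'}$ through the $\mathcal{T}$'s, i.e.\ an element of the wrapped/Fukaya-Seidel morphism space $\Hom(\mathcal{T}_\theta, \mathcal{T}_{\theta'})$ realized as a sum over intersection points of (a Hamiltonian pushoff of) $\mathcal{T}_{\theta'}$ with $\mathcal{T}_\theta$. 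Crucially, because each $\mathcal{T}_\theta$ is a product torus $(\mathbf R^\vee \times 0) \times (\mathbf R \times \theta)$ modulo the free $\bfW$-action, intersection points are indexed exactly by crossing patterns of the black points, with the `dots' corresponding to the $\mathbf{T}^\vee$-winding; this matches the basis of the KLRW Hom-space. I would define the functor on these generating intersection points and extend $A_\infty$-linearly.

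The heart of the proof is computing the relevant $A_\infty$ products — really just $\mu^2$ (composition) and the superpotential-twisted differential $\mu^1$ — and matching them with the relations (a)--(g) of Figure \ref{fig:KLRW}. Here I would invoke the cylindrical model: a holomorphic disk contributing to $\mu^2$ projects to $\prod_i Sym^{d_i}\C^*_y$ and lifts, by Theorem \ref{cylindrical model intro}, to a branched cover $S \to C$ together with a map $\Phi_u: S \to \P^1_u$ with prescribed poles over $S_{(i)}$ (root hyperplanes), zeros over $S_{[i]\to(i)}$ (framing), and the factorized zero condition over $S_{(i)\to(j)}$ (arrows). The superpotential $\mathcal{W} = \sum u_{i,\alpha}$ of Theorem \ref{trivialization-intro} then weights each disk by $e^{-\mathcal{W}}$ evaluated at the critical-point boundary data, and the count of such $\Phi_u$ on a fixed domain curve is a classical, essentially Riemann--Roch / Abel--Jacobi, computation. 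The key point is exactly the one flagged after Theorem \ref{cylindrical model intro}: the Coulomb branch over each codimension-one hyperplane degenerates to (trivial factor) $\times$ (a rank-$\le 2$ Coulomb branch), so every relation in Figure \ref{fig:KLRW} is local on the cylinder and reduces to one of the three elementary quivers — at which point it is a finite check that the signs, the $\eta$'s, and the $\hbar$ (the Floer-theoretic/Novikov or equivariant parameter) come out as drawn, including the extra $(-1)$ for the reversed arrow orientation noted in the figure caption.

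After the relations are verified one has a functor $\mathcal{C}^{cyl}_{\Gamma,\vec d, F} \to \Fuk(\MCB(\Gamma,\vec d),\mathcal{W})$; to upgrade it to an embedding I would show it is faithful, i.e.\ injective on each Hom-space. Since both sides are free modules over the polynomial/dot algebra with bases indexed by the same crossing diagrams (black-point braid words), and the functor sends basis to basis up to unitriangular change of coordinates with respect to the length filtration on crossings, faithfulness follows from this matching of bases together with a degree/weight-filtration argument; the smoothness hypothesis on $\MCB(\Gamma,\vec d)$ is what guarantees the Fukaya-Seidel category is well-behaved (e.g.\ the $\mathcal{T}_\theta$ are honest objects and the Hom-spaces have the expected size) so that no collapsing occurs.

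The main obstacle I expect is the holomorphic-disk count itself — specifically, controlling \emph{compactness and transversality} for the cylindrical-model moduli spaces in a space that is only the smooth locus of a (possibly singular-at-infinity) affine variety with a nontrivial superpotential: one must rule out disk bubbling into the removed hyperplanes $\mathbf T \setminus \mathbf T_O$ and show the $\Phi_u$-count is unobstructed, so that the reduction to the three rank-$2$ quivers is legitimate. Relatedly, pinning down the correct grading and sign conventions (orientations of the moduli of branched covers, the Maslov index in the product-but-not-holomorphic-symplectic Kähler form) so that the $\eta,\hbar,\pm1$ in Figure \ref{fig:KLRW} match on the nose will require care; I would handle the signs by a direct local computation in the three basic cases rather than a general orientation formalism.
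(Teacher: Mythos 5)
Your overall frame---intersection points of wrapped $\T_\theta$'s indexed by weighted strand diagrams, the cylindrical model to reduce disk counts to the rank-two local quiver models, and a crossing filtration for injectivity---does match the paper's strategy. But the central difficulty of the theorem is not addressed. Once the root divisor is present (i.e.\ once $\hbar \neq 0$), the wrapped Hom spaces are \emph{not} free on taut diagrams: the continuation maps defining $HW(\T_{c_1},\T_{c_2})$ no longer preserve the diagram basis, and correspondingly $\Hom_{KLRW}(c_1,c_2)$ is no longer isomorphic to $\C\pcal(c_1,c_2)$ (the paper flags exactly this failure of Equation \eqref{punctured functoriality via taut strands}). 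Hence ``define the functor on the generating intersection points and extend'' does not yet define anything: an intersection point of a fixed wrapping only gives a well-defined class in the colimit $HW$ if one shows it is preserved by continuation. The paper deals with this by a staged argument (first $(\bfT_O\times\bfT^\vee)/\bfW$ with $\eta=\hbar=0$, then only the root divisor removed, then the full Coulomb branch), by introducing the crossing-number filtration (Proposition \ref{p:resolve-crossing}, Corollary \ref{c:GrHW}) so that only the \emph{associated graded} recovers the taut-diagram basis, by defining the functor only on \emph{robust} elementary diagrams (which are continuation-invariant for filtration/grading reasons), and by closing the argument with the recognition principles (Propositions \ref{p: KLRW h=0 recog} and \ref{p: KLRW recog}, Lemma \ref{lm: filtered isom}). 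Your ``unitriangular with respect to the crossing filtration'' remark is the germ of this, but you deploy it only for faithfulness, whereas it is needed even to construct the functor.

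A second gap: you propose to verify \emph{all} relations of Figure \ref{fig:KLRW} by direct disk counts in local models, including the braid relations (d) and (e). The paper never counts those disks: already the two-strand dot-pass-crossing relation requires an analysis of a five-real-dimensional moduli of six-marked-point disks with a marked branch point, its maps to $\D\times\C^*_y\times\P^1_u$, boundary degenerations, and an intersection-number computation inside the hexagon $\mcal_{\S\to\D}$ (Section \ref{s: klrw from disk 2}); the braid relations involve three strands and would be strictly harder, which is why the paper instead deduces them algebraically by pre-composing with non-zero-divisor crossings (Lemma \ref{lm: short-cut}). Your ``finite check in the three elementary quivers'' covers the bigon-type relations but does not obviously cover the braid relations, and you give no substitute for the short-cut lemma. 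Two smaller inaccuracies: $\mathcal{W}$ enters as a stop defining the partially wrapped (Fukaya--Seidel) category, not as an $e^{-\mathcal{W}}$ weighting of disks; and $\eta,\hbar$ are not Novikov or equivariant parameters but formal variables recording a disk's intersection numbers with the matter and root divisors, respectively.
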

The proof proceeds by using the cylindrical model of Theorem \ref{cylindrical model intro} to reduce all calculations to disk counts in a cylinder, which we then carry out explicitly.  The proof occupies the entirety of the final four sections of this article.

\begin{conjecture} \label{generation}
    The collection $\mathcal{T}_\theta$ generate $\Fuk(\MCB(\Gamma, \vec d) , \mathcal{W})$.
\end{conjecture}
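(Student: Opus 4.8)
The plan is to prove that the $\tcal_\theta$ split-generate $\Fuk(\MCB(\Gamma, \vec d), \mathcal{W})$ by exhibiting them as images, under homological mirror symmetry, of a known generating set on the B-side, or alternatively by a direct ``enough Lagrangians'' / Abouzaid-type generation criterion. First I would recall that, by the main theorem, the full subcategory on the $\tcal_\theta$ is equivalent to the cylindrical KLRW category $\mathcal{C}^{cyl}_{\Gamma, \vec d, F}$. On the representation-theoretic side this category is known (by work of Webster and Webster--Losev, cf.\ \cite{webster2022coherent}) to be formal and to have a module category equivalent to a block of category $\mathcal{O}$ for a cyclotomic/cylindrical quiver variety; in particular its derived category of modules is compactly generated by the projectives corresponding to the objects $((\theta))$. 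So the statement amounts to: every object of $\Fuk(\MCB(\Gamma, \vec d), \mathcal{W})$ is in the thick subcategory generated by the $\tcal_\theta$.

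The key steps, in order. (1) Use the fibration $y : \MCB(\Gamma, \vec d) \to \mathbf{T}/W = \prod_i \Sym^{d_i}\C^*_y$ together with the cylindrical model of Theorem \ref{cylindrical model intro} to understand the Fukaya category in terms of the base $\Sym^{d_i}\C^*_y$ and the superpotential $\mathcal{W}$; the base is (a symmetric product of) a cylinder, whose wrapped/Fukaya-Seidel category is generated by a finite collection of arcs/cocores, and the $T_\theta$ are exactly the lifts of products of such arcs. (2) Show that the vanishing cycles of $\mathcal{W}$ (or, in the Fukaya--Seidel picture, a distinguished basis of Lefschetz thimbles associated to a Morsification of $\mathcal{W}$) are all isotopic to, or built from, the $\tcal_\theta$: this is where the explicit form $\mathcal{W} = \sum u_{i,\alpha}$ in the trivializing coordinates is essential, since it makes the critical locus and the gradient flow tractable fibrewise over $\mathbf{T}_O$. (3) Invoke the generation theorem for Fukaya--Seidel categories by Lefschetz thimbles (Seidel, and in the non-exact/partially wrapped setting the results of Ganatra--Pardon--Shende or Giroux--Pardon), concluding that the thimbles generate; combined with (2), the $\tcal_\theta$ generate. (4) Handle the subtlety that $\MCB(\Gamma, \vec d)$ is only partially compactified and $\mathcal{W}$ need not be proper: here I would either restrict to the ADE/smooth case where the relevant finiteness holds, or pass to a suitable partially wrapped model and check that the stops are also accounted for by red-point data, i.e.\ by the Lagrangians $[[a]]$.

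Alternatively, and perhaps more robustly, one can argue on the B-side: conjecturally $\Fuk(\MCB(\Gamma, \vec d), \mathcal{W})$ is equivalent to a category of (equivariant, matrix-factorization or coherent-sheaf) modules over the multiplicative quiver variety / Coulomb branch, for which the images of the $\tcal_\theta$ are a set of generators by standard arguments (the corresponding objects generate because their supports cover and they detect all simples). But since that mirror equivalence is not available a priori, the self-contained route is via thimbles. The main obstacle I expect is step (2) together with (4): controlling the global behavior of $\mathcal{W}$ away from $\mathbf{T}_O$ — i.e.\ showing that no ``extra'' generators hide over the discriminant hyperplanes $H_R, H_I, H_F$ and at the non-compact ends — and verifying that the naive count of thimbles matches the number of objects $((\theta))$ (equivalently, that $\mathcal{W}$ has the expected, isolated-or-clean critical locus even when $\Gamma$ is not minuscule, a point flagged in the commented-out remark about $D_4$). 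Establishing a clean Picard--Lefschetz description of $\mathcal{W}$ there, and matching the resulting vanishing cycles with the black/red point configurations, is the crux.
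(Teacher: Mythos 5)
There is a fundamental problem: the statement you are trying to prove is stated in the paper as Conjecture \ref{generation} and is left open; the paper proves only the embedding (Theorem \ref{main theorem}) and explicitly notes that establishing essential surjectivity ``would amount to verifying Conjecture \ref{generation}.'' More importantly, the core of your strategy --- steps (2) and (3), a Picard--Lefschetz/thimble analysis of $\mathcal{W}$ --- fails for a concrete reason that the paper itself flags in the remark immediately following the conjecture: in the trivializing coordinates $\mathcal{W} = \sum_{i,\alpha} u_{i,\alpha}$ extends to a regular function on $\MCB(\Gamma, \vec d)$ with \emph{no critical points whatsoever}, the model case being $\mathcal{W} \colon \C^* \to \C$, $u \mapsto u$. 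So there is no Morsification, no vanishing cycles, and no distinguished basis of Lefschetz thimbles for $\mathcal{W}$ to match against the $\tcal_\theta$; moreover the hypotheses of the thimble-generation theorems you invoke (Seidel, Ganatra--Pardon--Shende) are violated because $\mathcal{W}$ is not a fibration at infinity in the fibers. The paper also notes that the multivalued equivariant potential $\mathcal{W}_{eq}$ of \eqref{equivariant superpotential} does have critical points, but its thimbles include only some, not all, of the $\mathcal{T}_\theta$, and there are at present no generation-by-thimbles results for multivalued superpotentials --- so that variant of your argument does not close the gap either.

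Your alternative B-side route is circular: the mirror equivalence of $\Fuk(\MCB(\Gamma, \vec d), \mathcal{W})$ with coherent sheaves on the additive Coulomb branch is, in the paper, exactly the embedding \eqref{HMS} obtained from Theorem \ref{main theorem} plus Webster's computation, and upgrading it to an equivalence is precisely what Conjecture \ref{generation} would supply; you cannot assume it to deduce generation. Your step (1), reducing to the symmetric product of cylinders via the cylindrical model, is closer in spirit to what one might hope works (e.g.\ comparing with generation statements for Fukaya categories of symmetric products, or a stop-removal/Künneth style argument over $\mathbf{T}/W$), but as written it does not confront the genuine difficulty you yourself identify at the end: controlling the category over the discriminant hyperplanes and at infinity without any Lefschetz-type structure to lean on. As it stands, the proposal does not prove the statement, and no proof exists in the paper.
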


\begin{remark}
The reader might wonder whether the problem of generation might be approached by using the well known fact that, for a Lefschetz fibration in the sense of Seidel, the thimbles are known to generate the Fukaya-Seidel category (see e.g. \cite{Seidel-book, Biran-Cornea-cone, GPS2}).  We caution that this result does not apply here because $\mathcal{W}$ is not a `fibration at infinity' in the fibers.  In fact, our $\mathcal{W}$ has no critical points whatsoever, similar to the situation $\mathcal{W}: \C^* \to \C$ given by the evident embedding.  (The analogue of $\mathcal{T}_\theta$ in that example is $\R e^{i \theta}$, which does generate.)

There is a (multi-valued)  `equivariant' potential $\mathcal{W}_{eq}$ for $\MCB(\Gamma, \vec d)$ (see \eqref{equivariant superpotential} below), which does have critical points, and
whose thimbles include some, but not generally all, of the 
$\mathcal{T}_\theta$. However, at present, there are no available results guaranteeing generation by thimbles of such multi-valued superpotentials. 
\end{remark}

\subsection{Gradings} \label{intro gradings}

There is a long history of attempts to find a Floer theoretic realization of categorified quantum group link invariants, pioneered by Seidel and Smith \cite{SS} and pursued in \cite{abouzaid-smith:khovanov, manolescu, manolescu2, Mak-Smith} and elsewhere.  One virtue of such descriptions was that the categorical braid group action had a simple geometric description.  However, the corresponding categories were invariably singly graded: they had the homological grading, but lacked the grading associated to the `$q$' variable of the Jones polynomial. 

In general, if $Y$ is affine and $H$ is a divisor, then the function cutting out $H$ determines a map $Y \setminus H \to \C^*$.  Such a map in particular determines a $H^1$ class (by pulling back from $\C^*$).  We may use this $H^1$-class to define a $\Z$-gradings on the hom space $End(L)$, where $L$ is any contractible Lagrangian in $Y \RM H$. 

In our setup, we can obtain the desired gradings in this manner.  The relevant divisor arises as follows.    Our space $\MCB(\mathbf{G}, \mathbf{N})$ arises from the multiplicative Coulomb branch $\MCB(\mathbf{G}, \mathbf{N} \oplus \mathbf{N}_F)$ by removing the divisor $\{r_{\vec 1, \cdots, \vec 1}=0\}$.  The resulting $\Z$-grading matches the KLRW $\Z$-grading, where crossing of black strands of the same label contribute $-1$ and dot contribute $1$. See Section \ref{s:grading}.

Let us note that the target in \cite{SS} was shown in  \cite{manolescu} to be a certain quiver variety, now known to be isomorphic to $\ACB(\mathbf{G}, \mathbf{N} \oplus \mathbf{N}_F)$.  Our space differs both by being the multiplicative, instead of additive, version (this is related to the fact that we find the {\em cylindrical} KLRW) and insofar as we have deleted a divisor.

We refer to \cite{ aganagic-knot-1, aganagic-knot-2, aganagic-icm} and especially \cite{ALR} for the realization in terms of $Fuk(\MCB(\Gamma, \vec d) , \mathcal{W})$ of the relation between the KLRW category and the homological knot invariants.  In \cite{ALR}  it is explained how to explicitly compute the braid group action on in terms of ${\cal T}_{\theta}$ branes as well as link homology groups, as spaces of morphisms between objects of the cylindrical KLRW category.  We note in particular that the cylindrical KLRW category gives rise to the (richer) invariants of knots in $S^1  \times \R^2$, of which knots in $\R^3$ are a special case.

\subsection{Homological mirror symmetry}

It was conjectured in \cite{aganagic-knot-2} that $Fuk(\MCB(\Gamma, \vec d), \mathcal{W})$ is equvialent to the category of coherent sheaves on the (resolved) additive Coulomb branch $\mcal^+_C(\mathbf{G}, \mathbf{N} \oplus \mathbf{N}_F)_\chi$ of a 3d $N=4$ theory based on the same quiver, where $\chi$ is a resolution parameter.

For a general class of conical holomorphic symplectic varieties, including in particular the additive Coulomb branches, Bezrukavnikov and Kaledin \cite{Bezrukavnikov-Kaledin} constructed a tilting vector bundle ${\cal T}$ \cite{kaledin2008derived}.  For the case of interest here, Webster computed the endomorphism algebra of this bundle in \cite{webster2019coherent, webster2022coherent}, and showed that it coincides with the cylindrical KLRW algebra.

Combining this result of Webster with Thm. \ref{main theorem}, we have: 

\begin{equation}\label{HMS} Coh(\mcal^+_C(\mathbf{G}, \mathbf{N} \oplus \mathbf{N}_F)_\chi) = \text{KLRW-Perf} \hookrightarrow  Fuk(\MCB(\Gamma, \vec d) , \mathcal{W})
\end{equation}

Establishing surjectivity would amount to verifying Conjecture \ref{generation}. 

Let us turn to the question of how the gradings  discussed in Section \ref{intro gradings} interact with mirror symmetry.  An `equivariant' extension of $\mathcal{W}$ was proposed in 
 \cite[Appendix B.2.3]{aganagic-knot-2}, it took the form: 
\begin{equation} \label{equivariant superpotential} \wcal_{eq} = \wcal + \sum_{i \in \Gamma} \Lambda_i \log f_i + \Lambda_0 \log f_0. 
\end{equation}

Let us recall the relation of equivariance and log terms in the superpotential.  It has long been known that considering torus-equivariant Gromov-Witten invariants on one side of mirror symmetry corresponds to adding log terms to the mirror superpotential on the other \cite{Givental-mirror}.
However, here we are concerned instead with the other direction of mirror symmetry: how a torus action in the B-model is reflected the A-model.

The prescription we follow is the following.  First, delete the locus where the argument of the logarithms in the equivariant superpotential $\mathcal{W}_{eq}$ vanish. Then, form the pull-back square
$$ \begin{tikzcd}
  \wt \ycal \ar[r]\ar[d] & \ycal \ar[d]  \\
  \C^r \ar[r, "exp"] & (\C^*)^r
\end{tikzcd}
$$
where $r$ is the number of log terms. Now $\wcal_{eq}$ is a well-defined function on $\wt \ycal$, and the action by deck transformation induces a $\Z^r$ action on the Fukaya-Seidel category of $(\wt \ycal, \wcal)$.
The general expectation is that this $\Z^r$ action should be mirror to the action by tensor product of the characters of the mirror $(\C^*)^r$ on the  equivariant coherent sheaves category of the mirror \cite[Rem. 4.6]{teleman2014gauge}.  (The case of toric varieties is discussed in \cite{shende2021toric}.) 

In the present case, tracking the $\Z^r$ action across the mirror symmetry \eqref{HMS} amounts to checking that the grading on KLRW  is recovered on the $Coh$ side via a torus action on  $\mcal^+_C(\mathbf{G}, \mathbf{N} \oplus \mathbf{N}_F)_\chi$.  This was established in  \cite{aganagic-knot-1, webster2022coherent}.

\begin{remark}
    In the abelian case (all $d_i=1$), 
a related mirror symmetry result (after removing the superpotential on the A-model and replacing the B-model with the multiplicative version) was established by Gammage, McBreen, and Webster \cite{mcbreen2018homological, gammage2019homological} by entirely different methods;  namely, calculating the skeleton and microsheaves in the sense of \cite{nadler-shende} and then appealing to \cite{GPS3}.
\end{remark}

\begin{remark}
In the abelian setting, there is a Hamiltonian torus action on the target on the $A$ side.  Using the results of the aforementioned \cite{gammage2019homological}, it was shown in \cite{McBreen-Shende-Zhou} that `reduction commutes with Fukaya category', which in turn amounted to a mirror symmetry between the A-model on $\mathbf{T}_O$, and the B-model on the core of a certain conical Lagrangian inside the original mirror dual multiplicative Coulomb branch.  We expect a similar phenomenon in general, although on the A-side, rather than simply $\mathbf{T}_O$, we expect a more general ``Fukaya category with coefficients in a schober on $\mathbf{T}$''.
\end{remark}

\begin{remark}
Another mirror symmetry result in the context of Coulomb branches is Jin's study  \cite{jin2022homological}, of a Fukaya category of $\ACB(G, 0)$, where the symplectic form is taken to be the real part of the holomorphic symplectic form $\Omega$; she showed an equivalence with  $\Coh(T^\vee / W)$.  At present, we do not entirely understand the relationship with the work in this article.  
\end{remark}

\begin{remark}
Cautis and Kamnitzer \cite{cautis-kamnitzer-1, cautis-kamnitzer-2} previously studied coherent sheaves on  moduli spaces of Hecke modifications, to obtain invariants of links in ${\mathbb R}^3$. The spaces of Hecke modifications are (compact) convolution products ${\rm Gr}^{\vec \mu}$ of orbits in the affine Grassmannian ${\rm Gr} = {\rm Gr}_{G_{\Gamma}}$
 of the quiver group $G_{\Gamma}$. In this setting, the desired $q$ grading also arises from a torus action. 
 Our space is a transversal slices inside ${\rm Gr}^{\vec \mu}$ to another orbit ${\rm Gr}_{\nu}$, whenever a ${\vec \mu}$ and $\nu$ satisfy a certain dominance condition \cite{BFN}.
 \end{remark}

\subsection{Acknowledgements}
We would like to thank Mohammed Abouzaid, Ivan Smith, Ben Webster, Xin Jin for many insightful discussion.  We would like to especially thank Cheuk-Yu Mak, Yin Tian and Tianyu Yuan for explaining the work in \cite{Mak-Smith} and \cite{Honda-Tian-Yuan}. The main disk counting are done with the generous help of Tianyu Yuan. 

M.A. is supported, in part, by the NSF foundation grant PHY2112880 and by the Simons Investigator Award.
V.S. is supported by Novo Nordisk Foundation grant NNF20OC0066298, Villum Fonden Villum Investigator grant 37814, and Danish National Research Foundation grant DNRF157.

\section{Review of 3d N=4 Coulomb branches} \label{s:general-CB}
Let $G$ be a complex reductive group and $N$ a finite dimensional representation.  To this data, physicists associate a 3d N=4 gauge theory, which has certain associated moduli spaces called the ``Higgs branch'' and the ``Coulomb branch''. 

A direct mathematical definition of  Coulomb branches was given in \cite{BFN}, who describe them as algebraic varieties.  There are two versions: additive ($\ACB(G,N)$) and multiplicative ($\MCB(G,N)$).  Each carries a map: 
$$ \mu^+: \ACB(G,N) \to \tfrak/W, \quad  \mu^\times: \MCB(G,N) \to T/W. $$

The main results of the present article concern Floer theory in multiplicative Coulomb branches.  The purpose of this section is to recall from the literature various general results regarding the geometric properties of these spaces.  We focus on the multiplicative version.

\subsection{Definition}

Fix a connected complex reductive group $G$ and a finite dimensional representation $N$ of $G$. We assume $\pi_1(G)$ has no torsion. Let $T\In B \In G$,  $B$ a Borel subgroup and $T$ a maximal torus. Let $W$ be the Weyl group. Let $X_*(T) = \Hom(\C^*, T)$ be the coweight lattice. Let $\K = \C((t)), \O=\C[[t]]$. 
Let $Gr_G = G(\K) / G(\O)$ be the affine Grassmannian. For any $\lambda \in X_*(T)$, we have the $G(\O)$ orbit $Gr_G^\lambda = G(\O) t^\lambda G(\O)/G(\O) \In Gr_G$.

Let $\bD = \Spec \O$ denote the formal disk, $\bD^o = \Spec \K$ the punctured formal disk, and $\B = \bD \cup_{\bD^o} \bD$ the `bubble' or the `raviolli'.

We have the following moduli stacks of $G$-bundles
$$ \Bun_G(\bD) = \pt /  \GO, \quad \Bun_G(\bD^o) = \pt /  \GK, \quad \Bun_G(\B) = \Bun_G(\bD) \times_{\Bun_G(\bD^o)} \Bun_G(\bD). $$
where the map $\Bun_G(\bD) \to\Bun_G(\bD^o)$ is the restriction of $G$-bundles. 
Let $\Bun_{G,N}$ denote the moduli stack of $G$-bundles together with a section in the associated $N$-bundle, then
$$ \Bun_{G,N}(\bD) = N(\O) /  \GO, \quad \Bun_{G,N}(\bD^o) = N(\K) /  \GK, \quad \Bun_{G,N}(\B) = \Bun_{G,N}(\bD) \times_{\Bun_{G,N}(\bD^o)} \Bun_{G,N}(\bD). $$

Define $\B_{123}:= \bD_1 \cup_{\bD^o} \bD_2 \cup_{\bD^o} \bD_3$ and $\B_{ij} = \bD_i \cup_{\bD^o} \bD_j$ for $1=i<j=3$. We have restriction maps $p_{ij}: \Bun_{G,N}(\B_{123}) \to \Bun_{G,N}(\B_{ij})$.  In terms of these, one would expect a convolution product on 
$$\acal^\times (G,N) \,\, \mbox{`:='} \,\,  K(\Bun_{G,N}(\B)) = K \left( \frac{N(\O)}{G(\O)} \bigtimes_{\frac{N(\K)}{G(\K)}} \frac{N(\O)}{G(\O)} \right)$$

We put the quotes because the infinite nature of the spaces involved (and the state of present technology) lead to some problems in defining the RHS directly.  Instead, \cite{BFN}  rewrite
$$ \frac{N(\O)}{G(\O)} \bigtimes_{\frac{N(\K)}{G(\K)}} \frac{N(\O)}{G(\O)} 
\cong 
\frac{\frac{G(\K)\times N(\O)}{G(\O)} \underset{N(\K)}{\times}  
\frac{G(\K)\times N(\O)}{G(\O)}}{G(\K)} 
\cong 
\frac{N(\O) \underset{N(\K)}{\times}  \frac{G(\K)\times N(\O)}{G(\O)}}
{G(\O)} $$
and introduce 
\begin{equation}
    \label{e:RGN-TGN}
    T_{G,N} :=  \frac{G(\K)\times N(\O)}{G(\O)}, \quad R_{G,N} = 
N(\O) \underset{N(\K)}{\times}  T_{G,N}
\end{equation} 

A main result of \cite{BFN} is that 
$K^{G(\O)}(R_{G,N})$ indeed admits a natural
structure of convolution algebra, which
is moreover commutative.  They then define:

\begin{definition}[\cite{BFN}] \label{d:coulomb}
The affine multiplicative Coulomb branch algebra and spaces are defined as
    $$ \acal^\times(G,N) := K^{G(\O)}(R_{G,N}), \quad \MCB(G, N) := \Spec  K^{G(\O)}(R_{G,N})$$
\end{definition}

\begin{remark}
For the discussion of equivariant K-theory and BM homology, we can replace $G(\O)$-equivariance with $G$-equivariance. See \cite{BFM} Lemma 6.2 and the comment above. 
\end{remark}

Using $R_{G,N} \to pt$, we get a pullback map of algebra $K^{\GO}(pt) \to K^{\GO}(R_{G,N})$. Since $\Spec K^{\GO}(pt) = T/W$, we get a map of spaces
$$ \mu: \MCB(G,N) \to T/W. $$


Let $\pi_{R}: R_{G, N} \to Gr_{G}$ be the projection map. If $\lambda$ is a minuscule coweight of $G$, then the $G(\ocal)$ orbit $Gr_G^\lambda$ is closed, and the preimage $R^\lambda_{G,N}:=\pi_R^{-1}(Gr_G^\lambda)$ is closed. 

\begin{definition} \label{monopole operators}
    For any  minuscule coweight $\lambda$ of $G$,
    then we write $r_\lambda \in  \acal^\times(G,N)$ for the equivariant K-theory class of the structure sheaf on  $R^\lambda_{G,N}$.   
    More generally, if $\cal V$ is a $G(\O)$-equivariant vector bundle over $Gr^\lambda$, then we write $r_{\lambda, \cal V} \in \acal^\times(G,N)$ for the K-theory class of the pullback bundle $\pi_R^* \cal V$ on $R^\lambda_{G,N}$. 
\end{definition}

\subsection{Abelian case and ``pure gauge theory'' case}

The simplest example is when $G = T = (\C^*)^k$ and $N = 0$. In this case, we get
$$ \MCB(T, 0) = T^\vee \times T $$
and the map $\mu$ is simply the projection $\mu: T^\vee \times T \to T$. 

\begin{proposition} \label{prop abelian coulomb branch coordinates} \cite[Prop 4.1]{BFN}
Let $T$ be a torus, and $N$ be its representation with weights $\xi_1, \cdots, \xi_n$. For any two integers $k,l$, let 
$$ d(k,l) = \begin{cases}
  0 & \text{if $k$ and $l$ have the same sign} \\
  \min(|k|, |l|) & \text{otherwise}
\end{cases}
$$
and for any $\xi \in X^*(T)$, and $\lambda, \mu \in X_*(T)$, let
$$ d_\xi(\lambda, \mu) := d(\la \xi, \lambda \ra, \la \xi, \lambda \ra). $$ 
The multiplicative Coulomb branch algebra $\acal^\times (T,N)$ is generated as an algebra over $\O(T)$ with generators $\{r_\lambda\}_{\lambda \in X_*(T)}$, satisfying relations
$$ r_\lambda r_\mu = r_{\lambda+\mu} \cdot \prod_{i=1}^n (1 - y^{-\xi_i})^{d_{\xi_i}(\lambda, \mu)} $$
where for $\xi \in X^*(T)$ we denote $y^{\xi}: T \to \C^*$ as the character function on $T$.  
\end{proposition}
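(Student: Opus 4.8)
The plan is to compute $\acal^\times(T,N) = K^{T(\O)}(R_{T,N})$ directly from the geometry of $R_{T,N}$, which is especially transparent in the abelian case. First I would use that $Gr_T = X_*(T)$ is discrete, so $R_{T,N} = \coprod_{\lambda \in X_*(T)} R^\lambda_{T,N}$ with $R^\lambda_{T,N} = \pi_R^{-1}(t^\lambda) = N(\O) \cap t^\lambda N(\O) \subset N(\K)$; note every coweight of a torus is minuscule (its $Gr_T^\lambda$ is a point), so the classes $r_\lambda$ of Definition \ref{monopole operators} are defined for all $\lambda$. Decomposing $N = \bigoplus_{i=1}^n N_{\xi_i}$ into weight lines, $t^\lambda$ scales $N_{\xi_i}(\K)$ by $t^{\langle \xi_i,\lambda\rangle}$, so in the $i$-th factor $R^\lambda_{T,N}$ is $t^{\max(0,\langle\xi_i,\lambda\rangle)}\O$; hence $R^\lambda_{T,N}$ is a pro-affine space with a linear $T$-action. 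Since a pro-unipotent group acts trivially on equivariant $K$-theory and $T$-equivariant $K$-theory is invariant under linear contractions, $K^{T(\O)}(R^\lambda_{T,N}) \cong K^T(\pt) = \O(T)$, generated by $r_\lambda = [\O_{R^\lambda_{T,N}}]$. Thus $\acal^\times(T,N)$ is a free $\O(T)$-module with basis $\{r_\lambda\}_{\lambda \in X_*(T)}$, and in particular the $r_\lambda$ generate it as an $\O(T)$-algebra; what remains is to identify the product.

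Next I would compute $r_\lambda \star r_\mu$. Because the convolution product adds $Gr_T$-components, $r_\lambda \star r_\mu \in \O(T)\, r_{\lambda+\mu}$, so the content is a single scalar $c_{\lambda,\mu} \in \O(T)$, namely $m_*$ of the structure sheaf of the relevant component of the convolution correspondence. Working weight space by weight space, this map to $R^{\lambda+\mu}_{T,N}$ is the product of an affine-bundle projection (from the choice of intermediate Hecke modification) with the closed embedding of the sublattice $N(\O)\cap t^\lambda N(\O)\cap t^{\lambda+\mu}N(\O)$ into $N(\O)\cap t^{\lambda+\mu}N(\O)$. In the $i$-th weight space the former has codimension $\max\!\big(0,\langle\xi_i,\lambda\rangle,\langle\xi_i,\lambda+\mu\rangle\big) - \max\!\big(0,\langle\xi_i,\lambda+\mu\rangle\big)$, which a short case analysis on the signs of $\langle\xi_i,\lambda\rangle,\langle\xi_i,\mu\rangle$ shows equals $d\big(\langle\xi_i,\lambda\rangle,\langle\xi_i,\mu\rangle\big)$. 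This embedding is cut out by a regular sequence of $d_{\xi_i}(\lambda,\mu)$ coordinate functions on the weight-$\xi_i$ part of $N(\K)$, each of $T$-weight $-\xi_i$, so its Koszul resolution multiplies the pushforward of the structure sheaf by $(1-y^{-\xi_i})^{d_{\xi_i}(\lambda,\mu)}$, while the affine-bundle factor contributes trivially by the Thom isomorphism. Multiplying over $i$ gives $r_\lambda \star r_\mu = \big(\prod_{i=1}^n (1-y^{-\xi_i})^{d_{\xi_i}(\lambda,\mu)}\big) r_{\lambda+\mu}$.

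Finally I would note $r_0 = [\O_{R^0_{T,N}}] = [\O_{N(\O)}] = 1$ and assemble the above into the claimed presentation; associativity and commutativity are guaranteed by \cite{BFN}, and as a sanity check the coefficients $c_{\lambda,\mu}$ given by the formula satisfy the cocycle identity $c_{\lambda,\mu}c_{\lambda+\mu,\nu} = c_{\mu,\nu}c_{\lambda,\mu+\nu}$. I expect the main obstacle to be the second step: one must set up the abelian convolution correspondence carefully enough to exhibit the excess intersection, identify the normal bundle of the relevant embedding together with its $T$-weights, and check that the intermediate Hecke modification introduces no further correction. (I also observe that the displayed definition of $d_\xi(\lambda,\mu)$ should read $d(\langle\xi,\lambda\rangle,\langle\xi,\mu\rangle)$, which is what the argument above establishes and what the relation uses.)
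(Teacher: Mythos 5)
The paper does not actually prove this proposition; it is quoted directly from \cite{BFN}, so your proposal stands or falls on its own. Your first step is fine: over the discrete $Gr_T$ the space $R_{T,N}$ splits into the weight-wise lattices $N(\mathcal{O})\cap t^\lambda N(\mathcal{O})$, and contractibility gives that $\acal^\times(T,N)$ is a free $\mathcal{O}(T)$-module on the $r_\lambda$. You are also right that the displayed $d_\xi(\lambda,\mu)$ contains a typo and should read $d(\langle\xi,\lambda\rangle,\langle\xi,\mu\rangle)$.

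The key step, however, contains a genuine error. With $a=\langle\xi_i,\lambda\rangle$, $b=\langle\xi_i,\mu\rangle$, the codimension you compute for the sublattice $N(\mathcal{O})\cap t^\lambda N(\mathcal{O})\cap t^{\lambda+\mu}N(\mathcal{O})$ inside $N(\mathcal{O})\cap t^{\lambda+\mu}N(\mathcal{O})$ is $\max(0,a,a+b)-\max(0,a+b)$, and the ``short case analysis'' you invoke is false: this quantity vanishes whenever $a\le 0\le b$, while $d(a,b)=\min(|a|,|b|)$ there. In particular your structure constant is asymmetric in $\lambda,\mu$, which cannot happen for the commutative convolution product; concretely, for $T=\C^*$ and $N=\C$ of weight $1$ (Example \ref{ex:[1]-(1)}), your recipe with your stated conventions gives $r_{-1}\star r_{1}=r_0=1$, whereas the correct relation is $r_1r_{-1}=1-y^{-1}$ (and flipping the sign convention just moves the failure to the other ordering). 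The missing contribution sits exactly where you asserted ``no further correction from the intermediate Hecke modification'': the class being pushed forward is not the structure sheaf of the triple intersection, because the fiber product over $N(\K)$ defining the convolution correspondence is not transverse, and the resulting excess/derived intersection contributes an additional K-theoretic Euler factor $(1-y^{-\xi_i})^{\min(|a|,|b|)}$ precisely in the mixed-sign case where your count returns $0$. A clean repair using only what the paper already recalls: for $G=T$ the map $h_*$ of Section \ref{localization section} is injective, and Proposition \ref{p:abelianize} (with $Gr_T^\lambda$ a point) gives $ab(r_\lambda)=x^\lambda\prod_i(1-y^{-\xi_i})^{\max(0,-\langle\xi_i,\lambda\rangle)}$ in $\mathcal{O}(T^\vee\times T)$; the stated relation is then the elementary \emph{symmetric} identity $\max(0,-a)+\max(0,-b)-\max(0,-a-b)=d(a,b)$, and freeness of the module gives the generation statement.
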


In general, when $N = 0$ (`pure gauge theory'), 
there is an explicit description of the Coulomb branch: 
\begin{proposition} \label{pure gauge theory coordinates} 
\cite[Theorem 2.15]{BFM}
Let $G$ be a complex reductive group, $R \In X^*(T)$ the roots of $G$, $W$ the Weyl group, then: 
$$ K^{G(\O)}_*(Gr_G) \cong \C[T^\vee \times T, \{\frac{e^{\alpha^\vee} -1 }{e^\alpha - 1}\}_{\alpha \in R} ]^W.$$
\end{proposition}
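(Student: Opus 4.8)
The plan is to prove Proposition \ref{pure gauge theory coordinates} by \emph{abelianization}: construct an injective homomorphism of commutative convolution algebras
$$ z:\ K^{\GO}_*(Gr_G)\ \hookrightarrow\ K^{T(\O)}_*(Gr_T), $$
identify the target explicitly, and then pin down the image. To build $z$, first restrict $\GO$-equivariance to $T(\O)$-equivariance, giving $K^{\GO}_*(Gr_G)\to K^{T(\O)}_*(Gr_G)$; this is injective by equivariant formality of $Gr_G$ in $K$-theory (the Schubert classes are an $R(G)$-basis). Next, the $T$-fixed locus of $Gr_G$ is exactly $Gr_T$, so the equivariant localization theorem (applied along the Schubert filtration, using ind-properness) embeds $K^{T(\O)}_*(Gr_G)$ into the localization of $K^{T(\O)}_*(Gr_T)$ at the non-zero roots. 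Both maps are ring maps for the Pontryagin/convolution products, since the convolution on $Gr_G$ restricts on $T$-fixed points to the addition map $Gr_T\times Gr_T\to Gr_T$. Composing and checking injectivity gives $z$.

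Second, identify the target. Since $Gr_T\cong X_*(T)$ is discrete, $K^{T(\O)}_*(Gr_T)=\bigoplus_{\lambda\in X_*(T)}K^T_*(\mathrm{pt})\cdot[\lambda]$, and the Pontryagin product induced by addition in $X_*(T)$ makes this the group algebra $R(T)[X_*(T)]$. Writing $R(T)=\C[T]$ and using $X_*(T)=X^*(T^\vee)$, this is $\C[T]\otimes\C[T^\vee]=\C[T^\vee\times T]$, with $[\lambda]$ the monomial $e^\lambda\in\C[T^\vee]$; after localizing at the roots, $z$ lands in $\C[T^\vee\times T][\tfrac1{e^\alpha-1}:\alpha\in R]$. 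Because $N_G(T)$ acts compatibly on the whole picture and $W=N_G(T)/T$ acts trivially on $K^{\GO}_*(Gr_G)$, the image of $z$ lies in the $W$-invariants. It remains to show $\mathrm{im}(z)$ is exactly $\C[T^\vee\times T,\{\tfrac{e^{\alpha^\vee}-1}{e^\alpha-1}\}_{\alpha\in R}]^W$.

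Third, the two inclusions. For $\subseteq$: the classes $r_\lambda$ of (closures of) the orbits $Gr_G^\lambda$ span $K^{\GO}_*(Gr_G)$ over $R(G)=\C[T/W]$, so it suffices to bound their images. By equivariant localization on $\overline{Gr_G^\lambda}$, $z(r_\lambda)$ is a $W$-symmetrized sum of terms of the shape $e^\lambda\cdot\prod_{\alpha>0,\,\langle\alpha,\lambda\rangle\neq 0}(1-e^{-\alpha})^{-1}$ times regular factors; a direct check reduces the claim that this lands in $\C[T^\vee\times T,\{\tfrac{e^{\alpha^\vee}-1}{e^\alpha-1}\}]$ to the rank-one groups $SL_2$ and $PGL_2$ (via restriction to minimal Levi subgroups), where $Gr$ is a chain of $\mathbb{P}^1$'s and everything is explicit, and one matches against Proposition \ref{prop abelian coulomb branch coordinates}. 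For $\supseteq$: the $W$-invariant part of $\C[T^\vee\times T]$ equals $R(G)$, which is the image of the pullback $K^{\GO}(\mathrm{pt})\to K^{\GO}_*(Gr_G)$, and the remaining generators $\tfrac{e^{\alpha^\vee}-1}{e^\alpha-1}$ (up to $W$-symmetrization and units in $\C[T^\vee\times T]$) arise as images of the rank-one Schubert classes by the same rank-one computation; one then invokes a residue/regularity criterion — a $W$-invariant function with at worst simple poles along each $\{e^\alpha=1\}$ whose leading term there is a multiple of $e^{\alpha^\vee}-1$ lies in the claimed subring — to conclude that these generate all of it.

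I expect the main obstacle to be precisely this determination of the image: showing that the localized images of the Schubert/orbit classes have \emph{exactly} the denominators $e^\alpha-1$ (no worse) and that they generate the whole subring. Both hinge on the explicit $SL_2$/$PGL_2$ computation together with a parabolic-induction argument reducing general $G$ to its minimal Levis, and the delicate point is making this induction interact correctly with the Pontryagin product and the $W$-action. An alternative for the generation half is a Hilbert-series comparison: both sides are finite free over $R(G)$, so it suffices to check their graded ranks agree — e.g. by degenerating to the additive/homological statement $H^{\GO}_*(Gr_G)\cong\C[\tfrak^\vee\times\tfrak,\{\alpha^\vee/\alpha\}]^W$, or via the ($q$-difference) Toda-lattice description of $K^{\GO}_*(Gr_G)$ — after which an injection of the right size is automatically onto.
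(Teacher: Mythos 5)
This proposition is not proved in the paper at all: it is quoted verbatim from \cite[Theorem 2.15]{BFM}, and the surrounding text (Section \ref{localization section}) only \emph{uses} the abelianization/localization package you are trying to rebuild. So the comparison can only be with the strategy of BFM themselves, which your first two steps do follow: restrict equivariance to $T$, localize to the fixed locus $Gr_T$, identify $K^{T}_*(Gr_T)$ with $\C[T^\vee\times T]$ with its Pontryagin product, and land in the $W$-invariants of the localization at the root hyperplanes. Up to that point your sketch is the standard argument and is fine in outline.

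The genuine gap is exactly where you flag it, and it is not a small one: the determination of the image \emph{is} the content of the theorem, and your sketch does not close it. Concretely: (i) the localization denominators for $[\ocal_{\overline{Gr^\lambda}}]$ involve the tangent weights at the fixed points, which are (classes of) affine roots and occur with multiplicities, so the claim that one only ever gets \emph{simple} poles along $\{e^\alpha=1\}$ already requires the rank-one/Levi reduction to be carried out, not just invoked; (ii) the statement ``the $W$-invariant part of $\C[T^\vee\times T]$ equals $R(G)$'' is false — $\left(\C[T^\vee]\otimes\C[T]\right)^W$ under the diagonal action contains mixed invariants far beyond $\C[T]^W=R(G)$ — so your argument for the inclusion $\supseteq$ collapses as written, and in any case $W$-invariants of a generated ring are not generated by symmetrized generators, which is precisely why BFM need a wall-by-wall characterization (a codimension-one congruence condition plus an algebraic argument that these conditions cut out the ring) rather than the residue criterion you state without proof; (iii) the fallback Hilbert-series comparison rests on ``both sides are finite free over $R(G)$'', which is also false — the Schubert basis of $K^{G(\O)}_*(Gr_G)$ over $R(G)$ is indexed by all dominant coweights, hence infinite — and the proposed degeneration to $H^{G(\O)}_*(Gr_G)\cong\C[\tfrak\times\tfrak^\vee,\{\alpha^\vee/\alpha\}]^W$ is itself another BFM theorem of the same depth, not an easier input. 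In short, your plan reproduces the easy half (an injection into $\C[T^\vee\times T]_{loc}^W$) but leaves the identification of the image, which is the theorem, unestablished.
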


\begin{example}
If $G = GL(d)$, we have 
$$  K^{G(\O)}(Gr_G) \cong \C[x_1^\pm, \cdots, x_d^\pm, y_1^\pm, \cdots, y_d^\pm, \{\frac{x_i/x_j -1 }{y_i/y_j -1 }\}_{i \neq j} ]^{S_d}$$
where $x_i^\pm$ means $x_i$ is a $\C^*$ variable. 
\end{example}

\subsection{Localization, abelianization, and formulas for monopole operators} \label{localization section}

We recall from \cite{BFN-slice} an embedding 
of the coordinate ring of $\MCB(G,N)$ into
a localization of the 
coordinate ring of $\MCB(T,0)$. 

Consider the closed embeddings (recall the definition of $R_{G,N}, T_{G,N}$ from Eq. \eqref{e:RGN-TGN}), 
$$ \iota: R_{T, N} \into R_{G,N} $$ 
$$ h: R_{T, N} \into T_{T, N}, $$
and the inclusion of the zero-section $$z: Gr_T \into T_{T,N}.$$
Consider the pushforward in K-theory
$$ h_*: K^T(R_{T,N}) \to K^T(T_{T,N}) , \quad \iota_*: K^T(R_{T,N}) \to K^T(R_{G,N}).$$
An explicit calculation in the abelian gauge theory case (see section 4(vi) of \cite{BFN}) shows $h_*$ is injective. 
Since $T_{T,N}$ is a vector bundle over $Gr_T$, we have an isomorphism 
$$ z^*: K^T(T_{T,N}) \congto K^T(Gr_T) $$

Note $K^T(R_{T,N})$ and $K^T(R_{G,N})$ are modules over $K^T(pt) = O(T)$, and for any root $\alpha$ of $G$ we have root hyperplane $H_\alpha = \{e^\alpha = 1\}$ in $T$. Let $O(T)_{loc} = O(T \RM \cup_\alpha H_\alpha)$ denote the coordinate ring of $T$ removing all the root hyperplanes, and let $K^T(R_{T,N})_{loc}$ and $K^T(R_{G,N})_{loc}$ denote the corresponding localization as modules.  
After such localization 
$\iota_*$ is an isomorphism: 
$$ (\iota_*)_{loc}: K^T(R_{T,N})_{loc} \congto K^T(R_{G,N})_{loc}.  $$
Combined, this determines the desired morphism
\begin{equation} 
z^* h_* (\iota_*)_{loc}^{-1} :  K^T(R_{G,N})_{loc} \into  K^T(Gr_T)_{loc} \end{equation}
which restricts to the subring $\O(\MCB(G,N)) \cong K^T(R_{G,N})^W \into  K^T(R_{G,N})_{loc}$ to get the `abelianization' map
$$ ab:=z^* h_* (\iota_*)_{loc}^{-1} \quad :  \O(\MCB(G,N))  \into  \O(\MCB(T,0))_{loc}.$$ 

We introduce the following notation for writing in abelianized coordinates:

\begin{definition} \label{ab coordinates xs and ys}
For any $\lambda \in X_*(T)$, let $x^\lambda$ denote the corresponding element in $\C[X_*(T)] = \O(T^\vee)$. For any $\xi \in X^*(T)$, let $y^\xi$ denote the element in $\C[X^*(T)] = \O(T)$.  \end{definition}
Then we have 
$$ \ocal(\MCB(T,0)) = \ocal(T \times T^\vee), \quad 
\ocal(\MCB(T,0))_{loc} = \ocal(T \times T^\vee)[\{(y^\alpha - 1)^{-1} \mid \alpha \in R\}]$$
where $R \In X^*(T)$ is the set of roots in $G$.

We turn to recall some formulas for the monopole operators (Def. \ref{monopole operators}). 

\begin{proposition}\cite[Proposition A.2]{BFN-slice} \label{p:abelianize}
If $\lambda$ is a minuscule coweight of $G$, and $r_\lambda \in \O(\MCB(G,N))$ is the K-theory class of the structure sheaf on $R^\lambda_{G,N}$, then
\begin{equation} \label{abelianized coordinates} ab (r_\lambda) = \sum_{\lambda' \in W \lambda} x^{\lambda'} \cdot \frac{\chi_T(t^{\lambda'} N(\O)/ t^{\lambda'} N(\O)\cap N(\O)) }{\chi_T(T_{\lambda'} Gr^\lambda )}  
\end{equation}
where $W \lambda$ is the $W$-orbit of $\lambda$, $T_{\lambda'} Gr^\lambda$ is the tangent space at the $T$-fixed point $t^{\lambda'}$ in $Gr^\lambda$, and $\chi_T$ is the map
$$ \chi_T: Rep(T) \to K(Rep(T)), \quad V \mapsto [\wedge^*(V^\vee[1])], \text{ i.e.} \quad \bigoplus_{i=1}^m \C_{\xi_i} \mapsto \prod_{i=1}^m (1-y^{-\xi_i}). $$ 
More generally, if $\cal V$ is a $G(\O)$-equivariant vector bundle over $Gr^\lambda$, and $r_{\lambda, \cal V}$ is the K-theory class of the pullback of $\cal V$ to $R^\lambda_{G,N}$, then 
\begin{equation} \label{more abelianized coordinates} ab (r_{\lambda, \cal V}) = \sum_{\lambda' \in W \lambda} x^{\lambda'} \cdot  [\cal V|_{t^{\lambda'}}]\cdot \frac{ \chi_T(t^{\lambda'} N(\O)/ t^{\lambda'} N(\O)\cap N(\O)) }{\chi_T(T_{\lambda'} Gr^\lambda )}. 
\end{equation}
\end{proposition}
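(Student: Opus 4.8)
The plan is to prove both \eqref{abelianized coordinates} and \eqref{more abelianized coordinates} by equivariant $K$-theory localization, evaluating $ab(r_\lambda)$ coefficient by coefficient in the basis $\{x^{\lambda'}\}$. The first step is to recognize the target: $\ocal(\MCB(T,0))_{loc}=\ocal(T\times T^\vee)[\{(y^\alpha-1)^{-1}\}]$ is free over $\ocal(T)_{loc}$ on the classes $\{x^{\lambda'}\}_{\lambda'\in X_*(T)}$; equivalently $\ocal(\MCB(T,0))=K^T(Gr_T)$ with $Gr_T=\coprod_{\lambda'}\{t^{\lambda'}\}$ a disjoint union of $T$-fixed points, $x^{\lambda'}$ being the class supported at $t^{\lambda'}$. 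Thus ``the coefficient of $x^{\lambda'}$'' of a class is its restriction to the zero-section point over $t^{\lambda'}$ (write $|_{t^{\lambda'}}$ for this), and it suffices to compute $ab(r_\lambda)|_{t^{\lambda'}}=\big(z^*h_*(\iota_*)_{loc}^{-1}[\ocal_{R^\lambda_{G,N}}]\big)|_{t^{\lambda'}}$ for every $\lambda'$. I would push $[\ocal_{R^\lambda_{G,N}}]$ through the three maps using: the self-intersection formula $i^*i_*(\beta)=\beta\cdot\chi_T(N_{Z/X})$ for a regular closed embedding $i:Z\into X$ (recall $\chi_T(V)=\lambda_{-1}(V^\vee)=\sum_j(-1)^j[\wedge^jV^\vee]$, so that $[\ocal_Z]|_p=\chi_T(N_{Z/X}|_p)$ for $p\in Z$); multiplicativity $\chi_T(A\oplus B)=\chi_T(A)\chi_T(B)$; and the fact that $z$, $h$, $\iota$ restrict compatibly to the zero sections over $Gr_T\subset Gr_G$.

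Next I would collect the geometric inputs, all read off from the models \eqref{e:RGN-TGN}. Over $t^{\lambda'}\in Gr_T$ the bundle $T_{T,N}$ has fiber the $T$-module $t^{\lambda'}N(\O)\subset N(\K)$ and the subbundle $R_{T,N}$ has fiber $t^{\lambda'}N(\O)\cap N(\O)$; hence the normal bundle of $R_{T,N}$ in $T_{T,N}$ along the zero section at $t^{\lambda'}$ is the \emph{finite-dimensional} module $t^{\lambda'}N(\O)/(t^{\lambda'}N(\O)\cap N(\O))$, whose $\chi_T$-class is exactly the numerator in the statement. Because $\lambda$ is minuscule, $Gr^\lambda\cong G/P$ is smooth projective with $T$-fixed locus precisely $\{t^{\lambda'}:\lambda'\in W\lambda\}$; the space $R^\lambda_{G,N}=\pi_R^{-1}(Gr^\lambda)$ is the restriction to $Gr^\lambda$ of the (pro-vector) bundle $R_{G,N}\to Gr_G$, hence smooth, and the normal bundle of $R^\lambda_{G,N}$ in $R_{G,N}$ along the zero section over $t^{\lambda'}$ is $T_{t^{\lambda'}}Gr_G/T_{t^{\lambda'}}Gr^\lambda$. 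Finally $R_{T,N}$ and $R_{G,N}$ have the same fibers over points of $Gr_T$ (both $N(\O)\cap t^{\lambda'}N(\O)$) while $Gr_T$ is $0$-dimensional, so the normal bundle of $R_{T,N}$ in $R_{G,N}$ along the zero section over $t^{\lambda'}$ is simply $T_{t^{\lambda'}}Gr_G$.

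Assembling: writing $z=h\circ z_R$ with $z_R:Gr_T\into R_{T,N}$ the zero section, self-intersection for $h$ gives $z^*h_*(\beta)|_{t^{\lambda'}}=\chi_T\!\big(t^{\lambda'}N(\O)/(t^{\lambda'}N(\O)\cap N(\O))\big)\cdot(z_R^*\beta)|_{t^{\lambda'}}$; and since the zero section $Gr_T\into R_{G,N}$ factors through $\iota$, self-intersection for $\iota$ applied to $\beta=(\iota_*)_{loc}^{-1}[\ocal_{R^\lambda_{G,N}}]$ gives $(z_R^*\beta)|_{t^{\lambda'}}=[\ocal_{R^\lambda_{G,N}}]|_{t^{\lambda'}}\big/\chi_T(T_{t^{\lambda'}}Gr_G)$, which is $0$ unless $\lambda'\in W\lambda$ and otherwise --- by smoothness of $R^\lambda_{G,N}$ and multiplicativity of $\chi_T$ over the $T$-equivariantly split sequence $0\to TGr^\lambda\to TGr_G\to N_{Gr^\lambda/Gr_G}\to 0$ --- equals $\chi_T(T_{t^{\lambda'}}Gr_G/T_{t^{\lambda'}}Gr^\lambda)\big/\chi_T(T_{t^{\lambda'}}Gr_G)=\chi_T(T_{t^{\lambda'}}Gr^\lambda)^{-1}$. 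This cancellation is the crux: the infinite-dimensional $\chi_T$-factors coming from $T_{t^{\lambda'}}Gr_G$ and from $N_{Gr^\lambda/Gr_G}$ annihilate each other and leave the finite answer. Multiplying the two contributions gives \eqref{abelianized coordinates}; the bundle case \eqref{more abelianized coordinates} is identical after replacing $[\ocal_{R^\lambda_{G,N}}]$ by $[\pi_R^*\cal V]$, whose restriction to the zero section over $t^{\lambda'}$ carries the extra factor $[\cal V|_{t^{\lambda'}}]$.

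The hard part is not this bookkeeping but making the localization rigorous: $T_{G,N}$, $R_{G,N}$ and $Gr_G$ are infinite-dimensional ind/pro-objects, so the $\chi_T$-Euler-class manipulations and the localization isomorphisms used above must be justified either through renormalized Euler classes or via finite-dimensional truncations --- the genuine technical content of \cite{BFN,BFN-slice}. The minuscule hypothesis is precisely what makes everything work: it renders $Gr^\lambda$ smooth with isolated $T$-fixed points exactly $\{t^{\lambda'}:\lambda'\in W\lambda\}$, and it is what forces the divergent factors to cancel and leave the well-defined finite expression on the right-hand side.
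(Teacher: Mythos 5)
The paper does not prove this proposition at all --- it is imported verbatim from \cite[Proposition A.2]{BFN-slice} --- and your argument is essentially the proof given there: identify the coefficient of $x^{\lambda'}$ with restriction to the $T$-fixed point $t^{\lambda'}$ of $Gr_T$, push $[\ocal_{R^\lambda_{G,N}}]$ through $z^*h_*(\iota_*)^{-1}_{loc}$ using Koszul/self-intersection factors, and let the minuscule hypothesis (smooth $Gr^\lambda\cong G/P$ with isolated fixed points $W\lambda$) force the cancellation of the infinite Euler factors from $T_{t^{\lambda'}}Gr_G$. Your bookkeeping is correct, and the one genuine issue --- making the localization and renormalized Euler classes rigorous for the ind/pro-finite spaces $T_{G,N}$, $R_{G,N}$ --- is exactly the content you defer to \cite{BFN, BFN-slice}, so the proposal matches the cited proof rather than offering a different route.
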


Consider the pure gauge theory of $\MCB(GL_d, 0)$. \cite{Finkelberg-Tsymbaliuk} gives the abelianized coordinate for some monopole operators as following.

For $r=1, \cdots, d-1$, let $\varpi_r$ be the $r$-th fundamental coweight, i.e. $\varpi_r=(1,\cdots, 1, 0, \cdots, 0)$ where $1$ appears $r$ times. 
Let $w_0 \in S_d$ be the total order reversing element, i.e., the longest word in the Weyl group $W$. 
Then $-w_0 \varpi_r = (0, \cdots, 0, -1, \cdots, -1)$ where $-1$ appears $r$ times. Let $L_0 = (\C[[t]])^d$ be the standard lattice, then we have
$$ Gr^{\varpi_r}=\{t L_0 \In L \In L_0 \mid \dim (L_0/L) = r. \} $$
and 
$$ Gr^{-w_0 \varpi_r}=\{ L_0 \In L \In t^{-1} L_0 \mid \dim (L/L_0) = r. \} $$ 
Then $Gr^{\varpi_r}$ parametrizes a codimension $r$ subspace in an $d$-dimensional space $L_0/ tL_0$, and $Gr^{-w_0 \varpi_r}$ parametrizes a dimension $r$ subspace in an $d$-dimensional space $t^{-1} L_0/ L_0$.

\begin{proposition} \label{p:gln-monopole} \cite{Finkelberg-Tsymbaliuk}
Let $Q_r$ denote the tautological rank $r$ quotient vector bundle $L_0/L$ on $Gr^{\varpi_r}$, and $S_r$ denote the tautological rank $r$ vector sub-bundle $L/L_0$ on $Gr^{-w_0\varpi_r}$. 
For $p=0, \cdots, r$, let $\wedge^p(Q_r)$ be the $p$-th exterior power of $Q_r$ (rank $r$ vector bundle), and let $e_p(x_1, \cdots, x_r)$ be the $p$-th elementary symmmetric function in $r$ variables.  Then, we have abelianization of K-theory classes: 
    \begin{equation} \label{FT eq1}
    ab ([\wedge^p(Q_r) \otimes \O_{Gr^{\varpi_r}}]) = \sum_{J \In \{1, \cdots, d\}, \#J = r } \frac{e_p(\{y_j\}_{j \in J}) }{\prod_{\alpha \in J, \beta \notin J} (1 - y_\beta/y_\alpha) } \cdot \prod_{\alpha \in J} x_\alpha 
\end{equation} 
\begin{equation} \label{FT eq2}
   ab([\wedge^p(S_r) \otimes \O_{Gr^{-w_0 \varpi_r}}]) = \sum_{J \In \{1, \cdots, d\}, \#J = r } \frac{e_p(\{y_j\}_{j \in J}) }{\prod_{\alpha \in J, \beta \notin J} (1 - y_\alpha/y_\beta) } \cdot \prod_{\alpha \in J} x^{-1}_\alpha.
\end{equation} 
\end{proposition}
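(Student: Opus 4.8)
The plan is to deduce both identities directly from the abelianization formula of Proposition \ref{p:abelianize}, specialized to pure gauge theory ($N=0$) and to the minuscule coweights $\varpi_r$ and $-w_0\varpi_r$ of $GL_d$. When $N=0$ the matter factor $\chi_T\bigl(t^{\lambda'}N(\O)/(t^{\lambda'}N(\O)\cap N(\O))\bigr)$ in \eqref{more abelianized coordinates} is an empty product, so that formula collapses to the pure fixed-point sum
$$ ab(r_{\lambda,\cal V}) \;=\; \sum_{\lambda'\in W\lambda} x^{\lambda'}\,\frac{[\cal V|_{t^{\lambda'}}]}{\chi_T(T_{\lambda'}Gr^\lambda)}. $$
Everything then reduces to reading off, at each $T$-fixed point of $Gr^{\varpi_r}$ (resp. $Gr^{-w_0\varpi_r}$), three pieces of data: the monomial $x^{\lambda'}$, the $T$-character of the fiber of $\wedge^p Q_r$ (resp. $\wedge^p S_r$), and the $T$-character of the tangent space.

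First I would parametrize the Weyl orbit: $W\varpi_r\subset X_*(T)$ is the set of $0$--$1$ vectors with exactly $r$ ones, so writing $\lambda_J$ for the one supported on $J\subseteq\{1,\dots,d\}$, $\#J=r$, we get $x^{\lambda_J}=\prod_{\alpha\in J}x_\alpha$, the monomial in \eqref{FT eq1}. Next, the fixed point $t^{\lambda_J}\in Gr^{\varpi_r}$ is the coordinate lattice $L_J=\bigoplus_{i\in J}t\C[[t]]\oplus\bigoplus_{i\notin J}\C[[t]]$; under the identification $Gr^{\varpi_r}\cong Gr(d-r,\C^d)$ with $\C^d=L_0/tL_0$ it is the coordinate subspace $\mathrm{span}(e_i:i\notin J)$, so $Q_r|_{t^{\lambda_J}}=L_0/L_J\cong\bigoplus_{i\in J}\C_{y_i}$ as a $T$-module (no loop-rotation weight enters, since we work with $K^{G(\O)}$ rather than its loop-rotation-equivariant refinement), hence $[\wedge^pQ_r|_{t^{\lambda_J}}]=e_p(\{y_i\}_{i\in J})$. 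Finally, the tangent space of $Gr(d-r,\C^d)$ at that point is $\Hom\bigl(\mathrm{span}(e_i:i\notin J),\mathrm{span}(\bar e_j:j\in J)\bigr)=\bigoplus_{j\in J,\,i\notin J}\C_{y_j/y_i}$, so by the definition of $\chi_T$,
$$ \chi_T(T_{\lambda_J}Gr^{\varpi_r})=\prod_{j\in J,\,i\notin J}\bigl(1-(y_j/y_i)^{-1}\bigr)=\prod_{\alpha\in J,\,\beta\notin J}\bigl(1-y_\beta/y_\alpha\bigr), $$
and substituting into the collapsed formula gives \eqref{FT eq1}. The case $-w_0\varpi_r$ is entirely parallel: $W(-w_0\varpi_r)=\{-\lambda_J\}$ gives $x^{-\lambda_J}=\prod_{\alpha\in J}x_\alpha^{-1}$; the fixed point is the lattice $\bigoplus_{i\in J}t^{-1}\C[[t]]\oplus\bigoplus_{i\notin J}\C[[t]]$ with $S_r|_{t^{-\lambda_J}}=L_J/L_0\cong\bigoplus_{i\in J}\C_{y_i}$, again contributing $e_p(\{y_i\}_{i\in J})$; and $Gr^{-w_0\varpi_r}\cong Gr(r,\C^d)$ with $\C^d=t^{-1}L_0/L_0$ has tangent space $\bigoplus_{i\in J,\,j\notin J}\C_{y_j/y_i}$ there, so $\chi_T=\prod_{\alpha\in J,\,\beta\notin J}(1-y_\alpha/y_\beta)$, matching the denominator of \eqref{FT eq2}.

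The only real work --- and the step most prone to sign or inversion errors --- is pinning down these $T$-equivariant structures with conventions consistent across three incarnations of the same objects: the coweight lattice $X_*(T)$, the affine Grassmannian $Gr_{GL_d}$ described via lattices in $\C((t))^d$, and the ordinary Grassmannians to which the minuscule orbits are isomorphic. One must in particular check that the chosen identifications carry $x^\lambda$, $y^\xi$ to the variables $x_\alpha$, $y_\alpha$ of the statement and that no loop-rotation weight is inadvertently introduced. Once the dictionary is fixed, the computation is a finite bookkeeping exercise; alternatively, \eqref{FT eq1}--\eqref{FT eq2} are exactly the fixed-point calculations of \cite{Finkelberg-Tsymbaliuk}, and it suffices to match notation with that reference.
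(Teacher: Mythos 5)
Your derivation is correct. Note, however, that the paper does not actually prove Proposition \ref{p:gln-monopole}: it is quoted from \cite{Finkelberg-Tsymbaliuk} with no argument given, so there is no in-paper proof to compare against. What you supply is a self-contained deduction from Proposition \ref{p:abelianize} (itself quoted from \cite{BFN-slice}), specializing to $N=0$ so the matter factor drops out, and then computing the three fixed-point ingredients on the minuscule orbits $Gr^{\varpi_r}\cong Gr(d-r,L_0/tL_0)$ and $Gr^{-w_0\varpi_r}\cong Gr(r,t^{-1}L_0/L_0)$: the monomials $x^{\pm\lambda_J}$, the fibers $Q_r|_{t^{\lambda_J}}\cong\bigoplus_{i\in J}\C_{y_i}$ and $S_r|_{t^{-\lambda_J}}\cong\bigoplus_{i\in J}\C_{y_i}$, and the tangent characters, whose images under $\chi_T(\C_\xi)=1-y^{-\xi}$ give exactly the denominators $\prod_{\alpha\in J,\beta\notin J}(1-y_\beta/y_\alpha)$ and $\prod_{\alpha\in J,\beta\notin J}(1-y_\alpha/y_\beta)$. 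Your remark that no loop-rotation weight enters is the right caveat, since the paper works with $K^{G(\O)}$ rather than $K^{G(\O)\rtimes\C^*}$, and the loop weights in any case cancel in the tangent characters. As a sanity check, your formulas at $r=1$, $p=0$ reproduce the expressions for $r_{[1,0]}$ and $r_{[-1,0]}$ in Example \ref{ex:gl2}, so the variable conventions are consistent with the paper's. In short: the proposal is a valid proof of the quoted statement, obtained by reducing it to another quoted statement plus finite equivariant bookkeeping, which is more than the paper itself provides.
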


We also recall a result about changing the representation $N$ to a subrepresentation.

\begin{proposition}\label{p:abelian-remove-matter}
\cite[Section 4(vi)]{BFN} If $G=T$ is a torus and $N' \In N$ is a sub-representation where $N/N' = \oplus_i \C_{\xi_i}$,  then we have canonical morphism (not necessarily an embedding)
$$ q: \MCB(T,N') \to \MCB(T,N). $$
If we use $r_\lambda$ for the monopole operator on $\MCB(T,N)$ and $r_\lambda'$ for $\MCB(T,N')$, then 
\begin{equation}\label{eq:remove-matter}  q^* (r_\lambda) = r'_\lambda  \prod_{i=1}^m (1 - y^{-\xi_i}))^{\max(0, - \langle \xi_i, \lambda \rangle)}.
\end{equation} 
\end{proposition}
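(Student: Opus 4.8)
The plan is to reduce the claim to an explicit computation in abelian $K$-theory, using the known generators-and-relations presentation of $\mathcal{A}^\times(T,N)$ and $\mathcal{A}^\times(T,N')$ from Proposition \ref{prop abelian coulomb branch coordinates}. Since $G = T$ is a torus, both Coulomb branch algebras are generated over $\mathcal{O}(T)$ by the monopole classes $\{r_\lambda\}_{\lambda \in X_*(T)}$ (respectively $\{r'_\lambda\}$), with the quadratic relations $r_\lambda r_\mu = r_{\lambda+\mu}\prod_i (1-y^{-\xi_i})^{d_{\xi_i}(\lambda,\mu)}$. So to define a morphism $q:\MCB(T,N')\to\MCB(T,N)$ it suffices to define a ring homomorphism $q^*:\mathcal{A}^\times(T,N)\to\mathcal{A}^\times(T,N')$, and the natural candidate is exactly the formula in \eqref{eq:remove-matter}: send $r_\lambda\mapsto r'_\lambda\prod_{i}(1-y^{-\xi_i})^{\max(0,-\langle\xi_i,\lambda\rangle)}$, and send $\mathcal{O}(T)$ to itself by the identity.

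First I would verify that this assignment respects the defining relations. Writing $c_i(\lambda) := \max(0,-\langle\xi_i,\lambda\rangle)$, the image of $r_\lambda r_\mu$ under the proposed $q^*$ is $r'_\lambda r'_\mu \prod_i (1-y^{-\xi_i})^{c_i(\lambda)+c_i(\mu)}$, which by the relation in $\mathcal{A}^\times(T,N')$ equals $r'_{\lambda+\mu}\prod_i (1-y^{-\xi_i})^{d'_{\xi_i}(\lambda,\mu) + c_i(\lambda)+c_i(\mu)}$, where $d'$ uses the weights of $N'$ only. On the other hand the image of $r_{\lambda+\mu}\prod_i(1-y^{-\xi_i})^{d_{\xi_i}(\lambda,\mu)}$ is $r'_{\lambda+\mu}\prod_i(1-y^{-\xi_i})^{c_i(\lambda+\mu)+d_{\xi_i}(\lambda,\mu)}$. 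Matching exponents reduces, for each weight $\xi_i$ of $N/N'$, to the elementary arithmetic identity
$$ d_{\xi_i}(\lambda,\mu) + c_i(\lambda) + c_i(\mu) = c_i(\lambda+\mu) + d_{\xi_i}(\lambda,\mu) - [\text{correction}],$$
which after cancelling $d_{\xi_i}(\lambda,\mu)$ on both sides (note $d'$ over the common weights of $N'$ already agrees with the corresponding part of $d$) is the statement that $d(k,l) = \max(0,-k)+\max(0,-l)-\max(0,-(k+l))$ for $k=\langle\xi_i,\lambda\rangle$, $l=\langle\xi_i,\mu\rangle$. This is a straightforward case check on the signs of $k$ and $l$: when both are nonnegative all three $\max$ terms vanish and $d(k,l)=0$; when both are negative one gets $(-k)+(-l)-(-(k+l))=0=d(k,l)$; when they have opposite signs, say $k\ge 0>l$, the right side is $-l - \max(0,-(k+l)) = -l-\max(0,-k-l)$, which equals $\min(k,-l)=\min(|k|,|l|)=d(k,l)$ in either subcase $k+l\ge 0$ or $k+l<0$. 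This confirms well-definedness.

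Second, I would record that the construction is canonical — it does not depend on auxiliary choices — because the presentation of Proposition \ref{prop abelian coulomb branch coordinates} is canonical and the formula is written intrinsically in terms of the pairing with the fixed weights $\xi_i$ of the $T$-representation $N/N'$; and I would note that $q$ need not be a closed embedding, consistently with the statement, since $q^*$ need not be surjective (the classes $r'_\lambda$ for $\lambda$ with some $\langle\xi_i,\lambda\rangle<0$ only appear multiplied by the non-units $(1-y^{-\xi_i})$). Taking $\Spec$ gives the desired map of spaces, and the formula $q^*(r_\lambda) = r'_\lambda\prod_i(1-y^{-\xi_i})^{\max(0,-\langle\xi_i,\lambda\rangle)}$ holds by construction. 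The only genuinely substantive input is the generators-and-relations description itself, which is quoted from \cite{BFN}; granting that, the main (and only) obstacle is the sign bookkeeping in the identity $d(k,l)=\max(0,-k)+\max(0,-l)-\max(0,-(k+l))$, which is elementary. An alternative, more geometric route would be to exhibit $q$ directly at the level of the spaces $R_{T,N'}$ and $R_{T,N}$ via the inclusion $N'(\mathcal{O})\hookrightarrow N(\mathcal{O})$ and track the effect on the relevant $K$-theory classes using the excision/pushforward description in section 4(vi) of \cite{BFN}; I expect the algebraic route above to be cleaner to write.
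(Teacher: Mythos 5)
Your verification is correct, but it proceeds differently from the paper, which offers no argument at all for this statement: it is quoted directly from \cite[Section 4(vi)]{BFN}, where the morphism $q$ is constructed geometrically (from the inclusion $N'(\O)\subset N(\O)$ and the induced maps on the spaces $R_{T,N'}$, $R_{T,N}$, the correction factor arising as a $T$-equivariant K-theoretic Euler class of $N/N'$). What you do instead is take the presentation of Proposition \ref{prop abelian coulomb branch coordinates} as the definition of both algebras, declare $q^*$ by the formula \eqref{eq:remove-matter}, and check multiplicativity, which reduces (after cancelling the contributions of the weights of $N'$, which appear identically on both sides) to the identity $d(k,l)=\max(0,-k)+\max(0,-l)-\max(0,-(k+l))$; your case analysis of that identity is right, and since the Coulomb branches are affine by definition, the algebra map gives the map of spaces. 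Two caveats worth making explicit. First, your argument tacitly uses that $\acal^\times(T,N)$ is \emph{free} as an $\O(T)$-module on the $r_\lambda$ with the stated product rule as a complete presentation (otherwise prescribing images of the $r_\lambda$ would not suffice); this is indeed part of the abelian description in \cite{BFN}, and the paper uses it the same way (e.g.\ Example \ref{ex:[1]-(1)}), but it should be cited rather than read off from the word ``generated''. Second, your construction produces \emph{a} morphism satisfying \eqref{eq:remove-matter}, whereas the proposition asserts the formula for the \emph{canonical} morphism of \cite{BFN}; identifying the two requires either the geometric route you sketch at the end or the observation that both maps agree on the module basis $\{r_\lambda\}$ over $\O(T)$, which is immediate once the BFN formula is granted. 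The trade-off: your route is self-contained modulo the presentation and makes the ``why the exponent is $\max(0,-\langle\xi_i,\lambda\rangle)$'' purely combinatorial, while the cited geometric construction explains canonicity and the origin of the Euler-class factor.
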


    


\begin{proposition}\cite[Theorem 2.9]{KWWY22} \label{p:invert}
Let $\acal(G,N)$ denote the (additive or multiplicative) Coulomb branch algebra. Let $\xi: \C^* \to T$ be a central cocharacter $G$. Assume that the representation $N$ restricted to this $\C^*$ has only non-negative weights, and let $N^\xi$ denote the fixed subspace of the $\C^*$-action. 
Then the natural inclusion $\acal(G,N) \into \acal(G, N^\xi)$ induces an isomorphism
$$ \acal(G,N)[r_\xi^{-1}] \congto \acal(G,N^\xi). $$
\end{proposition}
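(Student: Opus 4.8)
\emph{Strategy.} The statement is quoted from \cite{KWWY22}, but here is how one would establish it with the material above. The geometric picture: $r_\xi$ is the class of $\O_{R^\xi_{G,N}}$ where, $\xi$ being central, $Gr_G^\xi=\{t^\xi\}$ is a single point and $R^\xi_{G,N}=\pi_R^{-1}(t^\xi)=N(\O)\cap t^\xi N(\O)=t^\xi N(\O)$, the last equality by the non-negativity hypothesis (which gives $t^\xi N(\O)\subseteq N(\O)$). So convolution by $r_\xi$ ``translates the $Gr_G$-coordinate by the central element $t^\xi$''; this is invertible on $Gr_G$ but on $R_{G,N}$ it shrinks the section from $N(\O)$ to $t^\xi N(\O)$, and inverting $r_\xi$ amounts to allowing arbitrary poles in the strictly-positive-$\xi$-weight directions while keeping the $\xi$-fixed directions regular, i.e.\ it replaces $R_{G,N}$ by $R_{G,N^\xi}$. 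As a first observation, the natural map $\acal(G,N)\to\acal(G,N^\xi)$ already sends $r_\xi$ to a unit: on $N^\xi$ the cocharacter $\xi$ acts trivially, translation by $t^\xi$ is an honest automorphism, and $r_\xi r_{-\xi}=r_0=1$ there. Hence the map factors through $\acal(G,N)[r_\xi^{-1}]$, and the content is that this factored map is an isomorphism.

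\emph{Reduction to a torus.} I would deduce the general $G$ from $G=T$ via the abelianization of Section \ref{localization section}. As $\xi$ is central, $W\xi=\{\xi\}$, so $r_\xi$ is $W$-invariant, and Proposition \ref{p:abelianize} applied to the point-orbit $Gr_G^\xi$ gives $ab(r_\xi)=x^\xi\,\chi_T\big(t^\xi N(\O)/(t^\xi N(\O)\cap N(\O))\big)=x^\xi$, the correction vanishing exactly because $N$ has non-negative $\xi$-weights. Since $x^\xi$ is a unit in $\O(\MCB(T,0))_{loc}$, the abelianization embedding extends to $\acal(G,N)[r_\xi^{-1}]$, compatibly with the natural map to $\acal(G,N^\xi)$ and with abelianization for $N^\xi$ (all of these are the ``same'' restriction operations on the $T$-side). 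The root-hyperplane twist in the abelianization isomorphism is independent of $N$ and fixes $\xi$; taking $W$-invariants then reduces the claim, \emph{up to controlling the localization at root hyperplanes}, to the torus case.

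\emph{Torus case.} For $G=T$ combine the presentation of Proposition \ref{prop abelian coulomb branch coordinates} with the matter-change homomorphism $q^\ast:\acal(T,N)\to\acal(T,N^\xi)$ of Proposition \ref{p:abelian-remove-matter}. Writing the weights of $N$ as $\xi_1,\dots,\xi_n$, set $P=\{i:\langle\xi_i,\xi\rangle>0\}$ and $Z=\{i:\langle\xi_i,\xi\rangle=0\}$, so the weights of $N/N^\xi$ are $\{\xi_i\}_{i\in P}$. Then $q^\ast(r_\xi)=r'_\xi$ since $\max(0,-\langle\xi_i,\xi\rangle)=0$ for $i\in P$, and in $\acal(T,N^\xi)$ one has $r'_\xi r'_{-\xi}=r'_0\prod_{i\in Z}(1-y^{-\xi_i})^{d_{\xi_i}(\xi,-\xi)}=1$ because $\langle\xi_i,\xi\rangle=0$ forces $d_{\xi_i}(\xi,-\xi)=0$; so $q^\ast$ descends to $\acal(T,N)[r_\xi^{-1}]\to\acal(T,N^\xi)$. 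It is surjective: given $\lambda\in X_*(T)$, pick $M\gg0$ with $\langle\xi_i,\lambda+M\xi\rangle\ge0$ for all $i\in P$; then $q^\ast(r_{\lambda+M\xi})=r'_{\lambda+M\xi}=r'_\lambda(r'_\xi)^{M}$ (the $Z$-corrections vanish since $\langle\xi_i,\xi\rangle=0$), so $r'_\lambda=q^\ast(r_{\lambda+M\xi}\,r_\xi^{-M})$. It is injective: $ab_N$ factors as $\acal(T,N)[r_\xi^{-1}]\xrightarrow{q^\ast}\acal(T,N^\xi)\xrightarrow{ab_{N^\xi}}\O(\MCB(T,0))_{loc}$, and $ab_N$ is itself injective on the localization, since it sends $r_\xi^{-M}r_\lambda$ to $x^{\lambda-M\xi}$ times a nonzero polynomial in $y$ and distinct monomials $x^\mu$ are $\O(T)_{loc}$-linearly independent in the domain $\O(\MCB(T,0))_{loc}$. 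This proves the torus case; the additive case is entirely analogous with $(1-y^{-\xi_i})$ replaced by the corresponding linear form.

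\emph{Main obstacle.} The one non-formal point is the descent from $T$ back to $G$, i.e.\ removing the root-hyperplane localization in the reduction step. Injectivity is harmless, since both sides embed compatibly into their root-localizations. Surjectivity must be argued integrally: one checks that each (dressed) monopole generator $r'_{\lambda,\cal V}$ of $\acal(G,N^\xi)$ (Definition \ref{monopole operators}) lies in the image, again by the central-shift device, using $Gr_G^{\lambda+M\xi}\cong Gr_G^\lambda$ (translation by $t^{M\xi}$) and the vanishing, for $M\gg0$, of both the $N/N^\xi$-correction to the natural map and the $N^\xi$-correction to the convolution $r'_{\lambda,\cal V}\cdot(r'_\xi)^M$. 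Here one must fall back on the geometry of the first paragraph --- convolution with $R^\xi_{G,N}=t^\xi N(\O)$ is the translation operator, and its stabilization upon inverting $r_\xi$ is precisely $R_{G,N^\xi}$; that, I expect, is the spine of the argument in \cite{KWWY22}.
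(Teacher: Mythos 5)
There is nothing in the paper to compare you against: Proposition \ref{p:invert} is imported verbatim from \cite[Theorem 2.9]{KWWY22} and used as a black box, with no in-paper argument. So your proposal has to be judged on its own. Your abelian case is complete and correct: $ab(r_\xi)=x^\xi$ follows from Proposition \ref{p:abelianize} exactly as you say (centrality gives $W\xi=\{\xi\}$ and a point orbit, non-negativity kills the numerator), $q^*(r_\xi)=r'_\xi$ and $r'_\xi r'_{-\xi}=1$ follow from Propositions \ref{p:abelian-remove-matter} and \ref{prop abelian coulomb branch coordinates}, the shift trick $r'_\lambda=q^*(r_{\lambda+M\xi}\,r_\xi^{-M})$ gives surjectivity because the abelian algebra is generated over $\O(T)$ by the $r_\lambda$, and injectivity of the localized map follows from exactness of localization plus injectivity of the abelianization. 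The reduction of injectivity for general $G$ to the compatible embeddings into the root-localized abelianized algebras is also fine.

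The genuine gap is the one you flag yourself: surjectivity for non-abelian $G$. Your plan — hit each dressed monopole operator $r'_{\lambda,\cal V}$ via $r'_{\lambda,\cal V}(r'_\xi)^M = $ image of $r_{\lambda+M\xi,\cal V}$ for $M\gg 0$ — does work as a computation (the $N/N^\xi$ Euler factors become trivial for large $M$, and $\lambda+M\xi$ stays minuscule when $\lambda$ is), but it only proves surjectivity onto the $K^G(pt)$-subalgebra generated by dressed \emph{minuscule} monopole operators, which is all that Definition \ref{monopole operators} provides. That this subalgebra exhausts $\acal(G,N^\xi)$ is a substantive theorem: it is known for quiver gauge theories (Weekes), but the proposition is stated for arbitrary $(G,N)$, and nothing in the paper supplies such a generation statement; "I expect this is the spine of the argument" is not a proof. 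To close the gap without invoking generation, one should argue with the support filtration intrinsic to the BFN construction: both algebras are filtered by classes supported on $\pi_R^{-1}(\overline{Gr^\lambda})$, multiplication by $r_\xi$ shifts this filtration by $\xi$, and the natural map identifies the associated graded pieces once $\lambda$ has been translated sufficiently far in the $\xi$-direction, so the localized map is an isomorphism by passing to the colimit over multiplication by $r_\xi$. Either restrict your claim to the quiver setting and cite generation, or replace your last step by an argument of this filtration type; as written, the non-abelian surjectivity is asserted rather than proved.
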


\subsection{Root and Matter divisors}

To study the geometry of $\MCB(G,N)$, we can study the fibers of $\mu: \MCB(G,N) \to T/W$. First, we shall unfold the base $T/W$ back to $T$ and base change $\MCB(G,N)$. 
\begin{lemma}\cite[Lemma 5.3]{BFN}. 
Let    $ \wt \MCB(G, N) := \Spec K^T(R_{G,N}). $  Then there is
a pullback square: 
\begin{equation} \label{e:W-unfold} 
    \begin{tikzcd}
    \wt \MCB(G,N)  \ar[r] \ar[d, "\wt \mu"] & \MCB(G,N) \ar[d, "\mu"]   \\
    T \ar[r] &  T/W
\end{tikzcd}
\end{equation}
\end{lemma}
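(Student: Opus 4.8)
The plan is to recognize the square \eqref{e:W-unfold} as the spectrum of a flat base change of coordinate rings, and then to verify that base change directly. Recall that $\O(\MCB(G,N)) = K^{G(\O)}(R_{G,N}) = K^G(R_{G,N})$ — reducing $G(\O)$-equivariance to $G$-equivariance as in the Remark following Definition \ref{d:coulomb} — and that $\mu$ is $\Spec$ of the pullback homomorphism $K^G(\pt) \to K^G(R_{G,N})$ along $R_{G,N} \to \pt$. Since $\pi_1(G)$ is torsion-free, $K^G(\pt) = R(G) = R(T)^W = \O(T)^W = \O(T/W)$, while $K^T(\pt) = R(T) = \O(T)$, and the restriction $R(G) \to R(T)$ is exactly the pullback $\O(T/W) \to \O(T)$. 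Setting $\wt\mu := \Spec$ of $K^T(\pt) \to K^T(R_{G,N})$, the square \eqref{e:W-unfold} will be cartesian precisely when the natural $R(T)$-algebra map
$$ K^G(R_{G,N}) \otimes_{R(G)} R(T) \;\longrightarrow\; K^T(R_{G,N}) $$
is an isomorphism, because $\Spec$ carries this tensor product to the fiber product $\MCB(G,N) \times_{T/W} T$.

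To establish this ring isomorphism I would argue as follows. For any $G$-space $X$ the induction isomorphism gives $K^T(X) \cong K^G(G \times_T X)$, and the $G$-equivariant map $G \times_T X \xrightarrow{\sim} (G/T) \times X$, $[g,x] \mapsto (gT, gx)$, then identifies $K^T(X) \cong K^G((G/T) \times X)$. By the theorem of Pittie and Steinberg, the hypothesis that $\pi_1(G)$ has no torsion makes $R(T)$ a free $R(G)$-module of rank $|W|$; feeding this freeness into a $G$-equivariant Bruhat cell filtration of $G/T$ (equivalently, the filtration argument in \cite[Section 5]{BFN}) upgrades it to a natural isomorphism $K^G((G/T) \times X) \cong K^G(X) \otimes_{R(G)} R(T)$. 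Applied with $X = R_{G,N}$ this is exactly the map above, so the square is cartesian.

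The one step that genuinely requires care — and the main obstacle — is that $R_{G,N}$ is not a finite-dimensional $G$-space but an ind-scheme carrying a $G(\O)$-action, so $K^G(R_{G,N})$ is defined only through the finite-dimensional $G$-equivariant approximations used in \cite[Section 5]{BFN} (truncating $N(\O)$ and replacing $G(\K)$-orbits by finite-type subschemes). I would check that the induction isomorphism and the Pittie–Steinberg base change commute with the (co)limits building these groups; this works because each approximant is an honest finite-dimensional $G$-space to which the preceding paragraph applies verbatim, and the transition maps are $R(G)$-linear, so the flat base change along $R(G) \to R(T)$ passes to the limit. Everything else — that $\Spec$ turns the tensor product into a fiber product, and that the two vertical maps are compatible with the horizontal ones — is formal.
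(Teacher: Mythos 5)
The paper itself offers no proof of this lemma --- it is imported wholesale with the citation \cite[Lemma 5.3]{BFN} --- so there is no internal argument to compare yours against. Your overall strategy is the standard one and almost certainly the intended one: since all the rings are commutative and everything is affine, the square \eqref{e:W-unfold} is cartesian if and only if the natural $R(T)$-algebra map $K^G(R_{G,N})\otimes_{R(G)}R(T)\to K^T(R_{G,N})$ is an isomorphism; one proves this for honest finite-dimensional $G$-schemes and then passes to the finite-type approximations defining the equivariant K-theory of the ind-scheme $R_{G,N}$. Your reductions (replacing $G(\O)$ by $G$ via the Remark after Definition \ref{d:coulomb}, identifying $K^G(\pt)=R(T)^W=\O(T/W)$, and noting that $\Spec$ turns the tensor product into the fiber product) are all correct, and the limiting step is fine --- though note that what makes it work is not flatness alone but the fact that $R(T)$ is a \emph{finite} free $R(G)$-module, so $-\otimes_{R(G)}R(T)$ commutes with inverse limits as well as colimits.

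The one step whose justification does not work as written is the central base-change isomorphism. There is no ``$G$-equivariant Bruhat cell filtration of $G/T$'': $G$ acts transitively on $G/T$, and the Bruhat cells are only $B$- (or $T$-)stable, so the cellular-fibration argument cannot be run $G$-equivariantly on $(G/T)\times X$, and Pittie--Steinberg freeness cannot simply be ``fed into'' it. The statement you need --- that for $\pi_1(G)$ torsion-free and any $G$-scheme $X$ the natural map $K^G(X)\otimes_{R(G)}R(T)\to K^T(X)$ is an isomorphism --- is true, but its proof goes through the induction isomorphisms $K^T(X)\cong K^B(X)\cong K^G(G/B\times X)$ together with a genuinely nontrivial input: Merkurjev's base-change theorem, equivalently the Steinberg-basis/K\"unneth-decomposition-of-the-diagonal argument for $G/B$, which is exactly where the torsion-free $\pi_1$ hypothesis enters. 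So either quote that theorem, or, for the case the paper actually uses ($G=\prod_i GL(d_i)$), replace your filtration argument by the equivariant projective bundle theorem applied to the flag bundle $G/B\times X\to X$, which is an iterated projective bundle of equivariant vector bundles and gives the same conclusion elementarily. Finally, you should say one word about why the bijection respects the convolution products; since your map is the natural one (restriction of equivariance, with $R(T)=K^T(\pt)$ acting centrally), this is automatic, but it deserves a sentence. With these repairs the proof is complete.
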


\begin{definition} \label{d:hyperplanes}
There are two collections of hyperplanes (i.e. codimension-1 subtori) in $T$, 
\begin{enumerate}
    \item If $\alpha \in X^*(T)$ is a root of $G$, we define $$ \wt H_\alpha = \{y^{\alpha}=1\} \In T$$ as {\bf the downstairs root hyperplane for root $\alpha$}. 
    \item If $w \in X^*(T)$ is a weight of $N$ under action of $T$, then we define $$ \wt H_w= \{y^{w} =1\} \In T $$ as {\bf the downstairs matter hyperplane for weight $w$}.      
\end{enumerate}

For $\bullet = \alpha$ or $w$, let $\wt \mu^{-1}(\wt  H_\bullet)$ be the unfolded upstairs (root or matter) hyperplane. Let the downstairs root or matter divisor $H_{\bullet} \In T/W$ be the $W$ quotient image of $\wt H_\bullet$, and let the upstairs root or matter divisors be $\mu^{-1}(H_\bullet)$. 
\end{definition}

Now we discuss the geometry near certain nice part of the divisor. We first introduce the following notation. 

\begin{definition}
Let $X$ be a smooth variety and let $D_1, \cdots, D_n$ be a collection of  divisor that pairwise intersects transversely in $X$. 
Let $D_i^{sing}$ be the singularity loci of $D_i$, and $D_i^{sm} = D_i \RM D_i^{sing}$. We define the 'punctured' divisor $D_i^o$ and its open neighborhood $U(D_i^o)$
$$ D_i^o = D_i \RM (D_i^{sing} \cup \cup_{j \neq i} D_j), \quad U(D_i^o) = X \RM (D_i^{sing} \cup \cup_{j \neq i} D_j). $$
We also define two open subsets in $X$, $X_O \In X_o \In X$, 
$$ X_o := \cup_{i=1}^n U(D_i^o) , \quad X_O:= \cap_{i=1}^n  U(D_i^o).$$
Note that $X_O= X\RM \cup_i D_i$. 
\end{definition}

\begin{proposition} \cite[Section 6]{BFN} \label{p:local-geom}
Consider the collection of all root hyperplanes $\{\wt H_\alpha\}$ and matter hyperplanes $\{\wt H_w\}$ in $T$, assuming that no root hyperplane coincides with any weight hyperplane. 
Let $t \in T_o$, then we have the following three cases: 
\begin{enumerate}
    \item \label{trivialization on O locus} If $t \in T_O$, i.e. $t$ is not on any matter or root hyperplanes, then the fiber $\wt \MCB(G,N)|_t= T^\vee$. More precisely, we have 
    $$ \wt \MCB(G,N)|_{T_O} \cong \wt \MCB(T,0)|_{T_O} = T^\vee \times T_O. $$
    \item If $t \in \wt H_\alpha^o$ for a root hyperplane,  then over the neighborhood $U(\wt H_\alpha^o)$ of $t$,  we have 
    $$ \wt \MCB(G, N)|_{U(\wt H_\alpha^o)} \cong \wt \MCB(G_\alpha, 0)|_{U(\wt H_\alpha^o)} $$ 
    where $T \In G_\alpha \In G$ is closed reductive subgroup that only contains root $\pm \alpha$.  
    Note that explicit coordinates on the RHS are given
    by Prop. \ref{pure gauge theory coordinates}. 
    \item If $t \in \wt H_w^o$ for a matter hyperplane, then over the neighborhood $U(\wt H_w^o)$ of $t$,  we have  
    $$\wt \MCB(G, N)|_{U(\wt H_w^o)} \cong \wt \MCB(T, N_{w})|_{U(\wt H_w^o)} $$
    where $N_w$ is the $w$-weight subspace of $N$ under the action of $T$. 
    Note that explicit coordinates on the RHS are given by 
    Prop. \ref{prop abelian coulomb branch coordinates}.
\end{enumerate}
\end{proposition}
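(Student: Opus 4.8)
The plan is to recast all three assertions as statements about localizations of the single $\O(T)$-algebra $\O(\wt\MCB(G,N)) = K^T(R_{G,N})$. In each case the locus in question ($T_O$, $U(\wt H_\alpha^o)$, or $U(\wt H_w^o)$) is the complement in $T$ of a union of root or matter subtori, so the claim amounts to this: the localization of $K^T(R_{G,N})$ at the characters $y^\beta-1$ (resp.\ $y^v-1$) cutting out the deleted hyperplanes is isomorphic, as an $\O(T)$-algebra, to the analogous localization of $K^T(R_{G',N'})$ for the reduced data $(G',N')$. The mechanism throughout is the one already visible in Proposition~\ref{p:abelianize}: in abelianized coordinates each monopole operator is a sum whose \emph{matter} factors all have the form $1-y^{-w}$ ($w$ a weight of $N$) and whose \emph{root/affine-Grassmannian} factors all have the form $1-y^{-\beta}$ ($\beta$ a root of $G$), so inverting the equation of one hyperplane turns every contribution transverse to it into a unit; the hypothesis that no root hyperplane coincides with a matter hyperplane ensures the three cases are genuinely distinct and that these localizations do not interfere.

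For Case~1 I would first delete all root hyperplanes, so that the isomorphism $(\iota_*)_{loc}$ recalled in Section~\ref{localization section} gives $K^T(R_{G,N})_{loc}\cong K^T(R_{T,N})_{loc}$; then delete all matter hyperplanes, so that by Proposition~\ref{p:abelian-remove-matter} (with $N'=0$, its correction exponents now producing units) the map $q^*\colon K^T(R_{T,N})_{loc}\to K^T(R_{T,0})_{loc}$ is an isomorphism. Composing yields $K^T(R_{G,N})_{loc}\cong K^T(R_{T,0})_{loc} = \O(T^\vee\times T_O)$, which is (1). Case~3 runs the same way, except that in the second step one inverts only the matter hyperplanes \emph{other than} $\wt H_w$ and applies Proposition~\ref{p:abelian-remove-matter} with $N'=N_w$ the sum of the weight spaces whose characters cut out $\wt H_w$; the remaining correction factors are again units, giving $K^T(R_{G,N})_{loc}\cong K^T(R_{T,N_w})_{loc}$, for which Proposition~\ref{prop abelian coulomb branch coordinates} supplies explicit coordinates.

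Case~2 is the substantive one, since the characters $y^{\pm\alpha}-1$ are \emph{not} inverted, so $(\iota_*)_{loc}$ is unavailable. I would split it into two reductions. \emph{Removing the matter:} deleting all matter hyperplanes (which, by hypothesis, does not invert $y^{\pm\alpha}-1$), the non-abelian analogue of Proposition~\ref{p:abelian-remove-matter} should give $K^T(Gr_G)_{loc}\cong K^T(R_{G,N})_{loc}$; because every matter correction factor is already a unit in $\O(T)_{loc}$ before one inverts $y^{\pm\alpha}-1$, this can be checked after the further (injective) localization at all roots, where it reduces to the abelian matter-removal statement of Proposition~\ref{p:abelian-remove-matter}. \emph{Reducing the group:} by the torus-equivariant form of Proposition~\ref{pure gauge theory coordinates} (cf.\ \cite{BFM}, dropping $W$-invariance), $K^T(Gr_G)=\C[T^\vee\times T,\ \{\tfrac{e^{\beta^\vee}-1}{e^\beta-1}\}_{\beta\in R}]$, and inverting $e^\beta-1$ for every root $\beta\neq\pm\alpha$ makes the generator $\tfrac{e^{\beta^\vee}-1}{e^\beta-1}$ redundant (it then lies in $\C[T^\vee\times T][(e^\beta-1)^{-1}]$); what survives is $\C[T^\vee\times T,\ \tfrac{e^{\alpha^\vee}-1}{e^\alpha-1},\ \tfrac{e^{-\alpha^\vee}-1}{e^{-\alpha}-1}]_{loc}$, which is exactly the same localization of $K^T(Gr_{G_\alpha})$ — the root system of $G_\alpha$ being $\{\pm\alpha\}$. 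Chaining the two reductions gives $\wt\MCB(G,N)|_{U(\wt H_\alpha^o)}\cong\wt\MCB(G_\alpha,0)|_{U(\wt H_\alpha^o)}$, which is (2).

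I expect the main obstacle to be the first reduction in Case~2 — removing matter away from its hyperplanes for a \emph{non-abelian} group — since this requires controlling the full convolution-algebra structure of $K^T(R_{G,N})$ and not merely a generating set, and is not a formal consequence of the abelian Proposition~\ref{p:abelian-remove-matter}. One alternative would be to deduce it from Proposition~\ref{p:invert} after adjoining to $G$ a central torus that scales the matter with positive weights, at the cost of some bookkeeping to relate the enlarged Coulomb branch back to $\MCB(G,N)$. Either way this is precisely the local analysis carried out in \cite[\S6]{BFN}, which I would follow; everything else reduces to the elementary rank-one computations above.
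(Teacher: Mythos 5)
The paper does not actually prove Proposition \ref{p:local-geom}: it is quoted from \cite[Section 6]{BFN} with no internal argument, so there is nothing to match your write-up against except that citation. Judged on its own terms, your cases (1) and (3) are sound and use exactly the tools the paper recalls: over the relevant loci all root hyperplanes are inverted, so the localization isomorphism $(\iota_*)_{loc}$ of Section \ref{localization section} reduces everything to the torus theory, and Proposition \ref{p:abelian-remove-matter} (whose correction factors become units once the corresponding matter equations are inverted, and which is injective because $K^T(R_{T,N})$ is free over $\O(T)$ on the $r_\lambda$) finishes the reduction to $(T,0)$, resp.\ $(T,N_w)$. Your step (b) of case (2), eliminating the generators $\tfrac{e^{\beta^\vee}-1}{e^\beta-1}$ for $\beta\neq\pm\alpha$ in the $T$-equivariant form of Prop.~\ref{pure gauge theory coordinates}, is also fine.

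The genuine gap is step (a) of case (2), and you have flagged it yourself, but neither of your proposed repairs closes it. Along $U(\wt H_\alpha^o)$ the equations $y^{\pm\alpha}-1$ are not inverted, so: (i) a non-abelian matter-removal homomorphism $K^T(R_{G,N})\to K^T(Gr_G)$ is not among the recalled tools (only the abelian Prop.~\ref{p:abelian-remove-matter} is), and even granting its existence, verifying it is an isomorphism ``after the further localization at all roots'' can only give injectivity --- surjectivity of a map cannot be detected after a further localization, and surjectivity before inverting $y^{\pm\alpha}-1$ is exactly the point at issue; (ii) Prop.~\ref{p:invert} requires a central cocharacter of $G$ itself with nonnegative weights on $N$ and $N^\xi=0$, which generally does not exist, and adjoining a flavor torus to the gauge group changes the Coulomb branch (one lands in the family $\MCB(G,N)_{T_F}$), so the ``bookkeeping'' you defer is precisely the missing content. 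Your closing move --- follow \cite[Section 6]{BFN} --- is what the paper itself does; note also that the argument there is a single uniform localization statement (at a generic point $t$ of a wall only the centralizer $Z_G(t)$ and the subspace of $N$ fixed by $t$ contribute), of which all three cases, including your easy ones, are specializations, rather than the two-step matter-then-group factorization you attempt.
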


\subsection{Deformation}

\begin{definition} \cite[Section 3(viii)]{BFN}
If we have a torus $T_F$ that acts on $N$, and the action commutes with the $G$ action. Then, we have
the $T_F$-family Coulomb branch $$ \MCB(G,N)_{T_F}: = \Spec K^{G(\O) \times T_F}(R_{G,N}). $$
with a map 
$$ \mu_F: \MCB(G,N)_{T_F} \to   T_F. $$
For $\beta \in T_F$, we define the deformed multiplicative Coulomb branch as
$$ \MCB(G,N)_{\beta} := \mu_F^{-1}(\beta). $$
\end{definition}


In the abelian gauge theory with matter, we have a deformed version of Prop \ref{prop abelian coulomb branch coordinates}. 

\begin{proposition} \label{prop deformed abelian coulomb branch coordinates} 
Let $T$ be a torus, $N = \oplus_{i=1}^n \C_{\xi_i}$ be its representation with weights $\xi_1, \cdots, \xi_n$. Let $T_F$ acts on $\C_{\xi_i}$ with weight $\eta_i \in X^*(T_F)$.  
The, the $T_F$-deformed multiplicative Coulomb branch algebra $\acal_C^\times (T,N)_{T_F}$ is generated as an algebra over $\O(T) \otimes \O(T_F)$ with generators $\{r_\lambda\}_{\lambda \in X_*(T)}$, satisfying relations
$$ r_\lambda r_\mu = r_{\lambda+\mu} \cdot \prod_{i=1}^n (1 - a^{-\eta_i} y^{-\xi_i})^{d(\xi_i(\lambda), \xi_i(\mu))} $$
where we use $y^\xi$ and $a^\eta$ as character functions on $T$ and $T_F$, respectively. 
\end{proposition}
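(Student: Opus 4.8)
The plan is to run the proof of Proposition~\ref{prop abelian coulomb branch coordinates} (i.e.\ \cite{BFN} Prop.~4.1) while carrying along the commuting flavor torus $T_F$: introducing $T_F$ alters none of the spaces or maps, only the equivariant parameters. By \cite[Section 3(viii)]{BFN} the deformed algebra is literally $\acal_C^\times(T,N)_{T_F}=K^{T(\O)\times T_F}(R_{T,N})$ with the \cite{BFN} convolution product, and every structural fact we invoke — the disjoint‑union description of $R_{T,N}$, injectivity of $h_*$, and the abelianization formula of \cite{BFN-slice} recalled in Proposition~\ref{p:abelianize} — holds verbatim after replacing $T(\O)$‑equivariance by $T(\O)\times T_F$‑equivariance.

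\emph{Module structure (generation, and no hidden relations).} Since $T$ is a torus, $Gr_T=X_*(T)$ is discrete, so $R_{T,N}=\coprod_{\lambda\in X_*(T)}R^\lambda_{T,N}$, with each component $R^\lambda_{T,N}$ a $T(\O)\times T_F$‑equivariant (infinite‑dimensional) affine space retracting onto the lattice point $t^\lambda\in Gr_T$. Hence $K^{T(\O)\times T_F}(R^\lambda_{T,N})\cong K^{T\times T_F}(\pt)=\O(T)\otimes\O(T_F)$ is free of rank one on $r_\lambda:=[\mathcal{O}_{R^\lambda_{T,N}}]$, and summing over $\lambda$ exhibits $\acal_C^\times(T,N)_{T_F}$ as the free $\O(T)\otimes\O(T_F)$‑module on $\{r_\lambda\}_{\lambda\in X_*(T)}$. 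In particular the $r_\lambda$ generate as an algebra, and once the product relations below are verified they already collapse the abstract presentation onto the same free module, so there are no further relations.

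\emph{Structure constants, and matching the formula.} By the convolution construction the product $r_\lambda r_\mu$ is supported on $R^{\lambda+\mu}_{T,N}$, hence $r_\lambda r_\mu=c_{\lambda,\mu}\,r_{\lambda+\mu}$ for a unique $c_{\lambda,\mu}\in\O(T)\otimes\O(T_F)$, which I would pin down by abelianizing $r_\lambda$ itself. In the abelian case Proposition~\ref{p:abelianize} degenerates — $W$ is trivial and $Gr^\lambda$ is a point, so the tangent‑space denominator disappears — to $ab(r_\lambda)=x^\lambda\cdot\chi_{T\times T_F}(t^\lambda N(\O)/(t^\lambda N(\O)\cap N(\O)))$, where $N(\O)$ is taken as a $T\times T_F$‑representation. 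For the summand $\C_{\xi_i}$ this quotient is a sum of $\max(0,-\la \xi_i,\lambda\ra)$ copies of the one‑dimensional $T\times T_F$‑representation of weight $(\xi_i,\eta_i)$ — the $t$‑powers are invisible since loop rotation is not tracked — so $\chi_{T\times T_F}$ turns it into $(1-a^{-\eta_i}y^{-\xi_i})^{\max(0,-\la \xi_i,\lambda\ra)}$, giving $ab(r_\lambda)=x^\lambda\prod_i(1-a^{-\eta_i}y^{-\xi_i})^{\max(0,-\la \xi_i,\lambda\ra)}$ (equivalently, the $T_F$‑equivariant version of Proposition~\ref{p:abelian-remove-matter} with $N'=0$). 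Multiplying and using the elementary integer identity $\max(0,-a)+\max(0,-b)-\max(0,-(a+b))=d(a,b)$ — a one‑line check on the signs of $a=\la \xi_i,\lambda\ra$ and $b=\la \xi_i,\mu\ra$ — yields $ab(r_\lambda)\,ab(r_\mu)=ab(r_{\lambda+\mu}\prod_i(1-a^{-\eta_i}y^{-\xi_i})^{d(\la \xi_i,\lambda\ra,\la \xi_i,\mu\ra)})$. Since for $G=T$ there are no root hyperplanes, $ab=z^*h_*$ with $h_*$ injective and $z^*$ an isomorphism, so $ab$ is injective and the stated relation follows.

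\emph{Main obstacle.} There is no real obstacle; the content is bookkeeping. The one place needing care is ensuring the flavor weight enters exactly once per matter summand and that no spurious loop‑rotation weight contaminates the $\chi$ (Euler) class — i.e.\ checking that the relevant $T\times T_F$‑representation is genuinely a sum of copies of the weight‑$(\xi_i,\eta_i)$ line with the $t$‑grading inert — together with confirming that the cited \cite{BFN}/\cite{BFN-slice} inputs are available $T_F$‑equivariantly, which is immediate from \cite[Section 3(viii)]{BFN}.
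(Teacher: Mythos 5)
Your proposal is correct and follows essentially the same route as the paper: the paper's proof simply observes that enlarging the equivariant group from $T$ to $T\times T_F$ replaces each Euler factor $1-y^{-\xi_i}$ by $1-a^{-\eta_i}y^{-\xi_i}$ in the relations of Proposition \ref{prop abelian coulomb branch coordinates}, citing \cite{BFN} for the underlying abelian computation. You have merely spelled out the details that the paper delegates to that citation (freeness of the module on the $r_\lambda$, the abelianized formula with the $\max(0,-\langle\xi_i,\lambda\rangle)$ exponents, injectivity of $ab$ in the toric case, and the $\max$/$d(k,l)$ identity), all of which check out.
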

\begin{proof}
Comparing with Prop \ref{prop abelian coulomb branch coordinates}, we only enlarged the equivariant group from $T$ to $T \times T_F$. A one dimensional representation of $T \times T_F$ of weight $(\xi, \eta) \in X^*(T) \times X^*(T_F)$ have Euler characteristic $1 - a^{-\eta} y^{-\xi}$. Hence we only need to replace  $1 - y^{-\xi_i}$ by $1-y^{-\xi_i} a^{-\eta_i}$ in the formulas in Prop \ref{prop abelian coulomb branch coordinates}. See see BFN section 4(iii) for more details.  
\end{proof}

\begin{example}
Let $G = \C^*$, $V=\C^2$ with $\C^*$ acting diagonally.  The undeformed Coulomb branch is given by 
$$  \MCB(G,N) = \{uv=(1-1/y)^2\} \In \C_u \times \C_v \times \C^*_y. $$
Let $T_F= (\C^*)^2$ acts on $\C^2$ canonically. Then, 
$$ \MCB(G,N)_{T_F} = \{uv = (1-a_1/y)(1-a_2/y)\}\In \C_u \times \C_v \times \C^*_y \times (\C^*)^2_{a_1,a_2}. $$ 
\end{example}

\section{Cylindrical model for disks in  Coulomb branches} \label{s:MCB-quiver}

\subsection{Quiver Gauge Theory}
Let $\Gamma$ be a quiver (i.e. directed graph), with (abusing notation) $\Gamma = \{(1), \cdots, (r)\}$ the set of nodes. Let $\bar \Gamma$  be the framed quiver associated with $\Gamma$, that has an additional `framing node' $[i]$ and additional arrow $[i] \to (i)$ for each node $(i) \in \Gamma$. Here we use `box' $[\,]$ and `circle' $(\,)$ to distinguish the added framing nodes and the original nodes in $\Gamma$. 

A representation of the framed quiver $\bar Q$ is given by a collection of vector spaces for each node and a linear map for each arrow. Concretely, for a `circle' node $(i)$ we attach vector space $V_{(i)} \cong \C^{d_i}$, for a framing node $[i]$ we attach $W_{[i]} \cong \C^{m_i}$. The tuple $\vec d=(d_i), \vec m=(m_i)$ are called dimension vectors.  The moduli stack of representations is $N / G$ where 
$$ N = N_B \oplus N_F, \quad N_B=\bigoplus_{e: (i) \to (j)} \Hom(\C^{d_i}, \C^{d_j}), \quad N_F = \bigoplus_{[i] \to (i)} \Hom(\C^{m_i}, \C^{d_i})$$ 
and 
$$ G = \prod_{(i)} GL(d_i), \quad T= \prod_{(i)} (\C^*)^{d_i}, \quad T_F = \prod_{[i]} (\C^*)^{m_i}.$$
An element $y \in T$ will have coordinates $(y_{i,\alpha})$ where $i=1,\cdots, r$, and $1 \leq \alpha \leq d_i$. An element $\beta \in T_F$ will have 
 coordinates $(a_{i,\alpha})$ where $i=1,\cdots, r$, and $1 \leq \alpha \leq m_i$.

\subsection{Examples}\label{ss:example}
Here we give some simple examples to illustrate the abelianization procedure. We draw gauge nodes as circles, framing nodes as boxes, and the number inside the quiver are the dimensions of the vector spaces. 

\begin{example}\label{ex:pure-abelian}
$$    \begin{tikzpicture}[square/.style={regular polygon, regular polygon sides=4}]
  \node[draw,circle] (c1) at (0,0) {1};
\end{tikzpicture}
 $$
    Let $G = GL_1$ and $N=0$. In this case,  $T_{G,N} = R_{G,N} = Gr_{T} \cong \Z$, and the $G(\O)$ action on $R_{G,N}$ is trivial. Hence we have 
    the multiplicative Coulomb branch algebra
    $$  \acal^\times (T,0) = K_*^{T(\O)}(Gr_{T}) = K_T^*(pt) \otimes \C[X_*(T)] = \ocal(T \times T^\vee).
    $$
    The same is true for $G=T = (\C^*)^n$. 
\end{example}

\begin{example}\label{ex:(1)-(1)}
$$
    \begin{tikzpicture}[square/.style={regular polygon, regular polygon sides=4}]
  \node[draw,circle] (c1) at (0,0) {1};
  \node[draw,circle] (c2) at (1.5,0) {1};
   \draw[->] (c2) -- (c1);
\end{tikzpicture}
$$
$G=GL_1 \times GL_1, N=C$ with weight $1$ under the first $GL_1$ and $-1$ under the second $GL_1$. 
We have monopole generators $r_{i,j}$, for $i, j \in \Z$, they satisfy relation as in Proposition \ref{prop abelian coulomb branch coordinates}. In particular, we have 
$$ r_{1,1} r_{-1,-1} = 1, \quad r_{1,0} r_{-1,0} = 1 - y_2 / y_1, \quad r_{0, 1} r_{0, -1} = 1 - y_2 / y_1. \quad r_{1,0} r_{0,1} = r_{1,1} (1 - y_2 / y_1).$$
One can see that $r_{1,0}, r_{0,1}, r_{1,1}, r_{-1,-1}$ are generators of the coordinate ring over the base ring $\C[y_1^\pm, y_2^\pm]$. If we write $u = r_{1,0}, v = r_{0,1}, z = r_{-1,-1}$, then we have 
$$ \MCB(G,N) = \{ uvz = 1-y_2/y_1 \} \In \C_u \times \C_v \times \C^*_z \times (\C^*)^2_{y_1,y_2} $$
The map $\mu: \MCB(G,N) \to T$ is taking the $(y_1, y_2)$ coordinate. 

It is still worthwhile to consider the abelianization process. We have the abelianization map 
$$ \O(\MCB(G, N)) \to \O(\MCB(T, 0))_{loc}, $$
where we use $x_i^\pm$ as coordinates on $T^\vee \cong (\C^*)^2$ and $y_i^\pm$ as coordinates on $T \cong (\C^*)^2$. The abelianization map does
$$ r_{1,0} \mapsto x_1, \quad  r_{-1,0} \mapsto x_1^{-1}( 1- y_2/y_1),$$ 
$$ r_{0,1} = x_2 (1 - y_2 / y_1) \quad r_{0, -1} \mapsto x_2^{-1}, $$
$$ r_{1,1} \mapsto x_1 x_2, \quad  r_{-1,-1} \mapsto x_1^{-1} x_2^{-1}. $$
\end{example}

\begin{example}\label{ex:[1]-(1)}
$$    \begin{tikzpicture}[square/.style={regular polygon, regular polygon sides=4}]
  \node[draw,square] (b1) at (0,1) {1};
  \node[draw,circle] (c1) at (0,0) {1};
  \draw[->] (b1) -- (c1);
\end{tikzpicture}
$$
Let $G = GL_1$ and $N=\C$. 
We have 
$$ \acal^\times(G, N) =\oplus_{\lambda \in \Z} \C[y, y^{-1}] r_\lambda, $$
such that 
$$ r_\lambda r_\mu = r_{\lambda+\mu} \cdot \begin{cases}
  1 & \text{if $\lambda, \mu$ have the same sign} \\
  (1 - 1/y)^{\min(|\lambda|, |\mu|)}& \text{otherwise}. 
\end{cases}. 
$$
We write $r_0$ as $1$, and $r_1=u,r_{-1}=v$, then we have
$$ \acal^\times(G, N) = \C[y, y^{-1}, u, v] / (uv = 1-1/y). $$
We have the map
$$ \mu: \MCB(GL_1, \C) \to \C^*, \quad (u,v,y) \mapsto y. $$
then the fiber over $y\neq 1$ is isomorphic to $\C^*_u$, and the fiber over $y=1$ is a cone $uv=0$. 
\end{example}

\begin{example}\label{ex:gl2}
$$
    \begin{tikzpicture}[square/.style={regular polygon, regular polygon sides=4}]
  \node[draw,circle] (c1) at (0,0) {2};
\end{tikzpicture}
$$
$G=GL_2, N=0$. See \cite[Section 3.8]{BFM}. The coordinate ring of $\MCB(G,N)$ is
$$ \C\left[x_1^\pm, x_2^\pm, y_1^\pm, y_2^\pm, \frac{x_1 - x_2 }{y_1 - y_2}\right]^{S_2}.  $$
where the symmetric group $S_2$ permute the indices of $x_i$ and $y_i$. 

Using Proposition \ref{p:abelianize}, we write 
miniscule monopole operators in abelianized coordinates. 
Let $\lambda = [n_1, n_2] \in \Z^2 / S_2 = X_*(T)/S_2$. 
We have: 
$$ r_{[1,1]} \mapsto (x_1 x_2), \quad r_{[1,0]} \mapsto \left(\frac{x_1}{1-y_2/y_1} +\frac{x_2}{1-y_1/y_2} \right). $$
$$ r_{[-1,-1]} \mapsto (x_1 x_2)^{-1}, \quad r_{[-1,0]} \mapsto \left(\frac{x_1^{-1}}{1-y_1/y_2} + \frac{x_2^{-1}}{1-y_2/y_1} \right). $$
\end{example}

\begin{example}\label{ex:(2)-(1)}
$$
    \begin{tikzpicture}[square/.style={regular polygon, regular polygon sides=4}]
  \node[draw,circle] (c1) at (0,0) {2};
  \node[draw,circle] (c2) at (1.5,0) {1};
   \draw[->] (c2) -- (c1);
\end{tikzpicture}
$$
$G=GL_2 \times GL_1, N=\C^2$, where $N$ is the standard representation under the first factor $GL_2$, and of weight $(-1,-1)$ under $GL_1$. 

Using Proposition \ref{p:abelianize} and Eqs. \eqref{FT eq1}, \eqref{FT eq2}, we can compute the abelianization of some minuscule operators, 
$$ r_{[1,0], 0} \mapsto \frac{x_{1,1}}{1 - y_{1,2} / y_{1,1}} +  \frac{x_{1,2}  }{1 - y_{1,1} / y_{1,2}}  $$
$$ r_{[1,1], 0} \mapsto x_{1,1} x_{1,2} , \quad  r_{[-1,-1], 0} \mapsto (x_{1,1} x_{1,2})^{-1} (1 - y_2 / y_{1,1})(1 - y_2 / y_{1,2}). $$
$$ r_{[-1,0], 0} \mapsto \frac{x_{1,1} (1 - y_2 / y_{1,1}) }{1 - y_{1,1} / y_{1,2}} +  \frac{x_{1,2} (1 - y_2 / y_{1,2}) }{1 - y_{1,2} / y_{1,1}} $$
$$ r_{[0,0], 1} \mapsto x_2 (1 - y_2 / y_{1,1})(1 - y_2 / y_{1,2}) , \quad  r_{[0,0], -1} \mapsto x_2^{-1}.$$

\end{example}

\subsection{Deformed Coulomb branch and formulas for monopole operators } \label{ss:quiver-local-geom}

Let $Q, \vec d, \vec m$ be the fixed quiver data; we consider the corresponding group $G$ and representation $N$.  We write $\beta \in T_F$ for a deformation parameter.  Let us observe: 

\begin{proposition} \label{removing framing}
There is a canonical isomorphism
$$\MCB(G, N)_\beta \RM \{r_{(\vec 1, \cdots, \vec 1)} = 0 \}  = \MCB(G, N_B). $$
In particular, the left hand side is independent of the parameter $\beta$. 
\end{proposition}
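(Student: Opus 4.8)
The plan is to recognize the coweight $\xi := (\vec 1, \dots, \vec 1) \in X_*(T)$ as a \emph{central} cocharacter of $G = \prod_{(i)} GL(d_i)$ — namely the product of the scaling cocharacters $t \mapsto t\cdot\mathrm{Id}_{\C^{d_i}}$ — and then to apply Proposition \ref{p:invert} (i.e.\ \cite[Theorem 2.9]{KWWY22}) in a $T_F$-equivariant form. First I would compute the weights of $N = N_B \oplus N_F$ under this central $\C^*$: on a summand $\Hom(\C^{d_i}, \C^{d_j})$ of $N_B$ the action is $\phi \mapsto (t\,\mathrm{Id})\,\phi\,(t\,\mathrm{Id})^{-1} = \phi$, so $N_B$ sits in weight $0$; on a summand $\Hom(\C^{m_i}, \C^{d_i})$ of $N_F$, where $G$ acts only through the target, the action is $\phi \mapsto (t\,\mathrm{Id})\,\phi = t\phi$, so $N_F$ sits in weight $1$. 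Hence $N$ restricted to $\xi$ has only non-negative weights and $N^\xi = N_B$, which verifies the hypotheses of Proposition \ref{p:invert}. Since a central cocharacter has a single-point $G(\O)$-orbit in $Gr_G$, the class $r_\xi$ appearing there coincides with the monopole operator $r_{(\vec 1,\dots,\vec 1)}$ of Definition \ref{monopole operators}, and it is $W$-invariant, hence descends to $\MCB(G,N)$.

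Next I would run Proposition \ref{p:invert} with the additional $T_F$-equivariance; since $T_F$ commutes with $G$, the localization argument is unaffected and gives $\acal^\times(G,N)_{T_F}[r_\xi^{-1}] \congto \acal^\times(G, N^\xi)_{T_F} = \acal^\times(G, N_B)_{T_F}$. As $T_F$ acts trivially on $N_B$ (which involves no framing vector space), the right-hand side is just $\acal^\times(G, N_B)\otimes_\C \O(T_F)$. Now I base change along $\O(T_F)\to\C_\beta$: inverting a single element commutes with $-\otimes_{\O(T_F)}\C_\beta$, so the left-hand side becomes $\acal^\times(G,N)_\beta[r_\xi^{-1}] = \O\big(\MCB(G,N)_\beta \RM \{r_\xi = 0\}\big)$ while the right-hand side becomes $\acal^\times(G, N_B)$. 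Taking $\Spec$ yields the asserted isomorphism, and independence of $\beta$ is immediate because $\MCB(G, N_B)$ does not involve $\beta$ at all.

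The step I expect to require the most care is the $T_F$-equivariant upgrade of Proposition \ref{p:invert}: one must check that the argument of \cite{KWWY22} — identifying $K^{G(\O)}(R_{G,N})[r_\xi^{-1}]$ with $K^{G(\O)}(R_{G,N^\xi})$ by analyzing the locus in $Gr_G$ on which the defining section of $R^\xi_{G,N}$ is invertible — goes through verbatim after replacing $G(\O)$-equivariance with $(G(\O)\times T_F)$-equivariance. This should hold because $T_F$ acts compatibly on all the spaces and sheaves involved and trivially on the $\xi$-fixed matter $N_B$, but it is the one point that needs to be spelled out rather than cited. A secondary point is matching the normalization of $r_\xi$ in \cite{KWWY22} with $r_{(\vec 1,\dots,\vec 1)}$ of Definition \ref{monopole operators}; here the relevant equivariant line bundle over a point is trivial, so the two classes agree.
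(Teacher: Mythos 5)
Your proof is correct and follows essentially the same route as the paper: apply Proposition \ref{p:invert} (KWWY22, Theorem 2.9) to the central cocharacter $\xi = (\vec 1,\dots,\vec 1)$, note that $N_B$ has weight $0$ and $N_F$ has weight $1$, so $N^\xi = N_B$, and use triviality of the $T_F$-action on $N_B$ for independence of $\beta$. The only difference is that you spell out the $T_F$-equivariant upgrade and the base change along $\O(T_F)\to\C_\beta$, which the paper leaves implicit.
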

\begin{proof}
We can apply \cite[Theorem 2.9]{KWWY22}(recalled here as Proposition \ref{p:invert}) to the central cocharacter $\xi=(\vec 1, \cdots, \vec 1)$, and observe that if $N=N_B \oplus N_F$, then $N_F$ is a direct sum of representations with weight $1$, and $N_B$ is a direct sum  representations with weight $0$. Note that $T_F$ acts on $N_B$ trivially, hence the space is independent of the deformation parameters. 
\end{proof}

In particular, from Proposition \ref{removing framing} we obtain from  (a $T_F$ family of) restrictions of the monopole operators on $\MCB(G, N)$ to functions on $\MCB(G, N_B)$.

For any $\lambda$ minuscule coweight of $G$, we have monopole operator $r_\lambda$ as a global function on $\MCB(G,N)_\beta$.
Using Proposition \ref{p:abelianize} and \ref{p:gln-monopole}, with the adaptation to the deformed Coulomb branch as in Proposition \ref{prop deformed abelian coulomb branch coordinates},  we can express its restriction to $(T^\vee \times T_O)/W$ using abelianized coordinates $(y_{i,\alpha}, x_{i,\alpha})$. 

For $GL(d, \C)$, $l=1, \cdots, d-1$, let 
$$ \omega_l^\vee = (\underbrace{1, \cdots, 1}_{\text{$l$ many $1$s}}, 0, \cdots 0) \In \Z^{d} $$
be $l$-th fundamental coweight, where $1$ appear $l$ times.  Let $\vec 1 = (1,\cdots, 1) \in \Z^d$. We also note that 
$$ -w_0 \omega_l^\vee = (0, \cdots, 0, \underbrace{-1, \cdots, -1}_{\text{$l$ many $-1$s}}).$$ 

For $G = \prod_{i=1}^r GL(d_i)$, let $\omega_{i,j}^\vee$ be the coweight of $G$ that equals $\omega_j^\vee$ on the $i$-th factor. 
$$ \omega_{i,j}^\vee = (\vec 0, \cdots, \vec 0, \underbrace{(1, \cdots, 1, 0, \cdots 0)}_{\text{the $i$-th factor, with $j$ many $1$s.}}, \vec 0, \cdots, \vec 0) \in \Z^{d_1} \times \cdots \times \Z^{d_n}.  $$

In general, we will use  $(\vec a_1, \cdots, \vec a_r) \in \Z^{d_1} \times \cdots \Z^{d_r}$ for a cocharacter of $G$.

\begin{proposition} \label{p:Pn-class}
For $i \in 1, \cdots, r$, the monopole operator $r_{-w_0  \omega_{i,1}^\vee}$ restricted to $(T^\vee \times T_O)/W$ can be written in the abelianized coordinates as
    $$ ab(r_{-w_0  \omega_{i,1}^\vee}) = \sum_{(i,\alpha_i)} x_{i,\alpha_i}^{-1} \frac{\prod_{(j, \alpha_j) \to (i, \alpha_i)}\left(1 - \frac{y_{j, \alpha_j}}{y_{i, \alpha_i}} \right) \prod_{[i,\beta_i] \to (i,\alpha_i)}\left(1 - \frac{a_{i,\beta_i}}{y_{i, \alpha}} \right)}{\prod_{(i,\beta_i) \neq (i,\alpha_i)} (1-\frac{y_{i,\alpha_i}}{ y_{i,\beta_i}})} $$
where the product is over all possible indices  with fixed $i$. 
\end{proposition}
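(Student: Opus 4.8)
The strategy is to reduce the computation to the pure gauge theory statement of Finkelberg--Tsymbaliuk (Proposition~\ref{p:gln-monopole}) and then account for the matter by the abelian ``remove matter'' formula (Proposition~\ref{p:abelian-remove-matter}), carried across the deformation by Proposition~\ref{prop deformed abelian coulomb branch coordinates}. First I would fix attention on a single quiver node $(i)$, so that the relevant coweight $-w_0\omega_{i,1}^\vee$ is minuscule for the factor $GL(d_i) \subset G$ and trivial on all other factors. For the pure gauge group $G = \prod_j GL(d_j)$ with $N = 0$, Proposition~\ref{p:gln-monopole}, Eq.~\eqref{FT eq2} with $r = 1$ and $p = 0$ (so $e_0 = 1$, $S_1$ the tautological line bundle) gives directly
$$ ab(r_{-w_0\omega_{i,1}^\vee}) = \sum_{\alpha_i} x_{i,\alpha_i}^{-1}\,\frac{1}{\prod_{\beta_i \neq \alpha_i}\bigl(1 - y_{i,\alpha_i}/y_{i,\beta_i}\bigr)}. $$
This is the desired formula in the special case $N_B = N_F = 0$.

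\textbf{Incorporating the matter.} Next I would turn on the representation $N = N_B \oplus N_F$. By Proposition~\ref{p:abelianize}, the abelianization of $r_\lambda$ differs from the $N=0$ case only by the factor $\chi_T(t^{\lambda'} N(\O)/(t^{\lambda'}N(\O)\cap N(\O)))$ inserted termwise. For $\lambda' $ in the $W$-orbit of $-w_0\omega_{i,1}^\vee$, i.e. $\lambda'$ the cocharacter with a single $-1$ in slot $(i,\alpha_i)$, one computes $t^{\lambda'}N(\O)/(t^{\lambda'}N(\O)\cap N(\O))$ weight by weight: a weight-$w$ summand $\C_w$ of $N$ contributes a one-dimensional space precisely when $\langle w, \lambda'\rangle < 0$, i.e. $\langle w, \lambda'\rangle = -1$. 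Going through the weights of $N_B = \bigoplus_{(j)\to(k)}\Hom(\C^{d_j},\C^{d_k})$ and $N_F = \bigoplus_{[k]\to(k)}\Hom(\C^{m_k},\C^{d_k})$: the coordinate function on $\Hom$-matrix entry $(j,\alpha_j),(k,\alpha_k)$ has $T$-weight $y_{k,\alpha_k}/y_{j,\alpha_j}$, and pairing with $\lambda' = -e_{i,\alpha_i}$ is negative exactly when the \emph{target} index equals $(i,\alpha_i)$. Hence the surviving factors are, from $N_B$, the arrows $(j,\alpha_j)\to(i,\alpha_i)$ contributing $1 - y_{j,\alpha_j}/y_{i,\alpha_i}$ under $\chi_T$ (since $\chi_T(\C_w) = 1 - y^{-w}$ and $y^{-w} = y_{j,\alpha_j}/y_{i,\alpha_i}$), and from $N_F$, the framing arrows $[i,\beta_i]\to(i,\alpha_i)$ contributing $1 - a_{i,\beta_i}/y_{i,\alpha_i}$ after the deformation substitution $y^{-w}\mapsto a^{-\eta}y^{-w}$ of Proposition~\ref{prop deformed abelian coulomb branch coordinates}. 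The denominator $\chi_T(T_{\lambda'}Gr^{\omega})$ is unchanged from the pure gauge computation and supplies $\prod_{\beta_i\neq\alpha_i}(1 - y_{i,\alpha_i}/y_{i,\beta_i})$. Assembling these three pieces over each $\alpha_i$ yields exactly the claimed expression.

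\textbf{Main obstacle and bookkeeping.} The conceptual content is light; the real work is index bookkeeping and sign/orientation conventions. The one genuine subtlety I would be careful about is the precise form of the tangent space $T_{\lambda'}Gr^{-w_0\omega_{i,1}^\vee}$ at the $T$-fixed point $t^{\lambda'}$ and the direction of the characters appearing there --- i.e. making sure the denominator comes out as $\prod_{\beta_i\neq\alpha_i}(1 - y_{i,\alpha_i}/y_{i,\beta_i})$ rather than its inverse or the version with indices swapped; this is exactly the point where Eq.~\eqref{FT eq2} (the $-w_0\omega_r$ branch) differs from Eq.~\eqref{FT eq1}, and I would invoke Proposition~\ref{p:gln-monopole} directly rather than recompute. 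A secondary point is checking that restricting from $\MCB(G,N)_\beta$ to $(T^\vee\times T_O)/W$ --- equivalently, working on $\MCB(G,N_B)$ after Proposition~\ref{removing framing} --- is compatible with these abelianized formulas; but since all the manipulations take place inside $\O(\MCB(T,0))_{loc}$, which only inverts the root hyperplanes (all of which are removed in passing to $T_O$), the restriction is transparent. I do not expect any step to present a real difficulty beyond careful tracking of which node is the source and which the target of each arrow.
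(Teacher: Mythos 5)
Your proposal is correct and follows essentially the same route as the paper, whose proof simply cites Propositions \ref{p:abelianize}, \ref{p:gln-monopole} (Eq.~\eqref{FT eq2}) and \ref{p:abelian-remove-matter} together with the deformation substitution of Proposition \ref{prop deformed abelian coulomb branch coordinates}; your weight-by-weight evaluation of the factor $\chi_T\bigl(t^{\lambda'}N(\O)/(t^{\lambda'}N(\O)\cap N(\O))\bigr)$ is just an explicit unpacking of those same ingredients, with the numerator, framing, and denominator factors all coming out as in the stated formula.
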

\begin{proof}
This follows from Propositions \ref{p:abelianize}, \ref{p:gln-monopole} (Eq \eqref{FT eq2}), and \ref{p:abelian-remove-matter}. The effect of deformation is to associate the framing edge $[i,\beta_i] \to (i,\alpha_i)$ the Euler characteristic of $T \times T_F$ instead of $T$ only, hence we get the factor $1 - \frac{a_{i,\beta_i}}{y_{i, \alpha}}$ instead of $ 1 - \frac{1}{y_{i, \alpha}}$ (see Prop \ref{prop deformed abelian coulomb branch coordinates}). 
\end{proof}

\begin{definition} \label{d:u-variable}
    For $y \in T_O$, we define fiberwise coordinates for $T^\vee$, where $i=1,\cdots, r$, and $\alpha_i=1, \cdots, d_i$. 
    \begin{equation} \label{def u coord}
         u_{i,\alpha_i} := x_{i,\alpha_i}^{-1} \frac{\prod_{(j, \alpha_j) \to (i, \alpha_i)}\left(1 - \frac{y_{j, \alpha_j}}{y_{i, \alpha_i}} \right) \prod_{[i,\beta_i] \to (i,\alpha_i)}\left(1 - \frac{a_{i,\beta_i}}{y_{i, \alpha}} \right)}{\prod_{(i,\beta_i) \neq (i,\alpha_i)} (1-\frac{y_{i,\alpha_i}}{ y_{i,\beta_i}})}
    \end{equation}
\end{definition}

The definition is arranged to ensure 
\begin{equation}\label{W from us}
    ab(r_{-w_0  \omega_{i,1}^\vee}) = \sum_{(i,\alpha_i)}  u_{i,\alpha_i}
\end{equation}

\begin{proposition} \label{p:111}
 The monopole operator $r_{(\vec 1, \cdots, \vec 1)}$ restricted to $(T^\vee \times T_O)/W$ can be written in the abelianized coordinates as
$$ 
ab(r_{(\vec 1, \cdots, \vec 1)}) = \prod_{i,\alpha} x_{i,\alpha}
$$
\end{proposition}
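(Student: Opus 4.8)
The claim is that $ab(r_{(\vec 1,\dots,\vec 1)}) = \prod_{i,\alpha} x_{i,\alpha}$ after restriction to $(T^\vee\times T_O)/W$. The cocharacter $\lambda = (\vec 1,\dots,\vec 1)$ is the sum over $i$ of the top fundamental coweights $\omega_{i,d_i}^\vee$ of the factors $GL(d_i)$; each such coweight is central in its factor, so $\lambda$ is a \emph{central} cocharacter of $G$. For a central $\lambda$ the orbit $Gr_G^\lambda$ is a single point, and its Weyl orbit $W\lambda$ is the singleton $\{\lambda\}$. The plan is therefore to apply the abelianization formula \eqref{abelianized coordinates} of Proposition \ref{p:abelianize} directly with $\lambda = (\vec 1,\dots,\vec 1)$ and check that all the correction factors degenerate to $1$.

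First I would reduce, using Proposition \ref{removing framing} / Proposition \ref{p:invert}, the statement about $\MCB(G,N)_\beta$ with $N=N_B\oplus N_F$ to a statement on $\MCB(G,N_B)$ (the matter divisor contributions from $N_F$ would be tracked by the deformation as in Proposition \ref{p:Pn-class}, but here they will vanish). Then apply \eqref{abelianized coordinates}: since $W\lambda = \{\lambda\}$, the sum has a single term, $ab(r_\lambda) = x^\lambda \cdot \chi_T\big(t^\lambda N(\O)/t^\lambda N(\O)\cap N(\O)\big) / \chi_T(T_\lambda Gr^\lambda)$. Now $x^\lambda = \prod_{i,\alpha} x_{i,\alpha}$ is exactly the desired monomial, so it remains to show the quotient of $\chi_T$'s is $1$. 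For the denominator: $Gr^\lambda$ is a point when $\lambda$ is central, so $T_\lambda Gr^\lambda = 0$ and $\chi_T(0)=1$. For the numerator: I would compute $\langle w, \lambda\rangle$ for every $T$-weight $w$ of $N_B$ — a weight of $N_B$ for an arrow $(j)\to(k)$ has the form $e_{k,\alpha} - e_{j,\beta}$, whose pairing with the all-ones cocharacter $\lambda$ is $1-1=0$. Hence $t^\lambda$ acts trivially on every weight line of $N_B(\O)$, so $t^\lambda N_B(\O) = N_B(\O)$, the quotient $t^\lambda N_B(\O)/(t^\lambda N_B(\O)\cap N_B(\O))$ is zero, and $\chi_T$ of it is again $1$. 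Therefore $ab(r_\lambda) = \prod_{i,\alpha} x_{i,\alpha}$.

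Alternatively — and this gives a cleaner bookkeeping that I might prefer to present — one can derive the identity multiplicatively from the abelian generators: in the localized abelian algebra $r_\lambda$ factors as a product over $i$ of $r_{\omega_{i,d_i}^\vee}$, and each of the latter, by Eq. \eqref{FT eq1} (or directly from Proposition \ref{prop abelian coulomb branch coordinates} applied factorwise), abelianizes to $\prod_{\alpha} x_{i,\alpha}$ with no denominator because the relevant subset $J$ is all of $\{1,\dots,d_i\}$ and the product $\prod_{\alpha\in J,\beta\notin J}$ is empty. Multiplying over $i$ gives $\prod_{i,\alpha} x_{i,\alpha}$, and the matter factors in Proposition \ref{p:abelian-remove-matter} / the deformation of Proposition \ref{prop deformed abelian coulomb branch coordinates} contribute exponents $\max(0,-\langle\xi_i,\lambda\rangle)=0$ since, as computed above, all matter weights pair to $0$ with the central $\lambda$.

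\textbf{Main obstacle.} There is no serious obstacle: the only thing to be careful about is the bookkeeping of exactly which Euler-characteristic factors appear (the $N(\O)$ vs.\ $N(\O)\cap N(\O)$ quotient, and the framing/deformation factors), and to confirm that \emph{every} relevant weight of $N_B$ pairs to zero with the all-ones cocharacter. Once that vanishing is observed, the formula collapses immediately. The slight subtlety worth a sentence in the writeup is that $Gr^\lambda$ is a point precisely because $\lambda$ is central, so there is genuinely no denominator — this is what distinguishes $r_{(\vec 1,\dots,\vec 1)}$ from the $r_{-w_0\omega_{i,1}^\vee}$ of Proposition \ref{p:Pn-class}, whose abelianization does carry nontrivial denominators.
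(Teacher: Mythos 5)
Your proposal is correct and takes essentially the same route as the paper, whose proof simply invokes Proposition \ref{p:abelianize} and notes that $Gr_G^{\lambda}$ for $\lambda=(\vec 1,\dots,\vec 1)$ is a point; your writeup just spells out the vanishing of the numerator $\chi_T$-factor and the singleton Weyl orbit that the paper leaves implicit. One small slip worth fixing: the framing-matter weights pair to $+1$ (not $0$) with the central cocharacter, but since $\max(0,-1)=0$ this does not affect the conclusion.
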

\begin{proof}  We use Proposition \ref{p:abelianize}. 
The $G(\O)$ orbit $Gr_G^\lambda$ for the coweight $\lambda =  (\vec 1, \cdots, \vec 1)$ is a point. 
\end{proof}

\subsection{Some geometry of deformed Coulomb branch} 

We now study the geometry of $ \MCB(G,N)_\beta$.
As in Eq \eqref{e:W-unfold}, 
we consider the $W$-fold cover, where $W$ is the Weyl group of $G$. 

\begin{equation} \label{e: deformed W-unfold}
 \begin{tikzcd}
    \wt \MCB(G,N)_\beta  \ar[r] \ar[d, "\wt \mu"] & \MCB(G,N)_\beta \ar[d, "\mu"]   \\
    T \ar[r] &  T/W
\end{tikzcd}
\end{equation}

Over $T_O$, we have fiber $T^\vee$ with fiber coordinates $x_{i,\alpha_i}$, where $i=1,\cdots,r$ and $1 \leq \alpha_i \leq d_i$.  

There are three types of hyperplanes  in $T$, over which the fiber of $\wt \mu$ degenerate. For each such hyperplane $H$, we will describe the fiber behavior over the neighborhood $U(\wt H^o)$. 
\begin{enumerate}
    \item (Root Hyperplane) For each circle node $(i)$, we have downstairs root hyperplanes $H_{i, \alpha_i, \alpha_i'}$ labelled by $1 \leq \alpha_i < \alpha_i' \leq d_i$:
    $$ \wt H_{i, \alpha_i, \alpha_i'} := \{y_{i,\alpha_i} = y_{i,\alpha'_i}\}.$$    
    We write the function cutting out the union of all root hyperplanes as
    \begin{equation}
        \Delta_R := \prod_{(i,\alpha)\neq (i, \beta)}
        (y_{i, \alpha} - y_{i, \beta})
    \end{equation}

    \item (Internal matter hyperplane) For each internal edge $(i) \to (j)$, we have downstairs weight hyperplanes $H_{(i, \alpha_i) \to (j, \alpha_j)}$  labelled by $1 \leq \alpha_i \leq d_i$, $1 \leq \alpha_j \leq d_j$ 
    $$ \wt H_{(i, \alpha_i) \to (j, \alpha_j)} := \{y_{i,\alpha_i} = y_{j,\alpha_j}\}.$$
    We write the function cutting out the union of all the internal matter hyperplanes as 
    \begin{equation} \Delta_I := \prod_{(i, \alpha_i) \to (j, \alpha_j)} (y_{i, \alpha_i} - y_{j, \alpha_j})  \end{equation}

    \item (Framing matter hyperplane) For each framing edge $(i) \gets [i]$, we have downstairs  weight hyperplanes $H_{(i, \alpha_i) \gets [i, \beta_i]}$ (resp.  $\wt \hcal_{(i, \alpha_i) \gets [i, \beta_i]}$) labelled by $1 \leq \alpha_i \leq d_i$, $1 \leq \beta_i \leq m_i$ 
    $$ \wt H_{(i, \alpha_i) \gets [i, \beta_i]} = \{y_{i,\alpha_i} = a_{i,\beta_i}\}.$$ 

    We write the function cutting out the union of all the framing matter hyperplanes as 
    \begin{equation} \Delta_F := \prod_{(i, \alpha_i) \gets [i, \beta_i]} (y_{i, \alpha_i} - a_{i, \beta_i})  \end{equation}

\end{enumerate}

The geometry with the three kinds of hyperplane divisors removed is given by the following Proposition.

\begin{proposition}\label{p:u-coord}
The affine variety $\wt \MCB(G,N)_{\beta}^O \setminus \{r_{(\vec 1, \cdots, \vec 1)} = 0\}$ has ring of global
functions given by
    $$\wt \acal(G, N)_\beta^O := 
    \C[\{ x_{i,\alpha}^{\pm 1}, y_{i, \alpha}^{\pm 1}\}_{i,\alpha}, \Delta_R^{-1}, \Delta_F^{-1}, \Delta_I^{-1}] =
        \C[\{ u_{i,\alpha}^{\pm 1}, y_{i, \alpha}^{\pm 1}\}_{i,\alpha}, \Delta_R^{-1}, \Delta_F^{-1}, \Delta_I^{-1}].
    $$ 
\end{proposition}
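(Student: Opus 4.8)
The plan is to compute the ring of global functions on $\wt\MCB(G,N)_\beta^O \setminus \{r_{(\vec 1,\dots,\vec 1)}=0\}$ in two steps, first identifying the underlying space, then pinning down the coordinate ring, and finally checking that the two displayed presentations agree. For the first step, I would invoke Proposition \ref{removing framing} to replace $\MCB(G,N)_\beta$ by $\MCB(G,N_B)$ after deleting $\{r_{(\vec 1,\dots,\vec 1)}=0\}$, so that the space in question is $\wt\MCB(G,N_B)^O$, i.e. the $W$-cover restricted to the open locus $T_O$ where all root, internal-matter, and framing-matter hyperplanes have been removed. Over $T_O$, part \eqref{trivialization on O locus} of Proposition \ref{p:local-geom} gives $\wt\MCB(G,N)|_{T_O}\cong T^\vee\times T_O$; but note that $T_O$ is strictly smaller than the $O$-locus relevant to $\MCB(G,N_B)$ alone, because we have also deleted the framing hyperplanes $\wt H_{(i,\alpha)\gets[i,\beta]}$ which are matter hyperplanes only for $N=N_B\oplus N_F$, not for $N_B$. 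So the subtlety is that $T_O$ is an intersection of the $O$-locus for the root hyperplanes, the $O$-locus for the internal matter hyperplanes, and the locus $\{\Delta_F\neq 0\}$; away from $\{\Delta_F\neq 0\}$ nothing degenerates for $\MCB(G,N_B)$, so restricting further is harmless and simply inverts $\Delta_F$.

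Concretely, I would argue: by localization/abelianization (Section \ref{localization section}), $\O(\wt\MCB(G,N_B))\hookrightarrow \O(\MCB(T,0))_{loc}=\C[x_{i,\alpha}^{\pm},y_{i,\alpha}^{\pm}][\{(y^\alpha-1)^{-1}\}]$, and restricting to $T_O$ means also inverting $\Delta_I$ (the internal matter hyperplanes) and $\Delta_F$. Over $T_O$ the abelianization map is an isomorphism onto its image because the $W$-action on $T_O$ is free and the fiber is exactly $T^\vee$ (again part \eqref{trivialization on O locus} of Prop.\ \ref{p:local-geom}), and the monopole generators are expressed via Proposition \ref{p:Pn-class}. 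The generators $x_{i,\alpha}^{\pm}$ appear because, after inverting $\Delta_R$, $\Delta_I$, $\Delta_F$, each $x_{i,\alpha}$ is recovered from $u_{i,\alpha}$ via \eqref{def u coord} (all the factors $1-\frac{y_{j,\alpha_j}}{y_{i,\alpha_i}}$, $1-\frac{a_{i,\beta_i}}{y_{i,\alpha}}$, $1-\frac{y_{i,\alpha_i}}{y_{i,\beta_i}}$ become invertible precisely on this locus), and conversely each $u_{i,\alpha}$ is a Laurent expression in the $x$'s and $y$'s. Hence the ring is $\C[\{x_{i,\alpha}^{\pm},y_{i,\alpha}^{\pm}\},\Delta_R^{-1},\Delta_F^{-1},\Delta_I^{-1}]$. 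The last equality in the statement is then immediate: the change of variables $x_{i,\alpha}\leftrightarrow u_{i,\alpha}$ given by \eqref{def u coord} is invertible over this localized ring (the Jacobian factor is a product of the inverted $\Delta$'s times units), so the two presentations generate the same subalgebra of $\O(\MCB(T,0))_{loc}$.

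The main obstacle I anticipate is bookkeeping the precise relationship between the $O$-locus for $N_B$ and the $O$-locus $T_O$ for $N_B\oplus N_F$, and making sure that deleting $\{r_{(\vec 1,\dots,\vec 1)}=0\}$ interacts correctly with the $W$-unfolding — i.e. that $\wt\MCB(G,N)_\beta^O\setminus\{r_{(\vec 1,\dots,\vec1)}=0\}$ is genuinely the $W$-cover of $\MCB(G,N_B)_O$ and not something with extra components or a different open set. This should follow by combining the pullback square \eqref{e: deformed W-unfold} with Proposition \ref{removing framing} (both are compatible with base change along $T\to T/W$), but it requires care because Proposition \ref{removing framing} is stated downstairs. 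A secondary technical point is justifying that no further functions appear: one must check that the abelianization embedding is surjective onto the claimed ring, which uses that over $T_O$ there are no degenerations at all, so $\wt\MCB(G,N_B)^O$ is literally $T^\vee\times T_O$ with $T_O = \Spec\C[y_{i,\alpha}^{\pm},\Delta_R^{-1},\Delta_I^{-1},\Delta_F^{-1}]$ and $T^\vee=\Spec\C[x_{i,\alpha}^{\pm}]$, which is exactly the first displayed ring.
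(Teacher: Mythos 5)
Your proposal is correct and follows essentially the same route as the paper: identify the space with $\wt\MCB(G,N_B)$ via Proposition \ref{removing framing}, use Proposition \ref{p:local-geom}, item \ref{trivialization on O locus}, to trivialize over $T_O$ (inverting $\Delta_F$ in addition to $\Delta_R,\Delta_I$ because of the framing hyperplanes), and observe that the $x\leftrightarrow u$ change of variables from Definition \ref{d:u-variable} is a unit times a monomial on this localized ring. The extra care you take with the abelianization embedding and the compatibility of the $W$-unfolding with deleting $\{r_{(\vec 1,\dots,\vec 1)}=0\}$ is sound but not needed beyond what the paper already records.
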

\begin{proof}
The variety is affine, having been
obtained from the definitionally affine multiplicative Coulomb branches by removing divisors.  The equality of the two purported expressions for the ring of global functions is apparent from the definition of the $u$ variables (Definition \ref{d:u-variable}). 

To compute the ring of global functions, we begin by noting that the global functions on $\MCB(G, N_B)^O$ are, by 
Proposition \ref{p:local-geom}, item \ref{trivialization on O locus}, 
given by 
   $$\wt \acal(G, N_B)^O := 
    \C[\{ x_{i,\alpha}^{\pm 1}, y_{i, \alpha}^{\pm 1}\}_{i,\alpha}, \Delta_R^{-1},  \Delta_I^{-1}]
    $$ 
Under the isomorphism of Proposition \ref{removing framing}, we obtain 
$\wt \MCB(G,N)_{\beta}^O$ by further
removing the preimages of the framing matter hyperplanes
(i.e. inverting $\Delta_F$).
\end{proof}

\subsection{Proof of Theorem \ref{cylindrical model intro}}

We restate the result: 

\begin{theorem} \label{cylindrical model}
    Fix a map $C \to \mathbf{T}/W = \prod_i Sym^{d_i} \C^*_y$ transverse to $(\mathbf{T} \setminus \mathbf{T}_O) / W$, and consider
    the corresponding 
    $S \to C \times \C^*_y$.

    Lifts to $C \to \MCB(\Gamma, \vec d)$ are in bijection with maps $\Phi_u: S \to \P^1_u$ satisfying the following conditions: 
\begin{enumerate}
\setcounter{enumi}{-1}
    \item $\Phi_u(S_O) \subset \C^*_u$. 
    \item $\Phi_u$ has simple poles over points in $S_{(i)}$.
    \item $\Phi_u$ has simple zeros over $S_{[i] \to (i)}$.
    \item For $(s_i, s_j) \in S_{(i) \to (j)}$, we require that either $\Phi_u$ has a simple zero at $s_i$ and $\Phi_u(s_j) \in \C^*$,  or $\Phi_u$ has a simple zero at $s_j$ and $\Phi_u(s_i) \in \C^*$. 
\end{enumerate}
\end{theorem}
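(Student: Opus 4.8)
The strategy is to reduce the global statement to a local one over neighborhoods of each hyperplane, using the structure of the Coulomb branch established in Proposition \ref{p:local-geom}. First I would set up the picture over the open locus $S_O$: by Proposition \ref{p:u-coord}, over $C_O$ the space $\wt\MCB(\Gamma,\vec d)_\beta^O$ is trivialized by the $u$ and $y$ coordinates, and since we have deleted $\{r_{(\vec 1,\dots,\vec 1)}=0\}$ the $u_{i,\alpha}$ are invertible there; hence a lift of $C_O$ amounts precisely to the tuple of maps $S_O\to\C^*_u$ (this is condition (0), and is the content of the second Lemma in the introduction). The real work is to analyze what happens as we approach the three types of hyperplanes, where the map to $\mathbf{T}/W$ is transverse to $(\mathbf{T}\setminus\mathbf{T}_O)/W$.

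\textbf{Local reduction.} For each point of $C$ mapping to a punctured hyperplane $\wt H^o$ of root, internal-matter, or framing-matter type, Proposition \ref{p:local-geom} identifies a neighborhood of the corresponding fiber of $\wt\MCB(G,N)_\beta$ with a neighborhood in the Coulomb branch of a rank-one quiver: $\wt\MCB(G_\alpha,0)$ for a root hyperplane, $\wt\MCB(T,N_w)$ for an internal-matter hyperplane (with $N_w$ one-dimensional), and similarly $\wt\MCB(T,N_w)$ with a framing weight for a framing hyperplane. Concretely these are the three two-dimensional examples: $\MCB(GL_1,0)=\C^*_u\times\C^*_y$ away from any locus (Example \ref{ex:pure-abelian} — but with $GL_2$ pure gauge near the root wall, i.e. the surface $\{x_1-x_2 \sim y_1-y_2\}$ of Example \ref{ex:gl2}), $\MCB(GL_1,\C)=\{uv=1-1/y\}$ for the framing case (Example \ref{ex:[1]-(1)}), and $\MCB(GL_1\times GL_1,\C)$ for the internal edge (Example \ref{ex:(1)-(1)}). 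I would check in each of these three elementary cases directly, using the explicit abelianized formula for $u$ in Definition \ref{d:u-variable}, that a holomorphic lift of a disk transverse to the wall corresponds exactly to the stated condition on $\Phi_u$: a simple pole in $u$ at $S_{(i)}$ (from the factor $(1-y_{i,\alpha}/y_{i,\beta})^{-1}$ in the denominator, which has a simple zero along the root wall, and transversality upgrades this to a simple pole in $\Phi_u$ after pulling back via a map with nonzero derivative), a simple zero in $u$ at $S_{[i]\to(i)}$ (from the numerator factor $(1-a_{i,\beta}/y_{i,\alpha})$), and for an internal edge the condition that $\Phi_u(s_i)\Phi_u(s_j)$ — which equals, up to an invertible factor, the monopole relation $u_i u_j \sim (1-y_j/y_i)$ — has a simple zero, which after the bijection with ``simple zero at one of them and invertible at the other'' gives condition (3) as restated.

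\textbf{Gluing.} Having verified the local models, I would assemble the global bijection: a lift $C\to\MCB(\Gamma,\vec d)$ restricts to a lift over $C_O$, hence to $\Phi_u|_{S_O}:S_O\to\C^*_u$; conversely, a map $\Phi_u:S\to\P^1_u$ satisfying (0)--(3) determines, in each local model near a wall, an honest point of the relevant two-dimensional Coulomb branch (the conditions are exactly what is needed for $x_{i,\alpha}=u_{i,\alpha}^{-1}\cdot(\text{invertible on }T_O)$ and its Weyl conjugates to extend across the wall, or rather for the $W$-invariant combinations / monopole operators $r_\lambda$ to extend), so the two local lifts agree on overlaps and glue to a global lift. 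Since $\MCB(\Gamma,\vec d)$ is affine and we have produced a compatible family of maps to it that agree over the dense open $C_O$, uniqueness of the glued lift is automatic.

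\textbf{Main obstacle.} The genuinely delicate point is the framing-matter and internal-matter analysis, specifically the equivalence between ``$\Phi_u$ extends as a map to $\P^1$ satisfying the zero/pole conditions'' and ``the lift extends to the singular fiber of the Coulomb branch.'' In the local model $\{uv=1-1/y\}$ the fiber over $y=1$ is the nodal cone $uv=0$, and a disk passing transversely through $y=1$ can lift either into the $\{u=0\}$ branch or the $\{v=0\}$ branch; phrasing this uniformly as a condition on $\Phi_u:S\to\P^1$ (simple zero of $\Phi_u$ vs. $\Phi_u$ finite nonzero, equivalently the reciprocal $\Phi_v$ behavior) requires care, and for the internal edge the same dichotomy becomes the ``either/or'' in condition (3). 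I would expect most of the length to go into checking these rank-one cases carefully and confirming that transversality of $C\to\mathbf{T}/W$ to the wall is exactly what converts the order-one vanishing of the defining equation into order-one vanishing/poles of $\Phi_u$, with no hidden higher-order phenomena.
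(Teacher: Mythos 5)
Your proposal follows essentially the same route as the paper's proof: use transversality and the sheaf/locality property to reduce to small disks meeting at most one wall, invoke Proposition \ref{p:local-geom} to identify the local geometry with the rank-one models (pure $GL_2$ at a root wall, the $(1)\to(1)$ and $[1]\to(1)$ quivers at matter walls), compare the $u$-coordinates of the big and simplified quivers via Definition \ref{d:u-variable} (the ratio being invertible near the punctured wall), and verify the pole/zero conditions by hand in those elementary cases. The only cosmetic difference is that the paper states the invertible-ratio comparison of $u$-coordinates explicitly as a separate step, which you only gesture at ("up to an invertible factor"), but the substance is the same.
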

\begin{proof}
    Such sections form a sheaf of sets over $C$.  
    By hypothesis, $C \to \mathbf{T}/W$ meets all hyperplanes distinctly and transversely; we are (by the sheaf property of sections) therefore reduced to the case where $C$ is a small disk meeting at most one of the hyperplanes.  In the complement of the hyperplanes, the result is immediate from the identification of Proposition \ref{removing framing} and the trivialization of Proposition \ref{p:u-coord}. 
    
    Thus, we may compute in one of the charts described in Proposition \ref{p:local-geom}.  Observe that, in the quiver gauge theory setting, the chart itself is again the Coulomb branch of a quiver gauge theory, which moreover is either abelian (all entries of dimension vector are $1$) and has a single arrow, or has no arrows and only a single non-abelian node, which has dimension $2$.  
    
    Under the isomorphism of Proposition \ref{p:local-geom}, we may compare the $u$ coordinates of the more complicated original quiver and of the simplified quiver.  From the explicit defining formula of Definition \ref{d:u-variable}, we see that the ratio is a regular invertible function on $T_O$, and extends to a regular invertible function on $U(H^o)$  when restricted to the smaller locus.  

   More precisely, for $p \in C \cap (T \RM T_O)$, one can choose a lift $\wt p \in T$, and find an open neighborhood $\wt U$ of $\wt p$, and a simplified quiver $Q'$, such that the abelianized Coulomb branch $\wt \mcal_Q$ with rational functions $u_{i,a}$, and the simplified one $\wt \mcal_{Q'}$ with rational function $u'_{i,a}$ satisfies the condition that $\wt \mcal_Q \cong \wt \mcal_{Q'}$ over $\wt  U$, and there is an invertible function $R_{i,a}$ on $U_p$, such that $u_{i,a} = R_{i,a} u'_{i,a}$ on $\wt U$. That is, $u_{i,a}$ and $u'_{i,a}$ have the same pole and zero structure when restricted to $C$. 
    
    In particular, if we consider the assertion of the theorem on the smaller locus, it does not matter whether we use the $u$ coordinates of the original ambient quiver, or of the simplied quiver.  We are thus reduced to proving the theorem for quivers which have either no arrows and a single nonabelian node which moreover has dimension $2$, or abelian quivers with one arrow.  As the $u$ coordinates are compatible with disjoint union of quivers, we are reduced to checking for the situations of Examples \ref{ex:pure-abelian}, \ref{ex:(1)-(1)}, \ref{ex:[1]-(1)}, and \ref{ex:gl2}.   
    These can be checked explicitly by hand.  
\end{proof}

\section{KLRW category} \label{s:KLRW}

Here we fix some terminology for discussing the KLRW categories, and establish some recognition principles.

For any $n \in \N=\Z_{\geq 0}$, let $[n]=\{1,\cdots, n\}$. 

Let $\Gamma$ be a framed quiver with nodes labelled by $[n]$. We will also use $\Gamma$ to denote the set of non-framing nodes. The non-framing node is written as $(i)$ where $i \in [n]$, and the framing node is $[i]$. The arrows are written as $(i) \to (j)$ between non-framing nodes, and $[i] \to (i)$ from the framing node to the non-framing node. For each $i \in [n]$, we have dimension $d_i \in \N$ for the non-framing node and $m_i \in \N$ for the framing node. We use $\vec d, \vec m$ for the tuples of numbers.  We use $(i,j)$ for $i \in [n], j \in [d_i]$, and use $[i,j]$ for $i \in [n], j \in [m_i]$.

\begin{definition}
    A {\bf red points configuration on $S^1$} of type $\vec m$ is a finite subset $R \In S^1$, where $R = \sqcup_{[i]} R_i$ and $|R_i| = m_i$.  Given a red points configuration, a {\bf black points configuration on $S^1$} of type $\vec d$ is finite subset
    $B \In S^1 \RM R$, where $B = \sqcup_{(i)} B_i$ and $|B_i|=d_i$.  
\end{definition}

A red points configuration is denoted as $\{\theta_\rij\}$ where $\theta_\rij \in S^1$ and the ordering of the $\theta_{[i,j]}$ for a fixed $i$ is irrelevant. Similarly, and a black points configuration is denoted as $\{\theta_{(i,j)}\}$. 

Typically we will have fixed a red point configuration. We use $\ccal$ to denote the set of black points configurations. 

Recall that the KLRW category (for a fixed red points configuration) is the following. The set of objects is the set of black points configurations. The morphism space $Hom_{KLRW}(c_1, c_2)$ is the vector space generated by KLRW strand diagrams with $c_1$ at the bottom and $c_2$ at the top modulo relations, and composition is vertical concatenations. 

Recall we have introduced parameters $\hbar$ and $\eta$, and defined the KLRW category over $\C[\hbar, \eta]$. These parameters appear in the KLRW relations. 

We will be interested in certain degenerate cases, first $\hbar=0=\eta$, and also just $\hbar = 0$. In these cases, it is convenient to describe the morphisms in terms of the following basis. 

\begin{definition} \label{taut strands diagram}
    A {\bf taut strands diagram} on the cylinder from black points configuration $c_1$  to $c_2$, is a collection of strands that ends on $c_1$ at bottom and $c_2$ at top, such that the strands have no horizontal tangency, no triple intersection points,  and no bigon between any two strands (red or black). Two taut strands diagrams are equivalent, if they are isotopic modulo braid relation. 

    A {\bf weighted taut strands diagram} is a taut strands diagram where each strand is decorated by a non-negative number (called weight). 

    We write $\pcal(c_1, c_2)$ as the equivalence class of weighted taut strands diagram from black points configuration $c_1$  to $c_2$. 
\end{definition}

If  $D_{12} \in \pcal(c_1, c_2)$ is a weighted taut strand diagram, a {\bf taut decomposition} of $D_{12}$ are two diagrams $D_{1a} \in \pcal(c_1, c_a), D_{a2} \in \pcal(c_a,c_2)$ such that the concatenation $concat(D_{a2}, D_{1a})$ is taut and equal to $D_{12}$ and the sum of weights of each  strands matches with $D_{12}$. We write this as 
$$ D_{12} = concat(D_{a2}, D_{1a}). $$
and say $D_{12}$ is the {\bf taut concatenation} of $D_{a2}$ and $D_{1a}$. 

\begin{proposition} \label{taut strands at zero}
For any black points configurations $c_1, c_2 \in \ccal$, we have canonical isomorphism of vector spaces
$$ Hom_{KLRW}(c_1, c_2)_{\hbar=\eta=0} \cong \C \pcal(c_1, c_2). $$
\end{proposition}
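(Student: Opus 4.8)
The statement is that at $\hbar = \eta = 0$, the KLRW morphism space from $c_1$ to $c_2$ has a basis given by (equivalence classes of) weighted taut strand diagrams. The strategy is the usual two-step argument for such diagrammatic bases: first show that weighted taut diagrams \emph{span}, then show they are \emph{linearly independent}.

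For spanning, I would start from the fact that $\Hom_{KLRW}(c_1,c_2)$ is spanned by arbitrary strand diagrams with dots, and produce a straightening algorithm. Any strand diagram can be isotoped so that all crossings and dots occur at distinct heights. The relations of Figure \ref{fig:KLRW}, specialized to $\hbar=\eta=0$, then become homogeneous: the bigon relations (a), (b), (c) say a bigon is $0$ (the dot-correction terms all carry a factor of $\eta$), and the braid relations (d), (e) say the two sides of a triple point are equal (the correction carries a factor $\eta\hbar$), while the dot-slide relations (f), (g) say a dot slides freely through a crossing (the correction carries a factor $\hbar$). Hence at $\hbar=\eta=0$ one may: remove all bigons (setting such diagrams to $0$), freely apply braid moves, and slide all dots to the bottom of their strand, recording the number of dots on each strand as its weight. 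This rewriting terminates — e.g. by an induction on the number of crossings plus a measure of ``dottedness'' — and the output is a weighted taut diagram. So weighted taut diagrams span. One must also check well-definedness of the equivalence relation: two taut diagrams differing by isotopy-plus-braid-relation are identified in $\pcal(c_1,c_2)$ and are equal in $\Hom_{KLRW}$ by relations (d),(e) at $\hbar=0$, so the assignment descends to $\C\pcal(c_1,c_2)$.

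For linear independence, the cleanest route is to invoke a known faithful representation of the KLRW algebra — a polynomial (``Bott–Samelson'' / ``dot action'') representation, as constructed by Webster for KLRW and its cylindrical variant — and specialize it to $\hbar=\eta=0$. One checks that the images of distinct weighted taut diagrams are linearly independent in the specialized module; typically one filters by the underlying permutation/combinatorial type of the strand diagram (its image in the symmetric group, or its ``matching'' of bottom to top points around the circle), observes that the leading term of a weighted taut diagram's action is a monomial in the polynomial generators whose multidegree records the weights, and that distinct taut types give distinct leading behavior. Alternatively, and perhaps more in the spirit of this paper, one could appeal to the established basis theorem for (cylindrical) KLRW algebras over $\C[\hbar,\eta]$ — the statement that arbitrary reduced diagrams with a chosen dot-placement convention form a $\C[\hbar,\eta]$-basis — and simply note that $\{\hbar,\eta\}$ is a regular sequence so that reducing mod $(\hbar,\eta)$ preserves the basis, after matching ``reduced diagram with dots at the bottom'' with ``weighted taut diagram''. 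I expect the main obstacle to be bookkeeping the cylindrical subtlety: on $S^1$ there is no canonical leftmost strand, so ``taut'' diagrams and their braid-relation equivalence must be handled with the circular order, and one must make sure the straightening algorithm terminates and that the chosen faithful module (or basis theorem) is the cylindrical one of \cite{webster2022coherent} rather than the linear KLRW algebra. Once the correct cylindrical basis theorem is cited, the specialization argument is routine.
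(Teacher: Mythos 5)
Your spanning argument is essentially the paper's entire proof: the paper simply straightens an arbitrary KLRW diagram using the relations (which become homogeneous at $\hbar=\eta=0$), slides all dots to one end of each strand (the paper uses the top rather than the bottom), and reads off the dot count as the weight. Your additional care about linear independence, via the cylindrical basis theorem of \cite{webster2022coherent} or a faithful polynomial representation, is sound and in fact more thorough than the paper, which treats the resulting correspondence as a bijection without further comment; it is not a different route, just the same argument spelled out more completely.
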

\begin{proof}
We may use KLRW relations to straighten the strands in any KLRW diagrams and move dots to the top of the diagram, then by viewing the number of dots on a strand as the weight on that strand we have the bijection. 
\end{proof}

\begin{lemma} \label{lm: KLRW composition hbar eta all zero}
Let $D_{12} \in \pcal(c_1,c_2)$ and $D_{23} \in \pcal(c_2,c_3)$ be weighted taut strands diagrams, and let $D_{13}$ be the concatenation and straightening of $D_{23} \circ D_{12}$. If we view $D_{ij}$ as morphisms in the KLRW category with $\hbar=\eta=0$, then $D_{23} \circ D_{12}$ either equals zero or equals to $D_{13}$, and equals $D_{13}$ if and only if one of the following happens
  \begin{enumerate}[(1)]
        \item There is no bigon cancellation between black strands of same label. 
        \item There is no bigon cancellation between black strands with labels $(i)$ and $(i')$ if  $(i) \to (i')$. 
        \item There is no bigon cancellation between black strands and red strands of the same label (corresponding to a framing edge $[i] \to (i)$). 
    \end{enumerate}
\end{lemma}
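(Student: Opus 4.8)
The statement is about when the composition of two weighted taut strand diagrams $D_{23}\circ D_{12}$, computed in the KLRW category at $\hbar=\eta=0$, equals the straightened concatenation $D_{13}$ rather than zero. The natural approach is to work in the basis furnished by Proposition \ref{taut strands at zero}, so that composition is ``stack, then straighten using the relations,'' and to track exactly which relations can be invoked during the straightening. After stacking $D_{23}$ on top of $D_{12}$, the resulting diagram is generically non-taut: it may contain bigons. The only bigons that can appear are bigons between two black strands, or between a black strand and a red strand (there are no red-red bigons since the red configuration is fixed and the red strands are vertical). To straighten, one repeatedly removes bigons using the relations in Figure \ref{fig:KLRW}, and the point is that with $\hbar=\eta=0$ every one of these relations has vanishing right-hand side in the cases enumerated, while in all other cases the bigon can be removed without killing the diagram.

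\textbf{Key steps.} First I would record the local picture: a bigon in the stacked diagram is between two strands whose labels are either (i) equal, (ii) connected by an internal arrow $(i)\to(i')$, (iii) or it is a black-red bigon with matching label corresponding to a framing arrow $[i]\to(i)$, or (iv) it involves two strands whose labels are \emph{not} adjacent in $\Gamma$ and not equal. Case (iv) is governed by Figure \ref{fig:KLRW-bigon} only if labels are equal; for non-adjacent distinct labels the bigon can simply be undone by isotopy (no relation needed), so it never obstructs. Second, for each of the obstructing cases (i)--(iii), I would invoke the corresponding relation: the bigon relation Figure \ref{fig:KLRW-bigon} gives $=0$ for equal labels; Figure \ref{fig:KLRW-ij} gives a sum of dotted diagrams with coefficient $\eta$, which is zero when $\eta=0$; Figure \ref{fig:KLRW-bigon-red} gives $\eta$ times a dotted diagram, again zero when $\eta=0$. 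Hence whenever the straightening of $D_{23}\circ D_{12}$ forces the removal of one of these three types of bigon, the composition is zero; and conversely if none of these bigons is present, the straightening proceeds purely by isotopy and braid moves (which are the $\hbar=\eta=0$ specializations of Figures \ref{fig:KLRW-jij} and \ref{fig:KLRW-braid-red}, whose RHS carry a factor $\hbar$ and hence vanish, so the braid relation holds on the nose), together with moving dots to the top using the dot-pass-crossing relations Figures \ref{fig:KLRW-skein-1}, \ref{fig:KLRW-skein-2}, whose correction terms also carry $\hbar$ and vanish. Third, I would observe that under these circumstances the result is precisely the weighted taut diagram obtained by concatenating and straightening, with weights added strand by strand, which is the definition of $D_{13}$; this uses that, by Proposition \ref{taut strands at zero}, the image is a basis element and in particular nonzero, so ``$=D_{13}$'' and ``$=0$'' are the only two possibilities.

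\textbf{Main obstacle.} The genuine content is making precise the dichotomy: one must argue that the presence of an obstructing bigon in the stacked diagram is detectable before straightening and is not accidentally ``resolved'' by the straightening procedure in a way that avoids the vanishing relation. Concretely, the subtlety is that bigon removal can create new bigons, so one needs an argument — e.g. an induction on the number of crossings, or on the area swept out, or a normal-form/confluence argument for the rewriting system — showing that the straightening of $D_{23}\circ D_{12}$ is zero exactly when the naive combinatorial concatenation fails to be taut for one of reasons (1)--(3). I expect this confluence-type bookkeeping, rather than any individual relation computation, to be the main point requiring care; the underlying idea is simply that at $\hbar=\eta=0$ the KLRW relations degenerate so that crossings of strands with ``interacting'' labels (equal, or joined by an arrow, or a black-red matched pair) square to zero, while all other local moves are invertible isotopies.
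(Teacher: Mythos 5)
Your proposal is correct and follows essentially the same route as the paper, whose entire proof is the one-line remark that the claim is immediate from the KLRW relations; your relation-by-relation analysis (bigons of types (1)--(3) vanish at $\eta=0$, all other bigons and the braid/dot-pass moves resolve without correction at $\hbar=0$, and Proposition \ref{taut strands at zero} guarantees the result is the basis element $D_{13}$ or zero) is just the detailed spelling-out of that remark.
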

\begin{proof}
    This is immediate following the KLRW relations. 
\end{proof}

\begin{proposition} \label{stage2:KRLW has basis}
When $\hbar=0$, we have a canonical isomorphism of vector spaces 
$$ Hom_{KLRW}(c_1, c_2)_{\hbar=0} \cong \C[\eta] \cdot \pcal(c_1, c_2). $$
\end{proposition}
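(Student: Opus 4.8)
The plan is to bootstrap from the $\hbar=\eta=0$ case (Proposition \ref{taut strands at zero}) by an $\eta$-adic filtration argument. Over $\C[\eta]$, the KLRW category is $\Z_{\geq 0}$-graded by $\eta$-degree once we also use the number of dots and the number of crossings as part of a combined ``complexity'' grading; the point is that every relation in Figure \ref{fig:KLRW} that is not purely the $\hbar$-relation is \emph{homogeneous} in $\eta$ when $\hbar=0$, with the right-hand side (the $\eta$-term) having one fewer crossing than the left. So first I would set up the natural map $\C[\eta]\cdot\pcal(c_1,c_2) \to Hom_{KLRW}(c_1,c_2)_{\hbar=0}$ sending $\eta^k D$ (for $D$ a weighted taut diagram) to the class of $D$ with $k$ extra dots distributed in some fixed canonical way, multiplied by $\eta^k$; one must check this is well-defined (independent of the canonical choices) using the dot-migration relations, which are unproblematic since dots slide freely past crossings and red strands modulo $\eta$-corrections that only \emph{lower} crossing number.

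\textbf{Surjectivity.} Given any KLRW diagram $D$ representing a morphism at $\hbar=0$, I would argue by induction on the number of crossings. Using the relations, straighten $D$: remove bigons one at a time. Each bigon removal replaces $D$ by $\eta$ times a diagram with two fewer crossings (bigon, bigon-with-neighbor, bigon-with-red) plus possibly a diagram with the same number of crossings but the bigon resolved differently — and in all cases the total crossing count strictly decreases for at least one summand, or the diagram becomes taut. Dots produced along the way can be migrated to a canonical position. By induction every diagram is a $\C[\eta]$-combination of taut weighted diagrams, so the map is surjective. This is essentially the content of Lemma \ref{lm: KLRW composition hbar eta all zero} applied repeatedly.

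\textbf{Injectivity.} This is the main obstacle. The clean way is to use the associated graded with respect to the $\eta$-adic filtration (equivalently, filter $Hom_{KLRW}(c_1,c_2)_{\hbar=0}$ by powers of $\eta$, noting $\eta$ is a central non-zero-divisor once we know the module is free — so one wants to avoid circularity). Instead I would filter by crossing number: let $F_k$ be the span of images of diagrams with at most $k$ crossings. Setting $\eta=0$ kills exactly the relations that lowered crossing number, so the quotient $Hom_{KLRW}(c_1,c_2)_{\hbar=0}/\eta$ is, by Proposition \ref{taut strands at zero}, canonically $\C\pcal(c_1,c_2)$. Then $Hom_{KLRW}(c_1,c_2)_{\hbar=0}$ is a $\C[\eta]$-module which modulo $\eta$ is free of rank $|\pcal(c_1,c_2)|$ (with basis the taut diagrams), and which is generated by $\pcal(c_1,c_2)$ (by surjectivity above). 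A finitely generated $\C[\eta]$-module that is cyclic-free modulo $\eta$ on a spanning set is free on that set, \emph{provided} one knows there is no $\eta$-torsion; $\eta$-torsion-freeness follows because $Hom_{KLRW}(c_1,c_2)$ over $\C[\hbar,\eta]$ is free (a standard fact about KLRW algebras — it has an explicit basis of dotted taut diagrams, cf. \cite{Webster, webster2019coherent}), hence setting $\hbar=0$ keeps it $\eta$-torsion-free. With torsion-freeness in hand, Nakayama-type reasoning (or simply: a surjection $\C[\eta]^{\oplus N}\twoheadrightarrow Hom_{KLRW}(c_1,c_2)_{\hbar=0}$ which is an isomorphism mod $\eta$ and whose target is $\eta$-torsion-free must be an isomorphism) finishes it.

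\textbf{Remark on the hard step.} The genuine subtlety is establishing $\eta$-torsion-freeness of $Hom_{KLRW}(c_1,c_2)_{\hbar=0}$ without assuming the conclusion. I expect the cleanest route is to cite the known faithful polynomial/geometric representation of the KLRW algebra over $\C[\hbar,\eta]$ on which $\eta$ acts as a non-zero-divisor, and observe that this persists after specializing $\hbar=0$; alternatively, an explicit straightening algorithm (choosing a normal form: all dots at top, a reduced word for the underlying permutation) shows the dotted taut diagrams are linearly independent over $\C[\eta]$ directly, which gives both injectivity and torsion-freeness at once. Either way the combinatorics is routine given Lemma \ref{lm: KLRW composition hbar eta all zero}; the care needed is only in checking that the straightening terminates and that the normal form is independent of the order of moves, which is the usual confluence argument for KLR-type diagrammatics.
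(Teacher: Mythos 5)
Your argument is correct, but it is considerably more elaborate than what the paper does. The paper's proof is two sentences: straighten any diagram (move dots to the top/bottom, cancel bigons using the $\eta$-relations) to get a $\C[\eta]$-combination of taut weighted diagrams, and then observe that at $\hbar=0$ the braid relations hold on the nose, so taut KLRW diagrams modulo relations are exactly the weighted taut diagrams of Definition \ref{taut strands diagram}; linear independence is asserted rather than argued. Your surjectivity step is the same straightening induction. Where you genuinely diverge is injectivity: you reduce mod $\eta$ to Proposition \ref{taut strands at zero} and then invoke $\eta$-torsion-freeness, obtained either from the basis theorem for (cylindrical, weighted) KLRW algebras over $\C[\hbar,\eta]$ or from a faithful polynomial representation / confluence argument. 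This is a legitimate and in fact more honest treatment of the point the paper glosses over — it is the same kind of freeness statement the paper itself later asserts in Lemma \ref{lm:GrKLRW}. Two small caveats: (i) in the cylindrical setting $\pcal(c_1,c_2)$ is infinite (strands can wind), so your ``surjection from $\C[\eta]^{\oplus N}$, isomorphism mod $\eta$, torsion-free target'' step cannot literally use the finitely-generated Nakayama/determinant trick; you should instead note that the kernel $K$ satisfies $K=\eta K$, hence $K\subset\bigcap_n \eta^n\C[\eta]^{(\oplus N)}=0$ in the direct sum (or work one $q$/winding-degree at a time, where everything is finite rank); (ii) if you are willing to cite the $\C[\hbar,\eta]$ basis theorem on dotted taut diagrams, the proposition follows at once by the base change $\hbar=0$, so the crossing-number filtration scaffolding, while correct, is not needed. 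Both routes you sketch close the gap the paper leaves implicit.
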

\begin{proof}
First, any KLRW diagram can be expressed as a linear combination of taut KRLW diagrams with dots at the bottom. Next, since $\hbar=0$, for any 3 strands in a KLRW diagram we have braid relation, hence we have a bijection between taut KLRW diagram (modulo relations) and taut weighted strands diagram. 
\end{proof}

\begin{definition}
A weighted taut diagram (resp. KLRW diagram) is called {\em elementary} if it is one of the two types
\begin{enumerate}
    \item all strands vertical, and exactly one black strand has weight $1$ (resp. carries a dot). 
    \item there is at most one crossing of any type, and all strands have zero weights (resp. no dots). 
\end{enumerate}
\end{definition}
There is an obvious bijection between elementary KLRW diagram and elementary weighted taut diagram, by exchanging weight $1$ with one dot. For this reason, we do not distinguish elementary KLRW or weighted elementary diagram. Let $Elem(c_1, c_2) \In \pcal(c_1, c_2)$ be the subset of elementary diagrams from $c_1$ to $c_2$.

The morphisms in KLRW category can be generated by elementary diagrams modulo relations, which we formalize below. 
\begin{definition}
    Let $FreeElem$ be the category with objects $\ccal$ and hom space be 
$$ Hom_{FreeElem}(c, c') = \oplus_{k \geq 0} \oplus_{c=c_0, c_1, \cdots, c_k=c'} \C[\eta, \hbar] Elem(c_0 \to c_1 \cdots \to c_k) $$
where $Elem(c_0 \to \cdots \to c_k) = \prod_{i=1}^k Elem(c_{i-1}, c_i)$ is a chain of elementary weighted taut diagrams of length $k$, and composition is concatenation of chains.
\end{definition} 
There is an obvious functor from $FreeElem$ to $KLRW$ by composing the chain of elementary diagrams, and the map on hom space is surjective.

\begin{proposition} \label{p: KLRW h=0 recog}
Let $KLRW_{\hbar=0}$ be the KLRW category with $\hbar=0$, where we identify the morphism space by $Hom_{KLRW}(c_1, c_2)_{\hbar=0} \cong \C[\eta] \cdot \pcal(c_1, c_2)$ and we denote the composition of $c_1 \xto{D_{12}} c_2 \xto{D_{23}} c_3$ as $D_{23} \circ_K D_{12}$. 

Let $\acal$ be a category with the same set of objects and morphism space as $KLRW_{\hbar=0}$ and we denote the composition in $\acal$ of $c_1 \xto{D_{12}} c_2 \xto{D_{23}} c_3$ as $D_{23} \circ_\acal D_{12}$.

If the two compositions matches
\begin{equation} \label{char KLRW hbar=0}
D_{23} \circ_{K} D_{12} = D_{23} \circ_{\acal} D_{12} 
\end{equation}
for the following subset of diagrams
\begin{enumerate}
    \item $D_{12}, D_{23}$ are elementary strands diagram. 
    \item $D_{12}, D_{23}$ are such that $concat(D_{12}, D_{23})$ is taut. 
\end{enumerate}    
Then the two composition matches\eqref{char KLRW hbar=0} for all diagrams $D_{23}, D_{12}$. In particular, the category $\acal$ and $KLRW_{\hbar=0}$ are equivalent. 
\end{proposition}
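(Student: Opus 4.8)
The plan is to show that the two composition laws $\circ_K$ and $\circ_{\acal}$ agree on \emph{all} pairs of weighted taut strand diagrams, given that they agree on the two distinguished subsets. The key structural input is that $\circ_K$ (hence any candidate $\circ_{\acal}$ that we wish to match it) is determined by the composition law on $FreeElem$ together with the relations: by Proposition \ref{stage2:KRLW has basis}, the morphism spaces are identified with $\C[\eta]\cdot\pcal(c_1,c_2)$, and by the discussion around $FreeElem$, every morphism of $KLRW_{\hbar=0}$ is a $\C[\eta]$-linear combination of taut concatenations of elementary diagrams. So the first step is to reduce the general assertion \eqref{char KLRW hbar=0} to the case where $D_{12}$ is itself a taut concatenation of elementary diagrams and likewise $D_{23}$; by bilinearity over $\C[\eta]$ it then suffices to treat $D_{12}, D_{23}$ individual (non-linear-combination) weighted taut diagrams.

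Next, I would set up an induction on the total complexity of the pair $(D_{12}, D_{23})$ — say, on the number of crossings of $D_{12}$ plus the number of crossings of $D_{23}$ plus the total weight. The base case is when both diagrams are elementary, which is hypothesis (1). For the inductive step: write $D_{12} = concat(E, D_{12}')$ as a taut concatenation where $E$ is an elementary diagram at the top of $D_{12}$ — such a decomposition exists because any weighted taut diagram admits a factorization into elementary pieces by reading off one crossing (or one unit of weight) at a time, stacking from the bottom. Then $D_{23}\circ D_{12} = D_{23}\circ concat(E, D_{12}') = (D_{23}\circ E)\circ D_{12}'$ formally, and I want to commute the composition: $D_{23}\circ (E \circ D_{12}')$ — but more usefully, I peel $E$ off the \emph{top} of $D_{12}$ and recombine it with the \emph{bottom} of $D_{23}$. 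That is, $concat(D_{23}, D_{12}) = concat\big(concat(D_{23}, E), D_{12}'\big)$ as diagrams when the middle concatenation with $E$ is taut; if $concat(D_{23}, E)$ is taut, then by associativity and the inductive hypothesis applied to the (strictly simpler, since $E$ is elementary) pair we are done. If $concat(D_{23},E)$ is \emph{not} taut, we must first straighten it using Lemma \ref{lm: KLRW composition hbar eta all zero} / the KLRW relations; the straightening either kills it (a $\hbar=0=\eta$-type cancellation giving zero, or an $\eta$-term) or produces a simpler diagram, and in the latter case one invokes hypothesis (2) (the taut case) together with the relations that both $\circ_K$ and $\circ_{\acal}$ are required to respect.

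The main obstacle — and the place where care is needed — is precisely the bookkeeping of the straightening process and verifying that it is "agreement-preserving": we need that whenever $concat(D_{23}, D_{12})$ fails to be taut, the reduction to taut form happens via moves (bigon removal producing $\eta\cdot(\text{dots})$ terms, braid relations, dot-slides) that are \emph{forced} by the data we have, i.e. by agreement on elementary compositions and on taut compositions. Concretely, a bigon between two non-taut strands is resolved by Figure \ref{fig:KLRW}(b),(c) into a $\C[\eta]$-linear combination of diagrams each with strictly fewer crossings, and each such elementary relation is an instance of composing elementary diagrams — so its validity in $\acal$ is exactly hypothesis (1). One then has to check that iterating these reductions terminates (a standard decreasing-complexity argument: each bigon removal strictly decreases the crossing count) and that the order of reductions does not matter for the final answer (which holds because the \emph{answer} is pinned down in the basis $\C[\eta]\cdot\pcal(c_1,c_3)$ — both $\circ_K$ and $\circ_{\acal}$ land there, and we are only verifying they land on the same element). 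I would also need the small lemma that hypothesis (2), agreement on taut compositions, is what closes the induction when peeling $E$ off produces something already taut but not elementary. Assembling these pieces gives \eqref{char KLRW hbar=0} in general, and since the objects and morphism spaces already coincide, the resulting bijective-on-objects, identity-on-morphisms assignment is an equivalence (indeed an isomorphism) of categories.
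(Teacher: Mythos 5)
Your overall strategy (peel elementary pieces off one factor, regroup by associativity, and feed the resulting sub-compositions into hypotheses (1) and (2)) is the same in spirit as the paper's proof, but as written it has two genuine gaps. First, your induction measure does not decrease. In the "taut recombination" step you replace the pair $(D_{12}, D_{23})$ by the pair $(D_{12}', \, concat(D_{23},E))$: the first factor loses one elementary piece but the second gains one, so the total number of crossings plus weights is conserved, and the inductive hypothesis cannot be invoked. The paper avoids this by inducting on the crossing number of \emph{one} factor only, with the other factor universally quantified: it first proves the key intermediate statement that \eqref{char KLRW hbar=0} holds whenever \emph{one} of the two diagrams is elementary and the other is arbitrary, and then peels elementary pieces off $D_{23}$ (taut decomposition $D_{23}=concat(D_{43},D_{24})$ with $D_{24}$ elementary), so that the remaining factor $D_{43}$ has strictly smaller length while the other slot is allowed to be anything. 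Your argument needs this same restructuring, and in particular needs that intermediate "one factor elementary, other arbitrary" lemma, which you never establish.

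Second, your treatment of the non-taut case is circular. You write that the bigon resolution's "validity in $\acal$ is exactly hypothesis (1)" and appeal to "the relations that both $\circ_K$ and $\circ_{\acal}$ are required to respect" — but $\acal$ is an arbitrary category with the same hom spaces; it is not assumed to satisfy any KLRW relations, and a bigon occurring inside $concat(D_{23},E)$ with $D_{23}$ a general diagram is not literally a composition of two elementary diagrams, so hypothesis (1) does not apply to it directly. To make this step honest one must factor the general diagram so that the offending crossing sits at the interface: using braid isotopy (legitimate since the braid relations hold exactly at $\hbar=0$) move the bigon to the end, write e.g. $D_{12}=concat(D_{42},D_{14})$ with $D_{42}$ elementary and the concatenation taut, compute $D_{23}\circ_{\acal} D_{42}$ by hypothesis (1) (its output consists of crossingless diagrams), note that composing those outputs with $D_{14}$ is taut so hypothesis (2) applies, and then chain these equalities together using associativity of $\circ_{\acal}$ and $\circ_K$. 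This explicit factorization-plus-associativity argument is the heart of the paper's proof of the elementary case and is missing from your proposal; by contrast, your worries about termination and confluence of the straightening procedure are side issues, since, as you note, both compositions are already pinned down as elements of $\C[\eta]\cdot\pcal(c_1,c_3)$.
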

\begin{proof}

We first prove that, for  $D_{23}$ elementary (resp. $D_{12}$ elementary), Eq \eqref{char KLRW hbar=0} holds. 
We only prove the case for $D_{23}$. 

If $concat(D_{23}, D_{12})$ is taut, then there is nothing to prove by the hypothesis. Hence we only need to consider the case $D_{23}$ has a crossing and $concat(D_{23}, D_{12})$ has a bigon. By braid isotopy, we may move the bigon to the top and write $D_{12}$ as 
$$D_{12} = concat(D_{42}, D_{14} ) = D_{42} \circ_K D_{14} = D_{42} \circ_\acal D_{14},$$
where $D_{42}$ is elementary and the concatenation is taut. Since we know 
$$ D_{23} \circ_{K} D_{42} = D_{23} \circ_{\acal} D_{42} $$ 
by hypothesis, and the result is $\sum_i c_i (D_{43})_i $ where  $(D_{43})_i$ are diagrams without crossing,  hence the concatenation of $(D_{43})_i$ with $D_{14}$ is taut, and we get
$$ (D_{43})_i \circ_K D_{14} =  (D_{43})_i \circ_\acal D_{14}.$$
Putting these together, we have 
$$  D_{23} \circ_{K} D_{12} = D_{23} \circ_{K} (D_{42} \circ_K D_{14}) =  (D_{23} \circ_{K} D_{42}) \circ_{K} D_{14} = [\sum_i c_i (D_{43})_i] \circ_{K} D_{14} = \sum_i c_i [(D_{43})_i \circ_{K} D_{14}]  $$
$$ =  \sum_i c_i [(D_{43})_i \circ_{\acal} D_{14}]= [ \sum_i c_i (D_{43})_i]  \circ_{\acal} D_{14} = (D_{23} \circ_{\acal} D_{42}) \circ_{\acal} D_{14} = D_{23} \circ_{\acal} (D_{42} \circ_{\acal} D_{14})= D_{23} \circ_{\acal} D_{12}.
$$

Finally, we prove for general $D_{23}$,  by induction on length of $D_{23}$ where length of a diagram is the number of total number of crossings (of any type). Assume the composition matches for all $D_{23}$ of length $\leq N-1$  and we have $length(D_{23})=N$.  We do a taut decomposition
$$ D_{23} = concat(D_{43},  D_{24}) = D_{43} \circ_K D_{24} = D_{43} \circ_\acal D_{24} $$
where $D_{24}$ is elementary, and $length(D_{43}) = N -1$. Then 
$$   D_{23} \circ_K D_{12} =   D_{43} \circ_K D_{24} \circ_K D_{12} =  D_{43} \circ_{K} ( D_{24} \circ_{K} D_{12}) $$
$$ =  D_{43} \circ_{\acal} ( D_{24} \circ_{\acal} D_{12}) = (D_{43} \circ_{\acal} D_{24}) \circ_{\acal} D_{12}  = D_{23} \circ_{\acal} D_{12}. $$ 
\end{proof}

Finally we consider KLRW category for $\hbar \neq 0$.

Let $D$ be a KLRW diagram, let $cross_i(D)$ be the number of crossings among black strands with label $(i)$, and let $cross(D) = \sum_{i \in \Gamma} cross_i(D)$. 
Note that the function $cross$ is defined on KLRW diagrams, not modulo equivalence relations. 

For any non-negative integer $k$, let
$ \fcal_k Hom_{KLRW}(c_1, c_2) \In  Hom_{KLRW}(c_1, c_2)$ be the subspace spanned by the equivalence classes of KRLW diagrams $D$ from $c_1$ to $c_2$ with $cross(D) \leq k$. Clearly $\fcal_k$ defines an increasing filtration on $Hom_{KLRW}(c_1, c_2)$, and we have
\begin{lemma}
  The composition in KLRW category preserves filtration, namely
  $$ - \circ_{KLRW} -: \fcal_{l} Hom_{KLRW}(c_2, c_3) \otimes \fcal_{k} Hom_{KLRW}(c_1, c_2) \to \fcal_{k+l} Hom_{KLRW}(c_1, c_3).$$
\end{lemma}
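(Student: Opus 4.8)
<br>

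The plan is to verify the filtration property by reducing to the generating KLRW relations, since the composition functor is determined by stacking diagrams, and $cross$ is additive under stacking before any relations are applied. Concretely, given diagrams $D_2 \in \fcal_l Hom_{KLRW}(c_2,c_3)$ and $D_1 \in \fcal_k Hom_{KLRW}(c_1,c_2)$ represented by honest strand diagrams (before passing to equivalence classes) with $cross(D_2) \le l$ and $cross(D_1) \le k$, the naive concatenation $D_2 \circ D_1$ is a strand diagram with $cross(D_2 \circ D_1) = cross(D_2) + cross(D_1) \le k+l$. So the concatenation lies in $\fcal_{k+l}$ on the nose; the only thing to check is that applying the KLRW relations of Figure \ref{fig:KLRW} to bring the concatenation to a standard form (or to expand it in any chosen basis) does not increase $cross$.

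First I would observe that $cross_i$ counts only crossings between two black strands both labelled $(i)$, and inspect each relation in Figure \ref{fig:KLRW} to see its effect on this count. The bigon relations (a), (b), (c) each replace a diagram containing a bigon by diagrams with strictly fewer crossings (the right-hand sides have one or zero crossings where the left had two), so they only decrease $cross$. The braid-type relations (d), (e) for the triple point: relation (e) (braid with red) and the same-colour triple move each have the same number of black-black same-label crossings on both sides, or the right-hand side has fewer; in particular passing a black $(i)$-strand through a red strand or through a $(j)$-strand with $(i)\to(j)$ changes $cross_i$ by at most a controlled amount, and the cubic/error terms on the right-hand side have strictly fewer crossings. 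The dot-pass-crossing relations (f), (g) preserve the crossing pattern entirely on the leading term and produce a dotted term with fewer crossings. Isotopy and the braid relation itself manifestly preserve $cross$. Thus every relation, read as a rewriting rule, is non-increasing in $cross$.

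The key step, then, is: \emph{any} way of reducing a concatenated diagram to a chosen spanning set (e.g. the basis $\pcal(c_1,c_2)$-with-dots after setting no specialization, or a straightened form) proceeds by a finite sequence of applications of these relations, each of which does not increase $cross$. Hence the equivalence class of $D_2 \circ D_1$ is a linear combination of diagrams each with $cross \le k+l$, i.e. it lies in $\fcal_{k+l} Hom_{KLRW}(c_1,c_3)$. Since $\fcal_k$ is spanned by such diagrams and composition is bilinear, the stated inclusion follows.

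The main obstacle is purely bookkeeping: making precise that ``reduction to standard form'' can always be arranged to use only crossing-non-increasing moves — in principle an intermediate step of a rewriting could create a bigon by first doing an isotopy that raises and then lowers the crossing count, but since isotopy preserves $cross$ exactly and every relation is non-increasing, no such detour can ever produce a term of higher $cross$ than the starting diagram. One should also note the harmless subtlety that $cross$ is defined on honest diagrams and not modulo equivalence, so the claim ``$D \in \fcal_k$'' means $D$ is a linear combination of classes of diagrams admitting representatives with $\le k$ crossings; the argument above respects exactly this reading. I would write the proof as: concatenation is additive on $cross$, each KLRW relation is non-increasing on $cross$, therefore the composition of a $cross\le k$ class with a $cross \le l$ class expands in diagrams of $cross \le k+l$. $\square$
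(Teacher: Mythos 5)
Your proof is correct, and its essential content is exactly the paper's one-line argument: composition is concatenation of diagrams, and $cross$ is additive under concatenation, so a representative of the composite with at most $k+l$ crossings exists on the nose. The additional verification that each KLRW relation is non-increasing in $cross$ is superfluous for this lemma, since $\fcal_{k+l}$ is by definition the span of equivalence classes admitting \emph{some} representative with at most $k+l$ crossings (the concatenated representative already witnesses membership, with no need to pass to a standard form); those relation-by-relation checks are accurate, but they really belong to the subsequent identification of the associated graded (Lemma \ref{lm:GrKLRW}) rather than to the filtration-preservation statement itself.
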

\begin{proof}
    Indeed, the composition in KLRW category is by concatenation, and crossing number is additive on concatenation of diagrams. 
\end{proof}

For weighted taut strand diagram $D \in \pcal(c_1, c_2)$, we may also define $cross_i(D)$ for and $cross(D)$ as above. Let $\pcal_k(c_1, c_2)$ be the subspace spanned by $D$ with $cross(D)=k$. Then we have direct sum decomposition
$$ \pcal(c_1, c_2) = \oplus_{k \geq 0} \pcal_k(c_1, c_2). $$

\begin{lemma} \label{lm:GrKLRW}
For any $c_1, c_2 \in \ccal$, we have canonical isomorphism 
$$ \Phi_k: Gr_k^\fcal Hom_{KLRW}(c_1, c_2) \cong \C[\eta, \hbar] \cdot \pcal_k(c_1, c_2). $$
\end{lemma}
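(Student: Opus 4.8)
Lemma \ref{lm:GrKLRW} asserts a canonical isomorphism between the associated graded piece $Gr_k^\fcal Hom_{KLRW}(c_1,c_2)$ and $\C[\eta,\hbar]\cdot\pcal_k(c_1,c_2)$. The plan is to build this isomorphism from the basis-type results already established for the degenerate cases, tracking how the filtration by crossing number interacts with the KLRW relations.

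\textbf{The approach.} First I would recall that by Proposition \ref{stage2:KRLW has basis} (or more precisely its proof), every KLRW diagram can be rewritten, using the relations, as a $\C[\eta,\hbar]$-linear combination of \emph{taut} diagrams with dots pushed to the bottom — equivalently weighted taut strand diagrams. The point of the filtration is that the only relations in Figure \ref{fig:KLRW} that change the crossing number $cross$ are those whose right-hand side has \emph{strictly fewer} crossings than the left-hand side: the bigon relations \ref{fig:KLRW-bigon}, \ref{fig:KLRW-ij}, \ref{fig:KLRW-bigon-red} remove two crossings (or set the diagram to zero), and the dot-pass-crossing relations \ref{fig:KLRW-skein-1}, \ref{fig:KLRW-skein-2} trade one crossing for a dot. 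The braid relations \ref{fig:KLRW-jij}, \ref{fig:KLRW-braid-red} are crossing-number-preserving on their leading term (three crossings on each side) up to a lower-order ($\hbar$-weighted, one-crossing) correction. So I would first carefully verify the claim implicit in the preceding lemma: $cross$ is well-defined on the associated graded, i.e. two KLRW diagrams that are equivalent (isotopic modulo braid relations) have the same $cross$, and any application of a relation either preserves $cross$ or strictly decreases it.

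\textbf{Key steps.} (1) Define the map $\Phi_k$: send a weighted taut strand diagram $D\in\pcal_k(c_1,c_2)$ to the class of the corresponding KLRW diagram (weight $=$ number of dots, with dots at the bottom) in $Gr_k^\fcal$; extend $\C[\eta,\hbar]$-linearly. This is well-defined because such a diagram has exactly $k$ crossings among same-label black strands, hence lies in $\fcal_k$ but, modulo braid relations and the lower-crossing relations, its image in $\fcal_k/\fcal_{k-1}$ only depends on the taut equivalence class. (2) Surjectivity: given any KLRW diagram $D'$ with $cross(D')\le k$, straighten it and push dots down using the relations; each relation used is either crossing-preserving (braid, up to lower-order terms that die in the graded) or crossing-decreasing, so modulo $\fcal_{k-1}$ the result is a $\C[\eta,\hbar]$-combination of weighted taut diagrams in $\pcal_k$. (3) Injectivity: this is where I would lean on the already-proven Proposition \ref{stage2:KRLW has basis}. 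Setting $\hbar=0$ we have $Hom_{KLRW}(c_1,c_2)_{\hbar=0}\cong\C[\eta]\cdot\pcal(c_1,c_2)$, and $\pcal(c_1,c_2)=\oplus_k\pcal_k(c_1,c_2)$ is already a direct sum decomposition by crossing number. One checks that the $\hbar$-adic structure is compatible: the full module $Hom_{KLRW}(c_1,c_2)$ is free over $\C[\eta,\hbar]$ with the weighted-taut-diagram basis (this follows from Proposition \ref{stage2:KRLW has basis} together with freeness in $\hbar$, which is standard for KLRW algebras, or can be deduced by a filtration/deformation argument from the $\hbar=0$ case). Then the filtration $\fcal_k$ is exactly the span of basis elements with $cross\le k$, and the associated graded is freely spanned by basis elements with $cross=k$, which is precisely $\C[\eta,\hbar]\cdot\pcal_k(c_1,c_2)$.

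\textbf{The main obstacle.} The delicate point is step (3), establishing that the weighted taut strand diagrams form an honest $\C[\eta,\hbar]$-basis of $Hom_{KLRW}(c_1,c_2)$ — not merely a spanning set — and that the filtration is "split" by this basis. The generator-and-relations presentation gives spanning for free, but linear independence requires knowing a normal form, and here the subtlety is that for $\hbar\neq0$ the braid relation introduces genuine (not just leading-order) lower-crossing terms, so the rewriting system is not obviously confluent in a way that preserves a clean basis. The cleanest route is probably to invoke the known basis theorem for (cylindrical) KLRW algebras — i.e. that they are free with basis given by reduced/taut diagrams with dots, over the ground ring including the parameters — citing the relevant literature (\cite{Khovanov-Lauda-diagrammatics-1, webster2022coherent}), and then observe that $\fcal_\bullet$ is precisely the crossing-number filtration on this basis so that $Gr^\fcal_k$ is freely spanned by the basis elements of crossing number $k$. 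Once that is in hand, the identification with $\C[\eta,\hbar]\cdot\pcal_k(c_1,c_2)$ is immediate and the canonicity of $\Phi_k$ follows from the canonicity already present in Proposition \ref{stage2:KRLW has basis}.
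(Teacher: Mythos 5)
Your proposal is correct in substance, and it isolates the right pressure point, but it justifies that point differently from the paper. The paper's own proof runs in the opposite direction and is deliberately lightweight: given a KLRW diagram $D$ with $cross(D)\le k$, rewrite it via the relations as a combination of taut diagrams with dots, define $\Phi_k(D)$ to be the part with crossing number exactly $k$ (viewed as weighted taut diagrams), and then assert that this is independent of the chosen rewriting and descends to $Gr_k^\fcal$ --- i.e.\ the paper proves (or rather, claims ``one can check'') a graded confluence statement internally, without ever invoking a full basis theorem for $Hom_{KLRW}$ over $\C[\eta,\hbar]$. Your map is the inverse of theirs, your surjectivity step is their rewriting step, and your injectivity step is exactly the content of their well-definedness claim; the difference is that you discharge it by citing the known basis theorem for (cylindrical/weighted) KLRW algebras, which is a legitimate and arguably more solid route, at the cost of importing an external result where the paper stays self-contained. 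One caveat: your fallback argument that freeness over $\C[\eta,\hbar]$ ``can be deduced by a filtration/deformation argument from the $\hbar=0$ case'' is not automatic --- Proposition \ref{stage2:KRLW has basis} at $\hbar=0$ gives spanning plus independence of the specialization, but lifting independence to $\C[\hbar]$ requires knowing the Hom space has no $\hbar$-torsion (a relation divisible by $\hbar$ cannot be detected by setting $\hbar=0$), so if you go that way you need an extra argument (e.g.\ an upper-triangularity of the rewriting with respect to the crossing filtration, which is essentially the paper's ``one can check''); the literature citation, by contrast, is fine as the primary route.
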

\begin{proof}
Let $D$ be a KLRW diagram with $cross(D) \leq k$, we may use KRLW relations to express $D$ as a linear combination of taut KRLW diagrams (i.e. with out any bigon)
$ D = \sum_i c_i D_i. $
We define $\Phi_k(D) = \sum_i c_i \Phi_k(D_i)$, where 
$\Phi_k(D_i)=0$ if $cross(D_i) < k$ and otherwise $\Phi_k(D_i)$ is defined by viewing $D_i$ as taut weighted diagram where weight is the number of dots. One can check that different decomposition of $D$ as taut KLRW diagrams will give the same $\Phi_k$ image and $\Phi_k$ naturally descend to $Gr_k^\fcal$. 
\end{proof}

Let $Gr(KLRW)$ denote the associated graded category, where the set of objects is still $\ccal$, and the hom space is
$Hom_{Gr(KLRW)}(c_1, c_2) = \oplus_k Gr_k^\fcal(Hom_{KLRW}(c_1, c_2))$, and composition is naturally induced.

\begin{lemma} \label{lm: filtered isom}
    Let $\acal, \bcal$ be two $R$-linear categories.  Assume given an exhausting increasing filtration indexed by $\Z_{\geq 0}$ on the hom spaces and with compositions compatible with filtrations. If $\psi: \acal \to \bcal$ is a functor that respects the filtration, and $Gr\psi: Gr\acal \to Gr\bcal$ is an isomorphism, then $\psi$ is an equivalence of category
\end{lemma}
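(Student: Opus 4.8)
The plan is to show that $\psi$ is bijective on objects and fully faithful; together these make $\psi$ an equivalence (in fact an isomorphism) of categories. Since the grading construction does not change objects, $Gr\acal$ and $Gr\bcal$ have the same objects as $\acal$ and $\bcal$, so the hypothesis that $Gr\psi$ is an isomorphism already supplies the bijection on objects. Thus the real content is that for every pair $c_1,c_2$ the $R$-linear map $\psi\colon \Hom_\acal(c_1,c_2)\to\Hom_\bcal(c_1,c_2)$ is an isomorphism.

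First I would fix $c_1,c_2$, write $A_k:=\fcal_k\Hom_\acal(c_1,c_2)$ and $B_k:=\fcal_k\Hom_\bcal(c_1,c_2)$ (with the convention $A_{-1}=B_{-1}=0$), and observe that, since $\psi$ respects the filtration, it restricts to $R$-linear maps $\psi_k\colon A_k\to B_k$ whose induced map on associated graded pieces $A_k/A_{k-1}\to B_k/B_{k-1}$ is precisely the degree-$k$ component of $Gr\psi$, hence an isomorphism by hypothesis.

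Next I would prove by induction on $k$ that $\psi_k$ is an isomorphism; the base case ($k=-1$, or equivalently $k=0$ where $\psi_0$ coincides with $(Gr\psi)_0$) is immediate. For the inductive step, apply the five lemma — or a direct diagram chase — to the commutative ladder of short exact sequences of $R$-modules
\[
\begin{tikzcd}
0 \ar[r] & A_{k-1} \ar[r]\ar[d,"\psi_{k-1}"] & A_k \ar[r]\ar[d,"\psi_k"] & A_k/A_{k-1} \ar[r]\ar[d,"(Gr\psi)_k"] & 0 \\
0 \ar[r] & B_{k-1} \ar[r] & B_k \ar[r] & B_k/B_{k-1} \ar[r] & 0
\end{tikzcd}
\]
whose left vertical map is an isomorphism by the inductive hypothesis and whose right vertical map is an isomorphism by hypothesis; this forces $\psi_k$ to be an isomorphism. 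Finally, since the filtration is exhausting, every element of $\Hom_\bcal(c_1,c_2)$ lies in some $B_k$ and hence equals $\psi_k(\alpha)$ for a unique $\alpha\in A_k\subseteq\Hom_\acal(c_1,c_2)$, giving surjectivity; and if $\psi(\alpha)=0$ then $\alpha\in A_k$ for some $k$ with $\psi_k(\alpha)=0$, so $\alpha=0$, giving injectivity. Hence $\psi$ is fully faithful, and combined with the bijection on objects it is an equivalence of categories.

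I do not expect a genuine obstacle here — this is the standard filtered-to-graded comparison argument. The only care needed is the bookkeeping convention that starts the induction ($\fcal_{-1}=0$) and the remark that "$Gr\psi$ is an isomorphism" should be unpacked into the two ingredients actually used: bijectivity on objects and isomorphy of each graded component $A_k/A_{k-1}\to B_k/B_{k-1}$. (Note that compatibility of composition with the filtrations is needed only to make $Gr\acal,Gr\bcal$ categories in the first place; it plays no role in the argument above, which is purely about the filtered $R$-modules $\Hom(c_1,c_2)$.)
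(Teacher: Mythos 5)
Your proposal is correct and is essentially the paper's own argument: the paper's one-line proof ("a morphism of filtered $R$-modules inducing an isomorphism on associated graded is an isomorphism") is exactly the hom-space-by-hom-space filtered-to-graded comparison you spell out via induction on filtration degree and the five lemma, using that the filtration is exhausting and bounded below. No gap; you have simply written out the standard details the paper leaves implicit.
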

\begin{proof}
    If $F: V \to W$ are morphisms of filtered $R$-module with isomorphism on associated graded, then $V \congto W$. 
\end{proof}

\begin{lemma} \label{lm: short-cut}
Let $\acal$ be a category defined over $\C[\hbar, \eta]$ with objects $\ccal$, and let there be a functor
    $$ \psi: FreeElem \to \acal $$ 
such that $\psi$ satisfies the following condition
\begin{enumerate}
    \item annihilate all KRLW relations except possibly the braid relations (Figure \ref{fig:KLRW-jij}, \ref{fig:KLRW-braid-red})
    \item Let $D$ be the elementary diagram of a crossing of type $(i)-[i]$, i.e. a black strand of label $(i)$ and a red strand with label $[i]$. Then $\psi(D)$ is a morphism in $\acal$ that is not a zero-divisor, i.e., for any pre-composable morphism $E$, 
    $$ \psi(D) \circ E = 0 \LRA E = 0. $$
    and for any post-composable morphism $E$ for $\psi(D)$, 
    $$ E \circ \psi(D) = 0 \LRA E = 0. $$
    \item Let $D$ be the elementary diagram of a crossing of type $(i)-(j)$, where $(i), (j)$ are adjacent nodes. Then $\psi(D)$ is not a zero-divisor (in the above sense). 
\end{enumerate}
then $\psi$ in fact also annihilates the braid relations, and hence descends to a functor $\psi: KRLW \to \acal$. 
\end{lemma}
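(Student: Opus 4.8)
The goal is to show that if $\psi\colon FreeElem \to \acal$ annihilates all KLRW relations except possibly the braid relations (Figures~\ref{fig:KLRW-jij} and~\ref{fig:KLRW-braid-red}), and if $\psi$ sends the elementary $(i)$--$[i]$ crossing and the elementary $(i)$--$(j)$ crossing (for $(i)\to(j)$ an edge) to non-zero-divisors, then $\psi$ automatically kills the braid relations too. The plan is to reduce each braid relation to an algebraic identity between two morphisms in $\acal$ and then cancel a non-zero-divisor to upgrade a factored identity to the desired one. Concretely, consider the braid-with-red relation of Figure~\ref{fig:KLRW-braid-red}: the difference of the two triple-crossing diagrams equals $\eta\hbar$ times a diagram with the red strand vertical. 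I would like to post-compose (or pre-compose) both sides with the elementary $(i)$--$[i]$ crossing that moves the red strand back across a black strand. On the KLRW side, using only the relations $\psi$ is already known to respect (bigon-with-red, Figure~\ref{fig:KLRW-bigon-red}; dot-pass-crossing, Figures~\ref{fig:KLRW-skein-1} and~\ref{fig:KLRW-skein-2}; and honest isotopy/braid moves that don't create bigons), the composite of the ``braid relator'' with this extra crossing can be rewritten so that it becomes an identity which holds in $\acal$ as well, i.e.\ $\psi(D_{[i]})\circ(\text{LHS}-\text{RHS}) = 0$ in $\acal$. Since $\psi(D_{[i]})$ is not a zero-divisor, we may cancel it and conclude $\psi(\text{LHS}) = \psi(\text{RHS})$, which is exactly the braid-with-red relation.

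For the braid-with-neighbour relation of Figure~\ref{fig:KLRW-jij} (the $(j)(i)(i)$ or $(i)(i)(j)$ triple involving adjacent nodes $(j)\to(i)$), the same strategy applies with the $(i)$--$(j)$ crossing playing the role of the cancellable non-zero-divisor: one post-composes the braid relator with the elementary $(i)$--$(j)$ crossing, uses the already-verified relations (the bigon relations of Figures~\ref{fig:KLRW-ij} and~\ref{fig:KLRW-bigon}, the $(j)\to(i)$ bigon, and dot-pass-crossing) to turn the composite into an identity valid in $\acal$, and then cancels. The mixed $(i)(i)(i)$ braid relation with no extra strand — the Reidemeister-III-type move among three black strands of the same colour — must be handled separately: here there is no red or adjacent-colour strand available to cancel, but this relation is a \emph{consequence} of the others once we are allowed to use the bigon relation (Figure~\ref{fig:KLRW-bigon}, which gives $=0$ on same-colour bigons) together with the dot-pass-crossing relations; I would verify that the difference of the two sides of the $(i)(i)(i)$ braid move is already in the ideal generated by the non-braid relations, by the standard local reduction argument (push the middle strand past the crossing, create a bigon, apply Figure~\ref{fig:KLRW-bigon}).

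The main obstacle will be carrying out these ``move the bigon to the top and re-express'' manipulations cleanly at the level of $FreeElem$ rather than $KLRW$: since $\psi$ is only given on chains of elementary diagrams, I must exhibit the relevant rewritings as genuine equalities of chains in $Hom_{FreeElem}$ modulo the non-braid relations, being careful that every intermediate step is itself one of the allowed relations (bigon, dot-pass-crossing, honest isotopy) and never secretly a braid move. In spirit this is the same bookkeeping as in the proof of Proposition~\ref{p: KLRW h=0 recog} — decompose a diagram tautly, isolate the elementary piece where the interesting cancellation happens, and propagate — so I expect no conceptual difficulty, only the need to enumerate the handful of local configurations (black-black-red, black-black-black with an adjacent colour, black-black-black same colour) and check each. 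Once the identities are established in $\acal$ after composing with the cancellable crossing, the non-zero-divisor hypothesis does the rest, and $\psi$ descends through $FreeElem \twoheadrightarrow KLRW$ to a functor $KLRW \to \acal$.
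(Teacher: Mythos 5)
Your proposal takes essentially the same route as the paper: pre-/post-compose each of the two braid relators with the corresponding non-zero-divisor elementary crossing (the red--black crossing for Figure \ref{fig:KLRW-braid-red}, the $(i)$--$(j)$ crossing for Figure \ref{fig:KLRW-jij}), verify that the composed identity already follows from the dot-pass-crossing and bigon relations that $\psi$ is assumed to annihilate, and then cancel the non-zero-divisor. The only deviation is your separate treatment of the same-colour triple-crossing move, which is unnecessary here: it is not among the excepted relations in hypothesis (1), so it is already assumed to be annihilated and requires no derivation from the bigon relations.
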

\begin{proof}
The two braid relations that we want to show are as in Figure \ref{fig:KLRW-jij} and \ref{fig:KLRW-braid-red}. 
We pre-compose the equation with a non-zero divisor and show the new equation holds. 

The braid with red-strand relation is equivalent to Figure \ref{fig:braid-with-red-proof}, which holds by dot pass crossing relation. 

\begin{figure}[h]
    \centering
    \includegraphics[width=1\linewidth]{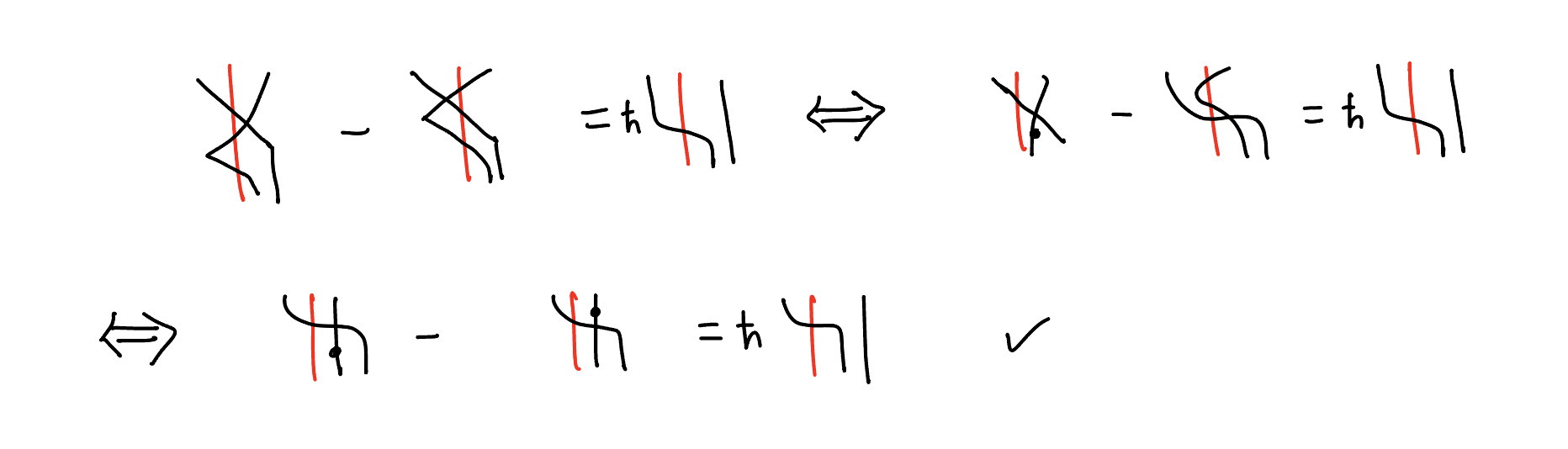}
    \caption{Proof for braid relations between two black strands and a red strands of the same label.}
    \label{fig:braid-with-red-proof}
\end{figure}

The braid relation between black strands of adjacent nodes is equivalent to the following equations of diagrams (Figure \ref{fig:braid-iji-proof}), which does hold. 
\begin{figure} [h]
    \centering
    \includegraphics[width=0.8\linewidth]{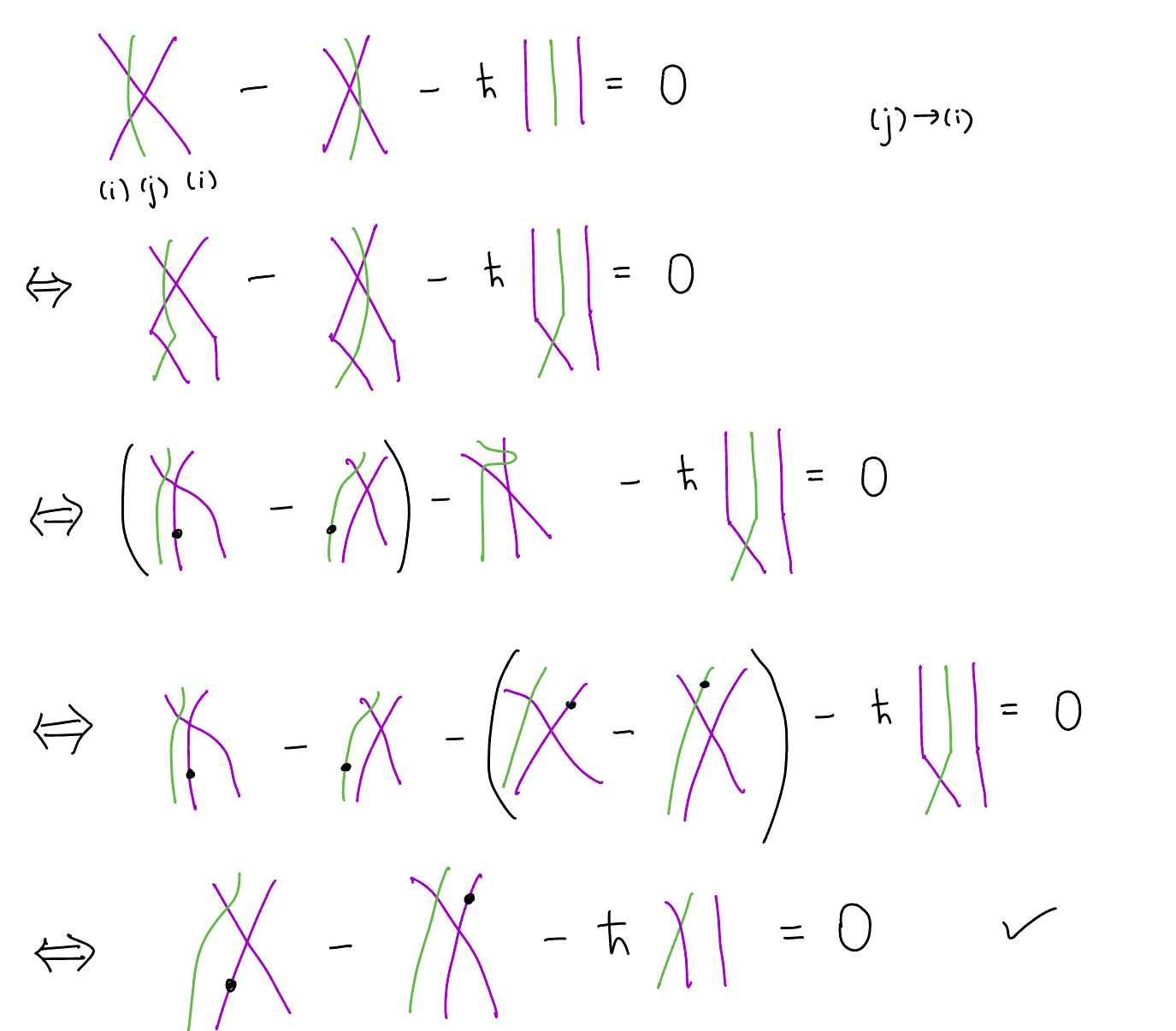}
    \caption{Proof for braid relations between black strands labelled by adjacent circle nodes. }
    \label{fig:braid-iji-proof}
\end{figure}


\end{proof}

\begin{proposition} \label{p: KLRW recog}
    Let $\acal$ be a category with objects $\ccal$ and with hom spaces $Hom_\acal(c_1, c_2)$ as a $\C[\eta, \hbar]$-free module. We assume that
\begin{enumerate}
    \item The hom spaces $Hom_\acal(c_1, c_2)$ has an increasing filtration  $\fcal_k(Hom_\acal(c_1, c_2))$ compatible with composition. 
    \item The associated graded has a canonical isomorphism
    $$ \Psi_k: Gr_k^\fcal(Hom_\acal(c_1, c_2)) \cong \C[\eta, \hbar] \cdot \pcal_k(c_1, c_2). $$
    \item $\Psi_k$ defines an isomorphism of graded category 
    $$ \Psi: Gr(\acal) \cong Gr(KLRW). $$ 
    \item There is a functor 
    $$ \psi: KLRW \to \acal $$ 
    that respects the filtration, and $Gr\psi = \Psi$. 
\end{enumerate}
Then we have $\psi$ is an equivalence of category. 
\end{proposition}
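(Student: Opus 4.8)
The plan is to deduce this directly from Lemma \ref{lm: filtered isom}, applied to the functor $\psi\colon KLRW \to \acal$ of hypothesis (4), with $KLRW$ as the source and $\acal$ as the target. Concretely, I would check the three inputs that lemma requires: that both categories carry exhausting increasing $\Z_{\geq 0}$-filtrations on their hom spaces with composition compatible with the filtration; that $\psi$ respects these filtrations; and that $Gr\,\psi$ is an isomorphism of graded categories.

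For the source, the relevant filtration on $KLRW$ is the crossing-number filtration $\fcal_k Hom_{KLRW}(c_1,c_2)$ introduced above: it is increasing and indexed by $\Z_{\geq 0}$, it is exhausting because every KLRW diagram has only finitely many crossings, and composition preserves it since crossing number is additive under concatenation (the lemma stated just before Lemma \ref{lm:GrKLRW}). For the target, hypothesis (1) supplies a composition-compatible increasing filtration $\fcal_k Hom_\acal(c_1,c_2)$, which I read as exhausting — this is what is needed in order to invoke Lemma \ref{lm: filtered isom}, and in the intended application it is manifest. Hypothesis (4) says $\psi$ respects these filtrations, hence induces $Gr\,\psi\colon Gr(KLRW)\to Gr(\acal)$. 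Finally, $Gr\,\psi$ is an isomorphism: on objects it is the identity on $\ccal$, and on hom spaces, for each $k$ the map $Gr_k\,\psi\colon Gr_k^\fcal Hom_{KLRW}(c_1,c_2)\to Gr_k^\fcal Hom_\acal(c_1,c_2)$ is identified, via the canonical isomorphism $\Phi_k$ of Lemma \ref{lm:GrKLRW} and the isomorphism $\Psi_k$ of hypothesis (2), with the comparison map $\Psi$ of hypothesis (3) — an isomorphism of graded categories (bijective on hom spaces and compatible with composition). This is exactly the content of the equation $Gr\,\psi = \Psi$ in hypothesis (4), read with source and target exchanged.

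With these three points in place, Lemma \ref{lm: filtered isom} applies verbatim and yields that $\psi\colon KLRW \to \acal$ is an equivalence, as claimed. The only point in the argument that is not pure bookkeeping is the exhaustiveness of the $\acal$-filtration — which I think is intended to be part of hypothesis (1) and cannot be extracted from the other hypotheses alone; everything else is a translation of the data (1)--(4) into the hypotheses of Lemma \ref{lm: filtered isom}. If one wishes to avoid even citing that lemma, one can argue hom space by hom space: by induction on $k$, using the short exact sequences $0 \to \fcal_{k-1} \to \fcal_k \to Gr_k^\fcal \to 0$ on both sides together with the five lemma, $\psi$ restricts to an isomorphism $\fcal_k Hom_{KLRW}(c_1,c_2)\congto \fcal_k Hom_\acal(c_1,c_2)$ for every $k$; passing to the increasing union over $k$ then shows $\psi$ is an isomorphism on each hom space, and hence, being the identity on objects, an equivalence (indeed an isomorphism) of categories.
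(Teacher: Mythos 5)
Your proposal is correct and follows essentially the same route as the paper, whose entire proof is to apply Lemma \ref{lm: filtered isom} to $\psi\colon KLRW\to\acal$; you simply spell out the verification of that lemma's hypotheses (including the exhaustiveness point, which the paper leaves implicit).
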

\begin{proof}
We may apply Lemma \ref{lm: filtered isom} to $\psi: KLRW \to \acal$. 
\end{proof}

\section{Floer homologies in $(\mathbf{T}_O \times \mathbf{T}^\vee)/\mathbf{W}$}
Our eventual goal is to compute Floer homology in the Coulomb branch for certain distinguished Lagrangians. It is possible to describe the Lagrangians, their positive wrapping and their intersections in an explicit way, and the only difficulty is to count disks that defines the $A_\infty$ structure. We will introduce two formal parameters $\eta$ and $\hbar$, where a factor of $\eta$ indicate a disk that intersects the matter divisor, and a factor of $\hbar$ indicate a disk intersects the root divisor. In this section, we set $\eta, \hbar=0$, which means the computation are in the complement of these divisors, which we call the punctured space. 

This section will describe the punctured space, certain distinguished Lagrangians in it and their hom spaces. In the next two sections, we will gradually add back the divisors. This section can be read without any knowledge of the Coulomb branches. We use the same notation as section \ref{s:KLRW}.

\subsection{Fukaya category setup}

We define
$$ \bfT_y = \prod_{(i,j)} \C^*_{y_{(i,j)}}, \quad \bfT^\vee_u = \prod_{(i,j)} \C^*_{u_{(i,j)}}, \quad \bfW = \prod_{(i)} S_{d_i}. $$
with coordinates as subscript. 
We also define a punctured version of $\bfT_y$
$$ \bfT_O = \left\{ (y_{(i,j)}) \in \bfT \mid 
\begin{cases}
    y_{(i,j)} \neq y_{(i,j')} & \forall (i,j) \neq (i,j') \\
    y_{(i,j)} \neq a_{[i,k]} & \forall (i,j), [i,k] \\
    y_{(i,j)} \neq y_{(i',j')} & \forall (i) \to (i'). 
\end{cases} \right \}. $$

We choose a product holomorphic volume form (used only to fix grading/orientation data for Floer homologies) on $\bfT^\vee \times \bfT$
\begin{equation} \label{hol vol T}
\Omega = \prod_{\bij} d \log u_{\bij} \wedge d \log y_{\bij}.
\end{equation}
Note that $\Omega$ is invariant under $\bf W$ hence descends to $(\mathbf{T}_O \times \mathbf{T}^\vee)/W$.

A {\bf product Lagrangian} is 
$$ {\bf L} = \prod_{(i,\alpha)} L_{u_{\bij}} \times L_{y_{\bij}} \In  \bfT^\vee \times \bfT_O, $$
where $L_{u_\bij} \In \CS_{u_\bij}$ and $L_{y_\bij} \In \C^*_{y_\bij}$ are Lagrangians (i.e. closed 1-dimensional submanifold without boundary). 

A {\bf symmetric product Lagrangian} is the quotient image of a product Lagrangian by $\bfW$
$$ {\bf L / W} \In (\bfT^\vee \times \bfT_O) / {\bf W}. $$

Given two product Lagrangians $\bf L_1,  L_2$, their intersections are product of intersections
$$ \bfL_1 \cap \bfL_2 = \prod_\bij (L_{1,u_\bij} \cap L_{2,u_\bij}) \times  (L_{1,y_\bij} \cap L_{2,y_\bij}).$$

Similarly, given two symmetric product Lagrangians, their intersections are 
$$ \bfL_1 /\bfW \cap \bfL_2/\bfW = \coprod_{\sigma \in \bfW} \prod_\bij (L_{1,u_\bij} \cap L_{2,u_\sigma\bij}) \times  (L_{1,y_\bij} \cap L_{2,y_{\sigma\bij}}).$$

\begin{definition}
Let $c = \{\theta_{(i,j)}\}$ be a black points configuration. We define the following symmetric product Lagrangian 
$$ \tcal_c = \{((u_\bij, y_\bij)) \in \bfT^\vee \times \bfT_O \mid \arg(y_{\bij})=\theta_\bij, \arg(u_\bij)=0\} / \bfW. $$
\end{definition}

\subsection{Floer theory in $\C^*, (\C^*)^2$ , $(\C^*)^2 \RM \Delta$ and $\Sym^2(\C^*)\RM \Delta$}
We study some model cases in this subsection.

\begin{definition}
Let $(\C^*_z, \omega =  i |z|^{-2} dz \wedge d\bar z,  \Omega = dz/z)$ be the symplectic cylinder.  
Let $T_1, T_2$ be two graded Lagrangians with conical ends that are Lagrangian section for the torus fibration $\log|z|: \C^* \to \R$.   We say {\em $T_1$ intersects $T_2$ positively and write $T_1 > T_2$},  if  $T_1, T_2$ intersects transversely and the chain complex $CF(T_1, T_2)$ is concentrated in degree $0$. 
In general, we write $T_1 > T_2 >\cdots > T_n$ if for any $i < j$ we have $T_i > T_j$.
\end{definition}

\begin{lemma} \label{lm: obvious-disk-1}
Given $T_1 > T_2 > T_3$ in $\C^*_z$, if $p_{12} \in T_1 \cap T_2$ and $p_{23} \in T_2 \cap T_3$, then there exists a unique $p_{13} \in T_1 \cap T_3$ as the composition output of $p_{23} \circ p_{12}$.     
\end{lemma}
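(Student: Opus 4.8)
**Proof plan for Lemma (the "obvious disk" in $\C^*$).**

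The statement concerns $\C^*_z$ with the standard cylindrical symplectic form and three graded Lagrangians $T_1 > T_2 > T_3$ that are sections of $\log|z|\colon \C^*\to\R$, with intersection points $p_{12}\in T_1\cap T_2$ and $p_{23}\in T_2\cap T_3$; we must produce a unique $p_{13}\in T_1\cap T_3$ realized as the $A_\infty$ product $\mu^2(p_{23},p_{12})$. Let me sketch how I would argue this.

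First I would set up coordinates. Since each $T_k$ is a Lagrangian section of the fibration $\log|z|$, I can lift to the universal cover: write $z = e^{\rho + i\phi}$ with $(\rho,\phi)\in\R\times(\R/2\pi\Z)$, and each $T_k$ is the graph $\phi = f_k(\rho)$ of a (multivalued, affine-at-infinity) function, or better, after passing to the $\R^2$ cover $(\rho,\phi)\in\R^2$, each $T_k$ lifts to a collection of honest graphs $\phi=f_k(\rho)+2\pi n$. The positivity hypothesis $T_i>T_j$ means $CF(T_i,T_j)$ is in degree $0$, which in this one-dimensional picture translates to a monotonicity/winding condition: relative to a chosen lift, $f_i - f_j$ is a function that, along any intersection, crosses from below to above in the appropriate sense — the standard fact that on the cylinder, transverse Lagrangian sections have a well-defined "order" and the Floer differential vanishes precisely when the relevant immersed polygons are orientation-compatible. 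The key point is that $T_1>T_2>T_3$ forces a consistent cyclic ordering.

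The main computation: holomorphic triangles in $\C^*$ with boundary on $T_1,T_2,T_3$ and corners at $p_{12},p_{23}$, and a third corner at some $p_{13}$. Passing to the cover and then to $\R^2$ (or the strip), such a triangle is a holomorphic map from the disk with three boundary punctures into $\C$ (via $\log$), with boundary on three affine-at-infinity curves; by the open mapping theorem / Riemann mapping considerations, given the two input corners there is exactly one output corner $p_{13}$ that bounds an immersed (in fact embedded, by convexity after straightening) triangle, and exactly one such triangle up to reparametrization, with Maslov index $0$ so that it contributes to $\mu^2$. I would make this precise by invoking the standard model: after an exact symplectomorphism I may assume near the relevant region the $T_k$ are straight lines in $\R^2 = T^*\R$, reducing to the elementary fact that three lines in general position bound a unique triangle, and its three vertices are the unique triple $(p_{12},p_{23},p_{13})$. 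Positivity guarantees this triangle is the one with the correct orientation (counterclockwise) so it is counted with coefficient $+1$ and lands in degree $0$; the other potential triangle (with the reversed orientation) would contribute to a product going the "wrong way" and is excluded by the degree constraint. Hence $\mu^2(p_{23},p_{12}) = p_{13}$, with $p_{13}$ unique.

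The main obstacle is bookkeeping on the cover: one must check that no \emph{other} intersection point $p_{13}'\in T_1\cap T_3$ admits a holomorphic triangle with the same two inputs, and that the unique triangle one finds does not "wrap" around the cylinder in an unexpected way. This is controlled by the energy (area) of the triangle being determined by the positions of $p_{12},p_{23}$ together with the linear behavior of the $f_k$ at infinity, so once the lifts are fixed the output corner is rigidly determined; wrapping is ruled out because the $T_k$ are sections (single-valued in $\phi$ up to the covering shift) and the $A_\infty$ structure only sees the chain-level representatives, i.e.\ we work with fixed lifts. I would therefore organize the proof as: (i) pass to $\R^2$ and straighten to the linear model; (ii) invoke the unique-triangle fact and compute its Maslov index to be $0$; (iii) use the positivity hypotheses to fix signs and the degree, concluding $p_{13}$ is unique and equals the product. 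This is the base case that all subsequent higher-rank disk counts will be bootstrapped from via the cylindrical/product model of Theorem \ref{cylindrical model}.
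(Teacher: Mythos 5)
Your proposal follows essentially the same route as the paper: pass to the universal cover $\C_u \to \C^*_z$, fix compatible lifts of the Lagrangians and of $p_{12}, p_{23}$, and use positivity together with the section/conical-at-infinity structure to force each pairwise intersection of the lifts to be a single point, so that the unique ``obvious'' triangle upstairs with corners at the lifted inputs produces the output $p_{13}$, and lifting any downstairs triangle rules out extra outputs. The only cosmetic difference is that the paper obtains existence of $\wt T_1 \cap \wt T_3$ by comparing the asymptotic values of $\mathrm{Im}(u)$ at the two conical ends (an intermediate-value argument) rather than by your straightening-to-lines heuristic, but the substance of the argument is the same.
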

\begin{proof} 
    We consider the universal cover $\C_u \to \C^*_z$, $z = e^u$. Choose a lift $\wt T_i$ of $T_i$ for $i=1,2,3$, such that the intersection points $p_{12}$ and $p_{23}$ lifts to $\wt p_{12} \in \wt T_1 \cap \wt T_2$ and $\wt p_{23} \in \wt T_2 \cap \wt T_3$, respectively. For $i<j$, we also have $CF(\wt T_i, \wt T_j)$ in degree $0$ (where $\Omega_{\C_u} = du$), hence $\wt T_i \cap \wt T_j$ at most at one point. We have seen that $\wt T_1 \cap \wt T_2 = \{\wt p_{12}\}$ and $\wt T_2 \cap \wt T_3 = \{\wt p_{23}\}$, it remains to show that $\wt T_1 \cap \wt T_3 \neq \emptyset$. Since $T_i$ are eventually conical, hence $Im(u)$ on $\wt T_i$ are eventually constant when $|Re(u)|$ are large enough. Let $R_i, L_i$ be the eventual value of $Im(u)|_{\wt T_i}$ on the far right and left ends, then we have 
    $$ R_1 > R_2 > R_3, \quad L_1 < L_2 < L_3. $$
    Since $R_1 > R_3, L_1 < L_3$, we see $\wt T_1 \cap \wt T_3 \neq \emptyset$, and denote the unique intersection point as $\wt p_{13}$. Then, there is an obvious disk in $\C_u$ that realizes $\wt p_{13} = \wt p_{23} \circ \wt p_{12}$. Let $p_{13} \in \C^*_z$ be the image of $\wt p_{13}$, then the image of the disk in $\C_u$ witnesses the composition $p_{13} = p_{23} \circ p_{12}$. 
\end{proof}

As a warm-up for using strand diagrams to represent morphisms in Fukaya category, we define the set of states to be
$ \ccal_1 = S^1$,  and for $c_1, c_2 \in \ccal_1$, the set of diagrams $\pcal_1(c_1, c_2)$ to be the taut strands in $S^1 \times [0,1]$ that connects $c_1$ at the bottom $S^1 \times 0$ to $c_2$ at the top $S^1 \times 1$ modulo isotopy. 
The composition of diagrams $D_{23} \circ D_{12}$ is concatenation, and is denoted as $concat(D_{23}, D_{12}) $. 

For any $c \in \ccal_1$, let $T_c = c \times \R \In S^1 \times \R \cong \C^*$ and let $T^w_c$ denote a wrapped version of $T_c$.

\begin{lemma} \label{lm:baby-comparison-1}
   (1) For any $c_1, c_2 \in \ccal_1$, if $T^w_{c_1} > T^w_{c_2}$, then there is a natural injection
    $$ \Phi: T^w_{c_1} \cap T^w_{c_2} \into \pcal_1(c_1, c_2). $$
   (2) For any $c_1, c_2, c_3 \in \ccal_1$, if $T^w_{c_1} > T^w_{c_2} > T^w_{c_3}$, and $p_{12}, p_{23}$ are intersection points corresponding to diagrams $D_{12}, D_{23}$ where $p_{ij} \in T^w_{c_i} \cap T^w_{c_j}$, then 
    $$ \Phi: p_{23} \circ p_{12} \mapsto concat(D_{23},  D_{12}). $$
\end{lemma}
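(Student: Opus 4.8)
The plan is to reduce both parts to elementary bookkeeping in the universal cover $\C_u \to \CS_z$, $z = e^u$, where $u = \rho + i\theta$ with $\rho = \log|z|$ and $\theta$ a real lift of $\arg z$, and the deck group $\Z$ acts by $u \mapsto u + 2\pi i$. The key observation making the grading data behave well is that $dz/z = du$, so $\Omega$ pulls back exactly and gradings of intersection points upstairs agree with those of their images downstairs. I will use that $\pcal_1(c_1,c_2)$ is a torsor over $\Z\cong\pi_1(S^1)$ (concatenation with a loop winding once around $S^1$), and that a wrapped Lagrangian $T^w_c$, being an eventually conical Lagrangian section of $\log|z|$, is graphical over $\rho$ near both ends; hence any lift $\wt T^w_c$ maps homeomorphically onto $T^w_c$ and is eventually horizontal, with $\theta\to L_c$ as $\rho\to-\infty$ and $\theta\to R_c$ as $\rho\to+\infty$, both $\equiv c\pmod{2\pi}$, while the other lifts are the translates $\wt T^w_c + 2\pi i\,n$, $n\in\Z$.

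For part (1), I would fix a lift $\wt T^w_{c_1}$ once and for all. Each $p\in T^w_{c_1}\cap T^w_{c_2}$ has a unique preimage $\wt p$ on $\wt T^w_{c_1}$, lying on exactly one translate $\wt T^w_{c_2}+2\pi i\,n(p)$ of $T^w_{c_2}$. Define $\Phi(p)$ to be the class in $\pcal_1(c_1,c_2)$ of the strand obtained by projecting to $S^1$ the concatenation of the $\rho\ge\rho(\wt p)$ arc of $\wt T^w_{c_1}$ with the $\rho\ge\rho(\wt p)$ arc of $\wt T^w_{c_2}+2\pi i\,n(p)$; this depends only on the Lagrangians and the wrapping, so it is natural. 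Injectivity of $\Phi$ is injectivity of $p\mapsto n(p)$: if $n(p)=n(p')$ then $\wt p,\wt p'$ both lie in $\wt T^w_{c_1}\cap(\wt T^w_{c_2}+2\pi i\,n(p))$, and since $T^w_{c_1}>T^w_{c_2}$ the complex $CF(\wt T^w_{c_1},\wt T^w_{c_2}+2\pi i\,n(p))$ is concentrated in degree $0$; exactly as in the proof of Lemma \ref{lm: obvious-disk-1}, two eventually horizontal graded Lagrangian sections of $\R\to\R$ whose transverse intersections are all of degree $0$ meet in at most one point, so $\wt p=\wt p'$, hence $p=p'$.

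For part (2), I would apply Lemma \ref{lm: obvious-disk-1} to $T^w_{c_1}>T^w_{c_2}>T^w_{c_3}$ to get $p_{13}=p_{23}\circ p_{12}$ together with its holomorphic triangle, and track that triangle through the cover as above. With the lift $\wt T^w_{c_1}$ fixed, $\wt p_{12}\in\wt T^w_{c_1}\cap(\wt T^w_{c_2}+2\pi i\,n(p_{12}))$; using that translate of $T^w_{c_2}$, the distinguished lift of $p_{23}$ is $\wt p_{23}+2\pi i\,n(p_{12})$, which lies on $\wt T^w_{c_3}+2\pi i\,(n(p_{12})+n(p_{23}))$, and the triangle lifts to one with these two inputs and output the unique point $\wt p_{13}\in\wt T^w_{c_1}\cap(\wt T^w_{c_3}+2\pi i\,(n(p_{12})+n(p_{23})))$. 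The boundary of this lifted triangle, extended by the constant tails of the three lifted Lagrangians at $\rho\to\pm\infty$, gives a homotopy rel endpoints between the arc defining $\Phi(p_{13})$ and the concatenation of the arcs defining $\Phi(p_{12})$ and $\Phi(p_{23})$; projecting to $S^1$ this says precisely $\Phi(p_{23}\circ p_{12})=concat(D_{23},D_{12})$.

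The one genuinely delicate point I anticipate is fixing conventions at the outset — which end ($\rho\to+\infty$ or $\rho\to-\infty$) of $\wt T^w_{c_i}$ is read as the bottom configuration and which as the top, and the sign of the wrapping — so that the arc recipe for $\Phi$ and the gluing of arcs along the lifted triangle are compatible and so that $\Phi$ indeed lands in $\pcal_1$ with $c_1$ at the bottom and $c_2$ at the top. Everything else is immediate from the definitions together with the universal-cover argument already carried out in the proof of Lemma \ref{lm: obvious-disk-1}.
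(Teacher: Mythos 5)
Your argument is correct and takes essentially the same route as the paper: the paper likewise identifies $\Phi(p)$ by the winding of a path built from the two Lagrangians (equivalently, by which $2\pi i\,\Z$-translate of the lift of $T^w_{c_2}$ meets a fixed lift of $T^w_{c_1}$), and for (2) it invokes Lemma \ref{lm: obvious-disk-1} and uses the holomorphic triangle as the homotopy between the concatenated paths. The only bookkeeping the paper makes explicit that you defer to a "convention fix" is appending the arcs on the unit circle traced by the wrapping isotopy from $T_{c_i}$ to $T^w_{c_i}$, which is what makes $\Phi(p)$ literally a strand from $c_1$ to $c_2$ rather than a path between the asymptotic angles of the wrapped Lagrangians.
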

\begin{proof}
(1) Given a point $p \in T^w_{c_1} \cap T^w_{c_2}$, let $c_i^w = T^w_{c_i} \cap U(1)$, where $U(1) \In \C^*$ is the unit circle. Then we have a natural sequence of paths, 
$$ c_1 \xto{U(1)} c_1^w \xto{T^w_{c_1}} p  \xto{T^w_{c_2}}  c^w_2 \xto{U(1)} c_2, $$
where the label on the arrow indicate where the path is along,  the first and last arrow are given by the isotopy from $T_{c_i}$ to $T^w_{c_i}$, and the middle two arrow are the unique path along $T^w_{c_i} \cong \R$ between two points. This path from $c_1$ to $c_2$ in $\C^*$ defines a path on $U(1)$ by taking argument, hence defines a (taut) strand diagram in $\pcal(c_1, c_2)$. Clearly, different point $p$ corresponds to differnt diagram $D(p)$ (we may isotope $T^w_{c_2}$ to be the unwrapped versino $T_{c_2}$, then different point $p$ corresponds to different winding number, hence different $D(p)$). 

(2) By Lemma \ref{lm: obvious-disk-1}, we see
$p_{23} \circ p_{12}$ is non-zero and is a unique intersection point $p_{13} \in T^w_{c_1} \cap T^w_{c_3}$. It is also easy to check that composition are compatible, since the holomorphic disk provides the homotopy of two paths from $c_1$ to $c_3$. 
\end{proof}

Next, we consider Fukaya category of $(\C^*)^2$. Let $\ccal_1^2 = (S^1)^2$. Then for $c_1,c_2 \in \ccal_1^2$, we write $c_i=(c_{i,1},c_{i,2})$. If $c_1, c_2 \in \ccal_1^2$, then let
$$ \pcal_1^2(c_1, c_2) := \pcal_1(c_{1,1}, c_{2,1}) \times \pcal_1(c_{1,2}, c_{2,2}).$$
The composition of $\C \pcal_1^2(c_1, c_2)$ is given by concatenation of each strand, denoted as $concat(D_{23}, D_{12})$. 
\begin{lemma} \label{lm:baby-comparison-1.5}
   (1) For any $c_1, c_2 \in \ccal_1^2$, if $T^w_{c_1} > T^w_{c_2}$, then there is a natural injection
    $$ \Phi: T^w_{c_1} \cap T^w_{c_2} \into \pcal_1^2(c_1, c_2). $$
   (2) For any $c_1, c_2, c_3 \in \ccal_1^2$, if $T^w_{c_1} > T^w_{c_2} > T^w_{c_3}$, and $p_{12}, p_{23}$ are intersection points corresponding to diagrams $D_{12}, D_{23}$ where $p_{ij} \in T^w_{c_i} \cap T^w_{c_j}$, then 
    $$ \Phi: p_{23} \circ p_{12} \mapsto concat(D_{23}, D_{12}). $$
\end{lemma}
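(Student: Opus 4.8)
The plan is to reduce everything to the one-dimensional statement, Lemma \ref{lm:baby-comparison-1}, by exploiting that $(\C^*)^2$, the Lagrangians $T^w_c$, the symplectic form, and the holomorphic volume form are all products of their $\C^*$-factors. For $c=(c_1,c_2)\in\ccal_1^2$ we have $T^w_c=T^w_{c_1}\times T^w_{c_2}$, and for two objects the intersection decomposes as $T^w_{c_1}\cap T^w_{c_2}=(T^w_{c_{1,1}}\cap T^w_{c_{2,1}})\times(T^w_{c_{1,2}}\cap T^w_{c_{2,2}})$. Under the tautological identification $\pcal_1^2(c_1,c_2)=\pcal_1(c_{1,1},c_{2,1})\times\pcal_1(c_{1,2},c_{2,2})$ built into the definition of $\pcal_1^2$, I would define $\Phi$ on the product to be $\Phi\times\Phi$, with $\Phi$ in each slot the map from Lemma \ref{lm:baby-comparison-1}. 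The hypothesis ``$T^w_{c_1}>T^w_{c_2}$'' should be read — via the Künneth isomorphism $CF(T^w_{c_1},T^w_{c_2})\cong CF(T^w_{c_{1,1}},T^w_{c_{2,1}})\otimes CF(T^w_{c_{1,2}},T^w_{c_{2,2}})$, or simply taken as its definition in the product — as the condition $T^w_{c_{1,k}}>T^w_{c_{2,k}}$ for $k=1,2$, so Lemma \ref{lm:baby-comparison-1} applies factorwise. Statement (1) is then immediate, injectivity being inherited from each factor.

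For (2), the key input is that, equipping $(\C^*)^2$ with the product complex structure, a pseudoholomorphic disk with boundary on a product of Lagrangians is a pair of pseudoholomorphic disks, one in each $\C^*$-factor, and the relevant Fredholm index is additive over the factors. As in the proof of Lemma \ref{lm: obvious-disk-1}, I would pass to the universal cover $\C^2_{(\log z_1,\log z_2)}\to(\C^*)^2$ and choose lifts $\wt T^w_{c_i}$ compatible with chosen lifts of $p_{12}$ and $p_{23}$; each factor then contains the unique intersection point $\wt p_{13,k}$ and the unique ``obvious'' holomorphic strip realizing $\wt p_{13,k}=\wt p_{23,k}\circ\wt p_{12,k}$ supplied by that lemma. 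Since each factor's Floer complex is concentrated in degree $0$, any rigid disk in $\C^2$ contributing to $p_{23}\circ p_{12}$ must have index-$0$ components, hence each component is this obvious strip, and their product is the unique contributing disk. Therefore $p_{23}\circ p_{12}$ is the single intersection point $(p_{13,1},p_{13,2})$ with $p_{13,k}=p_{23,k}\circ p_{12,k}$; applying Lemma \ref{lm:baby-comparison-1}(2) in each factor gives $\Phi(p_{13,k})=concat(D_{23,k},D_{12,k})$, and assembling over $k$ yields $\Phi(p_{23}\circ p_{12})=concat(D_{23},D_{12})$ in $\pcal_1^2(c_1,c_3)$, as claimed.

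The main obstacle is precisely this splitting of the disk moduli under the product, i.e.\ the Künneth property for the $\mu^2$-count; everything else is bookkeeping. In the present setting the obstacle is mild: the Lagrangians are conical sections of the standard torus fibrations, the only relevant disks are the explicit ones identified in Lemma \ref{lm: obvious-disk-1}, and one can verify their regularity and the absence of competitors directly in the universal cover, or else invoke the standard Künneth functor for Fukaya categories of products of Liouville manifolds (e.g.\ \cite{GPS2}). I would spell out the elementary version in order to keep the section self-contained, consistent with the rest of this section.
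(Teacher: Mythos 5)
Your proposal is correct and follows the same route as the paper, which simply observes that the statement follows by taking the product of Lemma \ref{lm:baby-comparison-1}; your factorwise definition of $\Phi$ and the splitting of holomorphic disks into their $\C^*$-components is exactly the content behind that one-line proof, just spelled out in more detail.
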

\begin{proof}
    This follows by taking product of Lemma \ref{lm:baby-comparison-1}. 
\end{proof}

Finally, we consider Fukaya category of $(\C^*)^2 \RM \Delta$. Let 
$$ \ccal_2 = (S^1)^2 \RM \Delta $$
and for all $c_1, c_2 \in \ccal_2$, let $\pcal_2(c_1, c_2)$ be the set of taut strands diagrams (modulo isotopy) from $c_1$ to $c_2$, and we write $c_1 \xto{D} c_2$ for a diagram $D \in \pcal_2(c_1, c_2)$. Let
$$ c_1 \xto{D_{12}} c_2 \xto{D_{23}} c_3 $$
we define
\begin{equation} \label{nil-cat}
    D_{23} \circ D_{12} = \begin{cases}
  concat(D_{23}, D_{12}) & \text{if the concatenation is taut} \\
  0 & \text{if the concatenation has a bigon}
\end{cases}
\end{equation} 

For $c \in \ccal_2$, let 
$$ T_c = c \times \R^2 \In (\C^*)^2 \RM \Delta. $$

\begin{lemma} \label{lm:baby comparison 2}
   (1) For $c_1, c_2 \in \ccal_2$, if $T^w_{c_1} > T^w_{c_2}$, then there is a natural injection
    $$ \Phi: T^w_{c_1} \cap T^w_{c_2} \into \pcal_2(c_1, c_2). $$
   (2) For $c_1, c_2, c_3 \in \ccal_2$, and  $T^w_{c_1} > T^w_{c_2} > T^w_{c_3}$, if $p_{12}, p_{23}$ are intersection points ($p_{ij} \in T^w_{c_i} \cap T^w_{c_j}$), corresponding to diagrams $D_{12}, D_{23}$,  then composition in Fukaya category is compatible with composition in $\ccal_2$, 
    $$ \Phi: p_{23} \circ p_{12} \mapsto D_{23} \circ D_{12}. $$
\end{lemma}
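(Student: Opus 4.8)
The plan is to deduce the statement from Lemma~\ref{lm:baby-comparison-1.5}, which already computes Floer theory in the ambient $(\C^*)^2$, and to account for the deleted divisor $\Delta = \{y_1 = y_2\}$ through the observation that at $\eta = 0$ one counts only holomorphic curves disjoint from $\Delta$. First I would realize each wrapped Lagrangian as a product $T^w_c = L^{(1)}_c \times L^{(2)}_c \subset \C^*_{y_1} \times \C^*_{y_2}$, with $L^{(k)}_c$ the positively wrapped cotangent fiber over the $k$-th angle of $c$, exactly as in Lemmas~\ref{lm:baby-comparison-1} and~\ref{lm:baby-comparison-1.5}. Since $c \in \ccal_2$ forces the two angles of $c$ to be distinct, the conical ends of $T^w_c$ avoid $\Delta$; as $\Delta$ has real codimension $2$ in $(\C^*)^2$, a small perturbation preserving the product form makes every $T^w_{c_i}$ and every pairwise intersection disjoint from $\Delta$ while keeping the $T^w_{c_i}$ mutually transverse. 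With this choice, a holomorphic disk in $(\C^*)^2 \RM \Delta$ bounded by the $T^w_{c_i}$ is exactly a holomorphic disk in $(\C^*)^2$ bounded by them that happens to avoid $\Delta$.

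For part (1): by the product description $T^w_{c_1} \cap T^w_{c_2} = (L^{(1)}_{c_1} \cap L^{(1)}_{c_2}) \times (L^{(2)}_{c_1} \cap L^{(2)}_{c_2})$, and Lemma~\ref{lm:baby-comparison-1.5}(1) injects this into $\pcal_1^2(c_1,c_2) = \pcal_1(c_{1,1},c_{2,1}) \times \pcal_1(c_{1,2},c_{2,2})$. A pair of one-strand diagrams is the same datum as a pair of winding numbers, and two labelled strands on the cylinder with fixed endpoints and prescribed winding numbers admit a unique taut configuration; so the target set $\pcal_1^2(c_1,c_2)$ is canonically identified with $\pcal_2(c_1,c_2)$, and this gives the injection $\Phi$ of (1).

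For part (2): fix $T^w_{c_1} > T^w_{c_2} > T^w_{c_3}$ and intersection points $p_{12} \in T^w_{c_1}\cap T^w_{c_2}$, $p_{23} \in T^w_{c_2}\cap T^w_{c_3}$ with diagrams $D_{12}, D_{23}$. By Lemma~\ref{lm:baby-comparison-1.5}(2), in $(\C^*)^2$ the composition $p_{23}\circ p_{12}$ is computed by a single rigid holomorphic triangle $u = u^{(1)}\times u^{(2)}$, whose output is the (straightened) concatenation of $D_{23}$ and $D_{12}$, with no other contributions. Hence in $(\C^*)^2\RM\Delta$ at $\eta=0$ the composition is this straightened concatenation if $u$ is disjoint from $\Delta$ and is $0$ otherwise; since when the literal stacking $concat(D_{23},D_{12})$ is taut it equals its own straightening, comparing with rule~\eqref{nil-cat} reduces the lemma to the claim
$$ u^{(1)}\times u^{(2)} \text{ meets } \Delta \quad\Longleftrightarrow\quad concat(D_{23},D_{12}) \text{ has a bigon.} $$
The left side holds exactly when the holomorphic function $g = u^{(1)}/u^{(2)}$ on the disk attains the value $1$; lifting to the universal cover via $y_k = e^{w_k}$, equivalently $\tilde u^{(1)} - \tilde u^{(2)}$ attains a value in $2\pi i\Z$. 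By the argument principle the number of such points (counted with multiplicity, necessarily nonnegative since the map is holomorphic) equals a winding number of the boundary loop, which along the three edges of the triangle traverses the three differences of the lifted wrapped fibers over $c_1$, $c_2$, $c_3$. Using the monotone tilted-line shape of these fibers one evaluates this winding and finds it equal to the number of crossings removed when $concat(D_{23},D_{12})$ is straightened, so it is zero precisely when $concat(D_{23},D_{12})$ is already taut. Together with part (1) this yields $\Phi(p_{23}\circ p_{12}) = D_{23}\circ D_{12}$ for the composition~\eqref{nil-cat}.

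The step I expect to be the main obstacle is the displayed equivalence in part (2): it is a positivity-of-intersection statement for the codimension-$2$ locus $\Delta$, and the delicate point is that mere coincidence of the two strands' arguments somewhere along the diagram does not force $u^{(1)}\times u^{(2)}$ onto $\Delta$ --- one needs the two-dimensional regions swept by $u^{(1)}$ and $u^{(2)}$ to overlap after a deck translation, which is exactly where the monotone-graph structure of the wrapped cotangent fibers (so that each $u^{(k)}$ lifts to a genuine triangle bounded by three monotone arcs) does the work. Part (1), and the reduction in part (2), are routine once this model is in place.
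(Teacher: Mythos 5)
Your proposal is correct and follows essentially the same route as the paper: part (1) via the identification $\pcal_2(c_1,c_2)\cong\pcal_1^2(c_1,c_2)$ and Lemma \ref{lm:baby-comparison-1.5}, and part (2) by reducing to the unique product triangle in $(\C^*)^2$ and deciding whether it meets $\Delta$ by passing to the universal cover and applying the argument principle to the difference of the two coordinate lifts, which is exactly the content of the paper's Lemma \ref{lm: bigon-and-diagonal}. The only cosmetic difference is that you prove that criterion inline with a single quantitative winding count (intersection number with $\Delta$ equals the number of crossings removed on straightening, anticipating Proposition \ref{p:resolve-crossing}), whereas the paper handles the two implications separately (a contraction-mapping argument for bigon $\Rightarrow$ intersection, and the zero-winding argument for no bigon $\Rightarrow$ no intersection).
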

\begin{proof}
(1) We note that $\ccal_2 \into \ccal_1^2$, and if $c_1, c_2 \in \ccal_2$, then $\pcal_2(c_1, c_2) \cong \pcal_1^2(c_1, c_2)$ where we choose representative of taut strands by straight strands. 

If $T^w_{c_1} > T^w_{c_2}$, we may view them as in $(\C^*)^2$ then apply Lemma \ref{lm:baby-comparison-1.5} to get 
$$ T^w_{c_1} \cap T^w_{c_2} \into \pcal_1^2(c_1, c_2) \cong \pcal_2(c_1, c_2)$$

(2) Let $T^w_{c_1} \xto{p_{12}} T^w_{c_2} \xto{p_{23}} T^w_{c_3}$. By Lemma \ref{lm:baby-comparison-1.5}(2), there exists an intersection point $T^w_{c_1} \xto{p_{13}}  T^w_{c_3}$ and a disk $\D \into (\C^*)^2$ that realizes the Floer composition $p_{13} = p_{23} \circ p_{12}$ in $(\C^*)^2$. If the disk does not intersects diagonal $\Delta$, then in $(\C^*)^2 \RM \Delta$, we still have $p_{13} = p_{23} \circ p_{12}$ and if the disk intersects $\Delta$, then $p_{23} \circ p_{12} = 0$. The the desired result follow from the following Lemma.
\end{proof}

\begin{figure}[hh]
    \centering
    \begin{subfigure}[b]{\linewidth}
        \centering
        \includegraphics[width=0.7\linewidth]{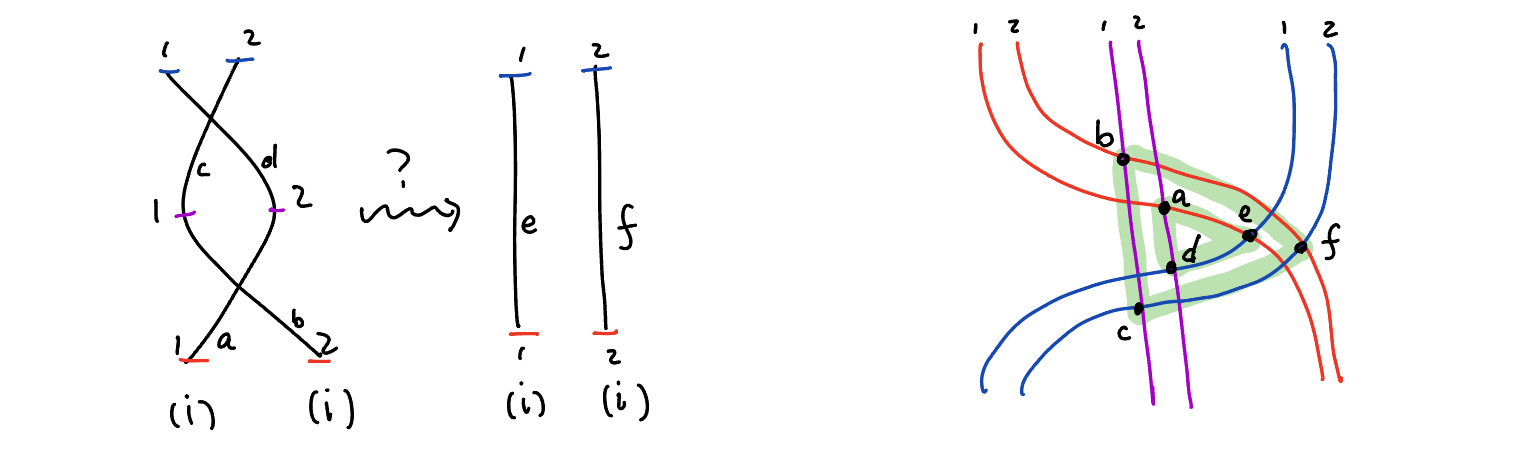}
        \caption{Bigon cancellation implies disk intersects with diagonal. }
        \label{fig:has-bigon}
    \end{subfigure} 
    \begin{subfigure}[b]{\linewidth}
    \centering
    \includegraphics[width=0.7\linewidth]{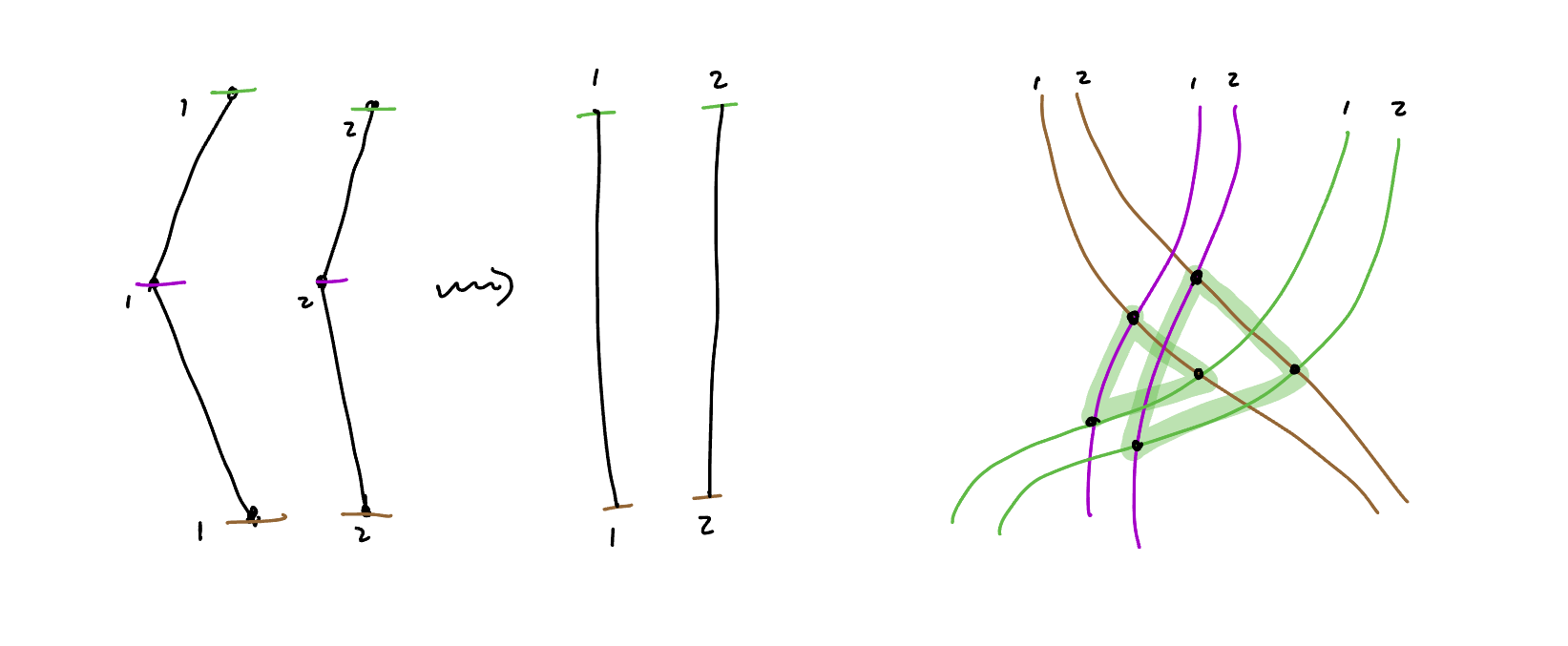}
    \caption{No bigon in diagram concatenation corresponds to disk avoiding diagonal. }
    \label{fig:no-bigon}
\end{subfigure}
    \caption{Caption}
    \label{fig:enter-label}
\end{figure}

\begin{lemma} \label{lm: bigon-and-diagonal}
Let $T^w_{c_1} > T^w_{c_2} > T^w_{c_3}$ in $(\C^*)^2 \RM \Delta$. Let $p_{12}, p_{23}, p_{13}$ be intersection points corresponding to diagrams $D_{12}, D_{23}, D_{13}$, and $\D \to (\C^*)^2$ realizes the composition $p_{13} = p_{23} \circ p_{12}$ in $(\C^*)^2$. 
$$
\begin{tikzcd}
 T^w_{c_1}\ar[r," p_{12}"]\ar[rr,out=-45,in=235,swap," p_{13}"] &  T^w_{c_2}\ar[r," p_{23}"] &  T^w_{c_3}
\end{tikzcd}
$$
Then 
 disk $\D \to (\C^*)^2$ intersects diagonal $\Delta$ if and only if the concatenation $concat(D_{23}, D_{12})$ has a bigon. 
\end{lemma}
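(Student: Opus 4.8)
The plan is to reduce the statement to a winding-number count for the single function $y_1/y_2$, where we write $y_1,y_2$ for the two $\C^*$-coordinates, so that $\Delta=\{y_1/y_2=1\}$. The composition disk $\D$ has as its domain a fixed Riemann surface $\Sigma$ --- a closed disk with three boundary marked points $z_{12},z_{23},z_{13}$ --- with $\D(z_{ij})=p_{ij}$ and with the three boundary arcs of $\partial\Sigma$ mapped respectively into $T^w_{c_1},T^w_{c_2},T^w_{c_3}$. Since the $T^w_{c_i}$ and the points $p_{ij}$ all lie in $(\C^*)^2\RM\Delta$ by hypothesis, the loop $(y_1/y_2)\circ\D|_{\partial\Sigma}$ lies in $\C^*\RM\{1\}$, and $(y_1/y_2)\circ\D\colon\Sigma\to\C^*$ is holomorphic and not identically $1$ (its corner values $p_{ij}$ are not in $\Delta$). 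So by the argument principle applied to $(y_1/y_2)\circ\D - 1$,
$$ \#\big(\D\cap\Delta\big)\ \ (\text{counted with multiplicity})\ =\ \mathrm{wind}\big((y_1/y_2)\circ\D|_{\partial\Sigma},\,1\big), $$
and by positivity of intersection of holomorphic curves with $\Delta$ this number is $\ge0$. Hence $\D$ meets $\Delta$ if and only if this winding number is nonzero.

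The next step is to read off the homotopy class of $(y_1/y_2)\circ\D|_{\partial\Sigma}$ in $\C^*\RM\{1\}$ from the diagram $concat(D_{23},D_{12})$. With the product almost complex structure, $\D$ factors over $\Sigma$ as a pair $(\D_1,\D_2)$ with $\D_k\colon\Sigma\to\C^*_{y_k}$, and by uniqueness in Lemma~\ref{lm: obvious-disk-1} each $\D_k$ is the composition disk of that lemma (its image is the "obvious" triangle bounded by the three lifted Lagrangian sections in $\C_{u_k}$); in particular, on the boundary arc carried by $T^w_{c_i}$, each $\D_k$ traverses an edge of that triangle, a path monotone in $\log|y_k|$ joining the $k$-th components of the two adjacent corners. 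Consequently $(y_1/y_2)\circ\D|_{\partial\Sigma}$ is, up to homotopy in $\C^*\RM\{1\}$, exactly the loop traced by the angular difference $\theta_1-\theta_2$ of the two black strands of $concat(D_{23},D_{12})$ as one runs once around the triangle boundary --- up strand $2$ and back down strand $1$, through both $D_{12}$ and $D_{23}$ --- so that its winding number around $1$ is the net number of times strand $1$ passes strand $2$ along this loop. This is the content of Figures~\ref{fig:has-bigon} and \ref{fig:no-bigon}.

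It then remains to match this winding number with the presence of a bigon. Since $D_{12}$ and $D_{23}$ are each taut, within each of them strands $1$ and $2$ cross the minimal number of times compatible with their endpoints, so along each half the relative angle $\theta_1-\theta_2$ realizes its net change without backtracking across a multiple of $2\pi$; a short combinatorial check then shows that the loop above has nonzero winding around $1$ precisely when some crossings of strands $1$ and $2$ in $D_{12}$ cancel against crossings in $D_{23}$, i.e.\ precisely when strands $1,2$ together cross more often in $concat(D_{23},D_{12})$ than in its straightening $D_{13}$ --- which is exactly the statement that $concat(D_{23},D_{12})$ contains a bigon. Combining this with the first paragraph gives the equivalence claimed in the lemma.

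I expect the main obstacle to be the second and third steps: making rigorous --- rather than merely pictorial --- the identification of the geometric boundary loop $(y_1/y_2)\circ\D|_{\partial\Sigma}$ with the combinatorial loop read off from the concatenated diagram, and the bookkeeping of the strands' winding around the cylinder needed to equate "nonzero winding around $1$" with "a bigon is present". The first paragraph, by contrast, is a routine application of positivity of intersection together with the fact that the $T^w_{c_i}$ and the corners $p_{ij}$ lie in the complement of $\Delta$.
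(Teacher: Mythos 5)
Your first step is correct and is essentially the same tool the paper uses: intersections of the holomorphic disk with $\Delta$ are counted, with positive multiplicity, by the winding of the boundary loop of $(y_1/y_2)\circ\D-1$ around $0$, so ``$\D$ meets $\Delta$'' is equivalent to ``nonzero boundary winding around $1$''. The gap is in the second and third steps, which is exactly where the content of the lemma sits. Your claimed identification of $(y_1/y_2)\circ\D|_{\partial\Sigma}$, up to homotopy in $\C^*\setminus\{1\}$, with ``the loop traced by the angular difference $\theta_1-\theta_2$'' is not well defined: that angular-difference loop lies on the unit circle and passes \emph{through} the point $1$ precisely at each crossing of the two strands, so it has no class in $\pi_1(\C^*\setminus\{1\})$. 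Whether the actual boundary loop passes to the left or right of $1$ at such a moment is governed by the modulus data, i.e.\ by which of the ordered Lagrangian sections $T^w_{c_1}>T^w_{c_2}>T^w_{c_3}$ carries that boundary arc; this over/under bookkeeping is the heart of the lemma, and the ``short combinatorial check'' you invoke to equate nonzero winding with the presence of a bigon is never carried out. As stated, your argument would not distinguish a concatenation with a bigon from one without, since both can have the same angular-difference data.

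For comparison, the paper avoids this aggregation problem by passing to the universal cover $(\C_Z)^2$ and arguing lift by lift: a bigon downstairs is equivalent to a bigon in some planar lift, and an intersection with $\Delta$ is equivalent to an intersection of some lifted disk with the corresponding lifted diagonal $\{Z_1=Z_2\}$. In a fixed lift one looks at $f_1-f_2$ for $f=(f_1,f_2):\wt\D\to(\C_Z)^2$: if the lifted concatenation has no bigon, the boundary winding of $f_1-f_2$ around $0$ is zero, so the argument principle gives no zeros (this is the direction your paragraph one captures); if the lifted concatenation has a bigon, a fixed-point/degree argument (the paper's ``contraction mapping'' step, Figure \ref{fig:has-bigon}) produces a point with $f_1=f_2$. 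If you want to salvage your downstairs route, you must replace the angular-difference heuristic with an argument that computes the winding around $1$ from the diagram \emph{together with} the ordering of the sections at each crossing --- at which point you will effectively be redoing the lift-by-lift analysis.
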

\begin{proof}
We consider the universal cover $\C_Z$ of $\C^*_z$ and $(\C_Z)^2$ for $(\C^*_z)^2$. The Lagrangians $T^w_{c_i}$, intersection points $p_{ij}$ and disk $\D$ in $(\C^*_z)^2$ each admits $\Z^2$-many choices of lifts. We consider all compatible lifts of Lagrangians , intersections and disks
$$
\begin{tikzcd}
\wt T^w_{c_1}\ar[r,"\wt p_{12}"]\ar[rr,out=-45,in=235,swap,"\wt p_{13}"] & \wt T^w_{c_2}\ar[r,"\wt p_{23}"] & \wt T^w_{c_3}
\end{tikzcd}
$$
with a disk $\wt D \to (\C_Z)^2$ realizing the composition. We may also consider lifting of diagrams to $\R \times [0,1]$.

Let $\wt \Delta$ be the diagonal in $(\C_Z)^2$. 
We claim the following are equivalent. 
\begin{enumerate}[(1)]
 \item $\D \cap \Delta = \emptyset$. 
    \item For all such lift $\wt D$ we have $\wt D \cap \wt \Delta = \emptyset$
    \item For all lift of $concat(D_{23}, D_{12})$ to $\R \times [0,1]$, there is no bigon. 
    \item $concat(D_{23}, D_{12})$ has no bigon. 
\end{enumerate}

It is clear that $(1) \LRA (2)$ and $(3) \LRA (4)$, we only need to show $(2) \LRA (3)$. 

$(2) \Rightarrow (3)$. We only need to show that, if there is a lift of diagram with bigon, then the corresponding lifts of Lagrangians, intersection and disk satisfies $\wt D \cap \wt \Delta \neq\emptyset$. 
Consider  Figure \ref{fig:has-bigon}. We have map $f=(f_1,f_2): \wt D \into (\C_Z)^2$. 
We need to show that there exists $p \in \wt D$, such that $f_1(p) = f_2(p)$. Such $p$ exists by contraction mapping theorem.


$(3) \Rightarrow (2)$. We need to show that, if a lift of diagram with no bigon, then the corresponding lifts of Lagrangians, intersection and disk satisfies $\wt D \cap \wt \Delta  = \emptyset$. Consider  Figure \ref{fig:no-bigon}. We have map $f=(f_1,f_2): \wt D \into (\C_Z)^2$. 
We need to show that there exists no $p \in \wt D$, such that $f_1(p) = f_2(p)$. Indeed, as $p \in \d \D$, we have the winding number of $f_1(p) - f_2(p)$ being zero, hence $f_1(p) - f_2(p) \neq 0$ for all $p \in \D$ by argument principle. This finishes the proof of the Lemma. 
\end{proof}

Here is another useful Lemma with a similar proof.
\begin{lemma} \label{lm:bigon-with-red}
    Let $a \in \C^*$ with $\theta = \arg(a)$.  Let $c_1, c_2, c_3 \in \ccal_1 \RM \theta$, and $T^w_{c_1} > T^w_{c_2} > T^w_{c_3}$ be wrapping in $\C^* \RM a$. Let $p_{12}, p_{23}, p_{13}$ be intersection points of $T^w_{c_i}$ with diagram $D_{12}, D_{23}, D_{13}$, and  assume $\D \to \C^*$ realizes the composition of $p_{13} = p_{23} \circ p_{12}$ in $\C^*$. Then the image of $\D$ covers $a$ if and only if the concatenated diagram $concat(D_{23}, D_{12})$ forms a bigon with the 'red' line $\theta \times [0,1] \In S^1 \times [0,1]$. 
\end{lemma}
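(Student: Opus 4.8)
The proof will run exactly parallel to that of Lemma~\ref{lm: bigon-and-diagonal}, with the diagonal $\Delta\subset(\C^*)^2$ there replaced by the single point $a\in\C^*$ here, and $(\C^*)^2$ replaced by $\C^*$. First I would pass to the universal cover $\C_Z\to\C^*_z$, $z=e^Z$. Since the wrapping producing $T^w_{c_i}$ takes place in $\C^*\RM a$, the point $a$ lies on no $T^w_{c_i}$, hence nowhere on $\partial\D$. I would then choose compatible lifts $\wt T^w_{c_1},\wt T^w_{c_2},\wt T^w_{c_3}$ of the Lagrangians, lifts $\wt p_{12},\wt p_{23},\wt p_{13}$ of the intersection points realized by a lift $\wt\D$ of the holomorphic triangle, lifts of the three strand diagrams to $\R\times[0,1]$, a lift $\wt a\in\C_Z$ of $a$ (whose full preimage is $\wt a+2\pi i\,\Z$), and a lift of the red segment $\{\theta\}\times[0,1]$ to $\R\times[0,1]$. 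As in the cited lemma I would then show the equivalence of: (1) $a$ lies in the image of $\D$; (2) $\wt a+2\pi i n\in\mathrm{image}(\wt\D)$ for some $n\in\Z$ (equivalently $\wt a\in\mathrm{image}(\wt\D)$ after replacing $\wt\D$ by a suitable $\Z$-translate); (3) some lift of $concat(D_{23},D_{12})$ bounds a bigon with some lift of the red segment; (4) $concat(D_{23},D_{12})$ bounds a bigon with the red segment in $S^1\times[0,1]$. The equivalences $(1)\Leftrightarrow(2)$ and $(3)\Leftrightarrow(4)$ are immediate from the covering maps $\C_Z\to\C^*$ and $\R\times[0,1]\to S^1\times[0,1]$ and the description of their fibers, exactly as in Lemma~\ref{lm: bigon-and-diagonal}.

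The substance is $(2)\Leftrightarrow(3)$, and here I would invoke the argument principle. The restriction $\wt f$ of $\D$ to $\wt\D$ is holomorphic and nonconstant with values in $\C_Z$, so if $\wt a\notin\wt f(\partial\wt\D)$ then $\#\wt f^{-1}(\wt a)$, counted with multiplicity in the interior, equals the winding number of the boundary loop $\wt f|_{\partial\wt\D}$ about $\wt a$, a nonnegative integer; hence $\wt a$ lies in the image of $\wt\D$ exactly when this winding number is positive. It remains to match this winding number with the bigon data. Tracking the argument along the three boundary arcs of the triangle exactly as in the path construction of Lemma~\ref{lm:baby-comparison-1}, the boundary loop $\wt f|_{\partial\wt\D}$ becomes, after the standard isotopy comparing $T^w_{c_i}$ with $T_{c_i}$ near the unit circle, the closed loop obtained by running up $concat(D_{23},D_{12})$ from $c_1$ to $c_3$ and back down the taut diagram $D_{13}$. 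Since $D_{13}$ is taut it meets the lifted red segment minimally in its class, so the winding number of this loop about $\wt a$ is positive precisely when $concat(D_{23},D_{12})$ meets the red segment strictly more often than $D_{13}$ does, i.e. precisely when the concatenation carries a bigon against the red segment, the excess pair of oppositely signed crossings bounding it.

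The main obstacle is this last identification: making rigorous the passage from the boundary of the holomorphic triangle in $\C_Z$ to the combinatorial picture of the concatenated strand and the red segment, and checking that positive winding about $\wt a$ corresponds to the presence of a bigon rather than to some other failure of tautness. This is where one uses that $c_1,c_2,c_3\notin\{\theta\}$, so that each strand has a well-defined constant crossing pattern with the red segment near its endpoints, and uses the ordering $T^w_{c_1}>T^w_{c_2}>T^w_{c_3}$, which pins down the asymptotic arguments of the three lifted boundary arcs and hence the sign of each crossing's contribution to the winding number. The residual bookkeeping — the signs of crossings, and the observation that several bigons merely enlarge the winding number without affecting the dichotomy — is routine and can be carried out with the same elementary contraction-mapping and argument-principle reasoning already used in the proof of Lemma~\ref{lm: bigon-and-diagonal}.
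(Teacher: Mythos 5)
Your overall skeleton is the same as the paper's: pass to the universal cover exactly as in Lemma \ref{lm: bigon-and-diagonal}, note that covering $a$ is detected by the winding number of the boundary of $\D$ about a lift $\wt a$ (argument principle), and reduce the bigon condition to the lifted picture. The gap is in the step you yourself flag as the ``main obstacle'' and then dismiss as routine bookkeeping: the claim that the winding number of the boundary loop about $\wt a$ is positive precisely when $concat(D_{23},D_{12})$ meets the red segment more often than $D_{13}$. As a statement about loops in the plane this is false: the diagrams only record the argument projection, i.e.\ crossings of the boundary with the full line $\{\arg=\theta\}$, whereas the winding number about the specific point $\wt a$ is the signed count of crossings with a \emph{ray} from $\wt a$, so it depends on the radial position (above or below $|a|$) at which each crossing occurs --- information the diagrams do not contain. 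A loop can cross the red line with two excess (cancelling) crossings and still have zero winding about $\wt a$ if both crossings occur on the same side of $a$, and conversely a taut configuration could a priori enclose $a$ if matched crossings occurred on opposite sides. Your appeal to $c_i\neq\theta$ and to the ordering $T^w_{c_1}>T^w_{c_2}>T^w_{c_3}$ ``pinning down the sign of each crossing's contribution'' does not engage with this: the issue is not the sign of a crossing but whether it lies on the relevant half of the line at all.

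What actually closes this gap --- and what the paper's proof encodes in Figure \ref{fig:bigon-with-red} --- is the specific geometry of the Lagrangians relative to the puncture: the $T^w_{c_i}$ are positive wrappings, in $\C^*\RM a$, of the vertical lines $T_{c_i}$, so the boundary arcs of $\D$ cross the ray $\{\arg=\theta\}$ only in the wrapping regions, with counterclockwise crossings occurring on one side of $a$ and clockwise crossings on the other (the two ends of the cylinder wrap in opposite angular directions). Consequently a cancelling pair of red-line crossings (a bigon) is realized by two arcs passing around $a$ on opposite sides, forcing winding $\pm 1$ about $\wt a$, while a matched same-direction pair (no bigon) is realized on the same side and contributes $0$. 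Your reduction of the boundary loop of $\D$ to the loop ``up $concat(D_{23},D_{12})$, down $D_{13}$'' (via contractibility of the Lagrangians and cancellation of the $U(1)$ detours of Lemma \ref{lm:baby-comparison-1}) is fine, but without the above correlation between crossing direction and side of $a$ your central equivalence is unsupported, and it is precisely this correlation, not the combinatorics of excess crossings, that constitutes the content of the lemma.
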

\begin{proof}
We may use the same argument as in Lemma \ref{lm: bigon-and-diagonal} to reduce the discussion to planar diagram in $\R \times [0,1]$ and Lagrangians in $\C$ (instead of $\C^*$).  If the composition of diagram has a bigon with the red line, then the corresponding Lagrangians will be as in Figure \ref{fig:bigon-with-red}, and the disk clearly covers the point $a$. Conversely, if there is no bigon with the red line, then the disk will not cover point $a$. 

    \begin{figure}[h]
        \centering
        \includegraphics[width=0.7 \linewidth]{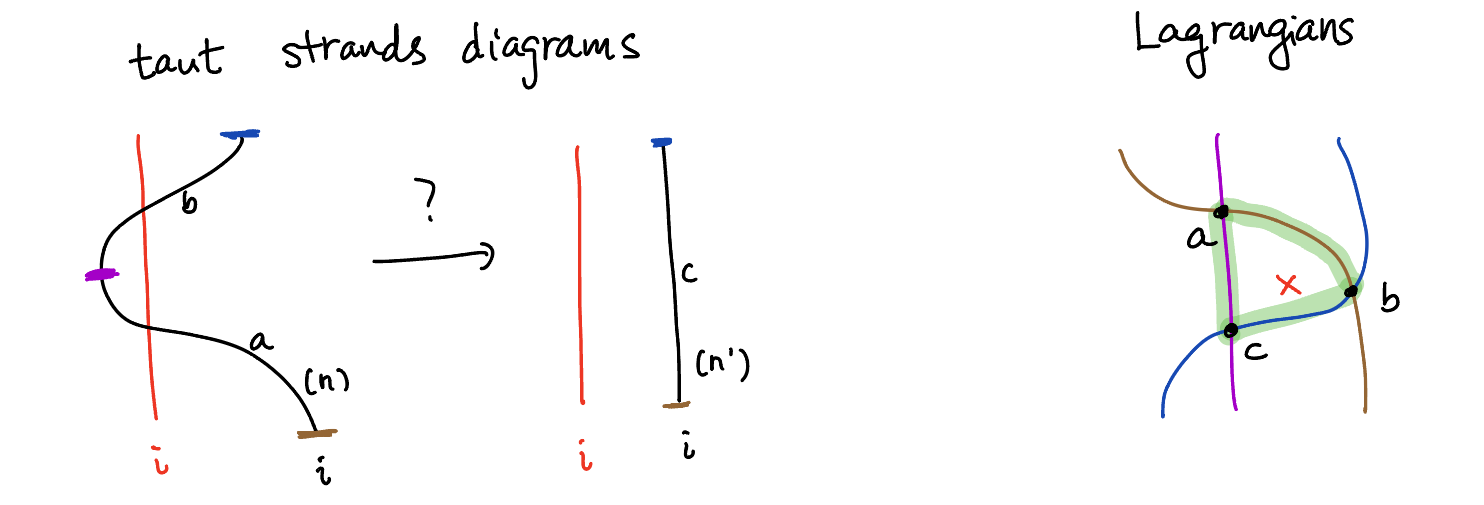}
        \caption{Bigon cancellation between a black and a red strand would involve a disk that pass through the red puncture, hence forbidden in the $\hbar=\eta=0$ setting.  }
        \label{fig:bigon-with-red}
    \end{figure}
    
\end{proof}

Finally, we consider Fukaya category of $\Sym^2(\C^*) / \Delta$. Let 
$$ \ccal_{[2]} = \Sym^2(S^1) \RM \Delta. $$
For $c_1, c_2 \in \ccal_{[2]}$, let $\pcal_{[2]}(c_1, c_2)$ be the set of taut diagrams. We define composition with the same formula as in Eq\ref{nil-cat}. For $c \in \ccal_{[2]}$, we may pick a lift $\wt c \in \ccal_2$, then we define
$$ T_c = T_{\wt c} / S_2. $$

\begin{lemma} \label{lm:baby comparison 2.5}
   (1) For $c_1, c_2 \in \ccal_{[2]}$, if $T^w_{c_1} > T^w_{c_2}$, then there is a natural injection
    $$ \Phi: T^w_{c_1} \cap T^w_{c_2} \into \pcal_{[2]}(c_1, c_2). $$
   (2) For $c_1, c_2, c_3 \in \ccal_2$, and  $T^w_{c_1} > T^w_{c_2} > T^w_{c_3}$, if $p_{12}, p_{23}$ are intersection points ($p_{ij} \in T^w_{c_i} \cap T^w_{c_j}$), corresponding to diagrams $D_{12}, D_{23}$,  then composition in Fukaya category is compatible with composition in $\ccal_{[2]}$, 
    $$ \Phi: p_{23} \circ p_{12} \mapsto D_{23} \circ D_{12}. $$
\end{lemma}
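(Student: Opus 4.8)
The plan is to deduce the lemma from its $(\C^*)^2$-counterpart, Lemma~\ref{lm:baby comparison 2}, by descending along the free $S_2$-action ($\bfW = S_2$ here). Since the diagonal has been removed, the quotient map
$$ \pi : (\C^*)^2 \RM \Delta \;\longrightarrow\; \Sym^2(\C^*)\RM\Delta $$
is an unramified double cover with deck group $S_2$; it is a local symplectomorphism and, as noted just after \eqref{hol vol T}, it intertwines the holomorphic volume forms, hence preserves all the grading and orientation data entering the Floer complexes. For $c\in\ccal_{[2]}$ and a chosen lift $\wt c\in\ccal_2$ one has $\pi^{-1}(T_c)=T_{\wt c}\sqcup T_{\sigma\wt c}$ (and similarly for the wrapped versions), where $\sigma\in S_2$ is the transposition. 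First I would choose the wrapping $T^w_{\wt c}$ to be $S_2$-equivariant so that it descends to the $T^w_c$ of the statement, and observe that the positivity hypothesis $T^w_{c_1}>T^w_{c_2}>T^w_{c_3}$ is by definition the statement that the relevant pairwise Floer complexes upstairs are concentrated in degree $0$.

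Next I would set up the dictionary. By the description of intersections of symmetric product Lagrangians given in the Fukaya-category setup above,
$$ T^w_{c_1}\cap T^w_{c_2}\;=\;\bigl(T^w_{\wt c_1}\cap T^w_{\wt c_2}\bigr)\;\sqcup\;\bigl(T^w_{\wt c_1}\cap T^w_{\sigma\wt c_2}\bigr), $$
and correspondingly $\pcal_{[2]}(c_1,c_2)$ is the disjoint union, over the two ways of matching endpoints between the lifts, of copies of $\pcal_2$. Applying the injection $\Phi$ of Lemma~\ref{lm:baby comparison 2}(1) separately to each of the two summands and assembling yields the desired injection $\Phi:T^w_{c_1}\cap T^w_{c_2}\into\pcal_{[2]}(c_1,c_2)$; injectivity survives because the two summands land in the two disjoint pieces of $\pcal_{[2]}$. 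This settles (1).

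For (2), the main point is the standard fact that Floer theory of a free quotient is computed upstairs: a holomorphic triangle $u:\D\to\Sym^2(\C^*)\RM\Delta$ with boundary on $T^w_{c_1},T^w_{c_2},T^w_{c_3}$ lifts, uniquely once a lift of one boundary arc is fixed, to a holomorphic triangle $\wt u:\D\to(\C^*)^2\RM\Delta$ with boundary on the corresponding lifts (because $\D$ is simply connected and $\pi$ is a covering), and conversely every upstairs triangle pushes down. Hence $p_{23}\circ p_{12}$ computed downstairs agrees, sheet by sheet, with the one computed upstairs, and Lemma~\ref{lm:baby comparison 2}(2) identifies the latter with $D_{23}\circ D_{12}$ — using that the rule \eqref{nil-cat} for $\ccal_{[2]}$ is literally the same ``taut, or else zero'' recipe as for $\ccal_2$, and that tautness of a concatenation and the presence of a bigon are visible in any lift, so that the dichotomy ``disk meets $\Delta$ $\iff$ concatenation has a bigon'' transfers verbatim from Lemma~\ref{lm: bigon-and-diagonal}. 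Pushing the resulting diagrams down recovers the composition in $\ccal_{[2]}$.

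The only genuine work is bookkeeping: tracking which of the two sheets each intersection point, diagram endpoint, and disk corner lies on after wrapping, and checking that the isotopy from $T_c$ to $T^w_c$ used in Lemma~\ref{lm:baby comparison 2} can be taken $S_2$-equivariant so that it descends while preserving all positivity hypotheses. Once the equivariant cover is in place, every analytic ingredient — transversality, the bijection with strand diagrams, and the bigon/diagonal dichotomy — is imported directly from the $(\C^*)^2$ case, so I expect no new difficulty there.
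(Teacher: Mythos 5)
Your proposal is correct and follows essentially the same route as the paper: both decompose the downstairs intersection points and diagram spaces sheet-by-sheet along the free $S_2$-cover ($T^w_{c_1}\cap T^w_{c_2}=\sqcup_{g\in S_2} T^w_{\wt c_1}\cap T^w_{g\wt c_2}$, $\pcal_{[2]}(c_1,c_2)=\sqcup_{g}\pcal_2(\wt c_1,g\wt c_2)$), lift holomorphic triangles to $(\C^*)^2\RM\Delta$, and quote Lemma \ref{lm:baby comparison 2}. The extra remarks on equivariant wrapping and descent of the volume form are harmless bookkeeping the paper leaves implicit.
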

\begin{proof}
(1) Choose a lifting of $c_i$ to $\wt c_i \in \ccal_2$. Let $S_2 = \{e, \sigma\}$. Then we have natural isomorphism 
$$ \pcal_{[2]}(c_1, c_2) = \sqcup_{g \in S_2}  \pcal_2(\wt c_1, g \wt c_2) $$ 
and
$$ T^w_{c_1} \cap T^w_{c_2} = \sqcup_{g \in S_2}  T^w_{\wt c_1} \cap T^w_{g \wt c_2} $$
Hence the result follows from Lemma \ref{lm:baby comparison 2}. 

(2) Let $c_1 \xto{D_{12}} c_2 \xto{D_{23}} c_3$ be diagrams. Choose a lifting of $c_1$ to $\wt c_1 \in \ccal_2$. Then the diagrams determine a lifting of $c_2 \leadsto \wt c_2$ and  $c_3 \leadsto \wt c_3$. We have corresponding lift of $T^w_{c_i} \In \Sym^2(\C^*)/\Delta$ to $T^w_{\wt c_i} \In (\C^*)^2/\Delta$, and the disk $\D$ bounded by $T^w_{c_i}$, if exists, would lift to $\wt D$ bounded by $\wt T^w_{c_i}$. Hence the result of Lemma \ref{lm:baby comparison 2} implies the desired result here. 
\end{proof}

\subsection{Wrapping and Intersections}
We will consider the partial wrapping associated to the potential
$$ \wcal = \sum_{\bij} u_{\bij}: (\bfT^\vee \times \bfT_O) / \bfW \to \C.$$
A cofinal sequence of wrappings of $\tcal_c$ may be realized by 
a family which remain symmetric products (we will explain briefly why at the end of the section; alternatively, the reader could choose simply to view this as our definition of wrapping in this space).  

Because we have a superpotential $\sum_{\bij} u_{\bij}$, the wrapping in the $\C^*_u$ factors is partially stopped as in the model case $(\C^*_u, W=u)$. \footnote{We will actually choose the superpotential to be $W = e^{-i \epsilon} u$, for $0<\epsilon\ll 1$, so that the stop will be at $u = \infty e^{i \epsilon} \in \d_\infty \C^*$. We make such a perturbation so that the Lagrangian $\R_{+} \In \C^*_u$ will have its ideal boundary just before the stop with respect to the Reeb flow.} The wrapping in the $\C^*_y$ factor is unstopped. See Figure \ref{fig:wrap-T}.

Fix black points configurations $c_1,c_2$, and correspondingly consider $\T_{c_1}, \T_{c_2}$.  Let $\T_{c_1}^w$, $\T_{c_2}^w$ be certain positived wrapped versions of them.

\begin{enumerate}
\item We write $\T_{c_1}^w >  \T_{c_2}^w$ if for each node $(i) \in \Gamma$, and for each $j_1, j_2 \in \{1,\cdots, d_i\}$, we have $\T_{c_1, y_{(i,j_1)}}^w > \T_{c_2, y_{(i,j_2)}}^w$ and $\T_{c_1, u_{(i,j_1)}}^w > \T_{c_2, u_{(i,j_2)}}^w$. See Figure \ref{fig:wrap-T} for an illustration. 
    
    \item We write $\T_{c_1}^w >  \T_{c_2}^w > \cdots > \T_{c_n}^w$ if for any $i<j$ we have $\T^w_{c_i} > \T^w_{c_j}$. 
\end{enumerate}

\begin{figure}
    \centering
\begin{subfigure}[b]{0.4\linewidth}
    \centering
    \includegraphics[width=\linewidth]{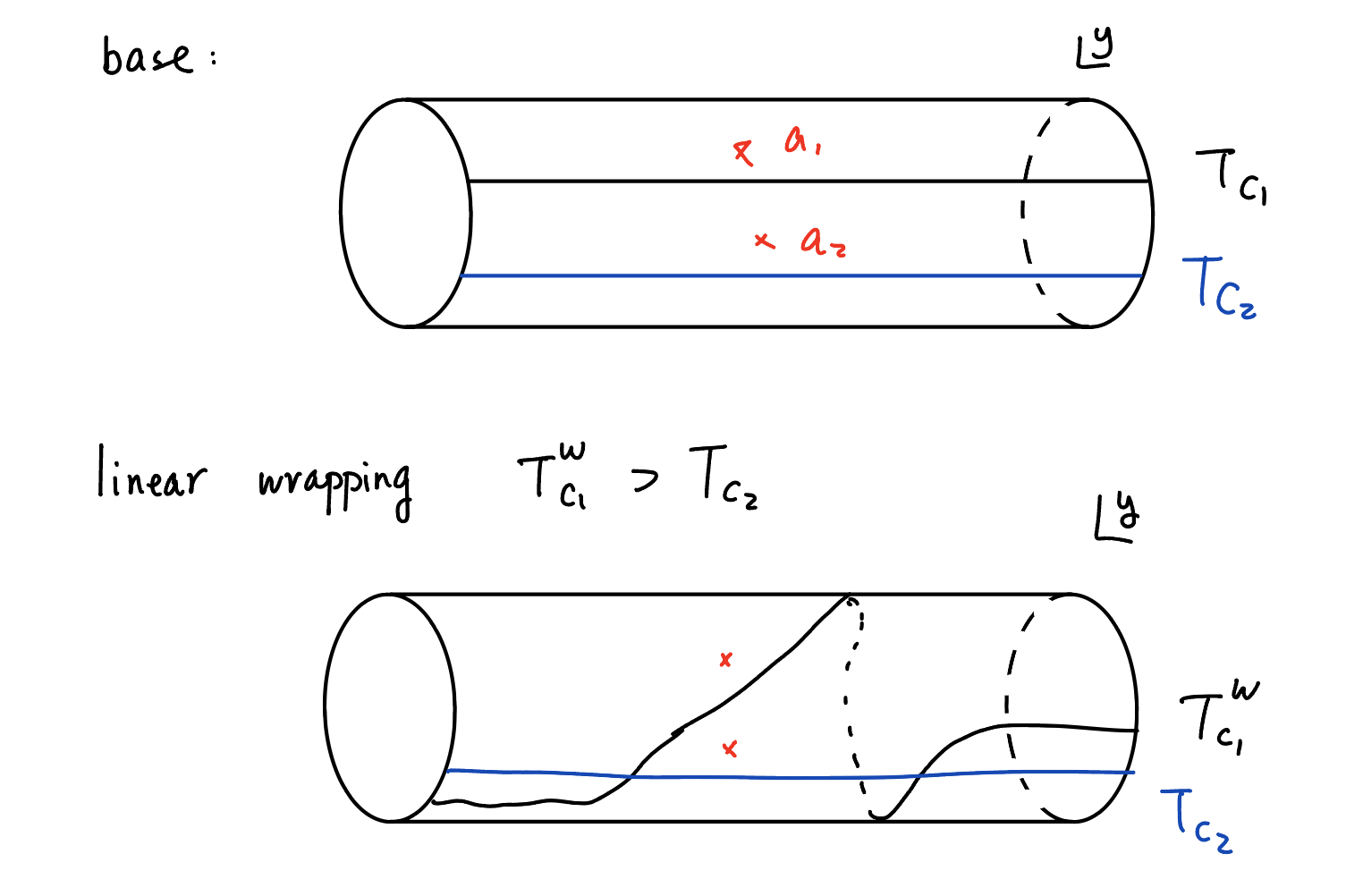}
    \caption{Base factor $\C^*_y$ wrapping}
\end{subfigure}
\begin{subfigure}[b]{0.4\linewidth}
    \centering
    \includegraphics[width=\linewidth]{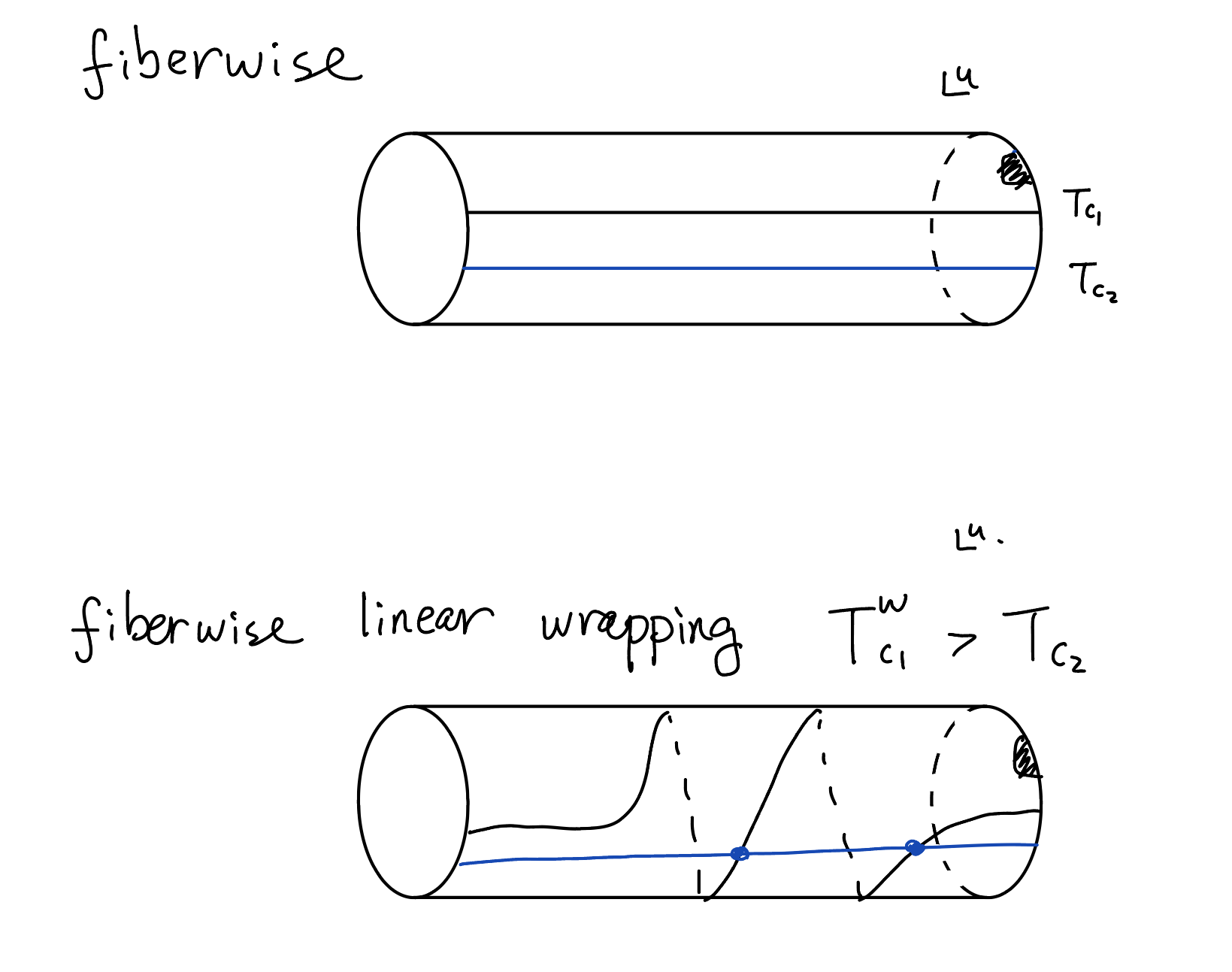}
    \caption{Fiberwise $\C^*_u$ wrapping}
\end{subfigure}
\caption{Wrapping of $\mathcal{T}$ in the base and in the fiber}
\label{fig:wrap-T}
\end{figure}

Next, we describe how to use weighted taut strands diagram to describe intersections of wrapped $\mathcal{T}_c$. This follows from the model cases in the previous subsection, we still give a complete description. 
Let $c_1, c_2$ be two black strands configurations, and consider the corresponding Lagrangians $\T_{c_i}$. Let $\T^w_{c_i}$ be their wrapped versions, such that $\T^w_{c_1} > \T^w_{c_2}$. Let $p \in \T^w_{c_1} \cap \T^w_{c_2}$ be an intersection point. We now describe how to assign a weighted taut strand diagram $D(p) \in \pcal(c_1, c_2)$ for the intersection point $p$.

We make a choice and label the points in $c_1$ and $c_2$ as $(c_1)_{\bij}$ and $(c_2)_\bij$, then we may write factors of the symmetric product Lagrangian as $\T_{c_k, y_\bij}$ etc.  The intersection point is
$$ p = \prod_{(i,j)} (p_{y,\bij}, p_{u, \bij}), \quad \text{where } p_{y,\bij} \in \wt \T_{c_1, y_{\bij}}^w \cap \wt \T_{c_2, y_{\sigma \bij}}^w, \text{ and } p_{u,\bij} \in \wt \T_{c_1, u_{\bij}}^w \cap \wt \T_{c_2, u_{\sigma \bij}}^w, $$
for some $\sigma \in \bfW$. 
The taut strands diagram lives in $U(1)_y \times [0,1]$, where the bottom $U(1)_y \times \{0\}$ is decorated by $c_1$ and the top is decorated by $c_2$. Take a black point $(c_1)_{\bij}$ in $c_1$, find the corresponding factor  $\T_{c_1,y_{\bij}}$. Write $(c_1)_\bij = \T_{c_1,y_{\bij}} \cap U(1)$,  $(c_1)^w_\bij = \T^w_{c_1,y_{\bij}} \cap U(1)$, and similar for $c_2$. Then we get a path in $\C^*_y$ as following
$$ (c_1)_\bij \leadsto (c_1)^w_\bij \leadsto p_{y,\bij} \leadsto (c_2)^w_{\bij} \to (c_2)_{\bij}, $$
where the first and last segments of path are along $U(1)_y$ and are induced by Lagrangian isotopy, and the second and third are along $\T^w_{c_1, y_\bij}$ and $\T^w_{c_2, y_{\sigma \bij}}$ respectively. 
Take the projection $\C^*_y \to U(1)_y$, we then get a path from $(c_1)_{\bij}$ to $(c_2)_{\sigma \bij}$ in $U(1)_y$ which determines a straight-line strand in $S^1 \times [0,1]$ from $(c_1)_\bij$ to $(c_2)_{\sigma \bij}$. Similar consideration in the $u$-factors gives a winding number (winding counter-clockwise around $u=0$ contributes to $+1$) which we label as weight on the strand. Repeat this construction for all $(i,j$ determines the weighted taut strands diagram $D(p)$.

Thus, we have shown that
\begin{proposition} \label{p:intersection as diagram}
If $\T_{c_1}^w >  \T_{c_2}^w$, then we have a natural injection
$$ \T_{c_1}^w \cap \T_{c_2}^w \into \pcal(c_1 ,  c_2); $$ 
Every element in $\pcal(c_1 ,  c_2)$ is  realized by some wrapping. 
\end{proposition}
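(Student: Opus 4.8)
The proposition simply records what the explicit assignment $p\mapsto D(p)$ constructed in the paragraphs above achieves, and the plan is to verify its three properties (well-definedness into $\pcal(c_1,c_2)$, injectivity, and realizability of every diagram) by reducing to the model computations of the previous subsection, taking products over the factors of $\bfT^\vee\times\bfT_O$ and then descending along the free $\bfW$-quotient. First I would fix labelings of the points of $c_1$ and $c_2$, so that an element $p\in\T^w_{c_1}\cap\T^w_{c_2}$ is recorded by a permutation $\sigma\in\bfW$ together with, for each index $\bij$, an intersection point in $\T^w_{c_1,y_\bij}\cap\T^w_{c_2,y_{\sigma\bij}}$ inside $\C^*_{y_\bij}$ and one in $\T^w_{c_1,u_\bij}\cap\T^w_{c_2,u_{\sigma\bij}}$ inside $\C^*_{u_\bij}$. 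Passing to universal covers as in Lemma~\ref{lm: obvious-disk-1}, the hypothesis $\T^w_{c_1}>\T^w_{c_2}$, which was imposed factorwise, forces each lifted pair of Lagrangian sections to meet at most once; hence such a one-dimensional intersection point is determined by and determines a single winding number, and for the $u$-factors the stop attached to $\wcal=\sum_\bij u_\bij$ confines this winding to $\Z_{\geq0}$, matching the non-negativity of weights. The projected $y$-path then defines a straight-line strand from $(c_1)_\bij$ to $(c_2)_{\sigma\bij}$ and the $u$-winding its weight; this is $D(p)$.

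Next I would check that $D(p)$ is genuinely taut, hence an element of $\pcal(c_1,c_2)$. All its strands are straight lines, so any two distinct black strands lift to distinct affine lines in $\R\times[0,1]$ and cross at most once, which excludes black--black bigons exactly as in Lemma~\ref{lm: bigon-and-diagonal}; and a straight black strand meets a vertical red strand more than once only because of its own winding, so any loop formed by a black arc and a red arc between two such crossings wraps nontrivially around the cylinder and bounds no disk, excluding black--red bigons as in the cylindrical analogue of Lemma~\ref{lm:bigon-with-red}. Injectivity of $\Phi$ is then immediate: up to isotopy the diagram remembers the colour-preserving bijection $\sigma$ of the labelled endpoints, the winding class of each strand, and all the weights, and by the previous paragraph this data recovers $p$.

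For the last assertion I would argue realizability after enlarging the wrapping. Given $D\in\pcal(c_1,c_2)$, read off its colour-preserving bijection $\sigma$, the winding numbers of its strands, and its weights --- all finite data. In each factor $\C^*_{y_\bij}$ every winding number occurs among the intersection points once the wrapping is large enough, and in each $\C^*_{u_\bij}$ every non-negative winding likewise occurs, by the model Lemmas~\ref{lm:baby-comparison-1}, \ref{lm:baby-comparison-1.5}, \ref{lm:baby comparison 2} and~\ref{lm:baby comparison 2.5}; since positive wrapping preserves the relation $\T^w_{c_1}>\T^w_{c_2}$, taking the product over $\bij$ inside the $\sigma$-component of $\T^w_{c_1}/\bfW\cap\T^w_{c_2}/\bfW$ produces a $p$ with $D(p)=D$. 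The one point requiring real care --- and the main obstacle --- is the bookkeeping across the $\bfW$-quotient: one must check that distinct $\sigma$ give inequivalent diagrams and that passing to the quotient does not merge strand classes, which is precisely why the auxiliary labelings of $c_1$ and $c_2$ must be carried consistently through the whole construction; granting this, the remaining verifications are product-and-covering-space arguments that reduce termwise to the already established model cases.
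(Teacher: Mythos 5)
Your proposal is correct and follows essentially the same route as the paper: the paper proves this proposition by exactly the construction $p \mapsto D(p)$ you describe (factorwise intersection data indexed by $\sigma \in \bfW$, base paths projected to straight strands, $u$-windings recorded as weights), reducing to the model Lemmas \ref{lm: obvious-disk-1}--\ref{lm:baby comparison 2.5}, with injectivity and realizability read off from winding numbers under sufficient wrapping. Your additional explicit checks of tautness and of the bookkeeping through the free $\bfW$-quotient are consistent with, and slightly more detailed than, what the paper records.
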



\begin{figure}[hh]
    \centering
\begin{subfigure}[b]{\linewidth}
    \centering
    \includegraphics[width=0.7\linewidth]{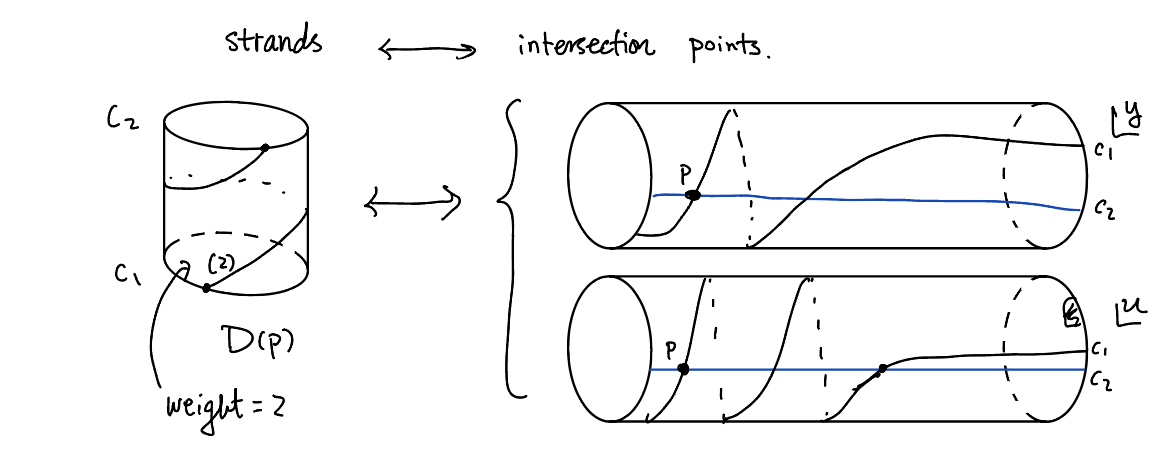}
     \caption{An example of one strands winding around in the base and in the fiber.}
\end{subfigure}
\begin{subfigure}[b]{\linewidth}
    \centering
    \includegraphics[width=0.7\linewidth]{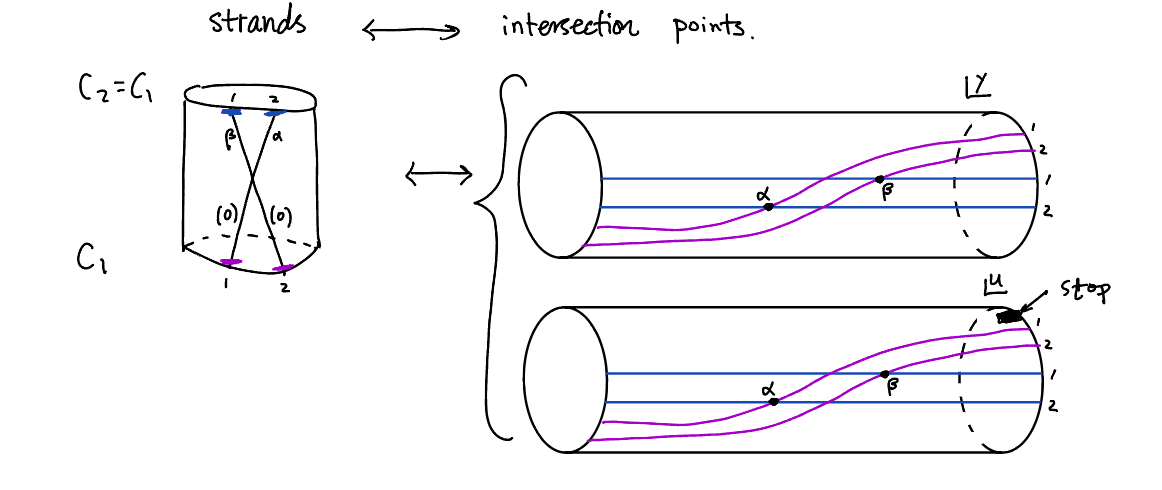}
    \caption{An example of two strands crossing}
\end{subfigure}
\caption{Two examples of intersection points as weighted taut strands diagram}
\label{fig:intersection-strands}
\end{figure}

\subsection{Composition}

Recall that a black points configuration on $S^1$, $c = \{\theta_\bij\}$, defines an element of the appropriate KLRW category, with the individual $\theta_\bij$ giving the positions of the black dots.  Recall also the `taut strands' basis of Def. \ref{taut strands diagram}, and the correspondence with intersection points from Prop \ref{p:intersection as diagram}. 

Consider the pull-back diagram
$$ \begin{tikzcd}
  \wt \ccal \ar[r] \ar[d]&  \prod_i (S^1)^{d_i} \RM \Delta  \ar[d] \\
  \ccal\ar[r] & (\prod_i (S^1)^{d_i} \RM \Delta)/\bfW
\end{tikzcd}
$$
For $c \in \ccal$, if $\wt c \in \wt \ccal$ is a lift of $c$, then we have a lift of $\T_c$ to 
$\wt \T_{\wt c} \In \bfT^\vee \times \bfT_O. $

We claim: 
\begin{proposition}  \label{disk calculation in maximally punctured case}
    Let $c_1, c_2, c_3 \in \ccal$, and let $\T_{c_i}$ be the corresponding Lagrangians. Choose any positive wrappings of $\T_{c_i}$ such that $\T^w_{c_1} > \T^w_{c_2} > \T^w_{c_3}$.  Choose any intersection points $p_{ij} \in \T^w_{c_i} \cap \T^w_{c_j}$ for $ij=12,23, 13$, and let $D_{ij}$ denote the corresponding diagram in $\pcal(c_i,c_j)$. Then $p_{13}$ appears in $p_{23} \circ p_{12}$ if and only if the following holds
    \begin{enumerate}[(1)]
        \item $D_{13}$ is the straightening out of $concat(D_{23}, D_{12})$ with  weights added on each strands.  
        \item There is no bigon cancellation between black strands of same label. 
        \item There is no bigon cancellation between black strands of adjacent labels $(i)$ and $(i')$, i.e.,  $(i) \to (i')$ has an arrow. 
        \item There is no bigon cancellation between black strands and red strands of the same label (corresponding to a framing edge $[i] \to (i)$). 
    \end{enumerate}
    (That is, bigon cancellations are allowed only for strands labelled by quiver nodes which are not connected by an edge.)
\end{proposition}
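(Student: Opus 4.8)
The plan is to reduce every holomorphic triangle count to the one-dimensional model computations of the previous subsection, handling one $\C^*$-factor at a time. First I would pass to the free $\mathbf{W}$-cover: since $\mathbf{W}$ acts freely on $\mathbf{T}_O$, the symmetric product Lagrangians $\T_{c_i}$ lift to product Lagrangians $\wt{\T}_{\wt c_i}\In\mathbf{T}^\vee\times\mathbf{T}_O$ once lifts $\wt c_i\in\wt{\ccal}$ are chosen, and a holomorphic triangle downstairs is the same datum as a $\mathbf{W}$-orbit of triangles upstairs, so it is enough to compute in the cover. Then I would use that $\mathbf{T}^\vee\times\mathbf{T}_O$ is an open subset of the genuine product $P:=\prod_{\bij}\C^*_{u_\bij}\times\prod_\bij\C^*_{y_\bij}$, obtained by deleting precisely the three hyperplane families appearing in the definition of $\mathbf{T}_O$ --- the loci $y_{(i,j)}=y_{(i,j')}$, the loci $y_{(i,j)}=y_{(i',j')}$ for an edge between $(i)$ and $(i')$, and the loci $y_{(i,j)}=a_{[i,k]}$. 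A holomorphic disk in $\mathbf{T}^\vee\times\mathbf{T}_O$ with product-Lagrangian boundary is in particular such a disk in $P$, and the maximum principle applied to each coordinate projection $P\to\C^*$ forces it to be a product of disks in the one-dimensional factors; since $\mathcal{W}=\sum_\bij u_\bij$ is a Thom--Sebastiani sum of the factor potentials $u_\bij$, the partially wrapped $A_\infty$ structure on $P$ is the tensor product of those on the factors. Conversely, a product of factor-disks lies in $\mathbf{T}^\vee\times\mathbf{T}_O$ exactly when it avoids the three deleted families.

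Given this reduction, I would assemble the statement factor by factor. In each $\C^*_{u_\bij}$ all three Lagrangians restrict (up to wrapping) to $\R_+$ in the partially stopped cylinder $(\C^*_u,\mathcal{W}=e^{-i\epsilon}u)$ of the model case above, whose composition on wrapped copies of $\R_+$ outputs the unique intersection point with winding number the sum of the two inputs; thus the weight of the output strand is the sum of the input weights. In each $\C^*_{y_\bij}$ the wrapping is unstopped and Lemma~\ref{lm:baby-comparison-1}, assembled via Lemma~\ref{lm:baby-comparison-1.5}, identifies the composition with concatenation-then-straightening of the associated $S^1\times[0,1]$ strands. Together, the $u$-factors and the $y$-factors attached to nodes not joined by an edge contribute exactly item~(1): $D_{13}$ must be the straightening of $\mathrm{concat}(D_{23},D_{12})$ with weights summed.

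The rest of the statement records when the resulting product disk actually misses the deleted hyperplanes. For two black strands of the same node $(i)$ the pertinent factor is $\Sym^2(\C^*_y)\RM\Delta$, and Lemma~\ref{lm:baby comparison 2.5} (via Lemma~\ref{lm: bigon-and-diagonal}) says the disk meets $\Delta$ iff $\mathrm{concat}(D_{23},D_{12})$ has a bigon between those two strands, so the composition vanishes unless item~(2) holds. For two black strands of nodes joined by an edge $(i)\to(i')$ the factor is $(\C^*_y)^2\RM\Delta$ and Lemma~\ref{lm:baby comparison 2} yields item~(3) in the same way; for a black strand of node $(i)$ paired with the red point $a_{[i,k]}$ the factor is $\C^*_y\RM a$ and Lemma~\ref{lm:bigon-with-red} yields item~(4). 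For black strands of nodes with no connecting edge the corresponding $y$-coordinates are never forced apart, the factor is the unobstructed $(\C^*_y)^2$ of the previous paragraph, and bigon cancellations between such strands are harmless. Combining the factors, $p_{13}$ appears in $p_{23}\circ p_{12}$ precisely when the product disk exists with the right output, i.e.\ when (1) holds and none of the forbidden cancellations (2)--(4) occurs.

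The product splitting of disks via the maximum principle and the Thom--Sebastiani splitting of the stop are standard and I would not dwell on them. The step I expect to need the most care is the interplay of the symmetric quotient with item~(2): a would-be bigon between two same-label black strands must be analyzed only after lifting to the $\Sym^2$-cover, and one has to check that the permutation bookkeeping of Lemma~\ref{lm:baby comparison 2.5} genuinely reproduces the stated no-cancellation condition rather than, say, quietly allowing a cancellation through an off-diagonal intersection. Everything else is a direct transcription of the four model lemmas.
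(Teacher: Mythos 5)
Your proposal is correct and takes essentially the same route as the paper: the paper likewise lifts to the free $\mathbf{W}$-cover, splits the triangle into one-dimensional $\C^*_y$- and $\C^*_u$-factors, identifies the output with the weighted concatenation via Lemma \ref{lm: obvious-disk-1}, and converts avoidance of the deleted divisors into the no-bigon conditions via Lemmas \ref{lm: bigon-and-diagonal} and \ref{lm:bigon-with-red}. The only cosmetic difference is that the paper packages this reduction through the cylindrical model of Theorem \ref{cylindrical model} (its conditions (i)--(iv)), which on the maximally punctured locus is precisely your unramified-cover/product-of-disks description.
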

\begin{proof}
To determine compositions, we are counting 
disks $\D$ with three boundary marked points mapping to the target space with prescribed Lagrangian boundary condition.
We use the cylindrical model of Theorem \ref{cylindrical model}.  Note that because the Lagrangians are of symmetric product type, the boundary conditions may also be expressed in the cylindrical model.  

Since we deleted all root and matter divisors, the conditions from Theorem \ref{cylindrical model} on the map $\S \to \D \times \C^*_y \times \C^*_u$ simplify to the following: 
\begin{enumerate}[(i)]
    \item Each map $\pi_{i,z}: \S_i \to \D_z$ is a unramified cover of degree $d_i$, namely $\S_i$ is a disjoint union of $d_i$ disks, which we denote as $\S_i = \sqcup_{j=1}^{d_i} \S_{i,j}$. 
    \item Each map $(\pi_{i,z}, \pi_{i,y}): \S_i \to \D_z \times \C^*_y$ is an embedding.
    \item For each arrow $(i) \to (i')$, the embedding images of $\S_i$ and $S_j$ in $\D_z \times \C^*_y$ are disjoint.  Since otherwise the map $\D \to \bfT / \bfW$ would intersect with the loci where $y_{\bij} = y_{(i',j')}$. 
    \item For each arrow $[i] \to (i)$, the projection image $\S_i \to \C^*_y$ is disjoint from any $a_{\rij}$. 
\end{enumerate}

We only need to show that the four conditions (i) - (iv) implies condition (1) - (4) in the proposition. 

$(i) \LRA (1)$.  We first show that there exists a unique such disk $\S \to \D \times \C^*_y \times \C^*_u$ satisfing condition (i) above. Indeed, we may choose a lift of $\T^w_{c_k}$ to $\wt \T^w_{\wt c_k} \In \bfT^\vee \times \bfT$, for $k =1,2,3$, such that $p_{12}$ and $p_{23}$ lift as intersections of the lifted $\T^w_{c_k}$. For each index $(i,j)$, we have $\T^w_{c_1,y_\bij} > \T^w_{c_2,y_\bij}> \T^w_{c_3,y_\bij}$ in $\C^*_y$, and similarly for $\C^*_u$ factor. By Lemma \ref{lm: obvious-disk-1}, there exists a unique disk $\S_{i,j} \to \C^*_y$ realizing the composition of $p_{12,y_\bij}$ and $p_{23,y_\bij}$, similarly for $\C^*_u$ direction. Thus $D_{13}$ is necessarily is the concatenation of $D_{23}$ and $D_{12}$ with weights added on the corresponding strands. 

 $(ii) \LRA (2)$ and $(iii) \LRA (3)$ follows from Lemma \ref{lm: bigon-and-diagonal}. 

 $(iv) \LRA (4)$ follows from Lemma \ref{lm:bigon-with-red}. 
\end{proof}

\begin{theorem}\label{thm:stage1}

    There is a fully faithful functor 
\begin{eqnarray*}
    KLRW_Q|_{\hbar = \eta = 0} & \to & \Fuk((\mathbf{T}_O \times \mathbf{T}^\vee)/\mathbf{W}, \wcal) \\
    c & \mapsto & \tcal_c
\end{eqnarray*}
\end{theorem}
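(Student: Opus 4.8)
The plan is to realize the asserted functor as a composite
$$ KLRW_Q|_{\hbar=\eta=0}\;\congto\;\mathcal{A}\;\congto\;\mathcal{E}\;\into\;\Fuk((\bfT_O\times\bfT^\vee)/\bfW,\wcal), $$
where $\mathcal{E}$ is the full $A_\infty$-subcategory on the objects $\{\tcal_c\}$ and $\mathcal{A}=H\mathcal{E}$ is its cohomology category; the three maps will be, respectively, an isomorphism of ordinary categories coming from the disk count, the canonical quasi-equivalence from a minimal model, and the inclusion of a full subcategory. Each is fully faithful, so the composite is.

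\textbf{Hom spaces.} Fix configurations $c_1,c_2$ and positive wrappings with $\T^w_{c_1}>\T^w_{c_2}$. Because $\Omega$ from \eqref{hol vol T} is a product over the $\C^*$ factors, each $\tcal_c$ is a $\bfW$-quotient of a product of graded Lagrangian sections of the fibrations $\log|\cdot|$, and by the model computation behind Lemma \ref{lm: obvious-disk-1} two positively-wrapped sections of $\C^*$ meet transversely and only in degree $0$. Hence $CF(\T^w_{c_1},\T^w_{c_2})$ lies in degree $0$ with vanishing differential; by Proposition \ref{p:intersection as diagram} its generators inject into $\pcal(c_1,c_2)$, the continuation maps for increasing wrapping are the evident inclusions of the subsets of realized diagrams, and every diagram is realized for sufficiently positive wrapping. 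Therefore
$$ HF^\bullet(\tcal_{c_1},\tcal_{c_2})\;=\;\colim_w CF(\T^w_{c_1},\T^w_{c_2})\;=\;\C\,\pcal(c_1,c_2), $$
concentrated in degree $0$, which by Proposition \ref{taut strands at zero} is canonically $Hom_{KLRW}(c_1,c_2)_{\hbar=\eta=0}$.

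\textbf{Reduction and matching.} Since all hom spaces of $\mathcal{E}$ are concentrated in degree $0$, every higher product $\mu^{\geq3}$ of a minimal model vanishes for degree reasons, so $\mathcal{A}=H\mathcal{E}$ is an ordinary $\C$-linear category with $Ob(\mathcal{A})=\ccal$, $Hom_\mathcal{A}(c_1,c_2)=\C\,\pcal(c_1,c_2)$, composition the Floer product $\mu^2$, and (working over the field $\C$) the inclusion $\mathcal{A}\congto\mathcal{E}$ is an $A_\infty$ quasi-equivalence. On the taut-diagram basis, Proposition \ref{disk calculation in maximally punctured case} states precisely that $\mu^2(D_{23},D_{12})$ is the straightening of $concat(D_{23},D_{12})$ with weights summed along strands, provided there is no bigon cancellation between black strands of equal label, between black strands of adjacent labels, or between a black and a red strand of the same label, and is $0$ otherwise; this coincides verbatim with the description of $KLRW$ composition at $\hbar=\eta=0$ in Lemma \ref{lm: KLRW composition hbar eta all zero}. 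As both products are bilinear, agree on basis elements, and both categories are associative and unital, the identity on the taut-diagram basis is an isomorphism $KLRW_Q|_{\hbar=\eta=0}\congto\mathcal{A}$. Composing with $\mathcal{A}\congto\mathcal{E}\into\Fuk$ yields a functor sending $c\mapsto\tcal_c$ whose linear term is a quasi-isomorphism on each hom complex, i.e.\ a fully faithful $A_\infty$ functor.

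\textbf{Main obstacle.} The genuine geometric content — that the Floer product on the $\tcal_c$ is controlled by bigon cancellation in strand diagrams — is already packaged in Proposition \ref{disk calculation in maximally punctured case}, so the rest is bookkeeping. The point needing the most care, invoked already in Propositions \ref{p:intersection as diagram} and \ref{disk calculation in maximally punctured case}, is the verification that a cofinal family of positive wrappings of $\tcal_c$ can be chosen among symmetric-product Lagrangians, so that the cylindrical model of Theorem \ref{cylindrical model} applies to the boundary data and the colimit above is exactly $\C\,\pcal(c_1,c_2)$; granting this, the degree-concentration step that discards all higher $A_\infty$ operations is immediate.
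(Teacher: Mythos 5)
Your proposal is correct and follows essentially the same route as the paper: identify $HW(\tcal_{c_1},\tcal_{c_2})$ with $\C\,\pcal(c_1,c_2)$ via Proposition \ref{p:intersection as diagram}, and match compositions via Proposition \ref{disk calculation in maximally punctured case} together with Lemma \ref{lm: KLRW composition hbar eta all zero}; the minimal-model/$A_\infty$ packaging you add is harmless bookkeeping. The one point you assert rather than derive --- that continuation maps act as the identity on the diagram basis --- is precisely what the paper extracts from Proposition \ref{disk calculation in maximally punctured case} (the continuation element is labelled by the identity diagram, whose concatenation with any taut diagram is taut), so there is no genuine gap.
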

\begin{proof}
Fix any $c_1, c_2 \in \ccal.$
Recall we can choose wrappings where all the intersections have zero cohomological degree.  Using also Proposition \ref{p:intersection as diagram}, we have: 
$$ Hom_{Fuk}(\T_{c_1}, \T_{c_2}) = HW(\T_{c_1}, \T_{c_2}) = \colim_{\T^w_{c_1} > \T_{c_2}} CF(\T^w_{c_1}, \T_{c_2}) = \colim_{\T^w_{c_1} > \T_{c_2}} \C\pcal(\T^w_{c_1}, \T_{c_2})$$

Note that the above considerations alone does not imply that the maps in the colimit on the RHS are compatible with the natural inclusions into $\C\pcal(c_1, c_2)$.  However, said compatibility is established by Proposition \ref{disk calculation in maximally punctured case}.  Combining this with Proposition \ref{taut strands at zero}, we get a canonical vector space isomorphism: 
\begin{equation} \label{punctured functoriality via taut strands} Hom_{KLRW}(c_1, c_2)_{\hbar=\eta=0} \cong \C \pcal(c_1, c_2) \cong Hom_{Fuk}(\T_{c_1}, \T_{c_2}) \end{equation}

Also by Proposition \ref{disk calculation in maximally punctured case} and Lemma \ref{lm: KLRW composition hbar eta all zero}, this carries composition 
in the KLRW category to the composition in the Fukaya category.  
\end{proof}

\subsection{Comparison of Liouville structure}

Let us return briefly to clarify some issues about how our wrapping prescription above compares with the general setting of \cite{GPS1, GPS2}.  The space 
$(\mathbf{T}_O \times \mathbf{T}^\vee)/W$ acquires the structure of Weinstein symplectic manifold via a K\"ahler form arising from the structure of this space as an affine algebraic variety. 
We will want to compare this symplectic structure to both the symplectic structure descended from the product $\mathbf{T} \times \mathbf{T}^\vee$, and later to compare to $\MCB(\Gamma, \vec d)$ (of which $(\mathbf{T}_O \times \mathbf{T}^\vee)/W$ is an open subset by Theorem \ref{trivialization-intro}).  The basic point is the  following fact: 

\begin{lemma}
    Suppose $Z$ is a smooth affine algebraic variety, and $f: Z \to \C$ a function.   Let $\lambda_Z$ and $\lambda_{Z \setminus f^{-1}(0)}$ be K\"ahler Liouville structures on $Z$ and $Z \setminus f^{-1}(0)$.  

    Fix $\epsilon > 0$ sufficiently small.  Then there is a Liouville domain $\overline{Z} \subset Z$ completing to $Z$ and an isotopy of Liouville structures $\lambda_{Z \setminus f^{-1}(0)} \sim \widetilde{\lambda}_{Z \setminus f^{-1}(0)}$
    such that $\lambda_{Z} = \widetilde{\lambda}_{Z \setminus f^{-1}(0)}$ on $\overline{Z} \cap \{|f| \ge \epsilon\}$. 
\end{lemma}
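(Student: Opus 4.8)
The plan is to compare the two Liouville structures in a neighbourhood of the part of $Z$ that lies `well away' from $f^{-1}(0)$, by an explicit interpolation supported near infinity, using the fact that both are K\"ahler and that $Z\setminus f^{-1}(0)$ differs from $Z$ only by the deletion of a divisor whose defining equation is $f$.

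\textbf{Step 1: Normalize the geometry near $f^{-1}(0)$.} First I would record that $|f|^2\colon Z\to\R_{\ge 0}$ is a proper map on each bounded region, so that for small $\epsilon>0$ the set $\{|f|\le \epsilon\}$ is a neighbourhood of the zero divisor and $\{|f|\ge\epsilon\}$ is its complement. The K\"ahler Liouville form $\lambda_Z$ is $-d^c\phi_Z$ for a plurisubharmonic exhaustion $\phi_Z$, and similarly $\lambda_{Z\setminus f^{-1}(0)}=-d^c\phi'$ for an exhaustion $\phi'$ of $Z\setminus f^{-1}(0)$, which in particular blows up along $f^{-1}(0)$ (e.g.\ has a $-\log|f|^2$ type singularity). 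The two forms agree up to an exact form away from $f^{-1}(0)$: $\lambda_{Z\setminus f^{-1}(0)}-\lambda_Z = -d^c(\phi'-\phi_Z) = dh$ for a smooth function $h$ on $Z\setminus f^{-1}(0)$ (using $d^c\alpha - $ correction; more precisely $d(d^c\psi)$ is the K\"ahler form, and $\lambda$'s differ by a $1$-form that is closed hence, on the relevant region, exact after possibly passing to the Liouville domain, which is what we will arrange).

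\textbf{Step 2: Cut off the discrepancy.} The difference $1$-form $\beta:=\lambda_{Z\setminus f^{-1}(0)}-\lambda_Z$ is a closed $1$-form on $Z\setminus f^{-1}(0)$ whose periods are governed by the class of $f^{-1}(0)$; on the region $\{|f|\ge\epsilon\}$, after shrinking to a Liouville domain $\overline Z\subset Z$ completing to $Z$ (chosen so that its Liouville vector field is outward-pointing along $\partial\overline Z$ and transverse to the level sets of $|f|$ near $\{|f|=\epsilon\}$), I would choose a smooth cutoff $\chi\colon Z\to[0,1]$ equal to $1$ on $\{|f|\ge 2\epsilon\}$ and equal to $0$ on $\{|f|\le \epsilon\}$, and set
$$\widetilde\lambda_{Z\setminus f^{-1}(0)} := \lambda_{Z\setminus f^{-1}(0)} - d(\chi\, h),$$
where $h$ is a primitive of $\beta$ on $\{|f|>\epsilon/2\}$ (which exists because that region deformation-retracts onto $\overline Z\cap\{|f|\ge\epsilon\}$, where the relevant cohomology vanishes, or can be killed by enlarging $\overline Z$ appropriately). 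On $\overline Z\cap\{|f|\ge 2\epsilon\}$ we then have $\widetilde\lambda_{Z\setminus f^{-1}(0)} = \lambda_{Z\setminus f^{-1}(0)} - \beta = \lambda_Z$ exactly; after relabelling $2\epsilon$ as $\epsilon$ this is the claim. That $\widetilde\lambda_{Z\setminus f^{-1}(0)}$ is still Liouville (i.e.\ $d\widetilde\lambda$ is symplectic) follows because the modification changes $\lambda$ only in the compact region $\{\epsilon\le|f|\le 2\epsilon\}$ by an exact form, and one checks non-degeneracy of $d\lambda - d(d(\chi h))=d\lambda$ — note the modification term is exact so the symplectic form is unchanged! — so in fact $d\widetilde\lambda_{Z\setminus f^{-1}(0)}=d\lambda_{Z\setminus f^{-1}(0)}$ and we have merely changed the primitive, which is automatically still a Liouville form for the same symplectic form; the isotopy $\lambda_t := \lambda_{Z\setminus f^{-1}(0)} - d(t\chi h)$ for $t\in[0,1]$ is the required isotopy of Liouville structures (all for the fixed symplectic form, hence certainly an isotopy).

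\textbf{The main obstacle} I anticipate is Step 2's cohomological bookkeeping: $\beta$ need not be exact on all of $\{|f|>\epsilon/2\}$ — its periods over loops linking $f^{-1}(0)$ are essentially residues of $d\log f$ — so one cannot globally write $\beta=dh$ there. The fix, which I would spell out, is that we only need a primitive on the smaller region $\overline Z\cap\{|f|\ge\epsilon\}$ together with a collar, and we are free to enlarge the Liouville domain $\overline Z$ (this is where the hypothesis ``$\overline Z$ completing to $Z$'' gives us room) so that the relevant piece is a Liouville subdomain on which $\beta$, being the difference of two Liouville forms for the \emph{same} symplectic form $d\lambda_Z=d\lambda_{Z\setminus f^{-1}(0)}$ restricted there, is closed; exactness then needs $H^1$ of that piece to vanish or the class of $\beta$ to be trivial on it, which holds because near a smooth point of the \emph{complement} of the divisor the obstruction class is pulled back from $\C^*$ via $f$ and dies once we stay in $\{|f|\ge\epsilon\}$ away from the divisor — i.e.\ $f/\epsilon$ maps that region to $\C^*$ but the relevant $1$-form $\beta$ has a \emph{global} primitive there because $-\log|f|$ is single-valued. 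Making this last point precise (that the singular part of $\phi'$ responsible for $\beta$ is $|f|$-radial, hence $\beta$'s primitive is a function of $|f|$ alone, manifestly single-valued on $\{|f|\ge\epsilon\}$) is the crux, and once it is in hand the cutoff argument goes through verbatim.
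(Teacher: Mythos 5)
The paper states this lemma without proof, so there is nothing to compare against on that side; judged on its own merits, your argument has a genuine gap at its foundation. You treat $\lambda_Z$ and $\lambda_{Z\setminus f^{-1}(0)}$ as two primitives of the same symplectic form on the overlap, so that their difference $\beta$ is closed and can be cut off after choosing a primitive $h$. But a K\"ahler Liouville structure on $Z\setminus f^{-1}(0)$ comes from a plurisubharmonic potential $\phi'$ that differs from $\phi_Z$ by a term blowing up along the divisor (e.g.\ $|1/f|^2$ from the embedding $(z,1/f)$), and $-dd^c(\phi'-\phi_Z)\ne 0$ on $\{|f|\ge\epsilon\}$: the two \emph{symplectic} forms already disagree there. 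Your Step 1 assertion that $-d^c(\phi'-\phi_Z)=dh$ confuses $d^c$ with $d$ — $-d^c\psi$ is exact (or even closed) only when $\psi$ is pluriharmonic. Consequently no modification of $\lambda_{Z\setminus f^{-1}(0)}$ by an exact $1$-form can equal $\lambda_Z$ on the region (equality of $1$-forms forces equality of their differentials), and your conclusion is internally inconsistent with your own remark that the modification leaves $d\lambda$ unchanged. Even in the one case where the extra potential is pluriharmonic, $\psi=-\log|f|$, the difference is proportional to $d\arg f$, whose periods on the loops $\{|f|=\epsilon\}$ (which lie inside your region) are nonzero; your proposed crux — that the primitive is a function of $|f|$ alone because $-\log|f|$ is single-valued — is exactly the same $d$ versus $d^c$ confusion, so the "fix" in your final paragraph does not work either.

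The missing idea is that the isotopy must genuinely deform the symplectic form, which is most naturally done at the level of potentials. For instance, replace the given potential on $Z\setminus f^{-1}(0)$ by $\phi_Z+\rho(-\log|f|^2)$, where $\rho$ is convex, nondecreasing, identically $0$ on $(-\infty,-\log\epsilon^2]$ (i.e.\ where $|f|\ge\epsilon$) and tending to $+\infty$ at the divisor; this is a psh exhaustion of $Z\setminus f^{-1}(0)$ whose associated Liouville form $-d^c(\phi_Z+\rho(-\log|f|^2))$ literally equals $\lambda_Z$ on $\{|f|\ge\epsilon\}$. One then connects this structure to the given $\lambda_{Z\setminus f^{-1}(0)}$ through K\"ahler Liouville structures by interpolating the psh potentials (convex combinations of psh exhaustions are psh exhaustions), invoking deformation invariance of finite-type Liouville structures, with $\overline{Z}=\{\phi_Z\le C\}$ a large sublevel set completing to $Z$. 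That potential-level interpolation, not a cutoff of a primitive of a (nonexistent) closed difference form, is what the statement requires.
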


\begin{corollary}
    Let $L_t \subset \bar{Z} \cap \{|f| \ge \epsilon\}$ be a family of Lagrangians with Legendrian boundary 
    $\partial L_t \subset \bar{Z} \setminus Z$.  Then $L_t$ satisfies the cofinality criterion of \cite{GPS2} with respect to $\lambda_{Z}$ iff it does with respect to $\widetilde{\lambda}_{Z \setminus f^{-1}(0)}$. 
\end{corollary}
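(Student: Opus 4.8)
The plan is to reduce the statement to a purely local assertion about the Liouville form near the ideal contact boundary, where the conclusion of the preceding Lemma applies on the nose.

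First I would recall the shape of the cofinality criterion of \cite{GPS2}: a positive isotopy $\{L_t\}$ is a cofinal wrapping precisely when, near the ideal boundary, it drags each component of $\partial_\infty L_0$ (other than those sitting at a stop) by an amount of positive Reeb time diverging as $t \to \infty$. The point to emphasize is that this is an \emph{intrinsic, local-at-infinity} condition: it is phrased entirely in terms of the positive isotopy together with the contact form on a neighborhood of $\overline{\bigcup_t \partial_\infty L_t}$ inside the ideal boundary, and makes no reference to the global geometry of the ambient space or to any other Lagrangian. Hence it is insensitive to any modification of the ambient Liouville structure supported away from such a neighborhood, and it is unchanged under Liouville isotopies of the ambient form (these induce symplectomorphisms of the completions isotopic to the identity and respecting the contact boundary).

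Next I would unwind the Lemma. Since $\lambda_Z = \widetilde{\lambda}_{Z \setminus f^{-1}(0)}$ on $\overline{Z} \cap \{|f| \ge \epsilon\}$, on that region the two Liouville vector fields agree, hence so do the contact forms they induce on the boundary hypersurface and the corresponding Reeb vector fields. Completing each structure by flowing its Liouville vector field, the part of the ideal boundary lying over $\{|f| \ge \epsilon\}$, together with a neighborhood of it, is literally the same contact manifold with the same Reeb dynamics for both structures. The new end that $\widetilde{\lambda}_{Z\setminus f^{-1}(0)}$ develops near $f^{-1}(0)$ is disjoint from this region; after shrinking $\epsilon$ (equivalently, enlarging the Liouville domain) one arranges that the Liouville flow near infinity preserves $\{|f|\ge\epsilon\}$, so there is no mismatch of cylindrical ends to worry about there.

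The conclusion is then immediate: the family $\{L_t\}$, and with it every $\partial_\infty L_t$, lies in $\overline{Z} \cap \{|f|\ge\epsilon\}$, so the cofinality criterion --- a local-at-infinity condition evaluated precisely on the common region identified above --- yields the same answer computed with $\lambda_Z$ or with $\widetilde{\lambda}_{Z\setminus f^{-1}(0)}$. (If desired one can first transport from $\widetilde{\lambda}_{Z\setminus f^{-1}(0)}$ back to $\lambda_{Z\setminus f^{-1}(0)}$ via the Liouville isotopy of the Lemma, using invariance of cofinality under such isotopies, but this is not needed.) The step I expect to require the most care is the previous paragraph's identification: \emph{cofinality} is \emph{a priori} a comparison against all positive isotopies, and the set of those genuinely differs for $Z$ and for $Z\setminus f^{-1}(0)$ because of the extra end near $f^{-1}(0)$; the reason the comparison never ``sees'' the deleted divisor is exactly the intrinsic characterization of cofinal wrappings in \cite{GPS2}, and making this rigorous --- rather than the bookkeeping with Reeb flows --- is the real content.
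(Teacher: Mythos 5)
Your argument is correct and is precisely the reasoning the paper intends: the corollary is stated there without proof as an immediate consequence of the Lemma, the point being exactly that the $L_t$ and their Legendrian boundaries lie in $\overline{Z} \cap \{|f| \ge \epsilon\}$, where $\lambda_Z$ and $\widetilde{\lambda}_{Z \setminus f^{-1}(0)}$ literally coincide, so the local-at-infinity cofinality criterion of \cite{GPS2} evaluates identically for the two structures. Your closing remark about why cofinality (a priori a global comparison in the wrapping category) can be checked by the intrinsic criterion is the right thing to flag, and it is handled by the criterion itself, just as you say.
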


\section{Gradings}\label{s:grading}

In general, if $L$ is a contractible Lagrangian in a Weinstein manifold $M$, and $\lambda \in H^1(M, \Z) \cong H^1(M, L; \Z)$ is an integral class, then given a path $\gamma$ in $M$ from $L$ to $L$, we can get a integer $\la \lambda, \gamma \ra$ using the pairing
$$ H_1(M, L; \Z) \times H^1(M, L;  \Z) \to \Z. $$
Then $\End(L)$ is a $\Z$-graded algebra.

If $f: M \to \C^*$ is a continuous function, then $(2\pi i)^{-1} d \log f \in H^1(M, \Z)$ give an integral class. We have two kinds of such functions
\begin{enumerate}
    \item For each circle node $(i)$, we have 
$$f_i(u,y)= \prod_{\bij} y_{\bij} $$ 
This is called the {\bf equivariant $q_i$-grading}. 
\item By Proposition \ref{p:111}, we have a monopole operator 
$$f_0 = r_{(\vec 1, \cdots, \vec 1)}$$ corresponding to the monopole operator for the diagonal embedding cocharacter $\C^* \to G =\prod_i GL(d_i)$. On $(\bfT_O \times \bfT^\vee)/\bfW$, we have an explicit expression
\begin{equation}
    \label{eq:f0} f_0(u,y) =  \prod_{\bij} u_{\bij}^{-1} \cdot \prod_{(i,j) \to (i',j')} (1 - y_{(i,j)} / y_{(i', j')}) \cdot \prod_{[i,k] \to (i,j)} (1 - a_{[i,k]} / y_\bij) \cdot \prod_{(i,j) \neq (i,j')} (1 - y_{(i,j)} / y_{(i,j')})^{-1} 
\end{equation} 
This is called the {\bf  $q$-grading}. 
\end{enumerate}


For each $c \in \ccal$, $\tcal_c \cong \R^{2d}$ is simply connected, hence $\End(\tcal_c)$ has well-defined $q$ and $q_i$ grading. 

For $c \neq c' \in \ccal$, the space $\Hom(\tcal_c, \tcal_{c'})$ is not naturally graded but instead a torsor for the grading space for either $c$ or $c'$.  One choice of trivialization of the torsor is: 
\begin{enumerate}
    \item a weight $w$ on any strand has $q$-grading $+w$.
    \item a crossing between strands of the same type has $q$-grading $-1$.
    \item if there is an arrow $[i] \to (i)$, then a strand of type $(i)$ crossing the red strand $[i]$ has $q$-grading $+1/2$. 
    \item if there is an arrow $(i) \to (j) $, then a strand of type $(i)$ crossing strand $(j)$ has $q$-grading $+1/2$. 
    \item all other crossings has $q$-grading $0$.
    \item Fix a representative in each connected component of $\ccal$ (there are finitely many). Fix a generic $\theta \in S^1$, away from any red points and any black points in the representatives. If a black strand of type $(i)$ crosses this line from right to left ($\arg$ is increasing), then $q_i$ grading is $+1$; if the crossing is from left to right, then $q_i$ grading is $-1$. 
\end{enumerate}
From Eq \eqref{eq:f0}, rule 1 comes from the factor $\prod_{\bij} u_\bij^{-1}$; rule 2 comes from $\prod_{(i,j) \neq (i,j')} (1 - y_{(i,j)} / y_{(i,j')})^{-1}$; rule 3 comes from factor $\prod_{[i,k] \to (i,j)} (1 - a_{[i,k]} / y_\bij)$; rule 4 comes from $\prod_{(i,j) \to (i',j')} (1 - y_{(i,j)} / y_{(i', j')}) $.


\section{Floer homologies with only root divisor removed}

Consider the following (less) punctured torus
$$ \bfT_o = \{ (y_\bij) \in \bfT \mid y_{(i,j)} \neq y_{(i,j')} \;\; \forall (i,j) \text{ and } (i,j')\}, $$
and the corresponding base change:
$$
\begin{tikzcd}
    \ycal_o \ar[d] \ar[r] & \MCB(\Gamma, \vec d) \ar[d] \\
    \bfT_o/\bfW \ar[r] & \bfT/\bfW
\end{tikzcd}
$$

Let $\wb H_{matter} = (\bfT_o - \bfT_O)/\bfW$, and $H_{matter}$ be the preimage of $\wb H_{matter}$ in $\ycal_o$. Let $\Fuk_\eta(\ycal_o, \wcal|_{\ycal_o}; H_{matter})$ be the Fukaya category defined over $\C[\eta]$, where we weight the count of a disk $\mathcal{D}$ by $\eta^{\mathcal{D} \cdot H_{matter}}$.

Our main result in this section is the following embedding. 
\begin{theorem} \label{thm:stage2}
    There is a fully faithful functor 
\begin{eqnarray*}
    KLRW_Q|_{\hbar = 0} & \to & \Fuk_{\eta} (\ycal_o, \wcal|_{\ycal_o}; H_{matter}) \\
    c & \mapsto & \tcal_c
\end{eqnarray*}
\end{theorem}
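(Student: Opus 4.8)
The plan is to deduce Theorem~\ref{thm:stage2} from Theorem~\ref{thm:stage1} by restoring the internal and framing matter divisors while keeping the root divisor deleted, and to package the combinatorics using the recognition principle of Proposition~\ref{p: KLRW h=0 recog}. Let $\acal$ be the full subcategory of $\Fuk_\eta(\ycal_o, \wcal|_{\ycal_o}; H_{matter})$ on the objects $\{\tcal_c\}_{c\in\ccal}$. First one identifies its hom-spaces. As in the proof of Theorem~\ref{thm:stage1}, choose a cofinal family of positive wrappings with $\tcal^w_{c_1}>\tcal_{c_2}$ and all intersection points in cohomological degree zero; by Proposition~\ref{p:intersection as diagram} these intersection points are in natural bijection with weighted taut strand diagrams, and this bijection is unaffected by the presence of $H_{matter}$, since all intersections occur in the product locus $(\bfT^\vee\times\bfT_O)/\bfW\subset\ycal_o$ and the wrapped $\tcal^w_c$ avoid the singular loci of the degenerate fibres over the matter hyperplanes. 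Passing to the colimit over $w$ (invoking the Liouville-structure comparison of the previous section, so that this computes wrapped Floer cohomology of $\ycal_o$) produces a $\C[\eta]$-module isomorphism
\begin{equation*}
 \Hom_{\acal}(\tcal_{c_1},\tcal_{c_2})\;\cong\;\C[\eta]\cdot\pcal(c_1,c_2)\;\cong\;\Hom_{KLRW}(c_1,c_2)_{\hbar=0},
\end{equation*}
the last step being Proposition~\ref{stage2:KRLW has basis}; compatibility of the colimit maps with these bases is the matter-weighted analogue of Proposition~\ref{disk calculation in maximally punctured case}, proved exactly as there but additionally recording a factor of $\eta$ for each transverse crossing of $H_{matter}$. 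Since the hom-spaces are concentrated in degree zero, all $A_\infty$ products $\mu^{\ge 3}$ on $\acal$ vanish and $\acal$ is an ordinary $\C[\eta]$-linear category, which is precisely the setting of Proposition~\ref{p: KLRW h=0 recog}.

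It then remains to check that composition in $\acal$ agrees with $\circ_{KLRW}$ on the two families of diagrams appearing in Proposition~\ref{p: KLRW h=0 recog}: pairs $(D_{12},D_{23})$ for which the concatenation $concat(D_{23},D_{12})$ is taut, and pairs of elementary diagrams. For the taut case the argument is verbatim that of Proposition~\ref{disk calculation in maximally punctured case}: in the cylindrical model of Theorem~\ref{cylindrical model}, the disk computing $p_{23}\circ p_{12}$ projects to a disk in $\prod_i Sym^{d_i}\C^*_y$; by the bigon-versus-intersection dichotomy (Lemmas~\ref{lm: bigon-and-diagonal} and~\ref{lm:bigon-with-red}) a taut concatenation forces this projection to miss every matter hyperplane, so the disk carries no factor of $\eta$, cannot meet the deleted root hyperplanes, and (by Lemma~\ref{lm: obvious-disk-1}, applied sheetwise in the $\C^*_y$ and $\C^*_u$ factors) has as its unique output the straightening of $concat(D_{23},D_{12})$, which is exactly $D_{23}\circ_{KLRW}D_{12}$ at $\hbar=0$.

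For a pair of elementary diagrams the composite has at most one crossing of each colour and at most one bigon. As in the proof of Theorem~\ref{cylindrical model}, one uses the local description of the Coulomb branch over the hyperplanes from \cite[Section 6]{BFN}, under which the $u$-coordinates of the ambient quiver differ from those of a two-dimensional model quiver by regular invertible factors, hence have the same zero/pole behaviour along the disk; this reduces each verification to the model quivers of Examples~\ref{ex:pure-abelian}, \ref{ex:(1)-(1)}, \ref{ex:[1]-(1)} and~\ref{ex:gl2}. The outcomes to be confirmed are: a same-colour bigon forces the disk to meet the deleted root divisor, so the composite is $0$ (Figure~\ref{fig:KLRW-bigon}); a bigon between adjacent colours $(i)\to(j)$ forces $\Phi_u$ to acquire a single simple zero over the point where the two witnesses collide on $\C^*_y$ --- which by cylindrical-model condition~(3) lies on either the $(i)$-sheet or the $(j)$-sheet --- contributing one factor of $\eta$ and one added dot, the two placements entering with opposite signs as in Figure~\ref{fig:KLRW-ij}, as the explicit count in $\MCB=\{uvz=1-y_2/y_1\}$ (Example~\ref{ex:(1)-(1)}) shows; a bigon between a black and a red strand of the same colour forces a single simple zero of $\Phi_u$ over the point meeting $a_{[i,\beta]}$, producing $\eta$ times the diagram with one added dot on the black strand as in Figure~\ref{fig:KLRW-bigon-red}, by the count in $\MCB=\{uv=1-a/y\}$ (Example~\ref{ex:[1]-(1)}); and dot-past-crossing and dot-past-dot composites straighten to taut diagrams, hence are covered by the previous paragraph, recovering the $\hbar=0$ specialisations of Figures~\ref{fig:KLRW-skein-1} and~\ref{fig:KLRW-skein-2}. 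Feeding all of this into Proposition~\ref{p: KLRW h=0 recog} yields an equivalence $KLRW_Q|_{\hbar=0}\cong\acal$, and composing with the inclusion $\acal\hookrightarrow\Fuk_\eta(\ycal_o,\wcal|_{\ycal_o};H_{matter})$ gives the asserted fully faithful functor $c\mapsto\tcal_c$.

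The main obstacle is the last paragraph: the model-case disk counts in the two-dimensional Coulomb branches $\{uvz=1-y_2/y_1\}$ and $\{uv=1-a/y\}$ must be carried out precisely enough to pin down not merely that a single dot is produced but also its placement and sign, so as to match Figures~\ref{fig:KLRW-ij} and~\ref{fig:KLRW-bigon-red} on the nose (including the extra $(-1)$ when the quiver arrow is reversed). The cylindrical model reduces these to counts of holomorphic maps $\D\to\P^1_u$ carrying one prescribed simple zero, which are rigid and in principle explicit, but the orientation bookkeeping against the chosen volume form $\Omega$ and the partial stop at $u=\infty e^{i\epsilon}$ will require care.
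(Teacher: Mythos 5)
Your outline follows the paper's route almost step for step: identify $\Hom_{\acal}(\tcal_{c_1},\tcal_{c_2})$ with $\C[\eta]\cdot\pcal(c_1,c_2)$ via Proposition \ref{p:intersection as diagram} and the fact that continuation maps (being compositions with the identity diagram, whose concatenation with any taut diagram is taut) preserve the diagram basis; verify the two composition checks demanded by the recognition principle Proposition \ref{p: KLRW h=0 recog} (taut concatenations, then pairs of elementary diagrams); and reduce the non-taut elementary compositions, via the cylindrical model, to bigon counts over a single matter hyperplane. This is exactly the architecture of the paper's proof (Proposition \ref{p:stage2, taut composition}, Corollaries \ref{stage2:continuation map} and \ref{c: stage2, Fuk has basis}, then Section \ref{s: bigon relation} and Proposition \ref{p: KLRW h=0 recog}).

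However, the step you yourself flag as "the main obstacle" is not an afterthought: the two matter-divisor bigon counts are the analytic core of this theorem, and your proposal does not carry them out. In the paper these are done in Section \ref{s: bigon relation}: for the black--red bigon one counts maps from a disk $\D_{3,1}$ with three boundary and one interior marked point, where the interior point must hit $a\in\C^*_y$ in the base and $u=0$ in the fiber, and a fiber-product dimension count ($0=0+2-2$) together with the bijectivity of $\mcal_{\D_{3,1}\to\P^1_u}\to\mcal_{\D_{3,1}}$ shows the count is a single rigid configuration, yielding $\eta\cdot(\text{dot})$ as in Figure \ref{fig:KLRW-bigon-red}; for the adjacent-node bigon one counts a two-component curve $\S_i\sqcup\S_j$ mapping to $\D\times\C^*_y\times\C_u$ with the incidence condition at $(z_0,y_0)$ and $s_j\mapsto 0$, again rigid by a fiber-product argument, producing the two dot-placements of Figure \ref{fig:KLRW-ij} (the paper verifies one output and notes the other is symmetric). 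Your proposed alternative --- computing directly in the model Coulomb branches $\{uvz=1-y_2/y_1\}$ and $\{uv=1-a/y\}$ and tracking signs against $\Omega$ and the stop at $u=\infty e^{i\epsilon}$ --- is a plausible substitute, but as written it is a statement of intent rather than a count, and without it the relations of Figures \ref{fig:KLRW-ij} and \ref{fig:KLRW-bigon-red} are not established, so the hypotheses of Proposition \ref{p: KLRW h=0 recog} are not verified and the functor is not yet shown to exist. So: right strategy, correct reductions, but the decisive disk counts remain to be supplied.
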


\subsection{Hom spaces and taut diagrams}

\begin{proposition} \label{p:stage2, taut composition}
Let $c_1,c_2, c_3 \in \ccal$, and consider wrappings $\T^w_{c_1} > \T^w_{c_2} > \T^w_{c_3}$. Let $D_{12}, D_{23}$ be weighted taut strands diagrams that represent intersection points (hence morphisms)
$$ \T^w_{c_1} \xto{D_{12}} \T^w_{c_2} \xto {D_{23}} \T^w_{c_3}.$$ 
If the concatenation $concat(D_{23} , D_{12})$ is taut, denoted as $D_{13}$ then $D_{13}$ represent an intersection point in $CF(\T^w_{c_1}, \T^w_{c_3})$ and the output of the Fukaya category composition is
$$ \mu_2(D_{23}, D_{12})= D_{13}. $$
\end{proposition}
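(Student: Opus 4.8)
The plan is to repeat the cylindrical-model disk count of the proof of Proposition~\ref{disk calculation in maximally punctured case} --- now with the matter divisor present in the ambient space --- and to observe that tautness of the concatenation forces the relevant disk to avoid the matter divisor, so that passing from the maximally punctured space to $\ycal_o$ changes nothing.

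First I would unwind the composition: $\mu_2(D_{23},D_{12})$ counts rigid holomorphic triangles $\D\to\ycal_o$ with boundary on $\T^w_{c_1},\T^w_{c_2},\T^w_{c_3}$, asymptotic to $p_{12}$ and $p_{23}$ at two of the corners and to some $p_{13}\in\T^w_{c_1}\cap\T^w_{c_3}$ at the third, each weighted by $\eta^{\,\D\cdot H_{matter}}$. Since $\ycal_o$ sits over $\bfT_o/\bfW$ (the root hyperplanes are already deleted), Theorem~\ref{cylindrical model} applies and identifies such a disk with: a disjoint union $\S_i=\sqcup_{j=1}^{d_i}\S_{i,j}$ of $d_i$ disks embedded in $\D\times\C^*_y$ for each node $(i)$, together with a map $\Phi_u\colon\S\to\P^1_u$ satisfying $\Phi_u(\S_O)\subset\C^*_u$, with simple zeros over $\S_{[i]\to(i)}$, and with the internal matter zero condition of Theorem~\ref{cylindrical model} over $\S_{(i)\to(j)}$ --- there being no root condition, since $\S_{(i)}=\varnothing$.

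Next I would pin the disk down. Lifting the $\T^w_{c_k}$ compatibly with $p_{12},p_{23}$ to product Lagrangians in $\bfT^\vee\times\bfT$ and applying Lemma~\ref{lm: obvious-disk-1} factor by factor in each $\C^*_y$ and each $\C^*_u$, the composition output in that factor is a unique point realized by a unique ``obvious'' disk, with winding numbers adding in the $u$-factors. Hence the third asymptotic is forced to be the intersection point $p_{13}$ whose associated diagram, via Proposition~\ref{p:intersection as diagram}, is the straightening of $concat(D_{23},D_{12})$ with the summed weights; since that concatenation is already taut, this is just $D_{13}$. This simultaneously shows that $D_{13}$ represents a (degree-zero) intersection point of $CF(\T^w_{c_1},\T^w_{c_3})$, using $\T^w_{c_1}>\T^w_{c_3}$ from the hypothesis, and that there is no other possible output.

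Finally I would check the disk misses $H_{matter}$. Since $D_{13}=concat(D_{23},D_{12})$ is taut it has no bigon between any two strands; by Lemma~\ref{lm: bigon-and-diagonal} the product disk then misses all root hyperplanes $\{y_{(i,j)}=y_{(i,j')}\}$ (so the disk really maps into $\ycal_o$) and all internal matter hyperplanes $\{y_{(i,j)}=y_{(i',j')}\}$ for arrows $(i)\to(i')$, and by Lemma~\ref{lm:bigon-with-red} it misses all framing matter hyperplanes $\{y_{(i,j)}=a_{[i,k]}\}$. Thus $\S_{[i]\to(i)}=\varnothing=\S_{(i)\to(j)}$ for this disk, the cylindrical-model conditions collapse to $\Phi_u\colon\S\to\C^*_u$ with no further constraint (so the obvious disk indeed lifts to $\ycal_o$), and the disk does not meet $H_{matter}$, so it is counted with weight $\eta^0=1$; hence $\mu_2(D_{23},D_{12})=D_{13}$. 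I expect the one genuinely delicate point to be exactly this last bookkeeping: after enlarging the ambient space from the maximally punctured $(\bfT_O\times\bfT^\vee)/\bfW$ to $\ycal_o$ one must know that no new disk contributes to this particular composition. That is precisely the factorwise uniqueness of Lemma~\ref{lm: obvious-disk-1}; equivalently it can be read off Proposition~\ref{disk calculation in maximally punctured case}, whose conditions (1)--(4) hold vacuously for a taut concatenation while Lemmas~\ref{lm: bigon-and-diagonal} and~\ref{lm:bigon-with-red} exclude the only new phenomenon, a contributing disk meeting the matter divisor.
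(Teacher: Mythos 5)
Your proposal is correct and follows essentially the same route as the paper: the paper's (much terser) proof likewise cites Proposition \ref{disk calculation in maximally punctured case} for the existence of the product disk realizing $D_{13}$ (tautness making the bigon conditions vacuous, so the disk avoids $H_{matter}$ and is weighted by $\eta^0$) and the argument of Lemma \ref{lm: obvious-disk-1} for the uniqueness of the disk $\S \to \D \times \C^*_y$, hence of the contributing disk in $\ycal_o$. Your extra bookkeeping via Lemmas \ref{lm: bigon-and-diagonal} and \ref{lm:bigon-with-red} just spells out what the paper leaves implicit.
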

\begin{proof}
From Prop \ref{disk calculation in maximally punctured case}, we see there is a disk $\D \to (\bfT^\vee \times \bfT_O)/\bfW$ that realizes the composition $\mu_2(D_{23}, D_{12})= D_{13}$. The argument in \ref{lm: obvious-disk-1} shows that such disk $\S \to \D \times \C^*_y$ is unique, hence this is the only disk contributing to $\mu_2(D_{23}, D_{12})$. 
\end{proof}

\begin{corollary} \label{stage2:continuation map}
Given $\T^{w}_{c_1} > \T^{w'}_{c_1} > \T^{w}_{c_2}$, and $e_{11'} \in \T^{w}_{c_1} \cap \T^{w'}_{c_1}$ corresponds to the identity diagram, and $p_{1'2} \in \T^{w'}_{c_1} \cap \T^{w}_{c_2}$ corresponds to some diagram $D_{12}$, then there is an intersection $p_{12} \in \T^{w}_{c_1} \cap \T^{w}_{c_2}$ corresponding to $D_{12}$, and
$$ \mu_2(p_{1'2}, e_{11'}) = p_{12}. $$
\end{corollary}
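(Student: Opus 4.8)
The plan is to obtain this as an immediate consequence of Proposition \ref{p:stage2, taut composition}, applied to the chain of wrappings $\T^{w}_{c_1} > \T^{w'}_{c_1} > \T^{w}_{c_2}$. A preliminary remark is in order: although Proposition \ref{p:stage2, taut composition} is phrased in terms of configurations $c_i \in \ccal$ each equipped with a single chosen wrapping, its proof uses only the pairwise comparability of the three wrappings together with the cylindrical-model count of Proposition \ref{disk calculation in maximally punctured case}, and nothing changes if two of the three Lagrangians are differently wrapped copies of $\T_{c_1}$, provided $\T^{w}_{c_1}$ is the more positive one. So I would first note (or silently use) that the proposition applies verbatim in this slightly degenerate configuration, with $\T^{w'}_{c_1}$ playing the role of the middle term.

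Next I would set up the dictionary. Under the correspondence of Proposition \ref{p:intersection as diagram}, let $D_{11'} \in \pcal(c_1, c_1)$ be the weighted taut strands diagram representing $e_{11'}$; by hypothesis this is the identity diagram $\mathrm{id}_{c_1}$, i.e., all strands vertical with all weights zero. Let $D_{12} \in \pcal(c_1, c_2)$ be the taut diagram representing $p_{1'2}$. The concatenation $concat(D_{12}, \mathrm{id}_{c_1})$, after straightening, is again $D_{12}$ with its weights unchanged, and in particular it is still taut. Applying Proposition \ref{p:stage2, taut composition} to $\T^{w}_{c_1} \xrightarrow{e_{11'}} \T^{w'}_{c_1} \xrightarrow{p_{1'2}} \T^{w}_{c_2}$ then yields an intersection point $p_{12} \in CF(\T^{w}_{c_1}, \T^{w}_{c_2})$ represented by $D_{12}$, together with the identity $\mu_2(p_{1'2}, e_{11'}) = p_{12}$, which is precisely the assertion.

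I do not expect a genuine obstacle here: no new holomorphic curve count is required, since the contributing disk is the unique one supplied factorwise by Lemma \ref{lm: obvious-disk-1}, exactly as in the proof of Proposition \ref{p:stage2, taut composition}. The only things to be careful about are bookkeeping — that concatenation with the identity diagram returns the original diagram and that tautness is preserved — and the convention that $e_{11'}$ is indeed the continuation element represented by $\mathrm{id}_{c_1}$; the latter is taken as a hypothesis here (and could alternatively serve as the very definition of $e_{11'}$), so it requires nothing to verify.
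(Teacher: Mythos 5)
Your proposal is correct and is essentially the paper's own argument: the paper also deduces the corollary from Proposition \ref{p:stage2, taut composition} via the one-line observation that the identity diagram concatenated with any taut diagram is still taut. Your extra remarks (that the proposition applies when two of the three Lagrangians are wrappings of the same configuration, and that concatenation with $\mathrm{id}_{c_1}$ returns $D_{12}$) are just spelled-out bookkeeping of the same step.
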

\begin{proof}
    The identity diagram composed with any taut diagram is still taut. 
\end{proof}

\begin{corollary} \label{c: stage2, Fuk has basis}
For any $c_1, c_2 \in \ccal$, we have canonical isomorphism
$$ HW(\T_{c_1}, \T_{c_2}) \cong \C\pcal(c_1, c_2). $$
\end{corollary}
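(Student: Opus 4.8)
The plan is to follow the same strategy as in the proof of Theorem~\ref{thm:stage1}: realize $HW(\T_{c_1},\T_{c_2})$ as a colimit of Floer cochain complexes over a cofinal family of positive wrappings, and identify that colimit with the free $\C[\eta]$-module $\C[\eta]\pcal(c_1,c_2)$ (which by Proposition~\ref{stage2:KRLW has basis} is also $Hom_{KLRW}(c_1,c_2)_{\hbar=0}$). First I would fix a cofinal sequence of symmetric-product wrappings $\T^w_{c_1}$ with $\T^w_{c_1} > \T_{c_2}$, so that
$$ HW(\T_{c_1},\T_{c_2}) = \colim_{\T^w_{c_1} > \T_{c_2}} CF(\T^w_{c_1},\T_{c_2}). $$
For each such wrapping, Proposition~\ref{p:intersection as diagram} provides a natural injection $\T^w_{c_1}\cap\T_{c_2} \hookrightarrow \pcal(c_1,c_2)$, and every weighted taut diagram is realized for some wrapping; hence $CF(\T^w_{c_1},\T_{c_2})$ is a free $\C[\eta]$-module on the image of that injection, and the (set-theoretic) colimit of the underlying modules exhausts $\C[\eta]\pcal(c_1,c_2)$.

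Two things then have to be checked. First, I would observe that the Floer differential $\mu_1$ vanishes on $CF(\T^w_{c_1},\T_{c_2})$: using the model computations of the previous subsection (Lemma~\ref{lm: obvious-disk-1} together with its products, and Lemmas~\ref{lm:baby comparison 2} and~\ref{lm:baby comparison 2.5}) the wrappings can be arranged so that $CF$ sits in cohomological degree $0$ in each $\C^*_y$ and $\C^*_u$ factor, hence in degree $0$ in the symmetric product, so there is no room for a differential and $HW$ is genuinely the colimit of the underlying modules. Second --- and this is the real content --- the continuation maps in the colimit must be compatible with the tautological inclusions into $\C[\eta]\pcal(c_1,c_2)$; this is exactly the point that is \emph{not} formal, and here I would invoke Corollary~\ref{stage2:continuation map}, which says that a continuation map is $\mu_2(-,e_{11'})$ with $e_{11'}$ the identity diagram, and that concatenating the identity with a taut diagram stays taut, so by Proposition~\ref{p:stage2, taut composition} the continuation map sends the diagram $D_{12}$ representing $p_{1'2}$ to the same diagram $D_{12}$ representing $p_{12}$. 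Thus the transition maps are ``the identity on shared diagrams'', the colimit is the increasing union of the images, and we obtain the canonical isomorphism $HW(\T_{c_1},\T_{c_2}) \cong \C[\eta]\pcal(c_1,c_2)$.

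The step I expect to be the main obstacle is precisely this colimit-compatibility: one must be sure that no ``extra'' holomorphic disk appears as the wrapping is pushed past a configuration in which the concatenated diagram would acquire a bigon with a matter hyperplane. In this section such disks are not set to zero but recorded with a power of $\eta$, so the needed input is Proposition~\ref{disk calculation in maximally punctured case} / Proposition~\ref{p:stage2, taut composition}: when the concatenation remains taut the composition is the concatenated diagram with \emph{no} extra factor of $\eta$, which is what makes the colimit stabilize onto $\C[\eta]\pcal(c_1,c_2)$ rather than onto a strictly larger $\C[\eta]$-module. Granting that, the remaining checks --- compatibility with the $q$- and $q_i$-gradings of Section~\ref{s:grading}, and naturality of the identification in $c_1$ and $c_2$ --- are routine bookkeeping.
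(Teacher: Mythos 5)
Your proposal is correct and follows essentially the same route as the paper: identify intersection points with taut weighted diagrams via Proposition \ref{p:intersection as diagram}, use Corollary \ref{stage2:continuation map} (resting on Proposition \ref{p:stage2, taut composition}) to see the continuation maps act as the identity on the diagram basis, and conclude via cofinality of wrappings realizing every diagram. The extra checks you flag (vanishing differential, gradings) are consistent with, though not spelled out in, the paper's argument.
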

\begin{proof}
Since for each wrapped versions $\T^w_{c_1} > \T_{c_2}$ we have natural basis (intersection points) in $CF(\T^w_{c_1}, \T_{c_2})$ identified with diagrams in $\pcal(c_1, c_2)$ (Proposition \ref{p:intersection as diagram}), and the continuation map identifies these basis (Corollary \ref{stage2:continuation map}), hence $HW(\T_{c_1}, \T_{c_2})$ has a natural basis given by diagrams in $\C\pcal(c_1, c_2)$. Further more, any diagrams in $\C\pcal(c_1, c_2)$ can be realized in wrapped Floer cohomology by sufficient wrapping, hence we have the desired isomorphism. 
\end{proof}

\subsection{KLRW relations from disk counts, I: matter divisor} \label{s: bigon relation}

Here we will count certain disks, which will ultimately give rise to the bigon-dot relations in the KLRW category. 
 
\subsubsection{Bigon of the same node}
$$
 \begin{tikzpicture}[very thick]
        \def\br{40}
        \draw (0,0) to [out=\br, in=-\br] node[pos=0, below ]{$(i)$} (0,2);
        \draw (0.5,0) to [out=180-\br, in=\br - 180] node[pos=0, below ]{$(i)$} (0.5,2);
        \node at (1,1) {$=0$};
        \end{tikzpicture}
        $$
This follows from Lemma \ref{lm: bigon-and-diagonal} and we removed root divisor, ie, diagonal divisor. 

\subsubsection{Bigon between red and black}

\begin{figure}[h]
        \centering
    \includegraphics[width=1\linewidth]{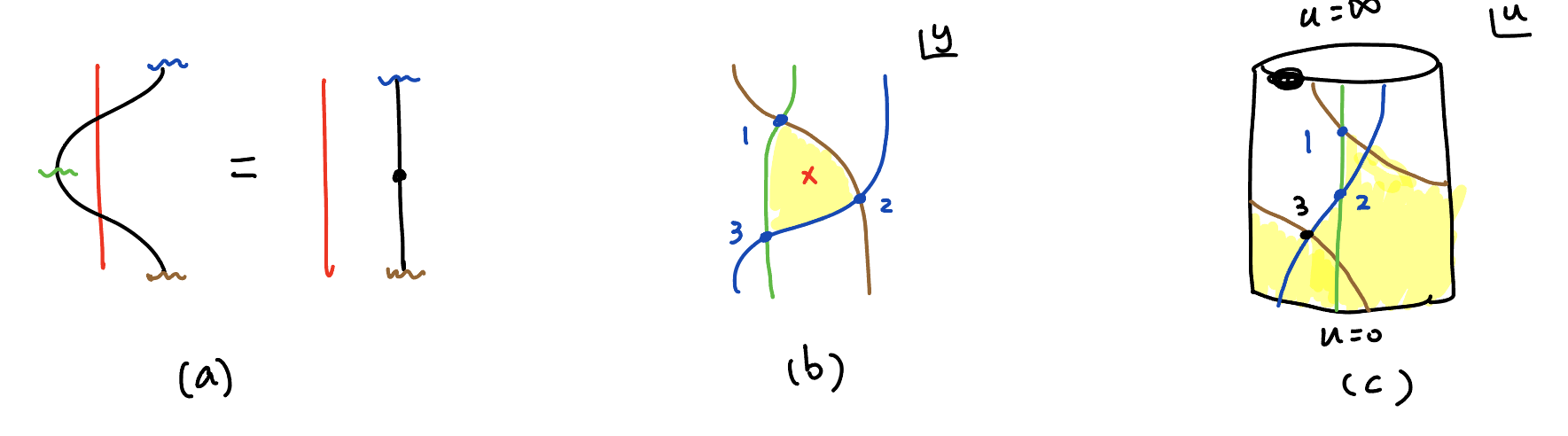}
   \caption{KLRW relation for bigon between a black strand and a red strand}
 \label{fig:bigon-red-black-proof}
\end{figure}

Consider three Lagrangans $\T_1 > \T_2 > \T_3$, drawn with color brown, green and blue in Figure \ref{fig:bigon-red-black-proof}. On the base $\C^*_y$ direction, we have $\T_1, \T_3$ are on the same side of the marked point, and $\T_2$ on the other side. On the fiber $\C^*_u$ direction, we pick the intersection points in $\T_1 \cap \T_2$ and $\T_2 \cap \T_3$ with winding number zero (marked by $1$ and $2$), and the intersection point in $\T_1 \cap \T_3$ with winding number $1$ (marked as $3$). 

Now we are ready to count disk. Consider a disk with three boundary marked points and one interior marked point, denoted as $\D_{3,1}$. Such a disk has a real 2 dimensional moduli space $\mcal_{\D_{3,1}}$.  

Recall from Theorem \ref{cylindrical model intro}, for the base direction map $\D \to \C^*_y$,   the interior marked point in $\D$  maps to the marked point $a \in \C^*_y$; for the fiber direction $\D \to \P^1_u$, the interior marked point maps to $u=0$. 

$$
\begin{tikzcd}
    & \mcal_{\D_{3,1} \to \C^*_y \times \P^1_u} (\dim_\R = 0)\ar[ld] \ar[rd] & \\
    \mcal_{\D_{3,1} \to \C^*_y} (\dim_\R = 0) \ar[rd] & & \mcal_{\D_{3,1} \to  \P^1_u} (\dim_\R = 2)\ar[ld] \\
      & \mcal_{\D_{3,1}}  (\dim_\R = 2)& 
\end{tikzcd}
$$ 

The base direction map has no moduli and is consist of one point. The fiber direction map has two degrees of freedom from the 'obtuse angle' at points $2$ and $3$. A more careful analysis shows that the map $\mcal_{\D_{3,1} \to  \P^1_u} \to \mcal_{\D_{3,1}}$ is a bijection. Hence the fiber product $\mcal_{\D_{3,1} \to \C^*_y \times \P^1_u}$ consist of a single point. 

\subsubsection{Bigon of black strands of adjacent nodes circle nodes}

\begin{figure}
    \centering
    \includegraphics[width=1\linewidth]{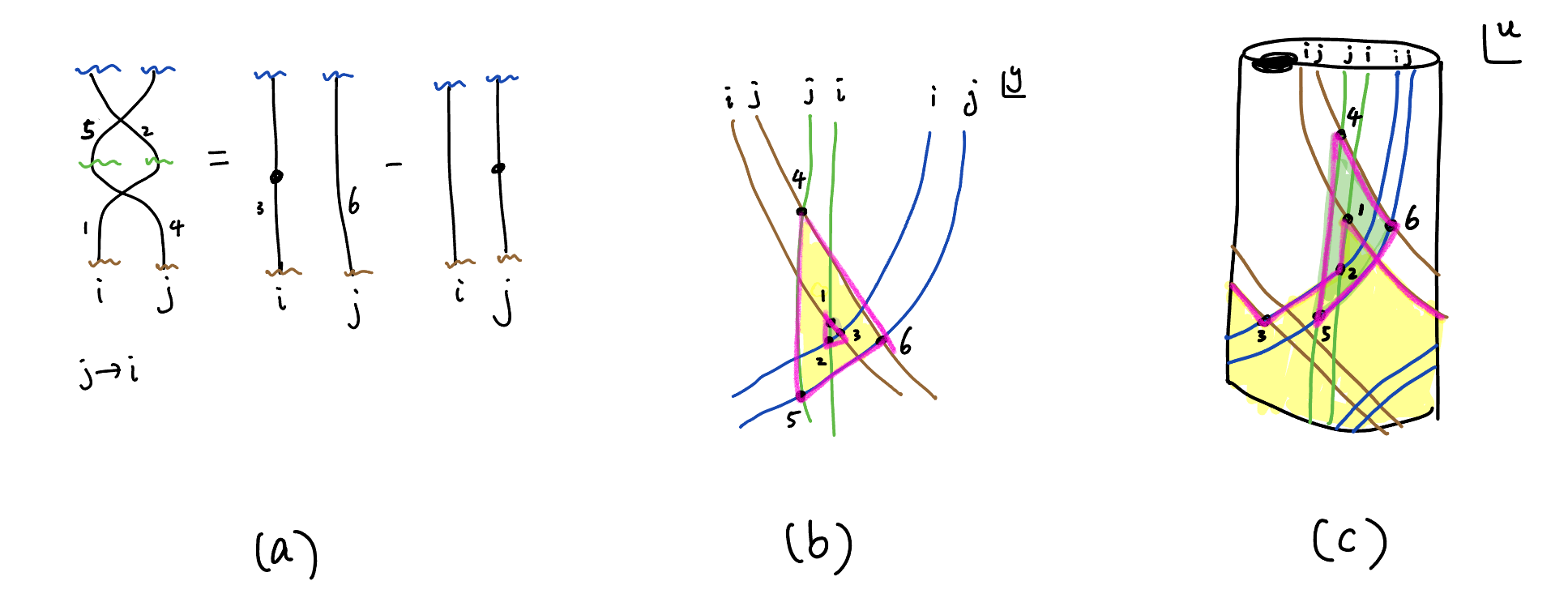}
   \caption{KLRW relation for bigon between a black strand and a red strand. Figure (c) shows the first composition output term in (a).}
         \label{fig:bigon-ij}
\end{figure}

Consider two adjacent nodes in the quiver $(j) \to (i)$.  We draw the Lagrangians base and fiber picture as in Figure \ref{fig:bigon-ij}. 

We show that the first output (dot on the (i) strand) can be realized, the second output can be similarly analysed which we omit. We are looking for two disks $\S_i, \S_j$ with three marked points (i.e. triangle), with maps $\S_i \sqcup \S_j \to \D \times \C^*_y \times \C_u$, such that 
\begin{enumerate}
    \item In $\C^*_y$, $\S_j$ (resp $\S_i$) maps to the triangle with vertex $1,2,3$ (resp $4,5,6$). 
    \item In $\C_u$, $\S_j$  maps to the triangle with vertex $1,2,3$, and $S_i$ maps into the region with corner $4,5,6$.  
    \item There is a $s_i \in \S_i$ and $s_j \in \S_j$ that maps to the same point $(z_0, y_0)$ in $\D \times \C^*_y$, and such that under $S_j \to \C_u$, we have $s_j \mapsto 0$. 
\end{enumerate}
We now show that there is a unique such disk. 
\begin{enumerate}
    \item Let $\mcal_{\S}$ be the moduli space of $\{(\S_i, s_i), (\S_j, s_j)\}$ of two disks, each with 3 boundary marked points and one interior marked points, $\mcal_\S \cong \mcal_{\D_{3,1}}^2$.
    \item Let $\mcal_{\S \to \D}$ be the moduli space of disk $\D$ with 3 boundary marked points and 1 interior marked point,  and isomorphism  $\S_i \to \D, \S_j \to \D$, hence $\mcal_{\S \to \D}$ is the diagonal in $\mcal_\S$. 
    \item Let $\mcal_{\S \to \C_y^*}$ be the moduli space of $\S_j$ (resp $\S_i$) maps to the triangle with vertex $1,2,3$ (resp $4,5,6$), and such that the marked points $s_i \in \S_i, s_j \in \S_j$ maps to the same point. Then the moduli space is determined by the image $y_0$ of $s_i$, hence $\mcal_{\S \to \C_y^*}$ is isomorphic to choice of $y_0$, namely the triangle with vertex $123$ in $\C^*_y$. 
    \item Let $\mcal_{\S \to \C_u}$ be the moduli space where $\S_j$ is mapped to the triangle $123$ in $\C_u$, and $\S_i$ is mapped to a degenerated 'triangle' where the three boundary marked points are mapped to $4,5,6$ and the interior point is mapped to $0 \in \C_u$ (see the green shaded region in Figure \ref{fig:bigon-ij} (c)). The obtuse corner $5,6$ gives two real degrees of freedom for $\S_j$ with marked points, and a more careful analysis shows that the forgetful map $\mcal_{\S \to \C_u} \to \mcal_\S$ is an isomorphism. 
\end{enumerate}  
By contraction mapping argument, the following fiber product $\mcal_{\S \to \D \times \C_y^* \times \C_u}$ is one point. 
$$
\begin{tikzcd}
    & \mcal_{\S \to \D \times \C_y^* \times \C_u} (0 \dim_\R) \ar[ld] \ar[d] \ar[rd] & \\
   \mcal_{\S \to \D} (2 \dim_\R)  \ar[rd] &  \mcal_{\S \to  \C_y^* }  (2 \dim_\R) \ar[d]  & \mcal_{\S \to \C_u} (4 \dim_\R) \ar[ld]  \\
    & \mcal_\S  (4 \dim_\R) & 
\end{tikzcd}
$$


 
 

         

         


\subsection{Proof of Theorem \ref{thm:stage2}}
From Proposition \ref{stage2:KRLW has basis} and Corollary \ref{c: stage2, Fuk has basis}, we see for any $c_1, c_2 \in \ccal$, there is a vector space isomorphism
$$ \Hom_{KLRW, \hbar=0}(c_1, c_2) \cong \Hom_{Fuk}(\T_{c_1}, \T_{c_2}) \cong \C\pcal(c_1,c_2). $$

Let $\circ_{KLRW}$ and $\circ_{Fuk}$ denote composition in the two categories, and let $\circ$ denote concatenation of diagram.  Then, for any weighted taut diagrams $c_1 \xto{D_{12}} c_2 \xto{D_{23}} c_3$, we need to verify
\begin{equation} \label{functoriality with matter}
\Phi(D_{23} \circ_{KLRW} D_{12}) \overset{?}{=} D_{23} \circ_{Fuk} D_{12}. \end{equation}

Proposition \ref{p:stage2, taut composition} asserts that \eqref{functoriality with matter} holds when $concat(D_{23}, D_{12})$ is itself taut (Prop \ref{p:stage2, taut composition}).

Let us show that if $D_{12}$ and $D_{23}$ are elementary weighted strands diagram, which can also be viewed as KLRW diagrams, then 
$$ \Phi (D_{23} \circ_{KLRW} D_{12}) = D_{23} \circ_{Fuk} D_{12}. $$
That is,  that \eqref{functoriality with matter} holds for compositions of elementary diagrams. 

Indeed, if $D_{12}$ or $D_{23}$ has all vertical strands, then the concatenation $concat(D_{23}, D_{12})$ is taut thus Prop \ref{p:stage2, taut composition} implies the result holds. 

If $concat(D_{23}, D_{12})$ is not taut, then we have exactly one bigon. If the bigon is of type $(i)-(i)$, $(i)-[i]$ or $(i)-(j)$ where $(i)$ and $(j)$ are adjacent in $\Gamma$, then 
section \ref{s: bigon relation} shows that the KLRW composition coincide with the Fukaya composition. If the bigon is of type $(i), (j)$ where $(i)$ $(j)$ are not adjacent, then KLRW and Fukaya composition (Lemma \ref{lm:baby-comparison-1.5}) will cancel the bigon.

Finally, by applying Prop \ref{p: KLRW h=0 recog} to the full subcategory $\{\T_c, c \in \ccal\}$, we finishes the proof of Theorem \ref{thm:stage2}.  $\qed$

\section{Floer homologies in the multiplicative Coulomb branch}
In this section we consider the Fukaya category of the original space $\MCB(\Gamma, \vec d)$ with no divisor removed. 

Two new difficulties are that (1) when $\hbar \ne 0$ there is no longer an isomorphism $\Hom_{KLRW}(c_1, c_2) \cong \C \pcal(c_1, c_2)$, and, in fact correspondingly, (2) the continuation maps determining wrapped Floer homology no longer preserve the diagram basis of Proposition \ref{p:intersection as diagram}.  That is, both isomorphisms of Equation \eqref{punctured functoriality via taut strands} fail.  

Instead we proceed by implementing the recognition principle of Proposition \ref{p: KLRW recog}.  

Recall we consider a Fukaya category where we count disks $\D \to \MCB(\Gamma, \vec d)$ with weight $\eta^{\D \cdot H_{matter}} \hbar^{\D \cdot H_{root}}$. Let $\acal$ be a full subcategory of the Fukaya category with objects $\{\T_c \mid c \in \ccal\}$. Our result is that $KLRW \cong \acal$, as categories defined over $\C[\eta, \hbar]$. 

\subsection{Crossing number filtration}

Let $c_1, c_2 \in \ccal$ be two black points configuration, for any diagram $D \in \pcal(c_1, c_2)$, define "crossing number of type (i)" as
$$ cross_i(D) = \text{number of crossings of black strands of type $(i)$},$$ 
and define the "total crossing number" as
$$ cross(D) = \sum_{i \in \Gamma} cross_i(D). $$

We may define a (multi-index) increasing filtration on $\C \pcal(c_1,c_2)$. 
\begin{definition}
For any $(i) \in \Gamma$, let $k_i \geq 0$ be an integer and let $\vec k = (k_i)$. Define an increasing filtration on $\C \pcal(c_1, c_2)$ called `crossing number filtration', 
$$ \fcal_{\vec k} \C \pcal(c_1, c_2) = \C \cdot \{D \in \pcal(c_1, c_2) \mid cross_i(D) \leq k_i , \forall i\}. $$
For any $k \in \Z_{\geq 0}$, we may define
$$ \fcal_{k} \C \pcal(c_1, c_2) = \C \cdot \{D \in \pcal(c_1, c_2) \mid cross(D) \leq k\} = \sum_{\vec k, |\vec k| = k} \fcal_{\vec k} \C \pcal(c_1, c_2). $$
\end{definition}
Let $Gr^\fcal_k$ denote the associated graded for the total crossing number filtration, and let $Gr^\fcal$ be the direct sum over $k$.

\begin{proposition} \label{p:resolve-crossing}
For any $c_1,c_2,c_3 \in \ccal$, and Lagrangians $\tcal^w_{c_1} >  \tcal^w_{c_2} > \tcal^w_{c_3}$, the composition 
$$ \mu_2:  CF(\tcal^w_{c_1}, \tcal^w_{c_2}) \otimes CF(\tcal^w_{c_2}, \tcal^w_{c_3})  \to CF(\tcal^w_{c_1}, \tcal^w_{c_3}) $$
preserves the crossing number filtration. 
Namely, if diagram $D_{13} \in CF(\tcal_1, \tcal_3)$ appears in $\mu_2(D_{12}, D_{23})$, then for any $i \in \Gamma$, 
$$ cross_i(D_{13}) \leq cross_i(D_{12}) + cross_i(D_{23}). $$
\end{proposition}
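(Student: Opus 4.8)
The plan is to run the cylindrical model of Theorem~\ref{cylindrical model} for the holomorphic triangles defining $\mu_2$ and read the output diagram off from the boundary of the disk, bounding its crossing number one node at a time. It suffices to prove the per-diagram statement: whenever $D_{13}$ (with underlying intersection point $p_{13}$) appears in $\mu_2(D_{12},D_{23})$, one has $cross_i(D_{13})\le cross_i(D_{12})+cross_i(D_{23})$ for every node $(i)$.

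Suppose $D_{13}$ appears in $\mu_2(D_{12},D_{23})$, witnessed by a disk $\D$ with three boundary punctures and boundary on $\T^w_{c_1},\T^w_{c_2},\T^w_{c_3}$. As in the proof of Proposition~\ref{disk calculation in maximally punctured case}, after a small generic perturbation the composite $\D\to\prod_i\Sym^{d_i}\C^*_y$ is transverse to the root, internal-matter and framing-matter loci, and Theorem~\ref{cylindrical model} presents $\D$ as the data, for each node $(i)$, of a degree-$d_i$ branched cover $\pi_i\colon S_i\to\D$ with only simple branch points, an embedded (possibly disconnected) surface $(\pi_i,f_i)\colon S_i\hookrightarrow \D\times\C^*_y$, together with a map $\Phi_u\colon \coprod_i S_i\to\P^1_u$ realizing the required poles and zeros. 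The branch points of $\pi_i$ are exactly the points over which $\D$ meets the color-$(i)$ root divisor, so there are $\D\cdot H_{root,i}\ge 0$ of them (and a corresponding factor $\hbar^{\D\cdot H_{root,i}}$). Since the branch points lie in the interior of $\D$, over each of the three boundary arcs of $\partial\D$ the cover $\partial S_i$ is $d_i$ disjoint arcs, each landing in one factor Lagrangian $\T^w_{c_k,y_{(i,\alpha)}}\subset\C^*_y$; projecting these boundary arcs to $U(1)_y$ and unrolling $\partial\D$ at the corner $p_{13}$ yields, as in Proposition~\ref{disk calculation in maximally punctured case}, a (not necessarily taut) strand diagram $\bar D_{13}$ from $c_1$ to $c_3$ whose straightening is $D_{13}$. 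Straightening only deletes crossings, so $cross_i(D_{13})\le cross_i(\bar D_{13})$, and it suffices to prove $cross_i(\bar D_{13})\le cross_i(D_{12})+cross_i(D_{23})$ for every $i$.

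To bound $cross_i(\bar D_{13})$, fix $i$ and argue in the universal cover $\C_Y\to\C^*_y$, $y=e^Y$, as in Lemmas~\ref{lm: obvious-disk-1} and~\ref{lm: bigon-and-diagonal}: lift $S_i$ and the three Lagrangians compatibly so that the color-$(i)$ factors of the inputs $p_{12},p_{23}$ lift to genuine intersection points. Because $S_i$ is embedded in $\D\times\C^*_y$, the boundary curves $\partial S_i$ are pairwise disjoint in $\partial\D\times\C^*_y$; over the two input arcs their cyclic order — how often the lifted boundary curves wind past one another — is precisely the braiding recorded by $D_{12}$ and by $D_{23}$. A color-$(i)$ crossing of $\bar D_{13}$ is a parameter value at which two of the projected boundary curves exchange order, and disjointness of $\partial S_i$ forces each such exchange to be accounted for either by a crossing already present over $\partial_1\cup\partial_2$ (counted by $cross_i(D_{12})$) or over $\partial_2\cup\partial_3$ (counted by $cross_i(D_{23})$), or else to be absorbed at a branch point of $\pi_i$: there two sheets of $S_i$ merge and resplit, which on the boundary diagram fuses a pair of color-$(i)$ strands and replaces a would-be crossing by uncrossed strands — exactly the geometric source of the crossing-lowering $\hbar$-terms in Figures~\ref{fig:KLRW-jij},~\ref{fig:KLRW-braid-red},~\ref{fig:KLRW-skein-1},~\ref{fig:KLRW-skein-2}. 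Hence each branch point only lowers $cross_i$, and summing contributions gives $cross_i(\bar D_{13})\le cross_i(D_{12})+cross_i(D_{23})$, as desired.

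I expect the main obstacle to be making this dichotomy precise: the careful accounting, in the embedded surface $S_i\hookrightarrow\D\times\C^*_y$ with its simple branch points, of how the crossings of $\bar D_{13}$ distribute among the two input arcs and the branch points, together with the verification that a simple branch point never manufactures an extra crossing. This is precisely where $\hbar\ne 0$ genuinely differs from Proposition~\ref{disk calculation in maximally punctured case}: new disks through the root divisor appear, and one must show they only shrink $cross_i$. The remaining divisor interactions do not interfere: crossings with internal- or framing-matter divisors (the $\eta$-powers) and bigon cancellations between strands of non-adjacent nodes change neither $S_i$ near its branch points nor the count of color-$(i)$ strand crossings, and since the argument takes place on $\Sym$-type Lagrangians (as in Proposition~\ref{disk calculation in maximally punctured case}) it applies directly to $\MCB(\Gamma,\vec d)$. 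One could further hope for the exact identity $cross_i(D_{13})=cross_i(D_{12})+cross_i(D_{23})-\D\cdot H_{root,i}$, but only the stated inequality is needed.
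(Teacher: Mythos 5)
Your approach circles the right geometric fact but leaves the decisive step unproven. The heart of your argument is the ``dichotomy'' claim: that every color-$(i)$ exchange in the output is either witnessed by an input crossing or absorbed at a branch point of $\pi_i\colon S_i\to\D$, and that a simple branch point can only lower $cross_i$. As you yourself note, this accounting is exactly the main obstacle --- but it is not a technical refinement to be supplied later; it \emph{is} the content of the proposition, repackaged in the branched-cover language. Nothing in the proposal establishes it. There is also a genuine technical problem with the setup: you invoke ``a small generic perturbation'' to make $\D\to\prod_i\Sym^{d_i}\C^*_y$ transverse to the divisors, but the disk must remain holomorphic for any positivity or sign statement about branch points to hold, and the cylindrical model of Theorem \ref{cylindrical model} is stated only for maps transverse to the hyperplane loci; a holomorphic triangle contributing to $\mu_2$ need not be transverse, so one must either argue via local intersection multiplicities or justify that the relevant moduli are cut out transversally --- neither is addressed. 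Finally, the construction of $\bar D_{13}$ from the boundary arcs and the claim that its straightening is $D_{13}$ conflates the diagram attached to the corner $p_{13}$ (via Proposition \ref{p:intersection as diagram}) with data read off the disk boundary; this can be repaired by the homotopy argument of Lemma \ref{lm:baby-comparison-1}, but as written it is another asserted step.

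The irony is that the exact identity you mention in passing at the end is the whole proof, and it is simpler than your route. The paper argues as follows: compose the triangle with the projection $\ycal\to\Sym^{d_i}(\C^*)$ and let $\Delta$ be the big diagonal; then
$$0 \;\leq\; \#(\D\cap\Delta) \;=\; cross_i(D_{12}) + cross_i(D_{23}) - cross_i(D_{13}),$$
where the left inequality is positivity of intersection of a holomorphic disk with a holomorphic divisor (no transversality or cylindrical model needed --- local multiplicities are automatically nonnegative), and the equality is a purely topological boundary computation: the algebraic intersection number with $\Delta$ is determined by the braiding of the boundary, i.e.\ by the three corner diagrams. If you want to salvage your write-up, the efficient fix is to drop the branched-cover bookkeeping, prove that winding-number identity directly (it is the precise version of your ``each exchange is accounted for'' heuristic), and quote positivity of intersections for the sign.
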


\begin{proof}
If $\varphi: \D \to \ycal$ is a holomorphic triangle interpolating $D_{12}, D_{23}, D_{13}$, then under the projection $\ycal \to \Sym^{d_i}(\C^*)$, we have $\D \to \Sym^{d_i}(\C^*)$. Let $\Delta$ be the big diagonal divisor in $\Sym^{d_i}(\C^*)$, then we have 
$$0 \leq \#(\D \cap \Delta) = cross_i(D_{12}) + cross_i(D_{23}) - cross_i(D_{13}), $$
hence the desired inequality. 
\end{proof}





\begin{corollary} \label{c:GrHW}
There is a natural total crossing number filtration on  $HW(\T_{c_1}, \T_{c_2})$, and
$$ Gr_k(HW(\T_{c_1}, \T_{c_2})) \cong \C[\eta, \hbar]\cdot \pcal_k(c_1, c_2), \forall k \in \Z_{\geq 0}. $$
\end{corollary}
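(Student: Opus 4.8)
The strategy is to induce the filtration on $HW(\T_{c_1},\T_{c_2})$ from the crossing–number filtration on the diagram bases, using the colimit presentation
$$ HW(\T_{c_1},\T_{c_2}) \;=\; \colim_{w}\, CF(\T^w_{c_1},\T_{c_2}), $$
the colimit taken over a cofinal system of positive wrappings $\T^w_{c_1}$ for which all intersection points with $\T_{c_2}$ lie in cohomological degree zero (so each $CF(\T^w_{c_1},\T_{c_2})$ is a free $\C[\eta,\hbar]$-module with vanishing differential) and along which the transition maps are of the form $\mu_2(-,e_w)$ with $e_w$ the trivial continuation chord, i.e.\ the identity strand diagram. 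By Proposition \ref{p:intersection as diagram} the intersection points of such a $CF$ form a subset $\pcal^w(c_1,c_2)\subseteq\pcal(c_1,c_2)$, these subsets are nested along continuation, and their union is all of $\pcal(c_1,c_2)$. On each $CF(\T^w_{c_1},\T_{c_2})$ I would take $\fcal_k$ to be spanned over $\C[\eta,\hbar]$ by those $D\in\pcal^w(c_1,c_2)$ with $cross(D)\le k$ (refining, if desired, to the multi-index filtration by the $cross_i$).

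First I would check that the continuation maps are filtered with associated graded the tautological inclusion of diagram bases. Filteredness is immediate from Proposition \ref{p:resolve-crossing} applied with one input the trivial diagram $e_w$, for which $cross_i(e_w)=0$. For the associated graded, suppose a diagram $D'$ occurs in $\mu_2(D,e_w)$ with $cross(D')=cross(D)=k$: the bound $cross_i(D')\le cross_i(D)$ at each node then forces equality node by node, and hence, by the identity $\#(\mathcal D\cap\Delta_i)=cross_i(D)+cross_i(e_w)-cross_i(D')$ from the proof of Proposition \ref{p:resolve-crossing}, the interpolating holomorphic triangle $\mathcal D$ is disjoint from every root divisor. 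But triangles disjoint from the root divisor are precisely those living in $\ycal_o$ that compute the composition of the Section-7 Fukaya category of Theorem \ref{thm:stage2}; and since $concat(D,e_w)=D$ is taut, that composition is $D$ itself with coefficient $1$ by Proposition \ref{p:stage2, taut composition} (equivalently Corollary \ref{stage2:continuation map}). Hence $Gr_k$ of the continuation map sends the basis vector $D\in\pcal^w_k(c_1,c_2)$ to $D\in\pcal^{w'}_k(c_1,c_2)$.

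Filtered colimits being exact, the $\fcal_k CF(\T^w_{c_1},\T_{c_2})$ then assemble into an exhaustive increasing filtration $\fcal_k HW(\T_{c_1},\T_{c_2}):=\colim_w\fcal_k CF(\T^w_{c_1},\T_{c_2})$ on the wrapped complex, and taking associated graded commutes with the colimit:
$$ Gr_k\, HW(\T_{c_1},\T_{c_2}) \;\cong\; \colim_w\, Gr_k\, CF(\T^w_{c_1},\T_{c_2}) \;\cong\; \colim_w\, \C[\eta,\hbar]\cdot\pcal^w_k(c_1,c_2) \;\cong\; \C[\eta,\hbar]\cdot\pcal_k(c_1,c_2), $$
the last step because the transition maps are the basis inclusions and $\bigcup_w\pcal^w_k(c_1,c_2)=\pcal_k(c_1,c_2)$. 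That this filtration is moreover compatible with composition — needed for the recognition principle of Proposition \ref{p: KLRW recog} — is exactly Proposition \ref{p:resolve-crossing} applied to general $\mu_2$, not just to continuation.

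The step I expect to be the crux is the associated-graded statement for continuation maps: in the presence of the root divisor (hence of $\hbar$) the continuation map is no longer a bijection on the diagram basis, and one must see that it nonetheless becomes one modulo lower crossing number. The essential input is Proposition \ref{p:resolve-crossing}, which pins any output diagram of the same total crossing number to an interpolating triangle that avoids every root divisor, thereby reducing the count to the $\eta$-only situation of Section 7 where a taut concatenation contributes exactly once with coefficient $1$. Everything else — existence of the filtration on the colimit, exactness, and the identification of the directed union of diagram bases — is formal.
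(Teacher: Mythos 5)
Your proposal is correct and follows essentially the same route as the paper: present $HW$ as a colimit of $CF$'s over positive wrappings, note the continuation maps are compositions with the identity diagram and hence filtered by Proposition \ref{p:resolve-crossing}, and identify the associated graded with the diagram basis via Proposition \ref{p:intersection as diagram}. Your justification that the continuation map is the identity on the graded basis (same total crossing number forces the triangle to avoid the root divisors, reducing to the taut-concatenation computation of Proposition \ref{p:stage2, taut composition} and Corollary \ref{stage2:continuation map}) is exactly the step the paper asserts more briefly, so the two arguments coincide in substance.
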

\begin{proof}
Recall that 
$$ HW(\T_{c_1}, \T_{c_2}) = co\lim_{\T_{c_1}^w} CF(\T^w_{c_1}, \T_{c_2}) $$
where various $CF(\T^w_{c_1}, \T_{c_2})$ are related by continuation maps. The continuation map is realized by composition with an intersection that is labelled by the 'identity diagram' in $\pcal(c_1,c_2)$, hence is in $\fcal_0$. By Prop \ref{p:resolve-crossing}, continuation map does not increase the filtration level, i.e., respects filtration. Hence the colimit $HW(\T_{c_1}, \T_{c_2})$ is filtered. On the associated graded level, we have continuation map is identity on the basis of weighted taut strands diagrams, hence $Gr(HW(\T_{c_1}, \T_{c_2}))$ has a basis which are in bijection with $\pcal(c_1,c_2)$ (Prop \ref{p:intersection as diagram}). 
\end{proof}

Since $\acal$ is a filtered category, i.e., morphism space is filtered and composition respects filtration, we may consider its assoicated graded category $Gr(\acal)$ where we replace the hom space by the assoicated graded. 
\begin{corollary} \label{stage 2: graded level}
We have equivalence of categories
$$  Gr(KLRW) \cong Gr(\acal). $$
\end{corollary}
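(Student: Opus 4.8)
The plan is to read off the equivalence $Gr(KLRW) \cong Gr(\acal)$ directly from the two descriptions of the associated graded pieces that we have already obtained, and to check that the (necessarily simpler) composition laws on the two sides agree. On the $KLRW$ side, Lemma \ref{lm:GrKLRW} identifies $Gr_k^\fcal Hom_{KLRW}(c_1,c_2)$ with $\C[\eta,\hbar]\cdot\pcal_k(c_1,c_2)$, and the induced composition on $Gr(KLRW)$ is computed from Lemma \ref{lm: KLRW composition hbar eta all zero}: on associated graded, a product of two taut weighted diagrams is the straightening of their concatenation \emph{provided} no bigon is cancelled, and is zero otherwise, because any bigon cancellation strictly drops the relevant crossing number $cross_i$ and hence lands in a lower filtration piece; in $Gr$ the correction terms (dots, $\eta$- and $\hbar$-multiples) from the KLRW relations all live in strictly lower filtration and therefore vanish. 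On the $\acal$ side, Corollary \ref{c:GrHW} identifies $Gr_k(HW(\T_{c_1},\T_{c_2}))$ with the same space $\C[\eta,\hbar]\cdot\pcal_k(c_1,c_2)$, compatibly with the taut-strands basis of Proposition \ref{p:intersection as diagram}, and Proposition \ref{p:resolve-crossing} shows the Fukaya product $\mu_2$ respects the crossing filtration with the same bound $cross_i(D_{13})\le cross_i(D_{12})+cross_i(D_{23})$.

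First I would fix the identifications: for each pair $(c_1,c_2)$ set $\Psi_k := \Phi_k^{-1}$ of Lemma \ref{lm:GrKLRW} on the $KLRW$ side and the isomorphism of Corollary \ref{c:GrHW} on the $\acal$ side, so that both $Gr_k$ spaces are canonically $\C[\eta,\hbar]\cdot\pcal_k(c_1,c_2)$; this defines a bijection on objects (the identity on $\ccal$) and a graded $\C[\eta,\hbar]$-linear isomorphism on each hom space. Second I would verify this is a functor, i.e. compatible with composition. By bilinearity it suffices to compute the product of two basis elements $D_{12}\in\pcal_{k}(c_1,c_2)$, $D_{23}\in\pcal_{l}(c_2,c_3)$ in $Gr_{k+l}$. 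On the Fukaya side, Proposition \ref{disk calculation in maximally punctured case} together with the disk counts of Section \ref{s: bigon relation} tells us exactly which terms occur in $\mu_2(D_{23},D_{12})$: the straightening $D_{13}$ of $concat(D_{23},D_{12})$ occurs (with coefficient $1$) precisely when no bigon between adjacent-label or equal-label or black/red-matched strands cancels, while bigon cancellations produce either $0$ (equal or adjacent labels, from Lemmas \ref{lm: bigon-and-diagonal}, \ref{lm:bigon-with-red}) or a lower-crossing term (the dot/$\eta$-corrections). In $Gr_{k+l}$ every lower-crossing correction is killed, so the Fukaya product on associated graded is exactly ``straighten the concatenation if no bigon cancels, else $0$'' — which is precisely the formula governing the $KLRW$ product on associated graded from Lemma \ref{lm: KLRW composition hbar eta all zero}. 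Hence $\Psi$ intertwines the two compositions, and since it is a graded isomorphism on every hom space it is an equivalence $Gr(KLRW)\cong Gr(\acal)$.

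The one point that requires genuine care — and which I expect to be the main obstacle — is the bookkeeping of \emph{which} bigon cancellations push a term into lower filtration versus which kill it outright, and checking that in all cases the $Gr$-level answer matches. Concretely: a bigon between equal-label strands contributes $0$ already before passing to $Gr$ (root divisor, Lemma \ref{lm: bigon-and-diagonal}); a bigon between black strands of adjacent nodes $(i)\to(j)$ or between a black $(i)$-strand and the red $[i]$-strand does \emph{not} vanish but resolves into dotted diagrams (Figures \ref{fig:KLRW-ij}, \ref{fig:KLRW-bigon-red}, and the disk counts of Section \ref{s: bigon relation}), and those dotted diagrams have strictly fewer crossings of \emph{every} type involved in the bigon, hence lie in a strictly smaller $\fcal_{\vec k}$; a bigon between strands of non-adjacent nodes cancels cleanly (Lemma \ref{lm:baby-comparison-1.5}) and again the straightened concatenation drops crossing number. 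In each case one must confirm the filtration drop is strict so the term dies in $Gr_{k+l}$; this is where the multi-index refinement $\fcal_{\vec k}$ of the filtration is used. Once this case analysis is in hand the corollary follows formally from the two identifications of associated graded pieces already established.
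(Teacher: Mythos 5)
There is a genuine gap, and it sits exactly at the step you flag as "the main obstacle": your bookkeeping of which bigon resolutions drop the filtration is wrong. The filtration $\fcal_{\vec k}$ is defined using $cross_i(D)$, which counts only crossings between two black strands \emph{of the same label} $(i)$ — equivalently, intersections with the root divisor. A bigon between black strands of adjacent labels $(i)\to(j)$, or between a black $(i)$-strand and a red $[i]$-strand, involves no same-label crossings at all, so resolving it does not lower any $cross_i$. Consequently the $\eta$-dotted correction terms (Figures \ref{fig:KLRW-ij}, \ref{fig:KLRW-bigon-red}, and the disk counts of Section \ref{s: bigon relation}) do \emph{not} die in the associated graded; only the $\hbar$-corrections (root-divisor crossings, Proposition \ref{p:resolve-crossing}) do. Your claimed graded composition rule "straighten the concatenation if no bigon cancels, else $0$" is therefore not the composition of $Gr(KLRW)$ or of $Gr(\acal)$: a concrete counterexample is the composition of two elementary black--red crossings of matching label forming a bigon, which lies entirely in $Gr_0$ and equals $\eta$ times a dotted vertical diagram on both sides, not $0$. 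With your rule the purported functor would fail to respect composition already in filtration degree zero, so the argument as written does not close.

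The correct statement is that the graded composition is the $\hbar=0$ composition (with all $\eta$-terms retained), and this is how the paper argues: since any disk meeting the root divisor strictly drops the total same-label crossing number, the composition in $Gr(\acal)$ is computed by disks avoiding the root divisor, i.e.\ by the composition in $\Fuk_\eta(\ycal_o,\wcal|_{\ycal_o};H_{matter})$, which Theorem \ref{thm:stage2} has already identified with the composition in $KLRW_{\hbar=0}$; combined with the hom-space identifications of Lemma \ref{lm:GrKLRW} and Corollary \ref{c:GrHW} (which you set up correctly), this gives the equivalence. So your first step is fine, but the composition comparison must be routed through Theorem \ref{thm:stage2} (the $\hbar=0$ stage), not through Lemma \ref{lm: KLRW composition hbar eta all zero} and Proposition \ref{disk calculation in maximally punctured case} (the $\hbar=\eta=0$ stage).
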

\begin{proof}
By Corollary \ref{c:GrHW} and Lemma \ref{lm:GrKLRW}, we have
$$ Gr(HW(\T_{c_1}, \T_{c_2})) \cong \C[\eta, \hbar]\cdot \pcal(c_1, c_2) \cong Gr(Hom_{KLRW}(c_1, c_2). $$
On the associated graded level, the composition in $Gr(\acal)$ uses disks that avoid the root divisors, hence by Theorem \ref{thm:stage2}, we have the isomorphism of hom space is compatible with composition, thus we have the desired equivalence of categories. 
\end{proof}


\subsection{Continuation maps and Robust diagrams}

Given two black points configurations $c_1, c_2$, the continuation map for computing wrapped Floer homology may not be 
the identity matrix in the basis given $ \pcal(c_1, c_2)$. 
Correspondingly, the wrapped Floer cohomology $HW(\T_{c_1}, \T_{c_2})$ may not be canonically  isomorphic to $\cp(c_1, c_2)$.

\begin{definition}
Let $c_1, c_2 \in \ccal$ be two black points configurations and let $D \in \pcal(c_1, c_2)$ be any weighted taut strands diagrams. We say $D$ is a {\bf robust diagram} (under the continuation map) if for any morphism $D_{12} \in CF(\T^w_{c_1}, \T^w_{c_2})$ labelled by $D$, and any continuation maps $e_{1}: \T^{w'}_{c_1} \to \T^w_{c_1}$ and $e_2: \T^w_{c_2} \to \T^{w'}_{c_2}$, the compositions $D_{12} \circ e_1$ and $e_2 \circ D_{12}$ consist of a single output intersection labelled by diagram $D$. 
\end{definition}

\begin{lemma} \label{l:lowest-filtrant}
    Let $D \in \cp(c_1, c_2)$ be a weighted taut strand diagram. If there exists no diagram $D' \in \cp(c_1, c_2)$, such that the following two conditions are satisfied: (1) $D'$ and $D$ has the same $q_i$-grading and $q$-grading, and (2) there exists $i \in \Gamma$ with $cross_i(D') < cross_i(D)$, then $D$ is a robust diagram.
\end{lemma}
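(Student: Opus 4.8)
The plan is to show that the continuation maps cannot introduce any diagram other than $D$ itself into the output, by a grading-plus-filtration argument. First I would recall the structure of the continuation map: by Corollary \ref{stage2:continuation map} and the discussion of continuation maps in Corollary \ref{c:GrHW}, a continuation map $e_1 \colon \T^{w'}_{c_1} \to \T^w_{c_1}$ is (up to the usual colimit identifications) given by $\mu_2$ with the intersection point labelled by the identity diagram, which lies in filtration level $\fcal_0$. Hence, by Proposition \ref{p:resolve-crossing}, composing $D_{12}$ (labelled by $D$) with $e_1$ on the right or $e_2$ on the left produces an output whose every diagram summand $D''$ satisfies $cross_i(D'') \le cross_i(D)$ for every $i \in \Gamma$; that is, the output lies in the same multi-index filtration piece $\fcal_{\vec k}$ with $\vec k = (cross_i(D))_i$.

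Next I would bring in the grading constraints from Section \ref{s:grading}. The composition $\mu_2$ in the Fukaya category is grading-homogeneous with respect to the $q$- and $q_i$-gradings coming from the $H^1$-classes $d\log f_0$ and $d\log f_i$ (these pair with the homology class of the holomorphic triangle, and the identity diagram contributes zero to all gradings). Therefore every diagram $D''$ appearing in $D_{12} \circ e_1$ or $e_2 \circ D_{12}$ has the same $q$-grading and the same $q_i$-grading as $D$. Combined with the previous paragraph, each such $D''$ satisfies: $cross_i(D'') \le cross_i(D)$ for all $i$, and $D''$ has the same $q$- and $q_i$-gradings as $D$.

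Now I would invoke the hypothesis of the lemma. The hypothesis says there is \emph{no} diagram $D'$ with the same $q$- and $q_i$-gradings as $D$ and with $cross_i(D') < cross_i(D)$ for some $i$. So any output summand $D''$ must in fact satisfy $cross_i(D'') = cross_i(D)$ for all $i$ — it sits in the top graded piece $Gr^\fcal_{\vec k}$. But on the associated graded level the continuation map acts as the identity on the taut-diagram basis (this is exactly what is used in the proof of Corollary \ref{c:GrHW}: $Gr(HW)$ has basis in bijection with $\pcal(c_1,c_2)$ and continuation is the identity there). Hence the image of $D$ in $Gr^\fcal_{\vec k}$ is $D$ itself with coefficient $1$, and there is no room for any other summand. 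Since all summands already lie in $\fcal_{\vec k}$ and their image in the associated graded is exactly $D$, the full output is $D$ (possibly times a scalar, which must be $1$ by the associated-graded computation). This shows $D_{12}\circ e_1$ and $e_2 \circ D_{12}$ each consist of a single output labelled by $D$, i.e. $D$ is robust.

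The main obstacle I anticipate is making precise the claim that the continuation map, as an actual chain-level map between $CF$ complexes for specific wrappings (not just in the colimit), is given by composition with an identity-labelled intersection and hence genuinely respects the crossing-number filtration with the identity behaving as filtration degree zero — i.e. that there is no subtlety with the direction of wrapping or with multiple intersection points labelled by the ``identity diagram'' contributing. This is essentially handled by Corollary \ref{stage2:continuation map} and the argument in Corollary \ref{c:GrHW}, but one should be careful to state that it is enough to check robustness for a cofinal system of continuation maps, and that each elementary continuation map in such a system is of the controlled form above; the general continuation map is then a composite of these, and both the filtration estimate and the associated-graded identity are stable under composition.
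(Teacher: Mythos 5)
Your argument is correct and follows essentially the same route as the paper's proof: decompose $\cp(c_1,c_2)$ by the $q$- and $q_i$-gradings, use Proposition \ref{p:resolve-crossing} to see that continuation (composition with the identity-labelled, filtration-zero intersection) cannot raise the crossing-number filtration, invoke the hypothesis to rule out lower-filtration summands in the same graded piece, and use the associated-graded identity of the continuation map (as in Corollary \ref{c:GrHW}) to pin the output down to $D$ itself. Your write-up just makes explicit the grading-homogeneity and associated-graded steps that the paper leaves implicit.
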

\begin{proof}
    Consider the direct sum decomposition of $\cp(c_1,c_2)$ given by the equivariant and $q$ gradings, and consider the crossing number filtration within each direct summand. By assumption, $D$ is in the lowest filtrand of one of the summand, hence by Proposition \ref{p:resolve-crossing}, is preserved by the continuation map. 
\end{proof}

\begin{proposition}
    Elementary weighted taut diagrams are robust diagrams. 
\end{proposition}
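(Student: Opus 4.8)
The plan is to deduce this from Lemma \ref{l:lowest-filtrant}: it suffices to show that each elementary weighted taut diagram sits in the lowest filtration level of its graded piece with respect to the equivariant $q_i$-gradings and the $q$-grading. Recall that an elementary diagram is of one of two types --- either all strands are vertical with a single strand of weight $1$, or there is at most one crossing and all weights are zero. In the first case $cross_i(D)=0$ for every $i$, so $D$ already has minimal crossing number and there is nothing to prove; it is automatically robust. So the whole content is in the second case, an elementary diagram $D$ consisting of a single crossing.

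First I would dispose of crossings that do not change any $cross_i$. A crossing between a black strand and a red strand, or between black strands of non-adjacent labels, or between black strands whose labels $(i)\to(j)$ are adjacent but with $i\ne j$: in all of these $cross_i(D)=0$ for every $i\in\Gamma$ (recall $cross_i$ counts crossings \emph{of black strands of type $(i)$}, so a crossing of a type-$(i)$ and a type-$(j)$ strand contributes to neither $cross_i$ nor $cross_j$). Hence these are again automatically in the lowest filtrand. The only remaining case is a single crossing of two black strands of the same label $(i)$, where $cross_i(D)=1$ and $cross_j(D)=0$ for $j\ne i$.

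So I must show: if $D$ is a weighted taut diagram with a single crossing of two type-$(i)$ strands, there is no $D'\in\cp(c_1,c_2)$ with the same $q$-grading and the same $q_j$-grading for all $j$, but with $cross_i(D')=0$. Suppose such a $D'$ existed. Since $D'$ has no type-$(i)$ crossing and $D$ has exactly one, the grading rules of Section \ref{s:grading} give $q(D') = q(D) + 1 + (\text{contribution of weights and other crossings})$, and one checks the non-$(i)$ crossings and weights of $D$ and $D'$ must match up in a way forcing a contradiction: the $q_j$-grading rule (rule 6) pins down, for each $j$, the net winding of type-$(j)$ strands across the reference line, hence the homotopy class of the underlying strand permutation of any diagram from $c_1$ to $c_2$ with prescribed $q_j$-gradings is determined; and a taut representative of that homotopy class with one fewer type-$(i)$ crossing than $D$ would have a strictly smaller $q$-grading because each removed same-label crossing changes $q$ by $-1$ (rule 2) while nothing else in the grading data is free to compensate once the winding data and weights are fixed. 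Thus $D'$ cannot have the same $q$-grading as $D$, the hypothesis of Lemma \ref{l:lowest-filtrant} is vacuously satisfied, and $D$ is robust.

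The main obstacle I anticipate is making the last paragraph precise --- namely, verifying that once the $q_i$-gradings and $q$-grading of a diagram are fixed, the number of same-label crossings of each type is determined. Concretely one wants: for fixed endpoints $c_1,c_2$ and fixed winding data, two taut diagrams with the same $q$-grading have the same $cross_i$ for all $i$. This should follow by bookkeeping from the grading rules, since for a taut diagram (no bigons) the crossing counts, the weights, and the winding numbers together determine the $q$ and $q_i$ gradings, and conversely the winding numbers plus the $q$-grading recover $\sum_i cross_i$ plus $\sum(\text{weights})$ up to the known mixed-crossing and red-crossing contributions which are themselves fixed by the winding data. I would isolate this as a small combinatorial sublemma about taut strand diagrams on the cylinder and check it by a direct count; everything else is immediate from Lemma \ref{l:lowest-filtrant}.
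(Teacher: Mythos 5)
Your handling of the dot diagram and of the crossings that do not raise any $cross_i$ (black--red, and black--black of distinct labels) is fine and matches the paper: there the crossing numbers are all zero, so the hypothesis of Lemma \ref{l:lowest-filtrant} holds vacuously. The gap is in the one case that actually matters for the $\hbar$-relation, a single crossing of two type-$(i)$ strands. For such a $D$ the hypothesis of Lemma \ref{l:lowest-filtrant} is in general \emph{false}, and so is the combinatorial sublemma you propose (that the $q$- and $q_j$-gradings determine each $cross_j$). Concretely, suppose some node $j\neq i$ has two black points bounding an arc containing no other black or red points and not the reference angle. Let $D'$ be the diagram that keeps all strands vertical except that it swaps these two type-$(j)$ strands inside that arc (one $(j)$--$(j)$ crossing). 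Then $D'\in\pcal(c_1,c_2)$, is taut, has $q$-grading $-1$ and all $q_k$-gradings $0$ --- exactly the gradings of $D$ --- yet $cross_i(D')=0<1=cross_i(D)$. So gradings only constrain certain combinations of the $cross_j$, not each one separately, and your "no such $D'$ exists" step fails.

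What rescues the statement is not the total-crossing lowest-filtrand criterion but the finer, multi-index form of Proposition \ref{p:resolve-crossing}: a continuation-map output of $D$ must satisfy $cross_k(\mathrm{output})\le cross_k(D)$ for \emph{every} $k$ separately (the continuation element has all crossing numbers zero), so the $D'$ above can never appear as an output even though it has the same gradings. Given that, one argues directly: outputs have $cross_j=0$ for $j\neq i$ and $cross_i\le 1$; if $cross_i=0$ then every grading contribution (weights, red crossings, adjacent-label crossings) is nonnegative, contradicting $q=-1$; hence $cross_i=1$, and $q=-1$ then forces zero weights and no adjacent or matching-red crossings, pinning the output to $D$ itself. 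This is what the paper's (terse) phrase "by $q$-degree and crossing number filtration" is invoking; your route through Lemma \ref{l:lowest-filtrant} alone cannot be repaired without importing this per-node filtration statement.
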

\begin{proof}
Let $D$ be an elementary weighted strand diagram. 
    If $D$ is vertical, then its crossing numbers are zero, then by Lemma \ref{l:lowest-filtrant}, it is a robust diagram. 
    
    If $D$ has one crossing, then by $q$-degree and crossing number filtration, $D$ is robust. 
\end{proof}

Since robust diagrams are preserved by continuation maps, they give well-defined element in the wrapped Floer cohomology, thus it makes sense to compose robust diagrams.

\subsection{KLRW relations from disk counts II: root divisor} \label{s: klrw from disk 2}
In this subsection we prove the following KLRW relation, where each KLRW diagram on the left is viewed as a composition of two elementary diagrams, which in turn are viewed as compositions in Fukaya categories. 
$$ \begin{tikzpicture}[very thick]
        \def\br{40}
        \begin{scope}
        \draw (0,0) -- node[pos=0, below ]{$(i)$}  (1,2);
        \draw (1,0) -- node[pos=0, below ]{$(i)$} node[pos=0.8, inner sep=2pt, circle, fill]{}  (0,2);
        \end{scope}
        \node at (1.5,1) {$-$};
        \begin{scope}[shift=({2,0})]
        \draw (0,0) -- node[pos=0, below ]{$(i)$}  (1,2);
        \draw (1,0) -- node[pos=0, below ]{$(i)$}  (0,2);
         \draw (1,0) -- node[pos=0, below ]{$(i)$} node[pos=0.2, inner sep=2pt, circle, fill]{}  (0,2);
        \end{scope}
        \node at (3.5,1) {$= \;\; \hbar $};
         \begin{scope}[shift=({4.3,0})]
        \draw (0,0) -- node[pos=0, below ]{$(i)$}  (0,2);
        \draw (1,0) -- node[pos=0, below ]{$(i)$}  (1,2);
        \end{scope}
        \end{tikzpicture}
    $$
It can be seen from the cylindrical model that the question reduces to the case where $\T$ is a symmetric product of two Lagrangians of the same label. Let $\T_1 > \T_2 > \T_3$ be wrapped versions of $\T$.  
Let $s$ denote the crossing, and $t_1, t_2$ denote dot on the first or second strand. 
See Figure \ref{fig:ts} for choice of intersections for $t_1 s$, and Figure \ref{fig:st} for that of $s t_2$. Here we draw $\P^1_u$'s open part $\C^*_u$ with $u=\infty$ to the right.  

Both $t_1 s$ and $s t_2$ will have as output the following weighted taut strand diagram$$
\begin{tikzpicture}[very thick]
     \draw (0,0) --   (1,2);
        \draw (1,0) -- node[pos=0.2,right]{$(1)$}  (0,2);
\end{tikzpicture}
$$which that cancels. The only possible output in with the same q-grading and strictly less in filtration is the identity diagram, and the corresponding disk will cross the diagonal once since the crossing number dropped by one. Hence in the cylindrical model,
the covering curve $\S \to \D$ is a branched double cover, and the projection $\S \to \P^1_u$ will cover $u=\infty $ once, sending the branch point $q \in S$ of $\S \to \D$ to $\infty$.

Let us write $\mcal_{\S \to \D \times \C_y \times \P^1_u}(p_1, p_2;q)$
for the moduli space of holomorphic maps $\Phi: \S \to \D \times \C_y \times \P^1_u$ with the given Lagrangian boundary conditions, such that the branching point of $\S \to \D$ coincide with pole of $\Phi:\S \to \P^1_u$.
Per the cylindrical model, the count of points 
in this space is the contribution of $q$ to 
$\mu_2(p_1, p_2)$.  Thus our remaining task is to show
that

$$ \# \mcal_{\S \to \D \times \C_y \times \P^1_u}(s, t_1; id) -  \# \mcal_{\S \to \D \times \C_y \times \P^1_u}(t_2, s; id) = 1. $$

We will describe these moduli spaces as intersections
in other simpler spaces.  
$$
\begin{tikzcd}
    & \mcal_{\S \to \D \times \C^*_y \times \P^1_u} (\dim_\R = 0)\ar[ld] \ar[rd] & \\
    \mcal_{\S \to \D \times \C^*_y} (\dim_\R = 1) \ar[rd] \ar[d] & & \mcal_{\S \to \D \times  \P^1_u} (\dim_\R = 1)\ar[ld] \ar[d] \\
   \mcal_{\S \to \C^*_y} (\dim_\R=4) \ar[rd]   & \mcal_{\S \to \D}  (\dim_\R = 2) \ar[d] & \mcal_{\S \to \P^1_u} (\dim_\R=4) \ar[ld] \\
      & \mcal_{\S} (\dim_\R=5)  & 
\end{tikzcd}
$$

\begin{itemize}
\item $\mcal_\S$ this is the moduli space of a disk with $6$ boundary marked points and one interior marked point. $\dim_\R \mcal_\S = 2+6-3 = 5$.

\item $\mcal_{\S \to \D}$ is the compactified moduli space of the branched cover of disk $\S$ with 6 marked points to disk $\D$ with 3 marked points, where the marked point of $\S$ is the branching point. 
The moduli is real $2$-dimensional, parameterized by the branching loci in $\D$. The boundary corresponds to the limit that the branching loci moves to  $\d\D$. It can either move to a boundary marked point, or a boundary segment.  See Figure \ref{fig:MSD}. 

\item $\mcal_{\S \to \C^*_y}$ is 4 dimensional, where the holomorphic map ($2$ degrees of freedom) determines a disk $\S$ with 6 boundary marked points, and the choice of the interior marked point is $2$  degrees of freedom.

\item $\mcal_{\S \to \P^1_u}$ is 4 dimensional, where the holomorphic map gives $4$ degrees of freedom, and the requirement that the marked point goes to $\infty$ determines the marked point in $\S$. 

\item $\mcal_{\S \to \D \times \C^*_y}$ and $\mcal_{\S \to \D \times \P^1_u}$ are 1-dimensional. 
\end{itemize}

Since the map $\S \to \C^*_y$ has no condition on the interior marked point of $\S$, and the double cover $\S \to \D$ determines the interior marked point to be the branching point, hence we may consider a simplified fiber product to get the map 
$\mcal_{\S \to \D \times \C^*_y} \to \mcal_{\S \to \D}$.
$$
\begin{tikzcd}
    \mcal_{\S \to \D \times \C^*_y} \ar[r] \ar[d] & \mcal_{\S \to \D} \ar[d] \\
    \mcal_{\S_{(6,0)} \to \C^*_y} \ar[r] & \mcal_{\S_{(6,0)}}
\end{tikzcd}
$$
We now explain the notations.

Let $\mcal_{\S_{6,0}}$ denote the (compactified) moduli space of disks with $6$ (labelled) boundary marked points and $0$ interior marked points. Then $\mcal_{\S_{6,0}}$ is the $6-3=3$ (real) dimensional associahedron. See Figure \ref{fig:associahedron}. There are two types of (real) codimension 1 degeneration of $\S_{(6,0)}$, corresponds to split the $6$ (ordered) boundary marked points as $6=2+4$ (6 ways) or $6=3+3$ (3 ways), they correspond to the 6 pentagon faces and 3 square faces in the associahedron. 

\begin{figure}[h]
    \centering
    \includegraphics[width=0.4\linewidth]{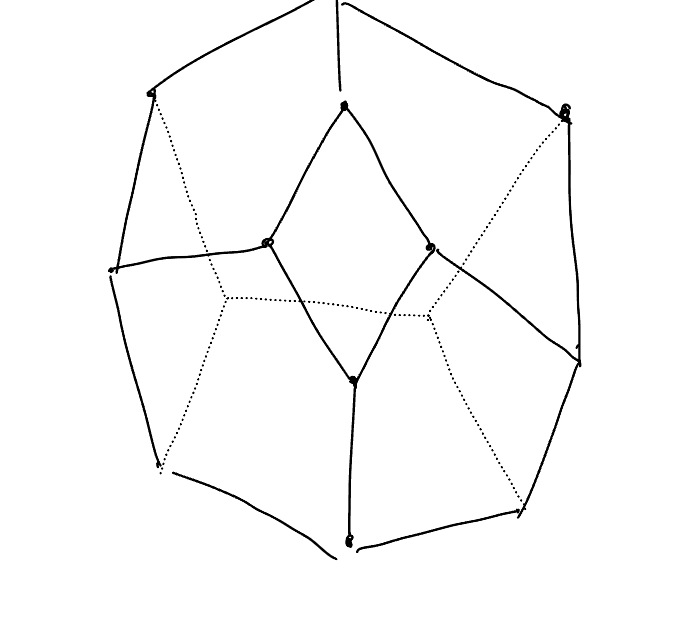}
    \caption{The moduli space $\mcal_{\S_{6,0}}$ is the 3 $\dim_\R$ associahedron.}
    \label{fig:associahedron}
\end{figure}

Let $\mcal_{\S_{6,0} \to \C^*_y}$ be the moduli space of holomorphic maps satisfies the Lagrangian boundary conditions and the marked points goes to Lagrangian intersection.  See Figure \ref{fig:lag-in-cy} for the Lagrangian arrangement and choice of intersections (this is for $\hbar \cdot id \in t_1 \circ s$). 

Let $\mcal_{\S_{6,0} \to \D_z}$ be the moduli space of disk $\D$ with three boundary marked point and a holomorphic map $\S \to \D$ that is a double cover and the boundary marked points of $\S$ maps to boundary marked points in $\D$. Note that this is the same as $\mcal_{\S \to \D}$ before, since the freedom of interior marked point in $\S$ is cancelled by the condition that it is the branching point.

We have studied $\mcal_{\S \to \D}$, (Figure \ref{fig:MSD}) and its embedding image in $\mcal_{\S_{(6,0)}}$ (see Figure \ref{fig: embed to ms } (left), where we can determine the boundary of the embedding by matching the disk degeneration $\mcal_{\S \to \D}$. 

\begin{figure}[h]
    \centering
    \includegraphics[width=0.5\linewidth]{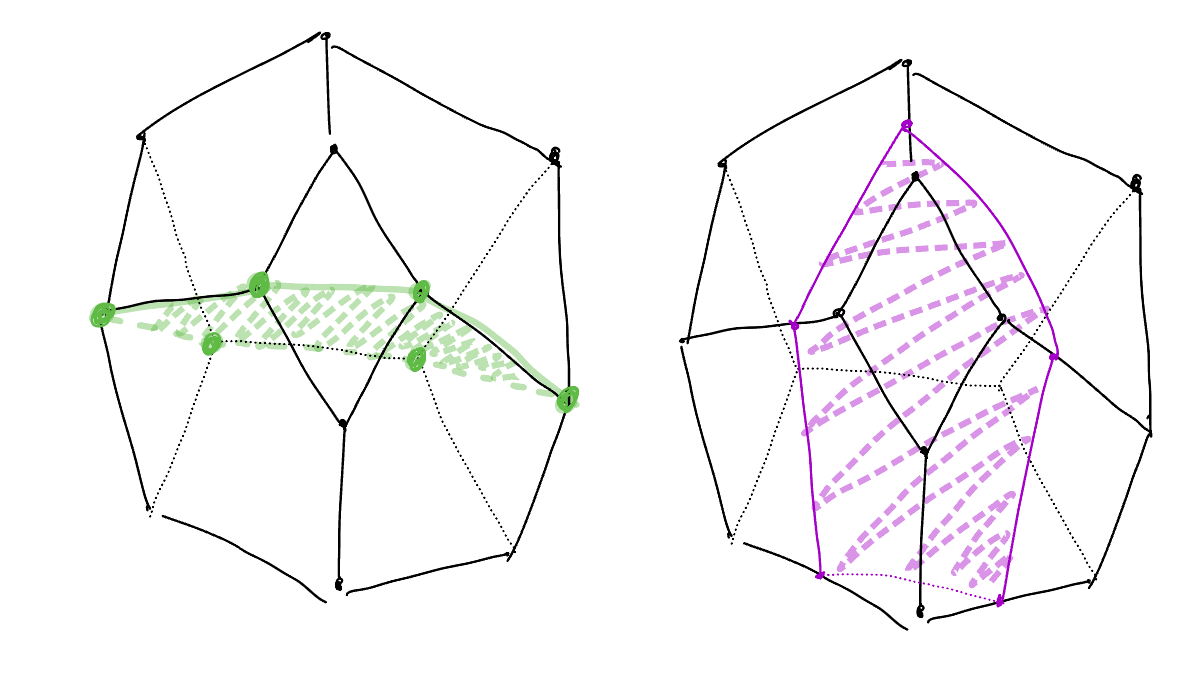}
    \caption{The images of $\mcal_{\S_{(6,0)} \to \D}$ (left, green) and $ \mcal_{\S_{(6,0)} \to \C^*_y}$ (right purple) in $\mcal_{\S_{(6,0)}}$}
    \label{fig: embed to ms }
\end{figure}

\begin{figure}[h]
    \centering
    \includegraphics[width=0.8\linewidth]{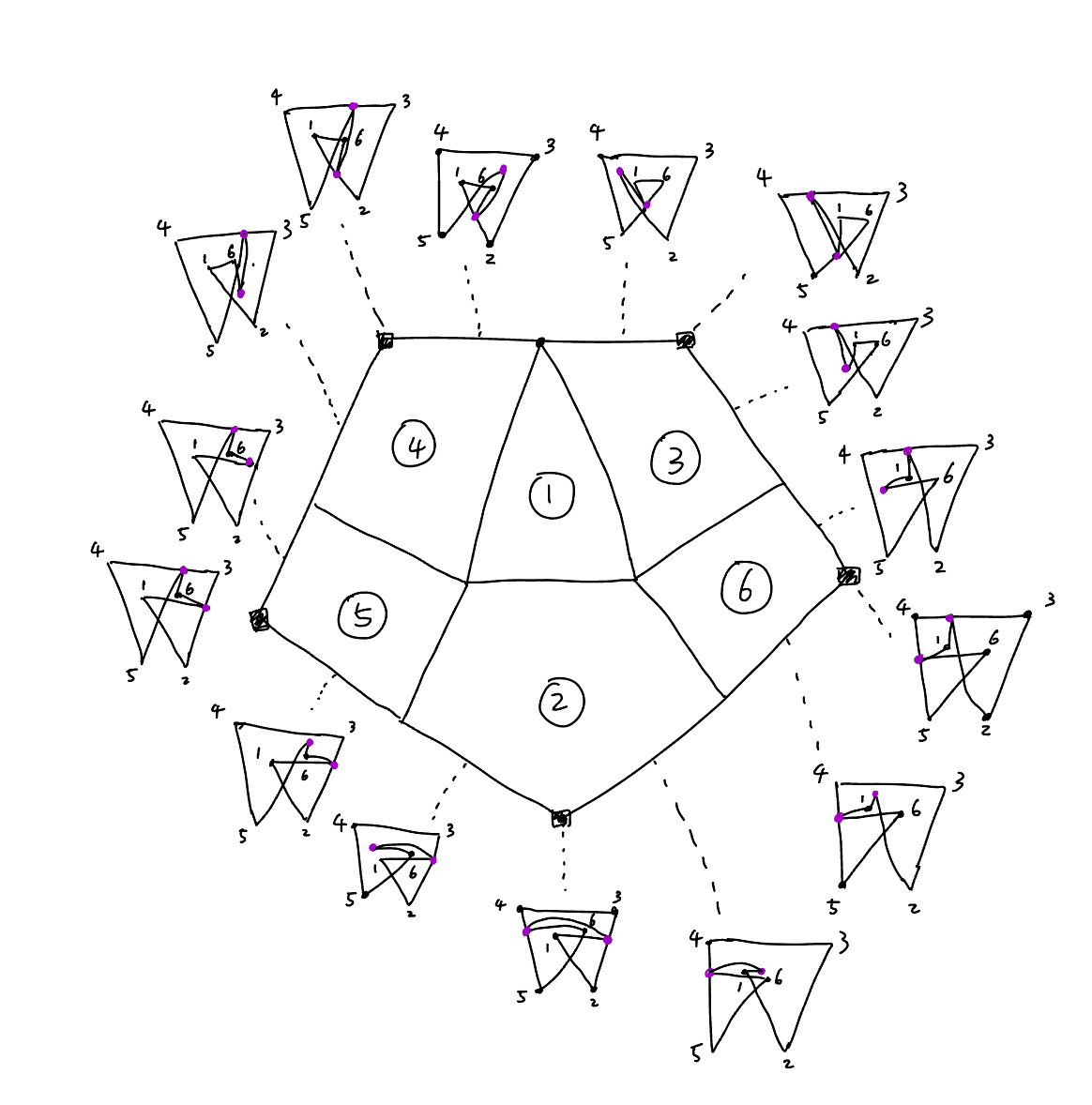}
    \caption{The moduli space $ \mcal_{\S_{(6,0)} \to \C^*_y}$, and we label the boundary of the moduli space by the corresponding disk degeneration type. }
    \label{fig:ms60tocy}
\end{figure}

Now we study $\mcal_{\S_{(6,0)} \to \C^*_y}$ in details. The moduli space is real 2 dimensional, and is the union of 6 regions. 
See Figure \ref{fig:ms60tocy}. We give a brief description of the regions
\begin{enumerate}
    \item Region (1): the image of the disk's boundary is the obvious one (shown in purple in Figure \ref{fig:ts}(b)). The map $\S \to \C^*_y$ has a ramification loci $y_0$, and the choice of $y_0$ is the small triangle with vertices $1,6$. That is the triangle in the moduli space. 
    \item Region (2): as the ramification loci $y_0$ in region (1) approaches the edge $(1)(6)$. The disk's image can develope `boundary double cusp', namely consider the Lagrangian line that contains point $(1), (6)$ (the second red line in Figure \ref{fig:lag-in-cy}). Parameterize the line, so that the intersection with edge (45) is at $x=0$, the intersection with edge (23) is at $x=1$, and the position of point (1) and (6) are $x_1$ and $x_6$. Let $a,b$ denote the end-points of the double-cusp. The boundary $\S$'s image will traverse the points in the order of $(6), a, b, (1)$. Hence $a,b$  satisfies
    $$ a < b, \quad 0 < a < x_6, \quad x_1 < b < 1. $$
    We see the corresponding moduli for $a,b$ is a pentagon, hence the shape of region (2). 
    \item Region (3), (4): as the ramification point $y_0$ reaches the edge (12) (resp edge (56)), the corresponding edge developes a double-cusp. 
    \item Region (5), (6): as the ramification point $y_0$ reaches the corner (6) (resp, corner (1)), the disk boundary can degenerate (see Figure \ref{fig:ms60tocy} near region (5) for the disk boundary degeneration). 
\end{enumerate}
After we analyze the moduli space $\mcal_{\S_{(6,0)} \to \C^*_y}$, we may consider the forgetful map to $\mcal_{\S_{(6,0)}}$. 
By examining the disk degeneration on the edges of of $\mcal_{\S_{(6,0)} \to \C^*_y}$, we see the $6$ marked points all splits as $2+4$ into two disks, hence the boundary of $\mcal_{\S_{(6,0)} \to \C^*_y}$ goes to the pentagon components of the boundary $\mcal_{\S_{(6,0)}}$. (See Figure \ref{fig: embed to ms }, right). Up to symmetry of the associahedron, this is the unique possiblity of image type.

Inside the hexagon $\mcal_{\S \to \D}$, $$ \mcal_{\S \to \D \times \C_y \times \P^1_u} (p_1, p_2;q)= \mcal_{\S \to \D \times \C_y} (p_1, p_2;q)\cap \mcal_{\S \to \D \times \P^1_u}(p_1, p_2; q).$$

For generic choice of base and fiber Lagrangian positions of $\T_1$ and $\T_3$, we will see that the RHS moduli
spaces do not meet along $\d \mcal_{\S \to \D}$. 
Thus, there is a well defined intersection number
on the RHS giving the contribution of the LHS.  
This intersection number is in turn determined by
how the RHS moduli spaces 
intersect the boundary of the hexagon in Figure \ref{fig:MSD}.

\begin{figure}
    \centering
\begin{tikzpicture}
\foreach \x in {0,1,...,6}{
\coordinate (\x) at (\x*60:2); 
\coordinate (m\x) at (\x*60+30:2); 
\fill (\x) circle (2pt);
}

\draw (0) {\foreach \x in {1,...,6} { -- (\x)}};

\node [right] at (m0) {$(25) = (61b)(2a5b)(a34)$};
\node [left] at (m2) {$(14) = (56b)(1a4b)(a23)$};
\node [below] at (m4) {$(36) = (12b)(3a6b)(a45)$};
\node [above] at (m1) {$(234q)(q561)$};
\node [left] at (m3) {$(456q)(q123)$};
\node [right] at (m5) {$(345q)(q612)$};

\begin{scope}[shift=({0,-3.5})]
\draw (0,0) circle (0.5) (1,0) circle (0.5) (-1,0) circle (0.5);
\node [fill, inner sep=2pt, label=-100:$3$] at (-100:0.5) {};
\node [fill, inner sep=2pt, label=80:$6$] at (80:0.5) {};
\node [fill, inner sep=2pt, label=00:$a$] at (0:0.5) {};
\node [fill, inner sep=2pt, label=180:$b$] at (180:0.5) {};

\node [fill, inner sep=2pt, label=-70:$4$, shift=({1,0})] at (-70:0.5) {};
\node [fill, inner sep=2pt, label=30:$5$, shift=({1,0})] at (30:0.5) {};

\node [fill, inner sep=2pt, label=110:$1$, shift=({-1,0})] at (110:0.5) {};
\node [fill, inner sep=2pt, label=210:$2$, shift=({-1,0})] at (210:0.5) {};

\node [fill, inner sep=1pt, red] at (0,0) {};

\end{scope}

\begin{scope}[shift=({-5,-2.2})]
\draw (0,0) circle (0.5) (1,0) circle (0.5);
\node [fill, inner sep=2pt, label=90:$1$] at (90:0.5) {};
\node [fill, inner sep=2pt, label=180:$2$] at (180:0.5) {};
\node [fill, inner sep=2pt, label=270:$3$] at (270:0.5) {};
\node [fill, inner sep=2pt, label=0:$q$,red] at (0:0.5) {};
\node [fill, inner sep=2pt, label=-90:$4$, shift=({1,0})] at (-90:0.5) {};
\node [fill, inner sep=2pt, label=0:$5$, shift=({1,0})] at (0:0.5) {};
\node [fill, inner sep=2pt, label=90:$6$, shift=({1,0})] at (90:0.5) {};
\end{scope}
 
\end{tikzpicture}

    \caption{Moduli space $\mcal_{\S \to \D}$.}
    \label{fig:MSD}
\end{figure}

\begin{figure}
     \centering
     \includegraphics[width=\linewidth]{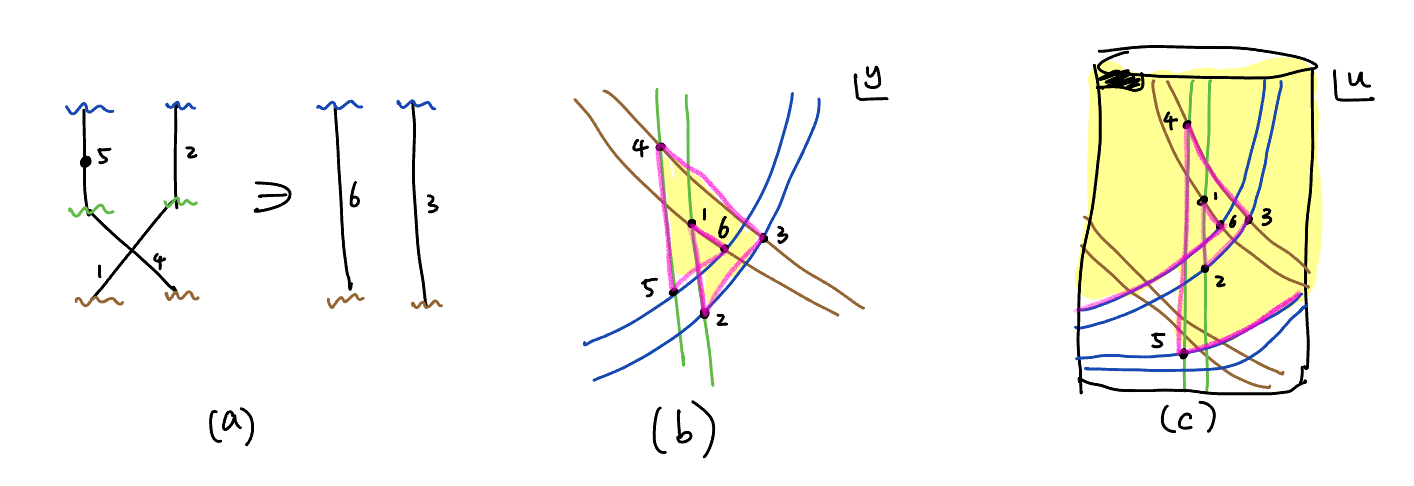}
                \caption{$\C_y$ and $\C^*_u$ projections
        of a cylindrical model 
        presentation a disk 
        contributing $\hbar \cdot id$ to $t_1 \circ s$. }
        \label{fig:ts}
\end{figure}

\begin{figure}
     \centering
\includegraphics[width=\linewidth]{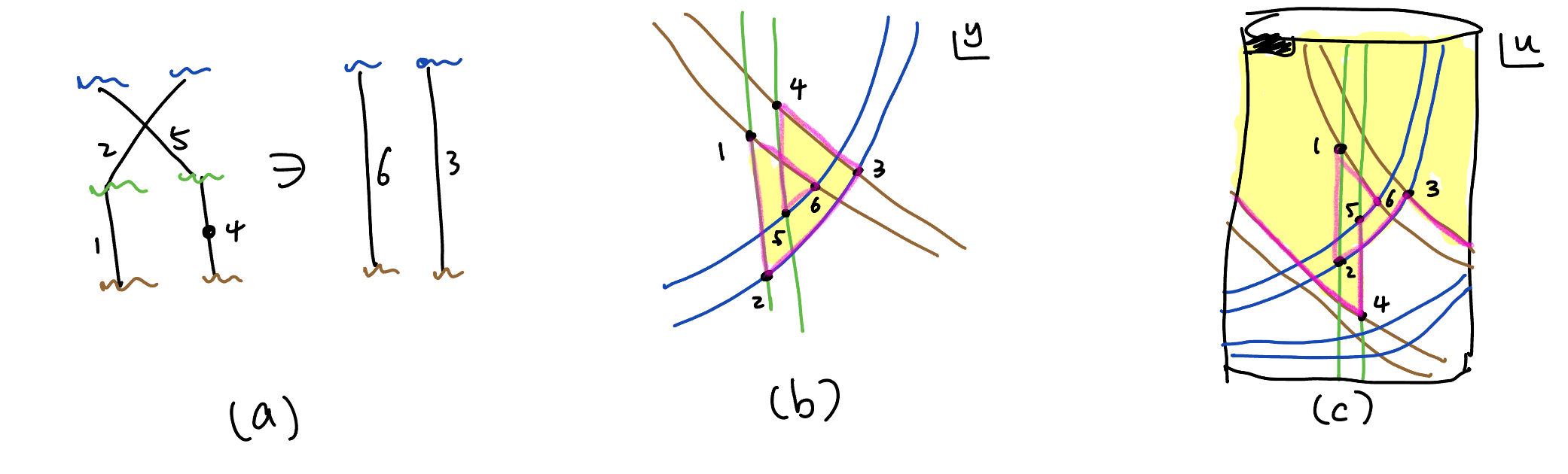}
        \caption{$\C_y$ and $\C^*_u$ projections
        of a cylindrical model 
        presentation of a disk 
        contributing $\hbar \cdot id$ to $s \circ t_2$. 
        }
        \label{fig:st}
\end{figure}

 
\begin{figure}
     \centering
\includegraphics[width=\linewidth]{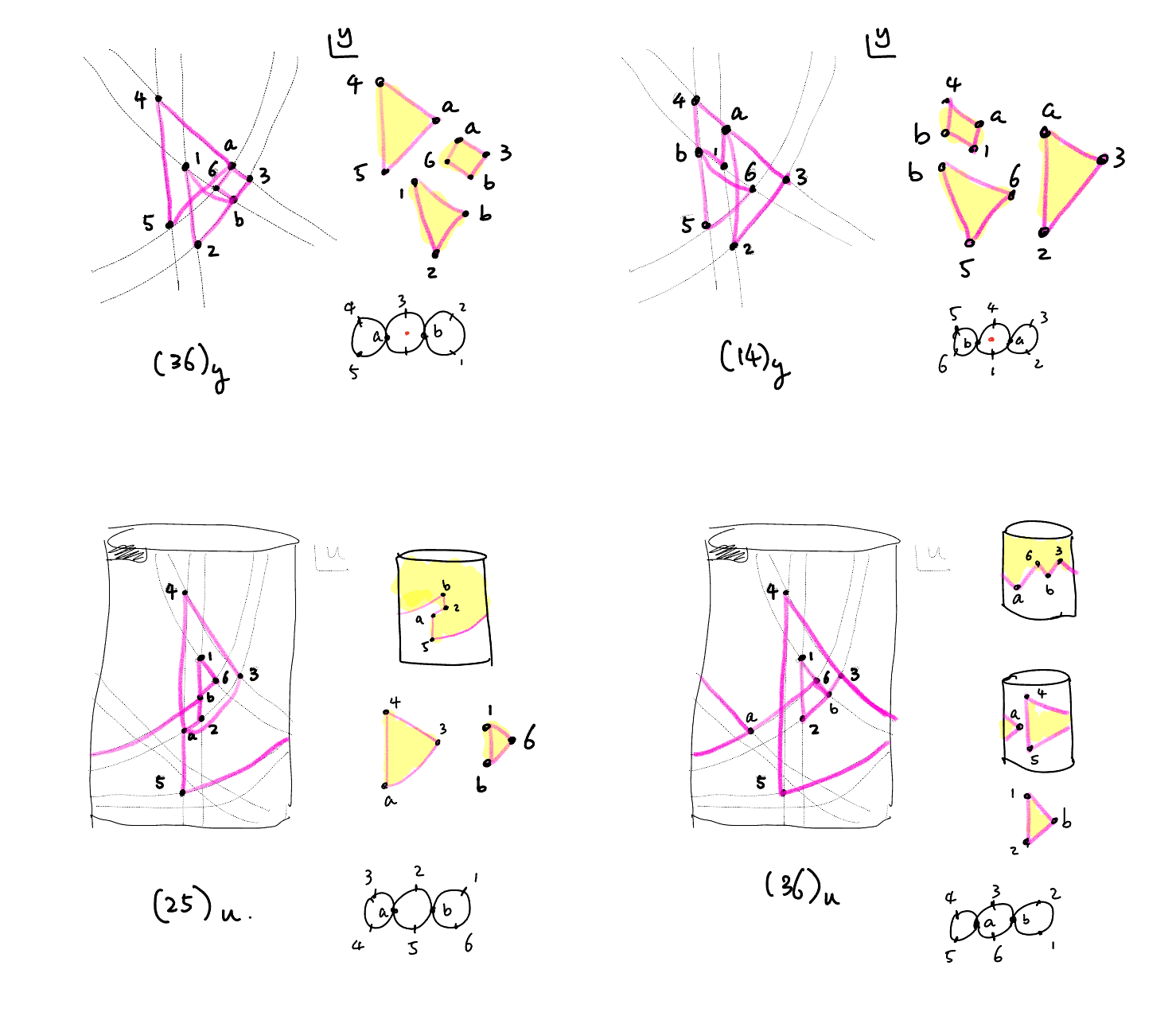}
        \caption{Degeneration in the base and fiber for $t_1 s$ }
        \label{fig:ts-degen}
\end{figure}

\begin{figure}
     \centering
   \includegraphics[width=1\linewidth]{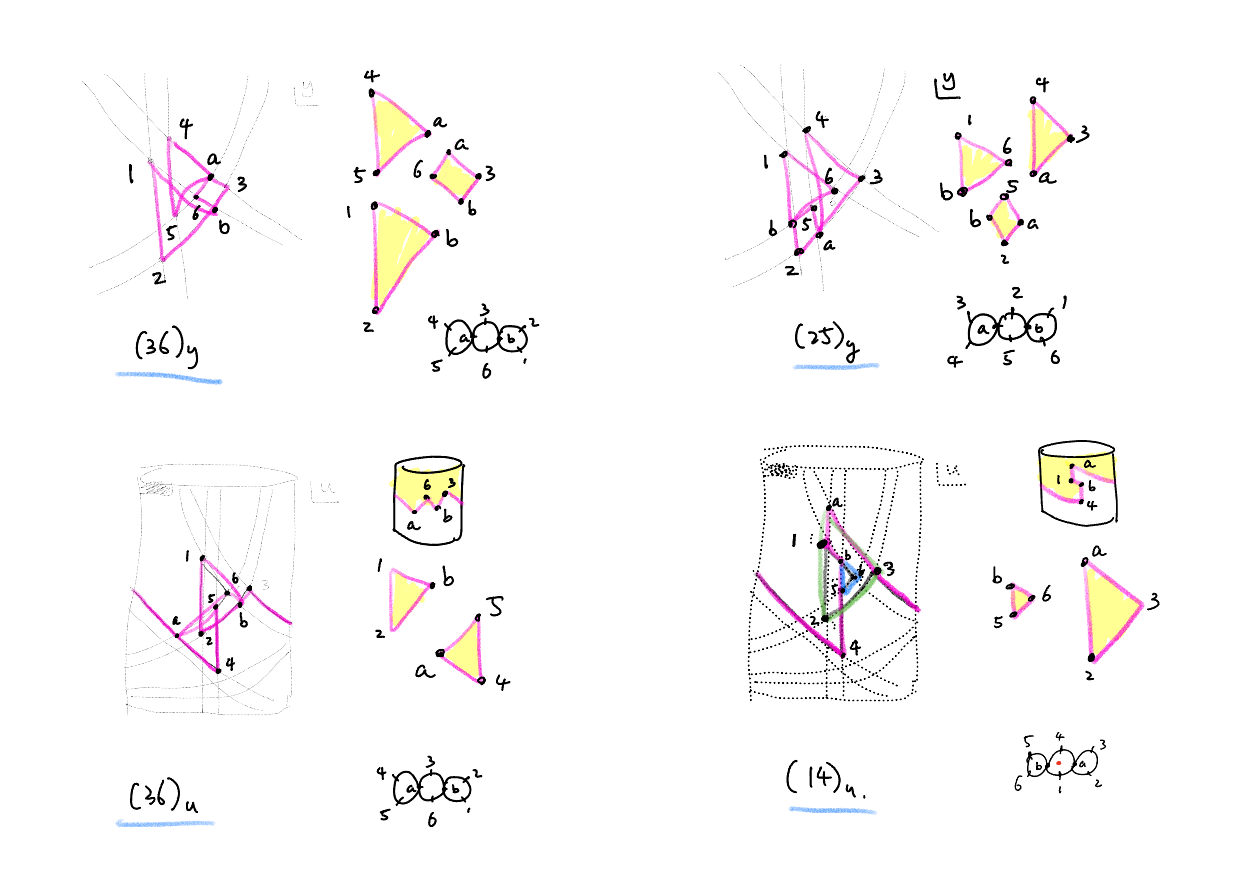}
       \caption{Degeneration in the base and fiber for $s t_2$ }
       \label{fig:st-degen}
\end{figure}

\begin{figure}[hhhh]
    \centering
 \begin{subfigure}[b]{0.4\textwidth}
  \centering
  \begin{tikzpicture}
\foreach \x in {0,1,...,6}{
\coordinate (\x) at (\x*60:2); 
\coordinate (m\x) at (\x*60+30:2); 
\fill (\x) circle (2pt);
}

\draw (0) {\foreach \x in {1,...,6} { -- (\x)}};


\node [fill, inner sep=2pt, label=-90:$(36)_y$, red] (ts-36y) at ($(4)!0.2!(5)$) {};
\node [fill, inner sep=2pt, label=180:$(14)_y$, red ] (ts-14y) at ($(2)!0.3!(3)$) {};
\node [fill, inner sep=2pt, label=-90:$(36)_u$, red ] (ts-36u) at ($(4)!0.7!(5)$) {};
\node [fill, inner sep=2pt, label=0:$(25)_u$, red] (ts-25u) at ($(0)!0.5!(1)$) {};

\draw[red] (ts-36y) to[out=60, in=-10]  node[pos=1, right] {$(t_1 s)_y$}  (ts-14y);
\draw[red] (ts-36u) to[out=100, in=200]  node[pos=1, left] {$(t_1 s)_u$} (ts-25u);

\node [fill, inner sep=1pt, blue] (st-36y) at (ts-36y) {};
\node [fill, inner sep=2pt, label=0:$(25)_y$,  blue ] (st-25y) at ($(0)!0.3!(1)$) {};
\node [fill, inner sep=1pt, blue ] (st-36u) at (ts-36u) {};
\node [fill, inner sep=2pt, label=180:$(14)_u$, blue] (st-14u) at ($(2)!0.7!(3)$) {};

\draw[blue] (st-36y) to[out=60, in=180]  node[pos=0.7, right ] {$(s t_2)_y$}  (st-25y);
\draw[blue] (st-36u) to[out=100, in=0]  node[pos=0.7, left ] {$(s t_2)_u$} (st-14u);
\end{tikzpicture}

        \caption{$id \notin t_1 s, id \in s t_2 $}
 \end{subfigure}
  \begin{subfigure}[b]{0.4\textwidth}
  \centering
    \begin{tikzpicture}
\foreach \x in {0,1,...,6}{
\coordinate (\x) at (\x*60:2); 
\coordinate (m\x) at (\x*60+30:2); 
\fill (\x) circle (2pt);
}

\draw (0) {\foreach \x in {1,...,6} { -- (\x)}};


\node [fill, inner sep=2pt,label=-90:$(36)_y$, red] (ts-36y) at ($(4)!0.8!(5)$) {};
\node [fill, inner sep=2pt,label=180:$(14)_y$, red ] (ts-14y) at ($(2)!0.3!(3)$) {};
\node [fill, inner sep=2pt,label=-90:$(36)_u$, red ] (ts-36u) at ($(4)!0.2!(5)$) {};
\node [fill, inner sep=2pt, label=0:$(25)_u$, red] (ts-25u) at ($(0)!0.5!(1)$) {};

\draw[red] (ts-36y) to[out=70, in=-10]  node[pos=1, right] {$(t_1 s)_y$}  (ts-14y);
\draw[red] (ts-36u) to[out=110, in=200]  node[pos=1, left] {$(t_1 s)_u$} (ts-25u);

\node [fill, inner sep=1pt, blue] (st-36y) at (ts-36y) {};
\node [fill, inner sep=2pt, label=0:$(25)_y$,  blue ] (st-25y) at ($(0)!0.3!(1)$) {};
\node [fill, inner sep=1pt, blue ] (st-36u) at (ts-36u) {};
\node [fill, inner sep=2pt, label=180:$(14)_u$, blue] (st-14u) at ($(2)!0.7!(3)$) {};

\draw[blue] (st-36y) to[out=100, in=180]  node[pos=0.7, right ] {$(s t_2)_y$}  (st-25y);
\draw[blue] (st-36u) to[out=50, in=0]  node[pos=0.7, left ] {$(s t_2)_u$} (st-14u);
\end{tikzpicture}
        \caption{$id \in t_1 s, id \notin s t_2 $}
 \end{subfigure}
    \caption{Intersections of $[\mcal_{\S \to \D \times \C_y} \cap \mcal_{\S \to \D \times \P^1_u}](p_1, p_2; q) $. Red lines for $(p_1, p_2; q) = (s, t_1, id)$, blue lines for $(p_1, p_2; q) = (t_2, s, id)$. Depending on the perturbation choices of $(36)_y$ and $(36)_u$, only one (pure color) intersection is non-empty. }
    \label{fig:2-cases}
\end{figure}
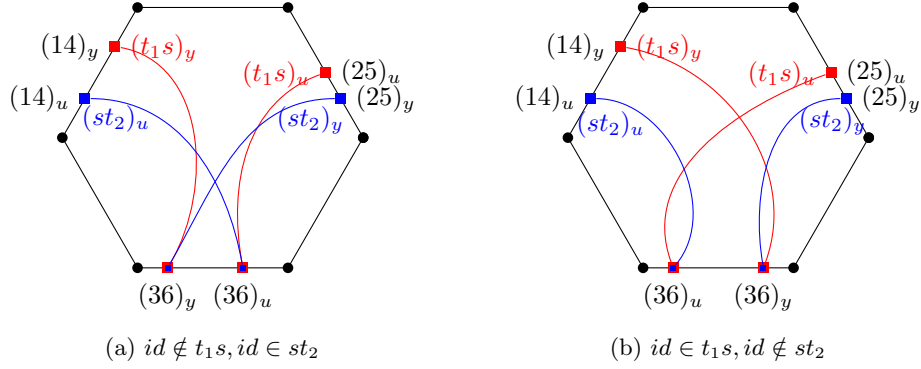

We now analyse in more detail these boundaries, hence 
determining the desired intersection number. 
We first consider  $\mcal_{\S \to \D \times \C_y \times \P^1_u}(s, t_1; id)$. 

Let us analyse the base direction constraint $\mcal_{\S \to \D \times \C_y}(s, t_1; id)$. 
It has two boundary points, which corresponds to Figure\ref{fig:ts-degen} (a),(b), the domain disk $\S$ will have degenerate types $(12b)(3a6b)(a45)$ and $(56b)(1a4b)(a23)$. The corresponding end-points we denote as $(36)_y$ and $(14)_y$ for short. The position of $(36)_y$ on the hexagon edge depends on the moduli of the disk $(3a6b)$ (shown in orange in Figure\ref{fig:ts-degen} (a)) as a disk with 4 marked points. Similarly for $(14)_y$.

Now we treat $\mcal_{\S \to \D \times \P^1_u}(s, t_1; id)$. Since the marked point of $\S$ maps to $u=\infty$, we have only have those degeneration as shown in Figure\ref{fig:ts-degen}(c),(d). The picture only shows the $\C^*_u$ part of $\P^1_u$, with the left end as $u=0$ and right end as $u=\infty$. Consider Figure\ref{fig:ts-degen}(c), the orange shadow indicate the image of the middle component of the degenerated disk $\S$ in the degenerate type $(61b)(2a5b)(a34)$ (See Figure\ref{fig:MSD} for the notation). Note that because of the two out-bending corners at vertex $2$ and $b$, we have freedom to vary the moduli of the domain disk, so that the involution center of $\S$ can map to $u=\infty$. Similarly for Figure \ref{fig:ts-degen}(d), where the two out-bending corners are at $3$ and $6$. 
These are no other boundary degenerations. For example, if we try to get $(14)_u$, the marked point of $\S$ will not be in the disk component containing points $1,4$.

We have identified the boundaries of the moduli
spaces.  The components of the moduli 
with these boundaries are then lines in the hexagon. 
(There may be also components without boundary, i.e.
circles.)
One line ends on sides $(14)$ and $(36)$, the other ends on sides $(25)$ and $(36)$. Depending on the relative position of the two endpoints on edge $(36)$, the two lines (as relative homology class) can have $0$ or $\pm 1$ intersection number (the sign depends on choice of orientations everywhere). This two cases are shown as red lines in the two subfigures in Figure \ref{fig:2-cases}. 

We can run the same analysis for the composition $\mcal_{\S \to \D \times \C_y \times \P^1_u}(t_2, s; id)$, and we get the blue lines in Figure \ref{fig:2-cases}. 
In either perturbation scenario, the difference of the blue intersection number and the red intersection number is $1$.





\subsection{Proof of Main theorem \ref{main theorem}}

We first consider a functor 
$$\psi: FreeElem \to \acal $$
that sends $D_{12} \in Elem(c_1, c_2)$ to the corresponding robust morphism $D_{12}^F$ in $HW(\T_{c_1}, \T_{c_2})$. 
The calculations in Sections \ref{s: bigon relation} and \ref{s: klrw from disk 2} show that  $\psi$ annihilates all the KLRW relations except perhaps the braid relations.   Elementary crossing diagram  of type $[i]-(i)$ and $(i)-(j)$ are not zero divisors in $Gr(KLRW)$, hence in $Gr(\acal)$ by Corollary \ref{stage 2: graded level}, hence are not zero divisors in $\acal$. Thus we may apply 
Lemma \ref{lm: short-cut} to see $\psi$ descends to a functor $KLRW \to \acal$. From Corollary \ref{c:GrHW} and \ref{stage 2: graded level}, we see the hypothesis of Prop \ref{p: KLRW recog} are satisfied, hence we may apply  Prop \ref{p: KLRW recog} to conclude this morphism is an isomorphism.

\clearpage

\bibliographystyle{plain}
\bibliography{ref}

\end{document}